\documentclass[11pt]{imsart}
\usepackage{amssymb,amsfonts,amsmath,amsthm,amscd,stackrel,dsfont,mathrsfs,eqsection}
\usepackage{graphicx}
\DeclareMathAlphabet{\mathpzc}{OT1}{pzc}{m}{it}

%\footnotesep 14pt
%\floatsep 27pt plus 2pt minus 4pt      % Nominal is double what is in art12.sty
%\textfloatsep 40pt plus 2pt minus 4pt
%\intextsep 27pt plus 4pt minus 4pt

% Somewhat wider and taller page than in art12.sty
%\topmargin -0.4in  \headsep 0.4in  \textheight 9.0in
%\oddsidemargin -0.15in  \evensidemargin -0.15in  \textwidth 7.in

\newtheorem{propo}{Proposition}[section]
\newtheorem{lemma}[propo]{Lemma}
\newtheorem{definition}[propo]{Definition}
\newtheorem{coro}[propo]{Corollary}
\newtheorem{thm}[propo]{Theorem}

\newtheorem{remark}[propo]{Remark}

\def\endproof{\hfill$\Box$\vspace{0.4cm}}

\def\di{{\partial i}}
\def\Rep{{\cal R}}

\def\uB{\underline{B}}

\newcommand{\<}{\langle}
\renewcommand{\>}{\rangle}

\newcommand{\reals}{{\mathds R}}
\newcommand{\integers}{{\mathds Z}}
\newcommand{\naturals}{{\mathds N}}

%% Math Stuff, Symbols

 % natural numbers
 % real numbers
\newcommand{\Z}{\field{Z}} % integers                 
 % reals
\newcommand{\1}{{\bf 1}} % vector of all 1's
 % indicator function
 % arrow for limits
 % abbreviation for right arrow
\newcommand{\prob}{{\mathbb P}} % probability
\newcommand{\E}{{\mathbb E}} % expectation
 % subject to

%% some caligraphic symbols

\def\Typ{{\sf Typ}}
\def\<{\langle}
\def\>{\rangle}
\def\cX{{\cal X}}
\def\vphi{\varphi}
\def\ve{\varepsilon}
\def\ve{\varepsilon}
\def\Var{{\rm Var}}
\def\reals{{\mathbb R}}

\def\bX{\overline{X}}

\def\reals{{\mathbb R}}
\def\cC{{\mathfrak C}}

\def\ind{{\mathbb I}}
\def\da{{\partial a}}
\def\H{{\mathbb H}}

\def\Tree{{\sf T}}
\def\CTree{{\sf CT}}
\def\me{\nu}
\def\normeq{\cong}
\def\di{\partial i}
\def\Tr{{\sf T}}
\def\atanh{{\rm atanh}\, }
\def\cN{{\cal N}}
\def\oalpha{\overline{\alpha}}
\def\F{{\sf F}}
\def\Ball{{\sf B}}
\def\cBall{\overline{\sf B}}
\def\sTV{\mbox{\tiny\rm TV}}
\def\de{{\rm d}}
\def\onu{\overline{\nu}}
\def\graph{{\mathbb G}}
\def\Pair{{\mathfrak P}}
\def\uu{\underline}
\def\ux{\underline{x}}

\def\wuX{\widetilde{\underline{X}}}
\def\wpsi{\widetilde{\psi}}
\def\wnu{\widetilde{\nu}}
\def\uy{\underline{y}}

\def\eps{\epsilon}

\def\cC{{\cal C}}
\def\l|{\left|\left|}
\def\r|{\right|\right|}

\def\E{\mathds E}
\def\1{\mathds 1}
\def\prob{{\mathds P}}
\def\ed{\stackrel{\rm d}{=}}
\def\ind{{\mathds I}}

\def\ve{\varepsilon}

\def\onu{\overline{\nu}}
\def\de{{\rm d}}
\def\reals{{\mathds R}}

\def\ux{\underline{x}}
\def\uy{\underline{y}}

\def\M{{\mathpzc M}}
\def\Tree{{\sf T}}

\def\Ball{{\sf B}}
\def\Edge{{\sf D}}

\def\Graph{{\sf G}}
\def\cBall{\overline{\sf B}}

\def\sTV{\mbox{\tiny\rm TV}}

\def\uh{\underline{h}}

\def\atanh{{\rm atanh}}

\def\uX{\underline{X}}
\def\cX{{\cal X}}

\def\node{P}
\def\edge{\rho}
\def\andeg{\overline{P}}
\def\aedeg{\overline{\rho}}
\def\ed{\stackrel{{\tiny \rm d}}{=}}

\def\dBall{\partial{\sf B}}

\def\root{{\o}}
\def\atanh{{\rm atanh}}

\def\eps{\epsilon}
\def\cC{{\cal C}}
\def\l|{\left|\left|}
\def\r|{\right|\right|}

\def\E{\mathds E}
\def\1{\mathds 1}
\def\prob{{\mathds P}}
\def\ed{\stackrel{\rm d}{=}}
\def\ind{{\mathds I}}

\def\ve{\varepsilon}

\def\onu{\overline{\nu}}
\def\de{{\rm d}}
\def\reals{{\mathds R}}

\def\M{{\mathpzc M}}
\def\Tree{{\sf T}}
\def\Ball{{\sf B}}
\def\Edge{{\sf D}}
\def\Graph{{\sf G}}
\def\cBall{\overline{\sf B}}

\def\sTV{\mbox{\tiny\rm TV}} 
\def\diam{{\rm diam}}
\def\dU{{\partial U}}
\def\di{{\partial i}}
\def\dj{{\partial j}}
\def\cU{{\cal U}}

\def\cZ{{\cal Z}}
\def\rh{\widehat{\rho}}
\def\M{{\sf M}}
\def\oU{\overline{U}}

\def\atanh{{\rm atanh}}

\def\cX{{\cal X}}

\def\sQ{{\sf Q}}

\def\vE{\vec{E}}

%% math operators

\def\1z{\vec{z}^{\,(1)}}
\def\2z{\vec{z}^{\,(2)}}
\def\3z{\vec{z}^{\,(3)}}

\def\upsi{\underline{\psi}}
\def\betaw{w}
\def\alp{a}
\def\bt{b}
\def\eham{{\mathcal E}}
\def\const{\kappa}
\def\GF{{\rm GF}(2)}
\def\ub{\underline{b}}

\def\uy{\underline{y}}

\def\CC{\mathcal C}
\def\ux{\underline{x}}

\def\v0{\vec{0}}

\def\vS{\vec{S}}
\def\bx{{\bf x}}
\def\vX{\vec{X}}
\def\poisson{{\sf Poisson}}

\def\gauss{{\sf G}}

\def\FF{\mathcal F}

\def\cF{{\cal F}}
\def\A{{\mathbb A}}

\def\K{{\mathcal K}}

\def\l|{\left|\left|}
\def\r|{\right|\right|}

\def\Var{{\rm Var}}

\def\E{\mathbb E}
\def\prob{{\mathbb P}}
\def\hprob{\widehat{\mathbb P}}

\def\ed{\stackrel{\rm d}{=}}
\def\G{{\mathcal G}}
\def\M{{\mathcal M}}
\def\K{{\mathcal K}}

\def\coeff{{\sf coeff}}

\def\vw{\vec{w}}
\def\vz{\vec{z}}
\def\vx{\vec{x}}

\def\vxp{\vec{x}\,{}'}

\def\ind{{\mathbb I}}

\def\ve{\varepsilon}

\def\tF{\widetilde{F}}
\def\tG{\widetilde{G}}
\def\tX{\widetilde{X}}
\def\tW{\widetilde{W}}
\def\Q{{\mathcal Q}}
\def\hq{\widehat{q}}
\def\htau{\widehat{\tau}}

\def\W{\widehat{W}}

\def\p{{\mathfrak p}}

\def\Z{\mathbb{Z}}

\def\vu{\vec{u}}

\def\vw{\vec{\omega}}

\def\vy{\vec{y}}

\def\covm{{\mathbb Q}}

\def\vF{\vec{F}}
\def\covd{{\mathbb G}}
\def\ggm{\gamma}

       % Solution of the ODE

       % Binomial r.v.

\def\de{{\rm d}}

\def\bJ{{\underline{J}}}

%\arxiv{math.PR/0000000}

\begin{document}

\begin{frontmatter}
\title{Gibbs Measures and Phase Transitions on Sparse Random Graphs}
\runtitle{Gibbs Measures on Sparse Random Graphs}

\begin{aug}
  \author{\fnms{Amir} \snm{Dembo}
\thanksref{t1}
\ead[label=e1]{amir@math.stanford.edu}},
  \author{\fnms{Andrea} \snm{Montanari}
\thanksref{t1}
\ead[label=e2]{montanari@stanford.edu}}
\thankstext{t1}{Research partially funded by NSF grant \#DMS-0806211.}
  \runauthor{Dembo et al.}
  \address{Stanford University.\\
  \printead{e1,e2}}
\end{aug}

\newcommand{\eqnsection}{\renewcommand{\theequation}{\thesection.\arabic{equation}}
      \makeatletter \csname @addtoreset\endcsname{equation}{section}\makeatother}
\renewcommand{\theenumi}{{\roman{enumi}}}
\renewcommand{\labelenumi}{(\alph{enumi})}
\renewcommand{\labelenumii}{\roman{enumii}.}

\begin{abstract}
Many problems of interest in computer science and information
theory can be phrased in terms of a probability distribution over 
discrete variables
associated to the vertices of a large (but finite) sparse graph. In recent
years, considerable progress has been achieved by viewing these distributions
as Gibbs measures and applying to their study heuristic tools 
from statistical physics. We review this approach and provide some results
towards a rigorous treatment of these problems.
\end{abstract}

\begin{keyword}[class=AMS]
\kwd[Primary ]{60B10}
\kwd{60G60}\kwd{82B20}
%\kwd[; secondary ]{60K35}
\end{keyword}

\begin{keyword}
Random graphs, Ising model, Gibbs measures, Phase transitions, Spin models, 
          Local weak convergence.
\end{keyword}

\end{frontmatter}

\setcounter{tocdepth}{2}
\tableofcontents

\section{Introduction}\label{ch:Intro}
\setcounter{equation}{0}

Statistical mechanics is a rich source of fascinating 
phenomena that can be, at least in principle, fully understood in
terms of probability theory. Over the last two decades,
probabilists have tackled this challenge with much
success.  Notable examples include percolation theory \cite{Grimmett}, 
interacting particle systems \cite{Liggett}, and most recently, 
conformal invariance. Our focus here is on another area of
statistical mechanics, the theory of Gibbs measures, which
provides a very effective and flexible way to define collections
of `locally dependent' random variables. 

The general abstract theory of Gibbs measures is fully 
rigorous from a mathematical point of view
\cite{Georgii}. However,
when it comes to understanding the properties of specific Gibbs measures,
i.e. of specific models, a large gap persists between 
physicists heuristic methods and the scope of 
mathematically rigorous techniques. 

This paper is devoted to somewhat non-standard,
family of models, namely Gibbs measures on sparse random graphs.
Classically, statistical mechanics has been motivated by the 
desire to understand the physical behavior of materials, 
for instance the phase changes of water under temperature change,
or the permeation or oil in a porous material. This naturally
led to three-dimensional models for such phenomena. The discovery
of `universality' (i.e. the observation that many qualitative features do 
not depend on the microscopic details of the system),
led in turn to the study of models on three-dimensional lattices,
whereby the elementary degrees of freedom (spins)
are associated with the vertices of of the lattice.
Thereafter, $d$-dimensional lattices (typically $\Z^d$),
became the object of interest upon realizing that significant 
insight can be gained through such a generalization.

The study of statistical mechanics models `beyond $\Z^d$' is not directly 
motivated by physics considerations. Nevertheless, physicists have
been interested in models on other graph structures for quite a long 
time (an early example is \cite{Dyson}). Appropriate graph structures
can simplify considerably the treatment of a specific model, and sometimes
allow for sharp predictions. Hopefully some qualitative features of this 
prediction survive on $\Z^d$. 

Recently this area has witnessed significant progress and renewed interest
as a consequence of motivations coming from computer science, 
probabilistic combinatorics and statistical inference. In these
disciplines, one is often interested in understanding the 
properties of (optimal) solutions of a large 
set of combinatorial constraints. As a typical example, consider
a linear system over GF$[2]$, $A\,\ux = \ub$ mod $2$,
with $A$ an $n\times n$ binary matrix and $\ub$ a binary vector of 
length $n$. Assume that $A$ and $\ub$ are drawn from random matrix/vector 
ensemble. Typical questions are:
What is the probability that such a linear system admits a solution?
Assuming a typical realization does not admit a solution, 
what is the maximum number of equations that can, typically, be satisfied?

While probabilistic combinatorics 
developed a number of ingenious techniques to deal with these questions, 
significant progress has been achieved recently by employing novel
insights from statistical physics (see \cite{MezardMontanari}). 
Specifically, one first defines a Gibbs measure associated
to each instance of the problem at hand, then analyzes
its properties using statistical physics techniques, 
such as the cavity method. While non-rigorous, this approach 
appears to be very systematic and to provide many sharp 
predictions.
 
It is clear at the outset that, for `natural' distributions
of the binary matrix $A$, the above problem does not have any $d$-dimensional 
structure. Similarly, in many interesting examples, one 
can associate to the Gibbs measure a graph that is sparse and random, 
but of no finite-dimensional structure. Non-rigorous statistical mechanics 
techniques appear to provide detailed predictions about general
Gibbs measures of this type. It would be highly desirable
--and in principle possible-- to develop a fully mathematical theory of 
such Gibbs measures. The present paper provides
a unified presentation of a few results in this direction.

In the rest of this section, 
we proceed with a more detailed overview of the topic, proposing 
certain fundamental questions the answer to which plays an important 
role within the non-rigorous statistical mechanics analysis.
We illustrate these questions on the relatively well-understood 
Curie-Weiss (toy) model and explore a few additional 
motivating examples.

Section \ref{ch:IsingChapter} focuses on a specific 
example, namely the ferromagnetic Ising model on sequences
of locally tree-like graphs. Thanks to its monotonicity properties,
detailed information can be gained on this model.

A recurring prediction of statistical mechanics studies is that
Bethe-Peierls approximation is asymptotically tight in 
the large graph limit, for sequences of locally tree-like 
graphs. Section \ref{ch:Bethe} provides a mathematical formalization
of Bethe-Peierls approximation. We also prove there that, under 
an appropriate correlation decay condition, Bethe-Peierls approximation is
indeed essentially correct on graphs with large girth.

In Section \ref{chap:Coloring} we consider a more challenging, and as
of now, poorly understood, example: proper colorings of a sparse random graph.
A fascinating `clustering' phase transition is predicted to occur 
as the average degree of the graph crosses a certain threshold.
Whereas the detailed description and verification 
of this phase transition remains an open problem,
its relation with the appropriate notion of correlation decay 
(`extremality'), is the subject of Section \ref{ch:Reco}.

%AD - Long version only:
Finally, it is common wisdom in statistical mechanics 
that phase transitions should be accompanied by a specific
`finite-size scaling' behavior. 
More precisely, a phase transition corresponds to 
a sharp change in some property of the model when a control parameter
crosses a threshold. In a finite system, the dependence on 
any control parameter is smooth, and the change
and takes place in a window whose width decreases with the system size. 
Finite-size scaling broadly refers to a description of the system behavior
within this window. 
Section \ref{ch:XORSAT} presents a model in which 
finite-size scaling can be determined in detail.
%------------------------

%
%**********************************************************
%
\subsection{The Curie-Weiss model and some general definitions}
\label{sec:Curie-Weiss}

The Curie-Weiss model is deceivingly simple, but is a good 
framework to start illustrating some important ideas.
For a detailed study of
this model we refer to \cite{NewmanEllis}.

\subsubsection{A story about opinion formation}
\label{sec:Opinion}

At time zero, each of $n$ individuals takes one of two opinions
$X_i(0) \in \{ +1,-1\}$ independently and uniformly at random for
$i\in [n] = \{1,\dots,n\}$. At each subsequent time $t$, one 
individual $i$, chosen uniformly at random,
computes the opinion imbalance
\begin{eqnarray}
M \equiv \sum_{j=1}^n X_j\, ,
\end{eqnarray}
and $M^{(i)} \equiv M-X_i$. Then, he/she changes his/her opinion
with probability
\begin{equation}
p_{\mbox{\tiny flip}}(\uX) = \left\{
\begin{array}{l c} 
\exp(-2\beta |M^{(i)}|/n) & \mbox{ if } \; M^{(i)} X_i > 0 \
%\mbox{ (i.e. $i$ agrees with the majority)}
\, , 
\\
%[0.5 cm] 
1 & \mbox{otherwise.}
\end{array}
\right.
\label{eq:PFlip}
\end{equation}
%that depends on the history so far only through the current
%value of $M^{(i)} x_i$.
%Let $M(t)$ be the opinion imbalance after $t$ steps.
Despite its simplicity, this model raises several interesting questions.
\begin{enumerate}
\item[(a).] How long does is take for the process $\uX(t)$
to become approximately stationary?
\item[(b).] How often do individuals change opinion in the stationary state?
\item[(c).] Is the typical opinion pattern strongly polarized (\emph{herding})?
\item[(d).] If this is the case, how often does the popular opinion change?
\end{enumerate}
We do not address question (a) here, but we will address some
version of questions (b)--(d). 
More precisely, this dynamics (first studied in statistical physics
under the name of \emph{Glauber} or \emph{Metropolis} 
dynamics) is an aperiodic 
irreducible Markov chain whose unique stationary measure is 
\begin{eqnarray}
\mu_{n,\beta}
(\ux) = \frac{1}{Z_n(\beta)}\, \exp\Big\{\frac{\beta}{n}\sum_{(i,j)}x_i
x_j\Big\}\, .\label{eq:CurieWeiss}
\end{eqnarray}
To verify this, simply check that the dynamics given 
by (\ref{eq:PFlip}) is 
\emph{reversible} with respect to the measure $\mu_{n,\beta}$
of (\ref{eq:CurieWeiss}). Namely,
that $\mu_{n,\beta}(\ux) \prob(\ux\to \ux') = \mu_{n,\beta}(\ux')
 \prob(\ux'\to \ux)$
for any two configurations $\ux$, $\ux'$ (where $\prob(\ux\to \ux')$ 
denotes the one-step transition probability from $\ux$ to $\ux'$).

We are mostly interested in the large-$n$ (population size),
behavior of $\mu_{n,\beta}(\cdot)$ and its dependence on 
$\beta$ (the interaction strength). 
In this context, we have the following `static' versions 
of the preceding questions:
\begin{enumerate}
\item[(b').] What is the distribution of $p_{\mbox{\tiny flip}}(\ux)$
when $\ux$ has distribution $\mu_{n,\beta}(\,\cdot\,)$?
\item[(c').] What is the distribution of the opinion imbalance $M$?
Is it concentrated near $0$ (evenly spread opinions), or far from $0$
(herding)?
\item[(d').] In the herding case: how unlikely are balanced 
($M\approx 0$) configurations?
\end{enumerate}
%
%\begin{exercise}
%%%\footnote{The more involved exercises are
%%%for readers interested in a deeper understanding
%%%of the material presented here.}
%%%Which measure replaces $\mu_{n,\beta}(\, \cdot\,)$
%of (\ref{eq:CurieWeiss}) in case 
%$p_{\mbox{\tiny flip}}(\ux) = q_{n,\beta} (M^{(i)},x_i)$
%for an arbitrary function $q_{n,\beta}(\,\cdot\,,\,\cdot\,)$ that 
%takes values in $(0,1)$? 
%\end{exercise}
%*******************************************************************
%

\subsubsection{Graphical models}
\label{sec:GeneralDef}

A graph $G=(V,E)$ consists of a set $V$ of
vertices and a set $E$ of edges (where 
an edge is an unordered pair of vertices).
We always assume $G$ to be finite with $|V|= n$ and
often make the identification $V= [n]$.
With $\cX$ a finite set, called the 
\emph{variable domain}, we associate to
each vertex $i\in V$ a variable $x_i \in \cX$,
denoting by $\ux \in \cX^V$ the complete assignment of
these variables and by $\ux_U = \{x_i:\; i\in U\}$
its restriction to $U\subseteq V$. 
\begin{definition}\label{def:spec}
A \emph{bounded specification}
$\upsi\equiv\{\psi_{ij}
:\; (i,j)\in E\}$ for a graph $G$ and
variable domain $\cX$ is a family of
functionals $\psi_{ij}:\cX\times \cX\to [0,\psi_{\max}]$
indexed by the edges of $G$ with
$\psi_{\max}$ a given finite, positive constant
(where for consistency
$\psi_{ij}(x,x')=\psi_{ji}(x',x)$ for
all $x,x' \in \cX$ and $(i,j)\in E$). The 
specification may include in addition 
functions $\psi_{i} : \cX \to [0,\psi_{\max}]$ 
indexed by vertices of $G$.

A bounded specification $\upsi$ for $G$ is \emph{permissive} 
if there exists a positive constant $\kappa$ and
a `permitted state' $x_i^{\rm p}\in \cX$ for each $i\in V$,
such that
$\min_{i,x'} \psi_{i}(x') \ge \kappa \psi_{\max}$ and
$$
\min_{(i,j) \in E,  x' \in \cX}
\psi_{ij}(x_i^{\rm p},x')
=
\min_{(i,j) \in E,  x' \in \cX}
\psi_{ij}(x',x_j^{\rm p})
\ge \kappa \psi_{\max} \equiv \psi_{\min} \,.
$$
\end{definition}

The \emph{graphical model} associated with 
a graph-specification pair $(G,\upsi)$ is
the \emph{canonical probability measure} 
\begin{eqnarray}
\mu_{G,\upsi} (\ux) = \frac{1}{Z(G,\upsi)}\,
\prod_{(i,j)\in E}\,\psi_{ij}(x_i,x_j)
\prod_{i \in V}\,\psi_{i}(x_i) 
\label{eq:Canonical}
\label{eq:GeneralGraphModel}
\end{eqnarray}
and the corresponding \emph{canonical stochastic process}
is the collection $\uX = \{X_i:\, i\in V\}$ of 
$\cX$-valued random variables having joint distribution  
$\mu_{G,\upsi} (\cdot)$. 

One such example is the distribution
(\ref{eq:CurieWeiss}), where $\cX=\{+1,-1\}$, 
$G$ is the complete graph over 
$n$ vertices and $\psi_{ij}(x_i,x_j)=\exp(\beta x_i x_j/n)$.
Here $\psi_i(x) \equiv 1$. It is sometimes convenient to introduce
a `magnetic field' (see for instance Eq.~(\ref{eq:CurieWeissField}) 
below). This corresponds to
taking  $\psi_i(x_i)=\exp(B x_i)$.
\vspace{0.2cm}

Rather than studying graphical models at this level of generality, 
we focus on a few concepts/tools that have been the subject of recent
research efforts.
\vspace{0.2cm}

{\bf Coexistence.} Roughly speaking, we say that 
a model $(G,\upsi)$ exhibits coexistence if the 
corresponding measure $\mu_{G,\upsi}(\cdot)$ 
decomposes into a convex combination of well-separated lumps. 
To formalize this notion, we consider sequences of 
measures $\mu_n$ on graphs $G_n = ([n],E_n)$, and say that 
coexistence occurs if, for each $n$, there exists a partition 
$\Omega_{1,n},\dots, \Omega_{r,n}$ 
of the configuration space $\cX^n$ 
with $r=r(n) \ge 2$, such that 
\begin{enumerate}
\item[(a).] The measure of elements of the partition is uniformly 
bounded away from one:
\begin{eqnarray}
\max_{1 \le s \le r} \, \mu_n(\Omega_{s,n}) \le 1-\delta\, .
\end{eqnarray}
\item[(b).] The elements of the partition are separated by `bottlenecks'. 
That is, for some $\epsilon>0$, 
\begin{eqnarray}\label{eq:bot-def}
\max_{1 \le s \le r} 
\frac{\mu_n(\partial_{\epsilon}\Omega_{s,n})}{\mu_n(\Omega_{s,n})}\to 0\, ,
\end{eqnarray}
as $n \to \infty$, where $\partial_{\epsilon}\Omega$
denotes the $\epsilon$-boundary of $\Omega\subseteq \cX^n$,
\begin{eqnarray}
\partial_{\epsilon}\Omega \equiv\{ \ux\in\cX^n:\, 1\le 
d(\ux,\Omega)\le n\epsilon\}\,,
\end{eqnarray}
with respect to the Hamming 
\footnote{The Hamming distance
$d(\ux,\ux')$ between configurations $\ux$ and $\ux'$ is the number of positions
in which the two configurations differ. Given $\Omega\subseteq \cX^n$,
$d(\ux,\Omega) \equiv \min \{d(\ux,\ux'):\ux'\in \Omega\}$.} 
distance.
The normalization by $\mu_n(\Omega_{s,n})$ removes 
`false bottlenecks' and is in particular needed 
since $r(n)$ often grows (exponentially) with $n$.

Depending on the circumstances, one may further
specify a required rate of decay in (\ref{eq:bot-def}).
\end{enumerate}
We often consider families of models indexed by one (or more)
continuous parameters, such as the inverse temperature $\beta$ in the 
Curie-Weiss model. A phase transition will generically be a sharp threshold
in some property of the measure $\mu(\,\cdot\,)$ as one of these parameters 
changes. In particular, a phase transition can separate values of the 
parameter for which coexistence occurs from those
values for which it does not.
\vspace{0.2cm}

{\bf Mean field models.} 
Intuitively, these are models 
that lack any (finite-dimensional) geometrical structure. 
For instance, models of the form (\ref{eq:Canonical}) with 
$\psi_{ij}$ independent of $(i,j)$ and $G$ 
the complete graph or a regular random graph
are mean field models, whereas models in which $G$ is a 
finite subset of a finite dimensional lattice are not.
To be a bit more precise, the 
Curie-Weiss model belongs to a particular class 
of mean field models in which the measure $\mu(\ux)$
is exchangeable (that is, invariant under 
coordinate permutations).  A wider class of mean field models
may be obtained by considering \emph{random} distributions\footnote{A 
random distribution over $\cX^n$ is just a random variable taking 
values on the $(|\cX|^n-1)$-dimensional probability simplex.} $\mu(\cdot)$
(for example, when either $G$ or $\upsi$ are chosen at random in
(\ref{eq:Canonical})). In this context, 
given a realization of $\mu$, consider $k$ i.i.d. configurations
$\uX^{(1)},\dots ,\uX^{(k)}$, each having distribution $\mu$. 
These `replicas' 
%of statistical physics 
have the unconditional, joint distribution  
\begin{eqnarray}
\mu^{(k)}(\ux^{(1)},\dots,\ux^{(k)}) = \E\left\{
\mu(\ux^{(1)})\cdots \mu(\ux^{(k)})\right\}\, .
\end{eqnarray}
The random distribution $\mu$ is 
a candidate to be a mean field model when  
for each fixed $k$ the measure $\mu^{(k)}$,
viewed as a distribution over $(\cX^k)^n$, is exchangeable
(with respect to permutations of the coordinate indices in $[n]$). 
Unfortunately, while this property suffices in many `natural' 
special cases, there are models that intuitively are not 
mean-field and yet have it. For instance, given a non-random 
measure $\nu$ and a uniformly random permutation $\pi$, 
the random distribution 
$\mu (x_1,\dots,x_n) \equiv \nu (x_{\pi(1)},\dots,x_{\pi(n)})$
meets the preceding requirement yet should not be considered a 
mean field model. While a 
satisfactory mathematical definition of
the notion of mean field models is lacking, 
by focusing on selective examples we examine 
in the sequel the rich array of interesting phenomena 
that such models exhibit.
\vspace{0.2cm}

{\bf Mean field equations.} Distinct variables may be correlated in  
the model (\ref{eq:Canonical}) in very subtle ways. 
Nevertheless, mean field models are often tractable because an
effective `reduction' to local marginals\footnote{In particular,
single variable marginals, or joint distributions of two variables 
connected by an edge.} takes place asymptotically for large sizes
(i.e. as $n\to\infty$).

Thanks to this reduction it is often possible to write a closed system
of equations for the local marginals that 
hold in the large size limit and determine 
the local marginals, up to possibly having finitely many solutions.
Finding the `correct' mathematical definition of this notion is 
an open problem, so we shall instead provide 
specific examples of such equations in a few special 
cases of interest (starting with the Curie-Weiss model).
 
\subsubsection{Coexistence in the Curie-Weiss model}

The model (\ref{eq:CurieWeiss}) appeared for the first time in 
the physics literature as a model for ferromagnets\footnote{A ferromagnet is
a material that acquires a macroscopic spontaneous magnetization at low temperature.}. In this context, the variables $x_i$
are called \emph{spins} and their value represents the direction in which
a localized magnetic moment (think of a tiny compass needle) is pointing.
In certain materials the different magnetic moments favor 
pointing in the same direction, and
physicists want to know whether such interaction may lead to a 
macroscopic magnetization (imbalance), or not.

In studying this and related problems it 
often helps to slightly generalize the model 
by introducing a linear term in the exponent 
(also called a `magnetic field'). More precisely, one considers the 
probability measures
\begin{eqnarray}
\label{eq:CurieWeissField}
\mu_{n,\beta,B}
(\ux) = \frac{1}{Z_n(\beta,B)}\, \exp\Big\{\frac{\beta}{n}\sum_{(i,j)}x_i
x_j+B\sum_{i=1}^n x_i\Big\}\, .
\end{eqnarray}
In this context $1/\beta$ is referred to as the `temperature'
and we shall always assume that $\beta\ge 0$ 
%(non-negative interactions) 
and, without loss of generality, also that $B\ge 0$.

The following estimates on 
the distribution of the magnetization per site 
are the key to our understanding of the large size 
behavior of the Curie-Weiss model (\ref{eq:CurieWeissField}).
\begin{lemma}\label{lemma:CWProbM}
Let $H(x) = -x\log x-(1-x)\log(1-x)$ denote the binary entropy function
and for $\beta \ge 0$, $B \in \reals$ and $m\in [-1,+1]$ set 
\begin{eqnarray}\label{eq:psi-def}
\vphi(m) \equiv 
\vphi_{\beta,B}(m) = B m+\frac{1}{2}\beta m^2 + H\left(\frac{1+m}{2}\right)\, .
\end{eqnarray}
Then, for $\bX \equiv n^{-1} \sum_{i=1}^n X_i$, 
a random configuration $(X_1,\ldots,X_n)$
from the Curie-Weiss model 
and each $m\in S_n \equiv \{-1, -1+2/n,\dots, 1-2/n,1\}$, 
\begin{eqnarray}
\frac{e^{-\beta/2}}{n+1}\, 
\frac{1}{Z_{n}(\beta,B)}
e^{n\vphi(m)}\le
\prob\{\bX = m\}\le \frac{1}{Z_n(\beta,B)}\, e^{n\vphi(m)}\, .
\label{eq:ProbBound}
\end{eqnarray}
%\begin{equation}\label{eq:av-mag}
%\bX \equiv \frac{1}{n} \sum_{i=1}^n X_i \;.
%\end{equation}
\end{lemma}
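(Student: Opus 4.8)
The plan is to compute $\prob\{\bX = m\}$ directly by summing the canonical weight in \eqref{eq:CurieWeissField} over all configurations with a prescribed value of the empirical mean, and then to observe that on this event the exponent is a deterministic function of $m$. First I would note that if $\bX = m$ with $m \in S_n$, then writing $k = n(1+m)/2$ for the number of $+1$ spins, we have $\sum_{(i,j)} x_i x_j = \tfrac12\big((\sum_i x_i)^2 - n\big) = \tfrac12(n^2 m^2 - n)$ and $\sum_i x_i = nm$, so the weight $\exp\{\tfrac{\beta}{n}\sum_{(i,j)}x_ix_j + B\sum_i x_i\}$ equals $\exp\{\tfrac12\beta n m^2 - \tfrac12\beta + Bnm\}$, the same for every such configuration. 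Hence
\begin{eqnarray}
\prob\{\bX = m\} = \frac{1}{Z_n(\beta,B)}\binom{n}{k} e^{-\beta/2}\, e^{n(Bm + \frac12\beta m^2)}\,.
\end{eqnarray}

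The remaining ingredient is the standard entropy estimate for the binomial coefficient: for $k = n(1+m)/2 \in \{0,1,\dots,n\}$ one has
\begin{eqnarray}
\frac{1}{n+1}\, e^{n H((1+m)/2)} \le \binom{n}{k} \le e^{n H((1+m)/2)}\,.
\end{eqnarray}
The upper bound follows from $\binom{n}{k} p^k (1-p)^{n-k} \le 1$ with $p = k/n$, and the lower bound from the fact that the term $\binom{n}{k}p^k(1-p)^{n-k}$ is the largest among the $n+1$ terms of the binomial expansion of $(p + (1-p))^n = 1$ when $p = k/n$, so it is at least $1/(n+1)$. Substituting these two bounds into the displayed expression for $\prob\{\bX=m\}$ and recognizing $Bm + \tfrac12\beta m^2 + H((1+m)/2) = \vphi(m)$ immediately yields
\begin{eqnarray}
\frac{e^{-\beta/2}}{n+1}\,\frac{1}{Z_n(\beta,B)}\, e^{n\vphi(m)} \le \prob\{\bX = m\} \le \frac{e^{-\beta/2}}{Z_n(\beta,B)}\, e^{n\vphi(m)} \le \frac{1}{Z_n(\beta,B)}\, e^{n\vphi(m)}\,,
\end{eqnarray}
which is exactly \eqref{eq:ProbBound} (using $e^{-\beta/2}\le 1$ for the final upper bound).

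There is no real obstacle here; the only point requiring a modicum of care is the lower bound on $\binom{n}{k}$, where one must pick the right reference probability $p = k/n$ so that the central term of the binomial distribution is the one being estimated — choosing $p = 1/2$ would not give a clean bound when $m$ is close to $\pm 1$. Everything else is algebraic bookkeeping: the identity $\sum_{(i,j)}x_ix_j = \tfrac12((\sum_i x_i)^2 - n)$, the identification of $k$ with $n(1+m)/2$, and the collection of terms into $\vphi(m)$. I would present the argument in the order above: reduce $\prob\{\bX=m\}$ to a single binomial coefficient times a deterministic exponential factor, invoke the two-sided entropy bound, and conclude.
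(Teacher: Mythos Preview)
Your proof is correct and follows essentially the same approach as the paper's: both compute $\prob\{\bX=m\}$ exactly as $\frac{1}{Z_n(\beta,B)}\binom{n}{(n+M)/2}\exp\{BM+\frac{\beta M^2}{2n}-\frac{\beta}{2}\}$ and then bound the binomial coefficient by $e^{nH((1+m)/2)}$ up to a factor of $1/(n+1)$. The paper simply cites Cover--Thomas for the two-sided entropy bound on $\binom{n}{k}$, whereas you spell out the standard method-of-types argument behind it.
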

\begin{proof} Noting that for $M= n m$, 
\begin{eqnarray*}
\prob\{\bX = m\} = 
%\prob\Big\{\sum_{i=1}^n X_i = M\Big\} = 
 \frac{1}{Z_n(\beta,B)}\, \binom{n}{(n+M)/2}\, \exp\Big\{
B M+\frac{\beta M^2}{2n}-\frac{1}{2}\beta\Big\}\, ,
\end{eqnarray*}
our thesis follows by Stirling's approximation of the binomial 
coefficient (for example, see \cite[Theorem 12.1.3]{CoverThomas}).
\end{proof}

A  major role in determining the asymptotic properties of the
measures $\mu_{n,\beta,B}$ 
is played by the \emph{free entropy density} (the term `density' 
refers here to the fact that 
we are dividing by the number of variables),
\begin{eqnarray}\label{eq:free-ent}
\phi_n(\beta,B) = \frac{1}{n}\log Z_n(\beta,B)\, .
\end{eqnarray}
\begin{lemma}\label{lemma:CWFreeEnergy}
For all $n$ large enough we have the following bounds on
the free entropy density $\phi_n(\beta,B)$  
of the (generalized) Curie-Weiss model 
\begin{eqnarray*}
 \phi_*(\beta,B)-\frac{\beta}{2n}-\frac{1}{n}\log\{n(n+1)\}
\le\phi_n(\beta,B) \le \phi_*(\beta,B)+\frac{1}{n}\log(n+1) \, ,
\end{eqnarray*}
where  
\begin{eqnarray}
\phi_*(\beta,B) \equiv \sup\left\{\vphi_{\beta,B}(m):\, m\in[-1,1]\right\}\, .
\label{eq:CWFreeEnergy}
\end{eqnarray}
\end{lemma}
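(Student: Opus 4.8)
The plan is to leverage the sharp pointwise control of Lemma~\ref{lemma:CWProbM} together with the elementary fact that a probability distribution on the $(n+1)$-point set $S_n$ has total mass one. I would set $q_n(m) := Z_n(\beta,B)\,\prob\{\bX=m\}$ for $m\in S_n$, so that $\sum_{m\in S_n}q_n(m)=Z_n(\beta,B)$, and rewrite \eqref{eq:ProbBound} as $\tfrac{e^{-\beta/2}}{n+1}\,e^{n\vphi(m)}\le q_n(m)\le e^{n\vphi(m)}$. For the upper bound, use $\vphi(m)\le\phi_*(\beta,B)$ for every $m\in S_n\subset[-1,1]$ and sum the right-hand inequality over the $|S_n|=n+1$ grid points, obtaining $Z_n(\beta,B)\le (n+1)\,e^{n\phi_*(\beta,B)}$, i.e.\ $\phi_n(\beta,B)\le\phi_*(\beta,B)+\tfrac1n\log(n+1)$, which is the claimed upper estimate.

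For the lower bound I would retain a single summand: if $m'\in S_n$ is the grid point nearest a maximizer $m^*$ of $\vphi_{\beta,B}$ on $[-1,1]$, then $Z_n(\beta,B)\ge q_n(m')\ge\tfrac{e^{-\beta/2}}{n+1}e^{n\vphi(m')}$. Granting the approximation claim $\vphi(m')\ge\phi_*(\beta,B)-\tfrac1n\log n$ for all large $n$, this yields $Z_n(\beta,B)\ge\tfrac{e^{-\beta/2}}{n(n+1)}e^{n\phi_*(\beta,B)}$, and taking $\tfrac1n\log(\cdot)$ gives precisely $\phi_n(\beta,B)\ge\phi_*(\beta,B)-\tfrac{\beta}{2n}-\tfrac1n\log\{n(n+1)\}$.

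The one step that needs an argument — and the place I expect the only real difficulty — is the grid-approximation claim, since the entropy term $H((1+m)/2)$ in $\vphi_{\beta,B}$ is not Lipschitz near $m=\pm1$, so a crude ``$S_n$ is a $\tfrac1n$-net'' estimate does not directly control $\phi_*(\beta,B)-\max_{m\in S_n}\vphi(m)$. The resolution I have in mind is to first observe that the maximizer cannot sit at the boundary: since $\vphi_{\beta,B}'(m)=B+\beta m-\atanh(m)$ diverges to $-\infty$ as $m\uparrow1$ and to $+\infty$ as $m\downarrow-1$, any maximizer $m^*$ is interior and solves $\atanh(m^*)=B+\beta m^*$, whence $|m^*|=|\tanh(B+\beta m^*)|\le\tanh(|B|+\beta)<1$. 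On the compact interval $I_{\beta,B}:=[-\tanh(|B|+\beta),\tanh(|B|+\beta)]$, which is bounded away from $\pm1$, the derivative of $\vphi_{\beta,B}$ is bounded by a constant $L=L(\beta,B)$, so $\vphi$ is $L$-Lipschitz on $I_{\beta,B}$; for $n$ large the nearest grid point $m'$ to $m^*$ lies in $I_{\beta,B}$ and satisfies $|m'-m^*|\le\tfrac1n$, giving $\phi_*(\beta,B)-\vphi(m')=\vphi(m^*)-\vphi(m')\le L/n\le\tfrac1n\log n$ once $n\ge L$. Both inequalities then hold for all $n$ large enough, as stated — the harmless use of $e^{-\beta/2}\le1$ to tidy the upper bound and the slack between $L/n$ and $\tfrac1n\log n$ being exactly why the conclusion is phrased for large $n$.
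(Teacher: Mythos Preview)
Your proposal is correct and follows essentially the same route as the paper: sum the pointwise bounds of Lemma~\ref{lemma:CWProbM} over $S_n$ for the upper bound, retain a single near-optimal grid point for the lower bound, and close the grid-to-continuum gap via the estimate $\max_{S_n}\vphi\ge\phi_*-n^{-1}\log n$. Your justification of that last step (interior maximizer bounded away from $\pm1$, hence Lipschitz control) is exactly the ``little calculus'' the paper leaves to the reader; the closing remark that $e^{-\beta/2}\le1$ is used to tidy the upper bound is a slip --- that factor never enters the upper-bound argument --- but it affects nothing.
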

\begin{proof}
The upper bound follows upon summing 
over $m \in S_n$ the upper bound in (\ref{eq:ProbBound}). 
Further, 
from the lower bound in (\ref{eq:ProbBound}) we get that 
\begin{eqnarray*}
\phi_n(\beta,B)\ge
\max\Big\{ \vphi_{\beta,B}(m):\, m\in S_n\Big\}-\frac{\beta}{2n}-\frac{1}{n}
\log(n+1)\, . 
\end{eqnarray*} 
A little calculus shows that maximum of $\vphi_{\beta,B}(\cdot)$ 
over the finite set $S_n$ is not smaller that its maximum over 
the interval $[-1,+1]$ minus $n^{-1} (\log n)$, for all $n$ large enough.
\end{proof}

Consider the optimization problem in Eq.~(\ref{eq:CWFreeEnergy}).
Since $\vphi_{\beta,B}(\cdot)$ is continuous on $[-1,1]$
and differentiable in its interior, with $\vphi'_{\beta,B}(m)\to\pm \infty$
as $m\to \mp 1$, this maximum is achieved at one of the
points $m \in (-1,1)$ where $\vphi_{\beta,B}'(m) =0$. 
A direct calculation shows that the latter 
condition is equivalent to 
\begin{eqnarray}
m = \tanh(\beta m+B)\, .
\label{eq:CWMeanField}
\end{eqnarray}
Analyzing the possible solutions 
of this equation, one finds out that:
\begin{enumerate}
\item[(a).] For $\beta\le 1$, the equation (\ref{eq:CWMeanField})
admits a unique solution $m_*(\beta,B)$
increasing in $B$ with $m_*(\beta,B)\downarrow 0$ as $B\downarrow 0$.
Obviously, $m_*(\beta,B)$ maximizes $\vphi_{\beta,B}(m)$.
\item[(b).]
For $\beta > 1$ there exists $B_*(\beta)>0$ continuously 
increasing in $\beta$ with $\lim_{\beta\downarrow 1} B_*(\beta) = 0$ such that:
$(i)$ for $0\le B<B_*(\beta)$, Eq. (\ref{eq:CWMeanField})
admits three distinct solutions $m_-(\beta,B), m_0(\beta,B),
m_+(\beta,B) \equiv m_*(\beta,B)$ with 
$m_-< m_0\le 0\le  m_+ \equiv m_*$; $(ii)$ for $B=B_*(\beta)$ the solutions 
$m_-(\beta,B)= m_0(\beta,B)$ coincide; $(iii)$
and for $B>B_*(\beta)$ only the positive solution  
$m_*(\beta,B)$ survives. 

Further, for $B \ge 0$  the global maximum 
of $\vphi_{\beta,B}(m)$ over $m \in [-1,1]$ 
is
attained at $m=m_*(\beta,B)$, while $m_{0}(\beta,B)$ and
$m_{-}(\beta,B)$ are (respectively) a local minimum and a local maximum
(and a saddle 
point
when they coincide at $B= B_*(\beta)$).
Since $\vphi_{\beta,0}(\cdot)$ is an even function, in particular 
$m_0(\beta,0) = 0$ and $m_\pm (\beta,0) = \pm m_*(\beta,0)$.
\end{enumerate}

Our next theorem answers question (c') of Section \ref{sec:Opinion}
for the Curie-Weiss model. 
\begin{thm}\label{thm:LargeDevMagn}
Consider $\bX$ of Lemma \ref{lemma:CWProbM}
and the relevant solution 
$m_*(\beta,B)$ of equation (\ref{eq:CWMeanField}).
If either $\beta\le 1$ or $B>0$, then 
for any $\ve>0$ there exists $C(\ve)>0$ such that, for all $n$ large enough
\begin{eqnarray}
\prob\left\{\left|\bX-m_*(\beta,B)\right|\le \ve\right\}
\ge 1-e^{-nC(\ve)}\, .\label{eq:LDevMagn}
\end{eqnarray}
In contrast, if $B=0$ and $\beta>1$, then 
for any $\ve>0$ there exists $C(\ve)>0$ 
such that, for all $n$ large enough
\begin{eqnarray}
\prob \left\{\left|\bX-m_*(\beta,0)\right|\le \ve\right\}
=\prob\left\{\left|\bX+m_*(\beta,0)\right|\le \ve\right\}
\ge\frac{1}{2}-e^{-nC(\ve)}\, .\label{eq:LDevMagn2}
\end{eqnarray}
\end{thm}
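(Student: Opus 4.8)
\emph{Proof proposal.} The plan is to convert the pointwise estimates of Lemma~\ref{lemma:CWProbM}, combined with the free-entropy asymptotics of Lemma~\ref{lemma:CWFreeEnergy}, into a uniform exponential upper bound on $\prob\{\bX=m\}$, and then to read off the stated concentration from the structure of the stationary points of $\vphi_{\beta,B}$ described before the theorem. First I would combine the upper bound in \eqref{eq:ProbBound}, written as $\prob\{\bX=m\}\le Z_n(\beta,B)^{-1}e^{n\vphi_{\beta,B}(m)}=e^{\,n(\vphi_{\beta,B}(m)-\phi_n(\beta,B))}$, with the lower bound $\phi_n(\beta,B)\ge\phi_*(\beta,B)-\beta/(2n)-n^{-1}\log\{n(n+1)\}$ from Lemma~\ref{lemma:CWFreeEnergy}. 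This gives, for all $n$ large enough and every $m\in S_n$,
\begin{equation*}
\prob\{\bX=m\}\ \le\ n(n+1)\,e^{\beta/2}\,\exp\big\{\,n\big(\vphi_{\beta,B}(m)-\phi_*(\beta,B)\big)\big\}\,.
\end{equation*}
Since $|S_n|=n+1$, summing over any $A\subseteq S_n$ yields $\prob\{\bX\in A\}\le n(n+1)^2 e^{\beta/2}\exp\{n\sup_{m\in A}(\vphi_{\beta,B}(m)-\phi_*(\beta,B))\}$, and the polynomial prefactor is absorbed into the exponent for $n$ large whenever this supremum is strictly negative.

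For the first case ($\beta\le1$, or $B>0$), items (a)--(b) preceding the theorem imply that $m_*(\beta,B)$ is the \emph{unique} global maximizer of the continuous function $\vphi_{\beta,B}$ on the compact interval $[-1,1]$ (for $B>0$ the other stationary points $m_0,m_-$ are a strict local minimum and a strict local maximum, hence lie strictly below $\phi_*(\beta,B)$; for $\beta\le1$ there is no other stationary point). Hence, for each $\ve>0$, compactness gives
\begin{equation*}
g(\ve)\ :=\ \phi_*(\beta,B)-\sup\big\{\vphi_{\beta,B}(m):\ m\in[-1,1],\ |m-m_*(\beta,B)|\ge\ve\big\}\ >\ 0\,.
\end{equation*}
Applying the bound above to $A=\{m\in S_n:\ |m-m_*(\beta,B)|>\ve\}$ gives $\prob\{|\bX-m_*(\beta,B)|>\ve\}\le n(n+1)^2 e^{\beta/2}e^{-ng(\ve)}\le e^{-nC(\ve)}$ for, say, $C(\ve)=g(\ve)/2$ and all $n$ large enough, which is exactly \eqref{eq:LDevMagn}.

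For the second case ($B=0$, $\beta>1$), the involution $\ux\mapsto-\ux$ leaves $\mu_{n,\beta,0}$ invariant, so $\bX\ed-\bX$ and the two probabilities in \eqref{eq:LDevMagn2} are equal; it remains to bound their common value from below. Since $\prob\{|\bX-m_*(\beta,0)|\le\ve\}$ is non-decreasing in $\ve$, it suffices to treat $0<\ve<m_*(\beta,0)$. Here $\vphi_{\beta,0}$ is even with exactly two global maximizers $\pm m_*(\beta,0)$ (by (a)--(b), $m_0(\beta,0)=0$ is a local minimum and $m_\pm(\beta,0)=\pm m_*(\beta,0)$), so as before $\tilde g(\ve):=\phi_*(\beta,0)-\sup\{\vphi_{\beta,0}(m):\ |m-m_*(\beta,0)|\ge\ve,\ |m+m_*(\beta,0)|\ge\ve\}>0$, and applying the bound to $A=\{m\in S_n:\ |m-m_*(\beta,0)|>\ve,\ |m+m_*(\beta,0)|>\ve\}$ gives $\prob\{|\bX-m_*(\beta,0)|>\ve,\ |\bX+m_*(\beta,0)|>\ve\}\le e^{-nC(\ve)}$ with $C(\ve)=\tilde g(\ve)/2$ for $n$ large. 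Since $\ve<m_*(\beta,0)$, the intervals $[m_*(\beta,0)-\ve,m_*(\beta,0)+\ve]$ and $[-m_*(\beta,0)-\ve,-m_*(\beta,0)+\ve]$ are disjoint, so the events $\{|\bX-m_*(\beta,0)|\le\ve\}$ and $\{|\bX+m_*(\beta,0)|\le\ve\}$ are disjoint with union equal to the complement of the event just bounded; their probabilities therefore sum to at least $1-e^{-nC(\ve)}$, and being equal each is at least $\tfrac12-\tfrac12 e^{-nC(\ve)}\ge\tfrac12-e^{-nC(\ve)}$, which is \eqref{eq:LDevMagn2}.

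Most of this is routine once the two lemmas are available; the genuine input is the variational analysis of $\vphi_{\beta,B}$ summarized in (a)--(b), and the only point needing a little care is the reduction to small $\ve$ (and the consequent disjointness of the two neighborhoods) in the symmetric phase $B=0$, $\beta>1$.
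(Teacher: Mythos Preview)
Your proof is correct and follows essentially the same approach as the paper's: combining the pointwise upper bound of Lemma~\ref{lemma:CWProbM} with the lower bound on $\phi_n$ from Lemma~\ref{lemma:CWFreeEnergy} to get an exponential tail bound away from the maximizer(s) of $\vphi_{\beta,B}$, and invoking the symmetry $\mu_{n,\beta,0}(\ux)=\mu_{n,\beta,0}(-\ux)$ in the second case. Your treatment of the $B=0$, $\beta>1$ case is simply more explicit than the paper's (which says only ``proved analogously''), spelling out the reduction to $\ve<m_*(\beta,0)$ and the disjointness of the two neighborhoods.
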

\begin{proof}
Suppose first that either $\beta\le 1$ or $B>0$, in which case
$\vphi_{\beta,B}(m)$ has the unique non-degenerate global 
maximizer $m_*=m_*(\beta,B)$. Fixing $\ve >0$ and setting 
$I_\ve=[-1,m_*-\ve]\cup[m_*+\ve,1]$, by Lemma \ref{lemma:CWProbM}
\begin{eqnarray*}
\prob\{\bX \in I_\ve\} \le\frac{1}{Z_{n}(\beta,B)}\, (n+1)\,
\exp\Big\{n\max[\vphi_{\beta,B}(m):\, m \in I_\ve ]\Big\}\, .
\end{eqnarray*}
Using Lemma \ref{lemma:CWFreeEnergy} we then find that 
\begin{eqnarray*}
\prob\{\bX \in I_\ve\}\le (n+1)^3 e^{\beta/2}\,
\exp\Big\{n\max[\vphi_{\beta,B}(m)-\phi_*(\beta,B):\, m \in I_\ve]\Big\}\,,
\end{eqnarray*}
whence the bound of (\ref{eq:LDevMagn}) follows.

The bound of (\ref{eq:LDevMagn2}) is proved analogously, using the 
fact that $\mu_{n,\beta,0}(\ux)=\mu_{n,\beta,0}(-\ux)$.
\end{proof}

We just encountered our first example of coexistence (and of phase transition).
\begin{thm}\label{thm:cw-coex}
The Curie-Weiss model shows coexistence if and only if
$B=0$ and $\beta > 1$.
\end{thm}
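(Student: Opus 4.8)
I would take $r=2$ with the partition $\Omega_{+,n}=\{\ux:\bX(\ux)\ge 0\}$, $\Omega_{-,n}=\{\ux:\bX(\ux)<0\}$. Since flipping at most $n\ve$ coordinates changes $\bX$ by at most $2\ve$, both $\partial_\ve\Omega_{+,n}$ and $\partial_\ve\Omega_{-,n}$ lie in $\{|\bX|\le 2\ve\}$. Now $\vphi_{\beta,0}$ is even with $\vphi_{\beta,0}''(0)=\beta-1>0$, so $0$ is a strict local minimum, while the global value $\phi_*(\beta,0)$ is attained at $\pm m_*(\beta,0)\ne 0$; hence there are $\ve_0\in(0,m_*(\beta,0))$ and $\eta>0$ with $\sup_{|m|\le 2\ve_0}\vphi_{\beta,0}(m)\le\phi_*(\beta,0)-\eta$. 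Fixing $\ve=\ve_0$ and summing the upper bound of Lemma~\ref{lemma:CWProbM} over $m\in S_n$ together with the lower bound on $Z_n$ from Lemma~\ref{lemma:CWFreeEnergy} gives $\mu_n(\{|\bX|\le 2\ve_0\})\le (n+1)^3 e^{\beta/2}e^{-n\eta}$. On the other hand spin-flip symmetry and Theorem~\ref{thm:LargeDevMagn} at $\pm m_*(\beta,0)$ force $\mu_n(\Omega_{\pm,n})\in[\tfrac12-e^{-nc},\tfrac12+e^{-nc}]$, so (a) holds with $\delta=\tfrac14$ and the ratios in~(\ref{eq:bot-def}) are $O(n^3e^{-n\eta})\to 0$, giving (b).

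\textbf{The case $B>0$ or $\beta\le 1$ (no coexistence).} Here $\vphi_{\beta,B}$ has a unique maximizer $m_*$ and Theorem~\ref{thm:LargeDevMagn} gives $\mu_n(\{|\bX-m_*|>\ve'\})\le e^{-nC(\ve')}$ for all $\ve'>0$. I would argue by contradiction: fix $\ve>0$, assume partitions $\{\Omega_{s,n}\}_{s\le r}$ satisfy (a) with parameter $\delta$ and (b) with $\eta_n:=\max_s\mu_n(\partial_\ve\Omega_{s,n})/\mu_n(\Omega_{s,n})\to 0$, and write $s(\ux)$ for the part of $\ux$. The set $G^c:=\{\ux:B(\ux,n\ve)\not\subseteq\Omega_{s(\ux),n}\}$ is contained in $\bigcup_s\partial_\ve\Omega_{s,n}$, so $\mu_n(G^c)\le\sum_s\mu_n(\partial_\ve\Omega_{s,n})\le\eta_n$; hence the "$\ve$-interiors" $C_{s,n}:=\{\ux:B(\ux,n\ve)\subseteq\Omega_{s,n}\}$ are pairwise at Hamming distance $>n\ve$, have $\mu_n(C_{s,n})\le\mu_n(\Omega_{s,n})\le 1-\delta$, and satisfy $\sum_s\mu_n(C_{s,n})=1-\mu_n(G^c)\ge 1-\eta_n$. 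I claim this is impossible. The mechanism is a concentration-of-measure estimate for $\mu_n$ in the subcritical regime,
\[
\mu_n(U)\ge\theta\ \Longrightarrow\ \mu_n\bigl(\{\ux:d(\ux,U)\le n\ve/2\}\bigr)\ge 1-\delta/4\qquad(\star)
\]
for some fixed $\theta=\theta(\ve,\delta)>0$ and all large $n$. Granting $(\star)$: order the parts so $\mu_n(C_{1,n})$ is largest, let $k^\ast$ be minimal with $\mu_n(D)\ge\theta$ for $D:=C_{1,n}\cup\cdots\cup C_{k^\ast,n}$, so $\mu_n(D)<\theta+\mu_n(C_{1,n})$; since the $C_{s,n}$ are $>n\ve$-separated the $(n\ve/2)$-neighbourhood of $D$ misses $\bigcup_{s>k^\ast}C_{s,n}$, so $(\star)$ gives $\sum_{s>k^\ast}\mu_n(C_{s,n})\le\delta/4$. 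Comparing with $\sum_s\mu_n(C_{s,n})\ge 1-\eta_n$ yields $\mu_n(\Omega_{1,n})\ge\mu_n(C_{1,n})>1-\eta_n-\theta-\delta/4>1-\delta$ for $n$ large, contradicting (a).

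\textbf{Where the work is.} The main obstacle is $(\star)$; in the complementary regime it is precisely the breakdown of such an estimate that permits coexistence. I would derive it from the structure of $\mu_{n,\beta,B}$: conditionally on $\bX$ the measure is uniform on a Hamming slice, which obeys a sub-Gaussian concentration inequality, while the marginal of $\bX$ is tightly concentrated near $m_*$ --- conveniently via the Hubbard--Stratonovich identity $e^{\beta M^2/(2n)}\propto\int e^{zM-nz^2/(2\beta)}\,dz$, which presents $\mu_n$ as a mixture of product measures over a mixing variable with fluctuations $O(n^{-1/2})$ (and $O(n^{-1/4})$ at $\beta=1$). Combining within-slice concentration with these fluctuations gives a bounded-difference inequality for $\mu_n$, and enlarging by $n\ve/2\gg n^{3/4}$ then yields $(\star)$ with room to spare; the care is in tracking constants so the Gaussian exponent stays comfortably above $\log(1/\theta)$ uniformly in the (fixed but arbitrarily small) $\ve$, and in handling $\beta=1$, where the weaker concentration scale must be reconciled with the $n\ve/2$ enlargement.
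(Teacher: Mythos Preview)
Your ``if'' direction is essentially the paper's own argument: the same partition $\Omega_\pm=\{\pm\bX\ge 0\}$, the same large-deviations input (Theorem~\ref{thm:LargeDevMagn} together with Lemmas~\ref{lemma:CWProbM}--\ref{lemma:CWFreeEnergy}), and the same conclusion that $\mu_n(\Omega_\pm)\to\tfrac12$ while $\mu_n(\partial_\ve\Omega_\pm)$ decays exponentially. The paper in fact stops there, explicitly restricting itself to the ``if'' part.

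Your ``only if'' argument therefore goes beyond the paper. The reduction to $(\star)$ is clean and handles unbounded $r(n)$ correctly: the bound $\mu_n(G^c)\le\sum_s\mu_n(\partial_\ve\Omega_{s,n})\le\eta_n\sum_s\mu_n(\Omega_{s,n})=\eta_n$ is uniform in $r$, and the $k^\ast$ argument only uses that the $C_{s,n}$ are $>n\ve$-separated. One small point to make explicit is that $(\star)$ must hold for \emph{every} $\theta>0$ (equivalently, with right-hand side $1-Ce^{-c(\ve)n}/\mu_n(U)$), since you need to pick $\theta<3\delta/4$ while $\delta$ is handed to you by the hypothetical partition. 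Your Hubbard--Stratonovich route gives exactly this: the mixing density $\propto\exp\{n[\log(2\cosh(B+z))-z^2/(2\beta)]\}$ has a unique non-degenerate maximum when $B>0$ or $\beta<1$, so the mixture is within $e^{-cn}$ in total variation of a single product measure, for which Talagrand/McDiarmid gives $\nu(U)\cdot\nu(\{d(\cdot,U)>t\})\le e^{-ct^2/n}$; at $\beta=1$, $B=0$ the maximum is quartic and the mixing variable has $O(n^{-1/4})$ fluctuations, but since you enlarge by $n\ve/2\gg n^{3/4}$ this still suffices. The alternative route via conditioning on $\bX$ (uniform measure on a slice, which has Gaussian concentration with variance $O(n)$) combined with the $O(n^{-1/2})$ or $O(n^{-1/4})$ concentration of $\bX$ itself works equally well. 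So the outline is sound; you have correctly located where the genuine analysis sits.
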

\begin{proof}
We will limit ourselves to the `if' part of this statement: 
for $B=0$, $\beta >1$, the Curie-Weiss model shows coexistence. 
To this end, we simply check that the partition of 
the configuration space $\{+1,-1\}^n$ to  
$\Omega_{+} \equiv\{\ux:\, \sum_i x_i \ge 0\}$ and 
$\Omega_{-} \equiv\{\ux:\, \sum_i x_i < 0 \}$ 
satisfies the conditions in Section \ref{sec:GeneralDef}.
Indeed, it follows immediately from (\ref{eq:LDevMagn2})
that choosing a positive $\epsilon < m_*(\beta,0)/2$, we have
\begin{eqnarray*}
\mu_{n,\beta,B}(\Omega_{\pm})\ge \frac{1}{2}-e^{-Cn}, \;\;\;\;\; 
\mu_{n,\beta,B}(\partial_{\epsilon}\Omega_{\pm})\le e^{-Cn}\, ,
\end{eqnarray*}
for some $C>0$ and all $n$ large enough, which is the thesis.
\end{proof}
 
\subsubsection{The Curie-Weiss model: Mean field equations}

We have just encountered our first example of coexistence and 
our first example of phase transition. We further claim that 
the identity (\ref{eq:CWMeanField}) can be `interpreted' as
our  first example of a \emph{mean field equation}  
(in line with the discussion of Section \ref{sec:GeneralDef}). 
Indeed, assuming throughout this section   
not to be on the coexistence line $B=0$, $\beta>1$,
it follows from Theorem \ref{thm:LargeDevMagn} that
$\E\,X_i = \E\,\bX\approx m_*(\beta,B)$.\footnote{We 
use $\approx$ to indicate
that we do not provide the approximation error, nor plan to 
rigorously prove that it is small.} 
Therefore, the identity (\ref{eq:CWMeanField}) can be rephrased as 
\begin{eqnarray}
\E\, X_i\approx \tanh\Big\{B+\frac{\beta}{n}\sum_{j\in V}
\E\, X_{j}\Big\}\,,
\label{eq:CWMeanField2}
\end{eqnarray}
which, in agreement with our general description of mean field equations,
is a closed form relation between the 
local marginals under the measure $\mu_{n,\beta,B}(\cdot)$.

We next re-derive the equation (\ref{eq:CWMeanField2}) 
directly out of the concentration in probability of $\bX$. 
This approach is very useful, for in more complicated models 
one often has mild bounds on the fluctuations of $\bX$ while 
lacking fine controls such as in Theorem \ref{thm:LargeDevMagn}. 
To this end, we start by proving the following 
`cavity' estimate.\footnote{Cavity methods of statistical physics aim
at understanding thermodynamic limits $n \to \infty$ 
by first relating certain quantities for systems of size 
$n \gg 1$ to those in systems of size $n' = n + O(1)$.}
\begin{lemma}\label{lemma:ChangeSize}
Denote by $\E_{n,\beta}$ and $\Var_{n,\beta}$ the
expectation and variance with respect to the Curie-Weiss model
with $n$ variables at inverse temperature $\beta$ (and magnetic field $B$).
Then, for $\beta' = \beta (1+1/n)$, $\bX = n^{-1}\sum_{i=1}^n X_i$ 
and any $i\in [n]$,
\begin{eqnarray}
\left|\E_{n+1,\beta'}X_i - \E_{n,\beta} X_i\right|\le \beta\sinh(B+\beta)
\sqrt{\Var_{n,\beta}(\bX)}\,.
\end{eqnarray} 
\end{lemma}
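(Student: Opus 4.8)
The plan is to express the size-$(n+1)$ Curie--Weiss expectation as a reweighting of the size-$n$ one, by integrating out the extra spin $X_{n+1}$. Writing the Hamiltonian of the $(n+1)$-spin model at inverse temperature $\beta'=\beta(1+1/n)$, one checks that $\frac{\beta'}{n+1}=\frac{\beta}{n}$, so the pair-interaction part is $\frac{\beta}{n}\sum_{1\le i<j\le n+1}x_ix_j$. Splitting off the terms involving $x_{n+1}$, this is $\frac{\beta}{n}\sum_{1\le i<j\le n}x_ix_j + x_{n+1}\big(\frac{\beta}{n}\sum_{j=1}^n x_j\big)$, plus the field term $B\sum_{j=1}^{n+1}x_j$. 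Hence, summing over $x_{n+1}\in\{+1,-1\}$,
\begin{eqnarray*}
\mu_{n+1,\beta'}(\ux_{[n]}) \propto \mu_{n,\beta}(\ux_{[n]})\, \cosh\!\Big(B+\beta\bX\Big)\,,
\end{eqnarray*}
where $\bX=n^{-1}\sum_{j=1}^n x_j$ and the constant of proportionality is fixed by normalization. Therefore, for any $i\in[n]$,
\begin{eqnarray*}
\E_{n+1,\beta'}X_i = \frac{\E_{n,\beta}\big[X_i\,\cosh(B+\beta\bX)\big]}{\E_{n,\beta}\big[\cosh(B+\beta\bX)\big]}\,.
\end{eqnarray*}

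The next step is to estimate the difference $\E_{n+1,\beta'}X_i-\E_{n,\beta}X_i$ using the standard covariance identity: for a positive random variable $W$,
\begin{eqnarray*}
\frac{\E[X_iW]}{\E[W]}-\E[X_i] = \frac{\mathrm{Cov}(X_i,W)}{\E[W]}\,,
\end{eqnarray*}
all expectations under $\mu_{n,\beta}$, with $W=\cosh(B+\beta\bX)$. I would then bound $|\mathrm{Cov}(X_i,W)|$ by writing $W=g(\bX)$ with $g(m)=\cosh(B+\beta m)$, and use that $g$ is monotone increasing and Lipschitz on $[-1,1]$ with $g'(m)=\beta\sinh(B+\beta m)$, so $\|g'\|_\infty\le\beta\sinh(B+\beta)$ (using $B\ge0$, $\beta\ge0$, $|m|\le1$). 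By the Cauchy--Schwarz inequality,
\begin{eqnarray*}
|\mathrm{Cov}(X_i,g(\bX))| \le \sqrt{\Var_{n,\beta}(X_i)}\,\sqrt{\Var_{n,\beta}(g(\bX))} \le 1\cdot \beta\sinh(B+\beta)\sqrt{\Var_{n,\beta}(\bX)}\,,
\end{eqnarray*}
since $\Var(X_i)\le 1$ and $\Var(g(\bX))\le\|g'\|_\infty^2\,\Var(\bX)$. Finally, $\E_{n,\beta}[W]=\E_{n,\beta}[\cosh(B+\beta\bX)]\ge\cosh(0)=1$ because $\cosh\ge1$ everywhere, so dividing by $\E[W]$ only helps. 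Combining these three bounds yields exactly $|\E_{n+1,\beta'}X_i-\E_{n,\beta}X_i|\le\beta\sinh(B+\beta)\sqrt{\Var_{n,\beta}(\bX)}$.

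The only genuinely delicate point is the first step — getting the constant $\beta'=\beta(1+1/n)$ exactly right so that $\frac{\beta'}{n+1}=\frac{\beta}{n}$ and the size-$n$ pair interaction reappears verbatim after integrating out $X_{n+1}$; everything after that is the textbook covariance-plus-Lipschitz estimate. One should also note the bound is stated for any fixed $i\in[n]$, so no symmetrization over $i$ is needed, though by exchangeability the left-hand side is in fact independent of $i$.
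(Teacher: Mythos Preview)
Your proof is correct and follows essentially the same route as the paper: integrate out $X_{n+1}$ to obtain the $\cosh(B+\beta\bX)$ reweighting (using $\beta'/(n+1)=\beta/n$), write the difference as a covariance divided by $\E[\cosh(\cdot)]\ge 1$, and bound the covariance via Cauchy--Schwarz together with the Lipschitz estimate $\Var(\cosh(B+\beta\bX))\le \beta^2\sinh^2(B+\beta)\,\Var(\bX)$. The only cosmetic difference is that the paper phrases the reweighting identity for a general bounded $F$ and uses $\sqrt{\Var F}\le\|F\|_\infty$, which for $F=X_i$ is exactly your $\Var(X_i)\le 1$.
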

\begin{proof}
By direct computation, for any function $F: \{+1,-1\}^n\to\reals$,
\begin{eqnarray*}
\E_{n+1,\beta'}\{F(\uX)\} = \frac{\E_{n,\beta}\{F(\uX)\cosh(B+\beta\bX)\}}
{\E_{n,\beta}\{\cosh(B+\beta\bX)\}}\, .
\end{eqnarray*}
Therefore, with $\cosh(a) \ge 1$ we get by Cauchy-Schwarz that  
\begin{eqnarray*}
&& |\E_{n+1,\beta'}\{F(\uX)\} - \E_{n,\beta}\{F(\uX)\}| \le
|{\rm Cov}_{n,\beta}\{\F(\uX), \cosh(B+\beta\bX)\}| \\
&\le& ||F||_{\infty}\sqrt{\Var_{n,\beta}(\cosh(B+\beta\bX))}
\le ||F||_{\infty}\beta\sinh(B+\beta) \sqrt{\Var_{n,\beta}(\bX)}\, ,
\end{eqnarray*}
where the last inequality is 
due to the Lipschitz behavior of $x\mapsto\cosh (B+\beta x)$ 
together with the bound $|\bX|\le 1$. 
\end{proof}

The following theorem provides a rigorous version of 
Eq.~(\ref{eq:CWMeanField2}) for $\beta\le 1$ or $B>0$. 
\begin{thm}
There exists a constant $C(\beta,B)$ such that for any $i \in [n]$,
\begin{eqnarray}
\Big|\E X_i- \tanh\big\{B+\frac{\beta}{n}\sum_{j\in V }
\E\, X_{j}\big\}\Big|\le C(\beta,B)\sqrt{\Var (\bX)}\, .
\end{eqnarray}
\end{thm}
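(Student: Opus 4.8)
The plan is to derive the mean-field identity by combining the "cavity" estimate of Lemma~\ref{lemma:ChangeSize} with the concentration of $\bX$ and the (essentially inessential) dependence of the Curie-Weiss expectations on the system size. The starting observation is that $\E X_i$ is, up to a small error, insensitive to (a) adding one more spin and (b) rescaling $\beta$ to $\beta'=\beta(1+1/n)$; both errors are controlled by Lemma~\ref{lemma:ChangeSize} in terms of $\sqrt{\Var_{n,\beta}(\bX)}$. So morally $\E_{n+1,\beta'}X_i\approx\E_{n,\beta}X_i$, and the extra spin $X_{n+1}$ in the larger system "sees" a field $B+\beta'\bX^{(n)}$, where $\bX^{(n)}$ is the average of the first $n$ spins. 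Taking the conditional expectation of $X_{n+1}$ given $X_1,\dots,X_n$ gives exactly $\tanh(B+\frac{\beta'}{n}\sum_{j=1}^n X_j)$, by the single-site conditional law of the graphical model.

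First I would write down the single-site conditional distribution: in the $(n+1)$-spin model at inverse temperature $\beta'$, conditioning on $\uX_{[n]}$ one has $\E_{n+1,\beta'}\{X_{n+1}\mid \uX_{[n]}\}=\tanh\!\big(B+\tfrac{\beta'}{n}\sum_{j=1}^n X_j\big)$, because the pair interactions involving vertex $n+1$ contribute $\tfrac{\beta'}{n}X_{n+1}\sum_{j\le n}X_j$ to the exponent (note $\beta'/n=\beta'/(n+1)\cdot(1+1/n)^{-1}$ — one must be a little careful matching the $1/n$ versus $1/(n+1)$ normalizations, which is precisely why $\beta'=\beta(1+1/n)$ is the right choice, so that $\beta'/(n+1)=\beta/n$). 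Thus $\E_{n+1,\beta'}X_{n+1}=\E_{n,\beta}\tanh\!\big(B+\beta\bX\big)$. Next I would remove the expectation from inside the $\tanh$ at the cost of $O(\sqrt{\Var_{n,\beta}(\bX)})$, using that $\tanh$ is $1$-Lipschitz and Jensen/first-order Taylor (so $|\E\tanh(B+\beta\bX)-\tanh(B+\beta\E\bX)|\le\beta\,\E|\bX-\E\bX|\le\beta\sqrt{\Var_{n,\beta}(\bX)}$). Then exchangeability of $\mu_{n+1,\beta'}$ gives $\E_{n+1,\beta'}X_{n+1}=\E_{n+1,\beta'}X_i$ for every $i$, and Lemma~\ref{lemma:ChangeSize} converts this back to $\E_{n,\beta}X_i$ with another error of order $\beta\sinh(B+\beta)\sqrt{\Var_{n,\beta}(\bX)}$. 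Finally, $\E\bX=\tfrac1n\sum_{j}\E X_j$ by linearity, and replacing $\beta'$ by $\beta$ inside $\tanh$ costs $O(1/n)$, which can be absorbed since for $\beta\le 1$ or $B>0$ one has $\sqrt{\Var(\bX)}$ bounded below by a constant times $1/\sqrt n$ is not needed — rather, one simply folds the $O(1/n)$ term into the constant $C(\beta,B)$ (or, more cleanly, one states the bound with the field $B+\tfrac{\beta}{n}\sum_j\E X_j$ and checks the $\beta'\to\beta$ swap changes the argument by at most $\beta/n\cdot\max_j|\E X_j|\le \beta/n$). Collecting the three error contributions yields the claimed inequality with $C(\beta,B)$ of order $\beta(1+\sinh(B+\beta))$ plus the $1/n$ correction.

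The one genuine subtlety — and the step I expect to require the most care — is the bookkeeping of the two normalizations $1/n$ and $1/(n+1)$ and the matching of $\beta$ with $\beta'$: one must make sure that the conditional-law computation in the $(n+1)$-system and the size-change Lemma~\ref{lemma:ChangeSize} are chained so that every "shift" is of size $O(\sqrt{\Var(\bX)})$ or $O(1/n)$ and nothing of order $1$ is lost; in particular the identity $\E_{n+1,\beta'}\{F(\uX)\}=\E_{n,\beta}\{F(\uX)\cosh(B+\beta\bX)\}/\E_{n,\beta}\{\cosh(B+\beta\bX)\}$ from the proof of Lemma~\ref{lemma:ChangeSize} should be reused (with $F$ depending only on $\uX_{[n]}$) to relate $\E_{n+1,\beta'}\{\tanh(B+\beta\bX)\}$ to $\E_{n,\beta}\{X_i\}$ directly, avoiding any double counting. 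Everything else is routine: Lipschitz bounds on $\tanh$, Cauchy–Schwarz, exchangeability, and the elementary inequality $|\bX|\le 1$.
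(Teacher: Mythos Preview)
Your approach is essentially the paper's: exchangeability plus the cavity identity to compute $\E_{n+1,\beta'}X_{n+1}$, Lipschitz control of $\sinh/\cosh$ (or $\tanh$) near the mean, and Lemma~\ref{lemma:ChangeSize} to pass back to $\E_{n,\beta}X_i$. One slip to fix: the line ``Thus $\E_{n+1,\beta'}X_{n+1}=\E_{n,\beta}\tanh(B+\beta\bX)$'' is false as an equality, since the marginal of $\uX_{[n]}$ under $\mu_{n+1,\beta'}$ is $\mu_{n,\beta}$ reweighted by $\cosh(B+\beta\bX)$; what you actually get from the conditional-law step is $\E_{n+1,\beta'}X_{n+1}=\E_{n+1,\beta'}[\tanh(B+\beta\bX)]$, and converting this outer expectation to $\E_{n,\beta}$ via the cavity identity yields exactly the paper's formula $\E_{n,\beta}\sinh(B+\beta\bX)/\E_{n,\beta}\cosh(B+\beta\bX)$ --- which is precisely what your final paragraph proposes, so no extra invocation of Lemma~\ref{lemma:ChangeSize} for the $\tanh$ term is needed (and your worry about a $\beta'\to\beta$ swap inside the $\tanh$ is moot, since $\beta'/(n+1)=\beta/n$ already gives $\beta$ there).
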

\begin{proof} In the notations of Lemma \ref{lemma:ChangeSize}
recall that $\E_{n+1,\beta'}X_i$ is independent of $i$ and 
so upon fixing $(X_1,\ldots,X_n)$ we get by direct computation that
\begin{eqnarray*}
\E_{n+1,\beta'}\{X_{i}\} = 
\E_{n+1,\beta'}\{X_{n+1}\} = \frac{\E_{n,\beta}\sinh(B+\beta\bX)}
{\E_{n,\beta}\cosh(B+\beta\bX) }\, .
\end{eqnarray*}
Further notice that (by the Lipschitz property of $\cosh (B+\beta x)$ and
$\sinh (B+\beta x)$  together with the bound $|\bX|\le 1$),
\begin{eqnarray*}
|\E_{n,\beta}\sinh(B+\beta\bX)-\sinh(B+\beta\E_{n,\beta}\bX)|\le 
\beta \cosh(B+\beta)
\sqrt{\Var_{n,\beta} (\bX)}\, 
,\\
|\E_{n,\beta}\cosh(B+\beta\bX)-\cosh(B+\beta\E_{n,\beta}\bX)|\le 
\beta \sinh(B+\beta)
\sqrt{\Var_{n,\beta}(\bX)}\, 
.
\end{eqnarray*}
Using 
the inequality $|a_1/b_1-a_2/b_2|\le |a_1-a_2|/b_1 + a_2|b_1-b_2|/b_1
b_2$ we thus have here (with $a_i \ge 0$ and $b_i \ge \max(1,a_i)$), 
that 
\begin{eqnarray*}
\Big|\E_{n+1,\beta'} \{X_i\}- \tanh\big\{B+\frac{\beta}{n}\sum_{j=1}^n
\E_{n,\beta}\, X_{j}\big\}\Big|\le C(\beta,B)\sqrt{\Var_{n,\beta}(\bX)}\,.
\end{eqnarray*}
At this point you get our thesis by applying Lemma \ref{lemma:ChangeSize}.
\end{proof}

%\begin{exercise} Adapt the preceding derivations to the $p$-spins model
%\begin{eqnarray}
%\mu_{n,\beta,B} 
%(\ux) = \frac{1}{Z_n(\beta,B)}
%\exp\Big\{\frac{\beta}{n^{p-1}}\sum_{(i_1,\ldots,i_p)}
%x_{i(1)}\cdots x_{i(p)} +
%B\sum_{i=1}^n x_i\Big\}\, ,
%\end{eqnarray}
%where $p\ge 2$ is a fixed integer, the first sum is over all  
%$p$-tuples of distinct indices in $[n]$
%and $x_i \in \{+1,-1\}$.\footnote{We have merely studied the case of $p=2$.}
%\end{exercise}

%\begin{exercise} Consider the case of a \emph{random field
%Curie-Weiss model}, i.e. 
%the distribution on $\{+1,-1\}^{[n]}$, 
%\begin{eqnarray}
%\mu_{n,\beta,\uz} (\ux) 
%= \frac{1}{Z_n(\beta,\uz)}\exp\Big\{\frac{\beta}{n}\sum_{(i,j)}x_ix_j +
%\sum_{i=1}^n z_i x_i\Big\}\, ,
%\end{eqnarray}
%where $z_i$ are a given realization  
%of i.i.d. Gaussian random variables with zero mean and variance $B^2$. 
%\begin{enumerate}
%\item What is the value of $Z_n(\beta,\uz)$ 
%for a `typical' realization of $\uz \equiv \{z_i, i \in [n]\}$?
%\item What about the expectations $\E\{X_i\}$ 
%for a typical realization of $\uz$?
%\end{enumerate}
%\end{exercise}
%
%*************************************************************************
%
\subsection{Graphical models: examples}\label{sec:other-models}

We next list a few examples of graphical models,  
originating at different domains of science and engineering. 
Several other examples that fit the same framework are discussed
in detail  in \cite{MezardMontanari}.
\subsubsection{Statistical physics} $~$

\noindent
{\bf Ferromagnetic Ising model.} The ferromagnetic Ising model is arguably the 
most studied model in statistical physics. It is defined 
by the Boltzmann distribution
\begin{eqnarray}
\mu_{\beta,B}(\ux) = \frac{1}{Z(\beta,B)}\,\exp\Big\{\beta\sum_{(i,j)\in E}
x_ix_j + B\sum_{i\in V}x_i\Big\}\, ,
\label{eq:IsingModel}
\end{eqnarray}
over $\ux = \{x_i:\, i\in V\}$, with $x_i\in\{+1,-1\}$,
parametrized by the
`magnetic field' $B \in \reals$ 
and `inverse temperature' $\beta \ge 0$,
where the partition function $Z(\beta,B)$ is
fixed by the normalization condition $\sum_{\ux} \mu(\ux)=1$.
The interaction between vertices $i, j$ connected by an edge
pushes the variables $x_i$ and $x_j$ towards taking the same value.
It is expected that this leads to a global  alignment of
the variables (spins) at low temperature, for a large family of graphs.
This transition should be analogue to the one we found for
the Curie-Weiss model,
but remarkably little is known about Ising models on \emph{general graphs}. 
In Section \ref{ch:IsingChapter} 
we consider the case of random sparse graphs. 

\vspace{0.1cm}
\noindent
{\bf Anti-ferromagnetic Ising model.} This model takes the same form
(\ref{eq:IsingModel}), but with $\beta<0$.\footnote{In the literature 
one usually introduces
explicitly a minus sign to keep $\beta$ positive.} 
Note that if $B=0$ and the graph is bipartite (i.e. 
if there exists a  partition $V=V_1 \cup V_2$ such that 
$E \subseteq V_1 \times V_2$), then 
this model is equivalent to the ferromagnetic one
(upon inverting the signs of $\{x_i, i \in V_1\}$). 
However, on non-bipartite graphs the anti-ferromagnetic model 
is way more complicated than the ferromagnetic one, and 
even determining the most likely (lowest energy)
configuration is a difficult matter. Indeed, 
for $B=0$ the latter is equivalent to  
the celebrated max-cut problem from theoretical computer science.

\vspace{0.1cm}
\noindent
{\bf Spin glasses.} An instance of the Ising spin glass is
defined by a graph $G$, together with edge weights $J_{ij}\in \reals$,
for $(i,j)\in E$. Again variables are binary $x_i\in\{+1,-1\}$ and
\begin{eqnarray}\label{eq:spin-glass}
\mu_{\beta,B,\bJ}(\ux) = \frac{1}{Z(\beta,B,\bJ)}
\,\exp\Big\{\beta\sum_{(i,j)\in E} J_{ij}x_ix_j + B\sum_{i\in V}x_i\Big\}\, .
\end{eqnarray}
In a spin glass model the `coupling constants' $J_{ij}$ are random
with even distribution (the canonical examples being $J_{ij}\in \{+1,-1\}$
uniformly and $J_{ij}$ centered Gaussian variables).
One is interested in determining the asymptotic properties 
as $n = |V| \to\infty$ of 
$\mu_{n,\beta,B,\bJ}(\,\cdot\,)$ for a typical realization of the coupling
$\bJ \equiv \{J_{ij}\}$.
 
\subsubsection{Random constraint satisfaction problems}\label{sec:random-CSP}

A constraint satisfaction problem (CSP) consists of a
finite set $\cX$ (called the variable domain), 
and a class $\cC$ of possible 
constraints (i.e. indicator functions), each of 
which involves finitely many $\cX$-valued variables $x_i$. 
An instance of this problem is then specified by a positive  
integer $n$ (the number of variables), and a set of $m$ 
constraints
involving only the variables $x_1,\ldots,x_n$ (or a subset thereof).
A solution of this instance is an assignment in $\cX^n$ 
for the variables $x_1,\ldots,x_n$ 
which satisfies all $m$ constraints.

In this context, several questions are of interest within computer science:
\begin{enumerate}
\item \emph{Decision problem}. Does the given instance have a solution?
\item \emph{Optimization problem}. Maximize the number of satisfied constraints.%
\item \emph{Counting problem}. Count the number of solutions.
\end{enumerate}

There are many ways of associating a graphical model to an instance of CSP. 
If the instance admits a solution, then one option is to
consider the uniform measure over all such solutions. Let us see
how this works in a few examples.

\begin{figure}
\center{
\includegraphics[angle=0,width=0.35\columnwidth]{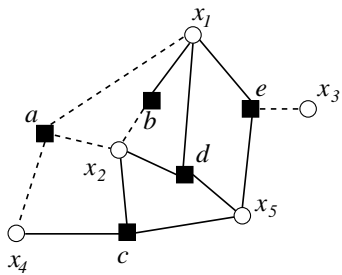}}
\caption{Factor graph representation of the satisfiability
formula $(\bar{x}_1\vee\bar{x}_2\vee\bar{x}_4)\wedge
(x_1\vee\bar{x}_2)\wedge(x_2\vee x_4\vee x_5)\wedge(x_1\vee x_2\vee x_5)
\wedge (x_1\vee\bar{x}_2\vee x_5)$. Edges are continuous or dashed depending
whether the corresponding variable is directed or negated in the clause.}
\label{fig:Factor}
\end{figure}

\vspace{0.1cm}

{\bf Coloring.} A proper $q$-coloring of a graph $G$ 
is an assignment of colors in $[q]$ to the vertices
of $G$ such that no edge has both endpoints of the same color.
The corresponding CSP has variable domain 
$\cX= [q]$ and the possible constraints in $\cC$ 
are indexed by pairs of indices $(i,j)\in V\times V$, where 
the constraint $(i,j)$ is
satisfied if and only if $x_i\neq x_j$.

Assuming that a graph $G$ admits a proper $q$-coloring, the uniform 
measure over the set of possible solutions is
\begin{eqnarray}
\mu_{G}(\ux) = \frac{1}{Z_G}\, \prod_{(i,j)\in E}\, \ind(x_i\neq x_j)\, , 
\end{eqnarray}
with $Z_G$ counting the number of proper $q$-colorings of $G$. 
\vspace{0.1cm}

{\bf $k$-SAT.} In case of $k$-satisfiability (in short, $k$-SAT), 
the variables are binary
$x_i\in \cX = \{0,1\}$ and each 
%possible 
constraint is of
the form $(x_{i(1)},\dots,x_{i(k)})\neq (x^*_{i(1)},\dots,x^*_{i(k)})$
for some prescribed  $k$-tuple $(i(1),\dots,i(k))$ of indices in $V=[n]$
and their prescribed values $(x^*_{i(1)},\dots,x^*_{i(k)})$. In this
context constraints are often referred to as `clauses' and can be written
as the disjunction (logical OR) of $k$ variables or their negations.
The uniform measure over solutions of an instance of this problem,
if such solutions exist, 
is then 
\begin{eqnarray*}
\mu(\ux) = \frac{1}{Z}\, \prod_{a=1}^m\, \ind\Big((x_{i_a(1)},\dots,x_{i_a(k)})
\neq (x^*_{i_a(1)},\dots,x^*_{i_a(k)})\Big)\, , 
\end{eqnarray*}
with $Z$ counting the number of 
solutions. 
An instance can be associated to a \emph{factor graph}, cf. 
Fig.~\ref{fig:Factor}. This is a bipartite
graph having two types of nodes: variable nodes in $V = [n]$
denoting the unknowns $x_1,\ldots,x_n$ 
and function (or factor) nodes in $F =[m]$
denoting the specified constraints. Variable node $i$ 
and function node $a$ are connected by 
an edge in the factor graph 
if and only if variable $x_i$ appears in the $a$-th clause,
so $\da = \{i_a(1),\dots,i_a(k)\}$ and $\di$ corresponds to
the set of clauses in which $i$ appears.

In general, such 
a construction associates to arbitrary CSP instance
a factor graph $G=(V,F,E)$. The uniform measure 
over solutions of such an instance is then of the form 
\begin{eqnarray}\label{eq:FCanonical}
\mu_{G,\upsi} (\ux) 
= \frac{1}{Z(G,\upsi)}\, \prod_{a \in F} \psi_a(\ux_{\da}) \, , 
\end{eqnarray}
for a suitable choice of $\upsi \equiv \{\psi_a(\cdot): a \in F\}$.
Such measures can also 
be viewed as the zero temperature limit of certain Boltzmann
distributions.
We note in passing that the probability measure 
of Eq.~(\ref{eq:Canonical}) corresponds to the special case 
where all function nodes are of degree two.

\subsubsection{Communications, estimation, detection}

We describe next a canonical way of phrasing 
problems from mathematical engineering  
in terms of graphical models. Though
we do not detail it here, this approach
applies to many specific cases of interest.

Let $X_1,\dots, X_n$ be a collection of i.i.d. `hidden' random variables
with a common distribution $p_0(\,\cdot\,)$
over a finite alphabet $\cX$. We want to estimate 
these variables from a given collection of
observations $Y_1,\dots,Y_m$.
The $a$-th observation (for $a\in [m]$) is a random 
function of the $X_i$'s for which $i\in\da = \{i_a(1),\dots,i_a(k)\}$.
By this we mean that $Y_a$ is conditionally independent 
of all the other variables given $\{X_i:\, i\in \da\}$ and
we write
\begin{eqnarray}
\prob\left\{Y_a\in A|\uX_{\da}=\ux_{\da}\right\} = Q_a(A|\ux_{\da})\, .
\end{eqnarray}
for some probability kernel $Q_a(\,\cdot\,|\,\cdot\,)$.

The \emph{a posteriori} distribution of the hidden variables
given the observations is thus  
\begin{eqnarray}
\mu(\ux|\uy) = \frac{1}{Z(\uy)}\, \prod_{a=1}^{m} Q_a(y_a|\ux_{\da})
\prod_{i=1}^n p_0(x_i)\, .\label{eq:Obs}
\end{eqnarray}
%
%\begin{exercise}
%Describe the (factor) graph associated with the
%distribution of (\ref{eq:Obs}).
%\end{exercise}

%\begin{exercise}
%Write the explicit form of the distribution
%(\ref{eq:Obs}) when the \emph{a priori} distribution $p_0$ of 
%$X_i$ is uniform in $\{+1,-1\}$ and the observations are 
%of the form 
%\begin{eqnarray}
%\uY = A\, \uX + \uW\, ,
%\end{eqnarray}
%with $A$ a sparse $m\times n$ matrix (i.e. with relatively 
%few non-vanishing entries)
%and $\uW = (W_1,\dots,W_m)$ a vector of $m$ i.i.d. 
%centered Gaussian random variables.
%\end{exercise}
%
%***************************************************
%
\subsubsection{Graph and graph ensembles}

The structure of the underlying graph $G$ is of 
much relevance for the general measures 
$\mu_{G,\upsi}$ of (\ref{eq:Canonical}).
The same applies in the specific examples we 
have outlined in Section \ref{sec:other-models}. 

As already hinted, we focus here 
on (random) graphs that lack finite dimensional 
Euclidean structure. A few well known    
ensembles of such graphs (c.f. \cite{Rgraphs}) are:
\begin{enumerate}
\item[I.] \emph{Random graphs with a given degree distribution.}
Given a probability distribution $\{P_l\}_{l\ge 0}$ 
over the non-negative integers, for each value of $n$
one draws the graph $G_n$ uniformly at random from the 
collection of all graphs with $n$ vertices of
which precisely $\lfloor n P_k \rfloor$ are of degree $k \ge 1$ 
(moving one vertex from degree $k$ to $k+1$ if 
needed for an even sum of degrees). We will denote this
ensemble by $\graph(P,n)$.
%The same happens
%if $G_n$ is drawn according to the so-called configuration model
%(c.f. \cite{Bollobas}). 
\item[II.] The ensemble of \emph{random $k$-regular graphs} 
corresponds to $P_k=1$ (with $kn$ even).
Equivalently, this is defined by the set of all graphs $G_n$ 
over $n$ vertices with degree $k$,
endowed with the uniform measure.  With a slight abuse
of notation, we will denote it by $\graph(k,n)$.
\item[III.] \emph{Erd\"os-Renyi graphs.} 
This is the ensemble of all graphs $G_n$ with $n$
vertices and $m = \lfloor n\alpha  \rfloor$ edges 
endowed with the uniform measure. 
A slightly modified ensemble is the one in which each edge $(i,j)$
is present independently with probability $n\alpha /\binom{n}{2}$.
We will denote it as $\graph(\alpha,n)$.
%or considering a multi-graph with Poisson$(2\alpha/n)$
%edges between each pair $(i,j)$.
\end{enumerate}
As further shown in Section \ref{sec:loc-conv},
an important property of these graph ensembles is that 
they converge locally to trees.  Namely, for any integer $\ell$, 
the depth-$\ell$ neighborhood $\Ball_i(\ell)$
of a uniformly chosen random vertex $i$
converges in distribution as $n \to \infty$
to a certain random tree of depth (at most) $\ell$.
%

%\begin{exercise} In the Erd\"os-Renyi random graph ensemble 
%\begin{enumerate}
%\item What is the expected fraction of vertices of degree $k$?
%\item How does it behave for large $k$?
%\item 
%What can you say about the \emph{actual} fraction of vertices of degree $k$?
%Is it close to its expectation?
%\end{enumerate}
%\end{exercise}

%AD - Long version only (the whole subsection)
\subsection{Detour: The Ising model on the integer lattice}

In statistical physics it is most natural to consider models
with local interactions on a finite dimensional integer 
lattice $\integers^d$, where $d=2$ and $d=3$ are often 
the physically relevant ones. While such models are of course
non-mean field type, taking a short detour we
next present a classical result about
ferromagnetic Ising models on finite subsets of $\integers^2$. 
\begin{thm}
Let $\E_{n,\beta}$ denote expectations
with respect to the ferromagnetic Ising measure (\ref{eq:IsingModel})
at zero magnetic field, in case $G=(V,E)$ 
is a square grid of side $\sqrt{n}$. 
Then, for large $n$ the average magnetization $\bX = n^{-1}\sum_{i=1}^n X_i$
concentrates around zero for high temperature but not for low temperature. 
More precisely, for some $\beta_o>0$,
\begin{eqnarray}
\label{2d-Ising-lowT}
&& \lim_{\beta \to \infty} \inf_n \E_{n,\beta} \{\, |\bX|\,\} = 1 \,, \\
&& \lim_{n \to \infty} \E_{n,\beta} \{\, |\bX|^2\,\} = 0 
\qquad \qquad \qquad \forall \beta < \beta_o  \,. 
\label{2d-Ising-highT}
\end{eqnarray}
\end{thm}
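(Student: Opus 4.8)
Since the field vanishes, $\mu_{n,\beta}$ is invariant under the global spin flip $\ux\mapsto-\ux$; hence $\E_{n,\beta}[\bX]=0$ and $\E_{n,\beta}[\bX^2]=\Var_{n,\beta}(\bX)=n^{-2}\sum_{i,j}\langle x_ix_j\rangle_{n,\beta}$, where $\langle\cdot\rangle_{n,\beta}$ is the expectation under (\ref{eq:IsingModel}) with $B=0$ on the $\sqrt n\times\sqrt n$ grid $G_n$. Both assertions reduce to control of the pair correlations $\langle x_ix_j\rangle_{n,\beta}$ that is \emph{uniform in $n$ and in $(i,j)$}: (i) at low temperature I will show $\langle x_ix_j\rangle_{n,\beta}\ge 1-\epsilon(\beta)$ with $\epsilon(\beta)\to0$ as $\beta\to\infty$; (ii) at high temperature I will show $|\langle x_ix_j\rangle_{n,\beta}|\le C(\beta)\,\lambda(\beta)^{\,d(i,j)}$ with $\lambda(\beta)<1$ for $\beta<\beta_o$, where $d$ is graph distance on $G_n$.

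For (i) the tool is the Peierls contour argument. Given $\ux$, the grid edges whose endpoints carry opposite spins form, in the dual lattice, a disjoint union of self-avoiding loops together with self-avoiding paths ending on $\partial G_n$. If $x_i\ne x_j$ then some such contour $\gamma$ separates $i$ from $j$; flipping all spins on the side of $\gamma$ containing $i$ is an injection on configurations that multiplies the Boltzmann weight by $e^{2\beta|\gamma|}$, so the probability that a given $\gamma$ occurs is at most $e^{-2\beta|\gamma|}$. A union bound gives $\prob\{x_i\ne x_j\}\le\sum_{L}N_L(i,j)\,e^{-2\beta L}$, where $N_L(i,j)$ counts contours of length $L$ separating $i$ from $j$. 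Such a contour must pass within distance $O(L)$ of $i$ (or of $j$), so $N_L(i,j)\le P(L)\,3^{L}$ for a fixed polynomial $P$, \emph{uniformly in $n$, $i$, $j$}. Hence $\prob\{x_i\ne x_j\}\le\sum_{L\ge4}P(L)3^{L}e^{-2\beta L}=:\epsilon(\beta)$, which is finite for $\beta$ large and $\to0$ as $\beta\to\infty$. Since $\langle x_ix_j\rangle=1-2\,\prob\{x_i\ne x_j\}$ we get $\langle x_ix_j\rangle\ge1-2\epsilon(\beta)$ for all $n,i,j$, so $\E_{n,\beta}[\bX^2]\ge1-2\epsilon(\beta)$, and because $|\bX|\le1$ also $\E_{n,\beta}[|\bX|]\ge\E_{n,\beta}[\bX^2]\ge1-2\epsilon(\beta)$. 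Taking the infimum over $n$, letting $\beta\to\infty$, and using $\E_{n,\beta}[|\bX|]\le1$, yields (\ref{2d-Ising-lowT}).

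For (ii) I use the high-temperature expansion: writing $e^{\beta x_kx_l}=\cosh\beta\,(1+x_kx_l\tanh\beta)$ and expanding over edges gives the classical identity $\langle x_ix_j\rangle_{n,\beta}=\big(\sum_{E'}t^{|E'|}\big)\big/\big(\sum_{E''}t^{|E''|}\big)$ with $t=\tanh\beta$, where $E'$ ranges over edge subsets of $G_n$ with odd degree exactly at $i$ and $j$, and $E''$ over those with all degrees even. Splitting off the connected component of $i$ (which necessarily also contains $j$) bounds the numerator by the denominator times $\sum_C t^{|C|}$, the sum over connected subgraphs $C$ with odd degrees exactly at $i,j$; any such $C$ has $|C|\ge d(i,j)$, and the number of connected subgraphs with $L$ edges through a fixed vertex of $G_n$ is at most $\rho^{L}$ for an absolute constant $\rho>1$, uniformly in $n$. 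Hence, for $\beta<\beta_o:=\atanh(1/\rho)$, $|\langle x_ix_j\rangle_{n,\beta}|\le(\rho t)^{d(i,j)}(1-\rho t)^{-1}$. (Alternatively, one may invoke exponential decay of correlations below the Dobrushin uniqueness threshold; see \cite{Georgii}.) Since at most $1+4d$ sites of $G_n$ lie at distance $d$ from a fixed site, $\E_{n,\beta}[\bX^2]=n^{-2}\sum_{i,j}\langle x_ix_j\rangle\le n^{-1}(1-\rho t)^{-1}\sum_{d\ge0}(1+4d)(\rho t)^{d}=O(1/n)\to0$, which is (\ref{2d-Ising-highT}).

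The main obstacle is the uniformity in $n$ of the Peierls count when contours touch $\partial G_n$; with periodic boundary conditions these do not arise and only loops surrounding a single endpoint occur, making the count manifestly independent of grid size. For free boundary conditions one must check that an open dual contour of length $L$ separating $i$ from $j$ still forces a contour vertex to within $O(L)$ of $i$ — equivalently, by the isoperimetric inequality in $\Z^2$, that the region it cuts off from the boundary has area $O(L^2)$ — so that its number of length-$L$ instances is polynomial in $L$ times $3^{L}$, with no dependence on $n$. The remaining steps are routine.
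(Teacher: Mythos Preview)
Your high-temperature argument is essentially the paper's: both expand $e^{\beta x_kx_l}=\cosh\beta(1+x_kx_l\tanh\beta)$, obtain the ratio of even/odd subgraph sums, and peel off a piece connecting $i$ to $j$. The paper peels off a \emph{simple path} $F_{i,j}$ (using that $F\setminus F_{i,j}\in\G(\emptyset)$), counts such paths by $3^L$, and gets the explicit $\beta_o=\atanh(1/3)$; you peel off the whole connected component, count connected subgraphs by $\rho^L$, and get an unspecified $\beta_o=\atanh(1/\rho)$. Same idea, the paper's version is just sharper.

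Your low-temperature argument is a genuinely different Peierls decomposition. You bound each pair correlation $\langle x_ix_j\rangle$ uniformly by counting contours separating $i$ from $j$, then use $\E|\bX|\ge\E\bX^2=n^{-2}\sum_{i,j}\langle x_ix_j\rangle$. The paper instead works globally: it writes $\E|\bX|\ge 1-\tfrac{4}{n}\E[|V_-|\,\ind(\Omega_+)]$, bounds the minority-spin count by $\sum_{C\in\CC}|C|^2$, and uses the crude count $N_c(n,\ell)\le n\cdot 3^{\ell+1}$ for \emph{all} contours of length $\ell$---the factor $n$ then cancels against the $1/n$. This sidesteps entirely the question of which contours separate a given pair, and in particular avoids the open-contour subtlety you flag at the end. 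Your route yields more (a uniform lower bound on every $\langle x_ix_j\rangle$, not just on the average), but at the price of the more delicate count $N_L(i,j)\le P(L)3^L$. That count is correct, though the clean statement is that the contour passes within $O(L)$ of $i$ \emph{or} $j$ (whichever lies in the smaller region the contour cuts off); your final paragraph says only ``of $i$'', which is not quite right as stated.
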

While this theorem and its proof refer to $\integers^2$,  
the techniques we use are more general.

\noindent
{\bf Low temperature: Peierls argument.}
The proof of (\ref{2d-Ising-lowT}) is taken from \cite{Gri64}
and based on the Peierls contour representation for
the two dimensional Ising model.
We start off by reviewing this representation. 
First, given a square grid $G=(V,E)$ of side $\sqrt{n}$ 
in $\integers^2$, for each $(i,j)\in E$ 
draw a perpendicular edge of length one, centered 
at the midpoint of $(i,j)$. Let 
$E^*$ denote the collection of all these perpendicular edges and
$V^*$ the collection of their end points, 
viewed as a finite subset of $\reals^2$.
% (so $|E^*|=|E|=2(\sqrt{n}-1)$ and $|V^*|=4(\sqrt{n}+1)^2$).  
A \emph{contour} is a simple path on the `dual' graph $G^*=(V^*,E^*)$,    
either closed or with both ends at boundary 
(i.e. degree one) vertices. A closed contour
$C$ divides $V$ to two subsets, the \emph{inside} of
$C$ and the \emph{outside} of $C$. We further 
call as `inside' the smaller of the two subsets 
into which a non-closed contour divides $V$ (an 
arbitrary convention can be used in case the 
latter two sets are of equal size).
A Peierls contours configuration $(\CC,s)$ consists 
of a sign $s \in \{+1,-1\}$ and an 
edge-disjoint finite collection $\CC$ of 
non-crossing contours (that is, whenever two 
contours share a vertex, each of them bends there). 
Starting at an Ising configuration $\ux \in \Omega \equiv \{+1,-1\}^V$ 
note that the set $V_+(\ux) = \{v \in V: x_v = +1\}$ is separated
from $V_- (\ux) = \{v \in V: x_v =-1\}$ by an 
edge-disjoint finite collection $\CC=\CC(\ux)$ of 
non-crossing contours. Further, it is not hard to check
that the non-empty set
$U(\ux) = \{v \in V : v $ not inside any contour from $\CC \}$ 
is either contained in $V_+(\ux)$, in which case 
$s(\ux)=+1$ or in $V_-(\ux)$, in which case $s(\ux)=-1$,
partitioning $\Omega$ to $\Omega_+ = \{ \ux : s(\ux)=+1\}$
and $\Omega_- = \{ \ux : s(\ux)=-1\}$. In the reverse 
direction, the Ising configuration is read off 
a Peierls contours configuration $(\CC,s)$ 
by setting $x_v=s$ when the number of contours $C \in \CC$ 
such that $v \in V$ lies in the inside of $C$ is even 
while $x_v=-s$ when it is odd. The mapping 
$\ux \mapsto -\ux$ exchanges $\Omega_+$ with $\Omega_-$ so 
\begin{equation}\label{eq:peier1}
\E_{n,\beta} [|\bX|] \ge
%   \E_{n,\beta} [\bX \ind(\uX \in \Omega_+)] 
% - \E_{n,\beta} [\bX \ind(\uX \in \Omega_-)] = 
2 \E_{n,\beta} [\bX \ind(\uX \in \Omega_+)]
= 1 - \frac{4}{n} \E_{n,\beta} [ |V_-(\uX)| \ind(\uX \in \Omega_+)] \,.
\end{equation} 
If $\ux$ is in $\Omega_+$ then $|V_-(\ux)|$ is bounded by the total 
number of vertices of $V$ inside contours of $\CC$, which 
by isoperimetric considerations 
% volume inside C is at most |\oo{C}|^2/(4 pi) taking care of 
% closed C; for non-closed C both ends either on same face of G^*
% or adjacent or opposite faces, with case by case analysis of
% how much of length came from C yields the bounds
% |C|^2/(2\pi), |C|^2/\pi and |C|^2 respectively.
is at most $\sum_{C \in \CC} |C|^2$ 
(where $|C|$ denotes the length of contour $C$). Further, 
our one-to-one correspondence 
between Ising and Peierls contours configurations 
maps the Ising measure at $\beta>0$ to uniform $s \in \{+1,-1\}$ 
independent of $\CC$ whose distribution is the Peierls measure  
$$
\mu_*(\CC) = \frac{1}{Z_* (\beta)} \prod_{C \in \CC} e^{-2\beta |C|} \,.
$$ 
Recall that if a given contour $C$ is in some  
edge-disjoint finite collection $\CC$ of non-crossing contours,
then $\CC'=\CC \setminus C$ is another such collection, with 
$\CC \mapsto \CC'$ injective, from which we easily deduce 
that $\mu_*(C \in \CC) \le \exp(-2 \beta |C|)$ for any fixed contour $C$.
Consequently, 
\begin{align}\label{eq:peier2}
\E_{n,\beta} [ |V_-(\uX)| \ind(\uX \in \Omega_+)] 
&\le \sum_{C} |C|^2 \mu_*(C \in \CC) \nonumber \\
&\le \sum_{\ell \ge 2} \ell^2 N_c (n,\ell) e^{-2\beta \ell} \,,
\end{align}
where $N_c (n,\ell)$ denotes the number of contours of 
length $\ell$ for the square grid of side $\sqrt{n}$. Each
such contour is a length $\ell$ path of a non-reversing nearest
neighbor walk in $\integers^2$ starting at some point in $V^*$.
Hence, $N_c (n,\ell) \le |V^*| 3^{\ell} \le n 3^{\ell+1}$. 
Combining this bound with (\ref{eq:peier1}) and (\ref{eq:peier2}) 
we conclude that for all $n$, 
$$
\E_{n,\beta} [|\bX|] 
\ge 1 - \frac{4}{n} \sum_{\ell \ge 2} \ell^2 N_c (n,\ell) e^{-2\beta \ell} 
\ge 1 - 12 \sum_{\ell \ge 2} \ell^2 3^\ell e^{-2\beta \ell} \,.
$$ 
We are thus done, as this lower bound converges to one for 
$\beta \to \infty$.

\noindent
{\bf High-temperature expansion.}
The proof of (\ref{2d-Ising-highT}), taken from \cite{Fis67}, is
by the method of high-temperature expansion which serves us 
again when dealing with the unfrustrated XORSAT model in Section 
\ref{sec-xorsat}. As in the
low-temperature case, the first step consists of finding an 
appropriate `geometrical' representation. To this end, given a
subset $U \subseteq V$ of vertices, let
$$
Z_U(\beta) = \sum_{\ux} x_U \exp\big\{ \beta \sum_{(i,j) \in E} x_i x_j \big\} 
$$
and denote by $\G(U)$ the set of subgraphs of $G$ having an odd-degree at
each vertex in $U$ and an even degree at all other vertices. Then, 
with $\theta \equiv \tanh(\beta)$ and
$F \subseteq E$ denoting both a subgraph of $G$ and its set of edges,
we claim that
\begin{equation}\label{eq:ht-2dIsing}
Z_U(\beta) = 2^{|V|} (\cosh \beta)^{|E|}\sum_{F \in \G(U)} \theta^{|F|} \,. 
\end{equation}
Indeed, 
$e^{\beta y} = \cosh(\beta) [ 1 + y \theta]$ for $y \in \{+1,-1\}$, 
so by definition
\begin{align*}
Z_U(\beta) &= (\cosh \beta)^{|E|} \sum_{\ux} x_U 
\prod_{(i,j) \in E} [1+ x_i x_j \theta] \\
&= (\cosh \beta)^{|E|} \sum_{F \subseteq E} \theta^{|F|} 
\sum_{\ux} x_U \prod_{(i,j) \in F} x_i x_j \,.
\end{align*}
By symmetry $\sum_{\ux} x_R$ is zero unless each $v \in V$ appears
in the set $R$ an even number of times, in which case the sum is $2^{|V|}$. 
In particular, the latter applies for 
$x_R = x_U \prod_{(i,j) \in F} x_i x_j$ if and only if $F \in \G(U)$
from which our stated high-temperature expansion (\ref{eq:ht-2dIsing}) follows.

We next use this expansion to get a uniform in $n$ decay of 
correlations at all $\beta < \beta_o \equiv \atanh(1/3)$, 
with an exponential rate with respect to the graph distance 
$d(i,j)$. 
%(namely, the length of shortest path between $i$ and $j$ in $G$). 
More precisely, we claim that 
for any such $\beta$, $n$ and $i,j \in V$ 
\begin{equation}\label{eq:ht-Ising-cor}
\E_{n,\beta} \{ X_i X_j \} \le (1-3\theta)^{-1} (3\theta)^{d(i,j)} \,.
\end{equation} 
Indeed, from (\ref{eq:ht-2dIsing}) we know that 
$$
\E_{n,\beta} \{ X_i X_j \} = \frac{Z_{(i,j)}(\beta)}{Z_{\emptyset}(\beta)}
=\frac{ \sum_{F \in \G(\{i,j\})} \theta^{|F|} }
{\sum_{F' \in \G(\emptyset)} \theta^{|F'|} } \,.
$$
Let $\FF (i,j)$ denote the collection of all simple paths
from $i$ to $j$ in $\integers^2$ and
for each such path $F_{i,j}$, denote by $\G(\emptyset,F_{i,j})$ 
the sub-collection of graphs in $\G(\emptyset)$ that have no 
edge in common with $F_{i,j}$.
The sum of vertex degrees in a connected component of a graph $F$
is even, hence any $F \in \G(\{i,j\})$ contains some path
$F_{i,j} \in \FF (i,j)$. Further, 
$F$ is the edge-disjoint union of $F_{i,j}$ and 
$F' = F \setminus F_{i,j}$ with $F'$ having an even degree 
at each vertex. As $F' \in \G(\emptyset,F_{i,j})$ we thus deduce that   
$$
\E_{n,\beta} \{ X_i X_j \} \le \sum_{F_{i,j} \in \FF(i,j)} 
\theta^{|F_{i,j}|} \frac{ \sum_{F' \in \G(\emptyset, F_{i,j})} \theta^{|F'|} }
{\sum_{F' \in \G(\emptyset)} \theta^{|F'|} } \le
\sum_{F_{i,j} \in \FF(i,j)} \theta^{|F_{i,j}|} \,.
$$   
The number of paths in $\FF (i,j)$ of length $\ell$ is at most $3^\ell$
and their minimal length is $d(i,j)$. Plugging this in the preceding 
bound establishes our correlation decay bound (\ref{eq:ht-Ising-cor}).

We are done now, for there are at most $8d$ vertices in $\integers^2$
at distance $d$ from each $i \in \integers^2$. Hence,
\begin{align*}
\E_{n,\beta} \{\,|\bX|^2\,\} &= \frac{1}{n^2} \sum_{i,j \in V}  
\E_{n,\beta} \{ X_i X_j \} \\
&\le 
\frac{1}{n^2 (1-3\theta)} \sum_{i,j \in V} (3\theta)^{d(i,j)}
\le \frac{1}{n (1-3 \theta)} \sum_{d=0}^\infty 8d (3\theta)^d \,,
\end{align*}
which for $\theta<1/3$ decays to zero as $n \to \infty$.
%-----------------------------------------------------------
%
%

%
\section{Ising models on locally tree-like graphs}
\label{ch:IsingModel}
\label{ch:IsingChapter}
\setcounter{equation}{0}

A ferromagnetic \emph{Ising model on the finite 
graph $G$} (with vertex set $V$, 
and edge set $E$) is defined by 
the Boltzmann distribution $\mu_{\beta,B}(\ux)$ of (\ref{eq:IsingModel})
with $\beta \ge 0$. In the following it is understood that,
unless specified otherwise, the model is ferromagnetic, and we will
call it `Ising model on $G$.'

For sequences of graphs $G_n = (V_n,E_n)$ of diverging size $n$,
non-rigorous statistical mechanics techniques, such as the `replica'
and `cavity methods,' make a number of predictions
on this model when the graph $G$ `lacks 
any finite-dimensional structure.' 
The most basic quantity in this context is the 
asymptotic \emph{free entropy density}, cf. Eq.~(\ref{eq:free-ent}),
%(also called free energy or pressure)
\begin{eqnarray}
\phi(\beta,B) \equiv \lim_{n\to\infty}\frac{1}{n}\log Z_n(\beta,B)\,. 
\label{eq:FreeEnergy}
\end{eqnarray}
The Curie-Weiss model, cf. Section \ref{sec:Curie-Weiss},   
corresponds to the complete graph $G_n=K_n$.
% (scaling $\beta$ by $1/n$ to get a non-trivial limit).
Predictions exist for a much wider class of models and 
graphs, most notably, sparse random graphs with bounded average degree
that arise in a number of problems from 
combinatorics and theoretical computer science (c.f. 
the examples of Section \ref{sec:random-CSP}).
An important new feature of  sparse graphs is that one can
introduce a notion of distance between vertices
as the length of shortest path connecting them.
Consequently, 
phase transitions and coexistence
can be studied with respect to the correlation decay properties
of the underlying measure. It turns out that this approach 
is particularly fruitful and allows to characterize these phenomena 
in terms of appropriate features of Gibbs measures on infinite trees.
This direction is pursued in \cite{OurPNAS} in the case of 
random constraint satisfaction problems.

Statistical mechanics also provides methods for approximating 
the local marginals of the Boltzmann measure of (\ref{eq:IsingModel}).
Of particular interest is the algorithm
known in artificial intelligence and computer 
science under the name of \emph{belief propagation}.
Loosely speaking, this procedure consists of solving 
by iteration certain mean field (cavity) equations.
Belief propagation is shown in \cite{ising} 
to converge exponentially fast for an Ising model on any graph
(even in a low-temperature regime lacking uniform decorrelation), 
with resulting asymptotically 
tight estimates for large locally tree-like graphs 
(see Section \ref{sec:algorithmic}).

\subsection{Locally tree-like graphs and conditionally independent trees}
\label{sec:loc-conv}

We follow here \cite{ising},
where the asymptotic free entropy density 
(\ref{eq:FreeEnergy}) is determined rigorously  
for certain sparse graph sequences $\{G_n\}$
that converge locally to trees.
In order to make this notion more precise, we denote by
$\Ball_i(t)$ the subgraph induced by vertices of $G_n$ 
whose distance from $i$ is at most $t$.
Further, given two rooted trees $T_1$ and $T_2$ of the
same size, we write $T_1\simeq T_2$ if 
$T_1$ and $T_2$
are identical upon 
labeling their vertices in a 
breadth first fashion following lexicographic order among siblings.
\begin{definition}
Let $\prob_n$ denote the law of the ball $\Ball_i(t)$
when $i\in V_n$ is a uniformly 
chosen 
random vertex.
We say that $\{G_n\}$ \emph{converges locally} to the random rooted
tree $\Tree$ if, for any finite $t$ and any 
rooted tree $T$ of depth at most $t$, 
\begin{eqnarray}
\lim_{n\to\infty}\prob_n\{\Ball_i(t) \simeq T\} = \prob\{
\Tree(t) \simeq T\}\,, 
\end{eqnarray}
where $\Tree(t)$ denotes the subtree of first $t$ generations 
of $\Tree$.
 
We also say that $\{G_n\}$ is \emph{uniformly sparse} if  
\begin{eqnarray}
\lim_{l \to \infty} \limsup_{n \to \infty}
\frac{1}{|V_n|} \sum_{i\in V_n}|\di|\, \ind(|\di|\ge l) = 0 \, ,
\end{eqnarray}
where $|\di|$ denotes the size of the set
$\di$ of neighbors of $i\in V_n$ (i.e. the degree of $i$).
\end{definition}

The proof that for locally tree-like graphs 
$\phi_n(\beta,B) =\frac{1}{n} \log Z_n(\beta,B)$ converges 
to (an explicit) limit $\phi(\beta,B)$ consists of two steps
\begin{enumerate}
\item[(a).] Reduce the computation of 
$\phi_n(\beta,B)$ to computing expectations
of local (in $G_n$)
quantities with respect to the Boltzmann measure (\ref{eq:IsingModel}).
This is achieved by noting that the derivative of $\phi_n(\beta,B)$ 
with respect to $\beta$ is a sum of such expectations.
\item[(b).] Show that under the Boltzmann measure (\ref{eq:IsingModel})
on $G_n$ expectations of local quantities are, for $t$ and $n$ large,
well 
approximated by the same expectations with respect to an Ising model on the
associated random tree $\Tree(t)$ 
(a philosophy related to that of \cite{Aldous}).
\end{enumerate}

The key is of course step (b), and the challenge is  
to carry it out when the parameter 
$\beta$ is large and we no longer have 
uniqueness of the Gibbs measure on the limiting tree $\Tree$. 
Indeed, this is done in \cite{ising} for 
the following collection of
trees of conditionally independent (and of bounded
average) offspring numbers.
\begin{definition}\label{def-cit}
An infinite labeled tree $\Tree$ rooted at the vertex $\root$
is called \emph{conditionally independent} if
for each integer $k \ge 0$, conditional on
the subtree $\Tree(k)$ of the first $k$ generations of $\Tree$,
the number of offspring $\Delta_j$ for $j \in \partial\Tree(k)$
are independent of each other, where
$\partial\Tree(k)$ denotes the set of vertices at generation $k$.
We further assume that the (conditional on $\Tree(k)$)
first moments of $\Delta_j$ are uniformly bounded by a given
non-random finite constant $\Delta$ and say that 
an unlabeled rooted  tree $\Tree$ is 
conditionally independent if $\Tree \simeq \Tree'$ for some 
conditionally independent labeled rooted tree $\Tree'$.
\end{definition}
 
As shown in \cite[Section 4]{ising} (see also 
Theorem \ref{thm:TreeDecay}),
on such a tree, local expectations are
insensitive to boundary conditions that stochastically dominate
the free boundary condition. Our program then follows 
by monotonicity arguments. An example of the monotonicity properties 
enjoyed by the Ising model is provided by Lemma \ref{lem-comp}. 

We next provide a few examples of well known 
random graph ensembles that are uniformly sparse
and converge locally to conditionally independent trees.
To this end, 
let $\node = \{\node_k:\, k\ge 0\}$ be a probability distribution over 
the non-negative integers, with finite,
positive first moment $\andeg$, set 
$\edge_k = (k+1) \node_{k+1}/\andeg$ and denote its mean as $\aedeg$.
We denote by $\Tree(\edge,t)$ the rooted Galton-Watson tree of 
$t\ge 0$ generations, i.e. the random tree such that 
 each node has offspring distribution 
$\{\edge_k\}$, and the offspring numbers at different nodes are
independent. Further, $\Tree(\node,\edge,t)$ 
denotes the modified ensemble where only the offspring 
distribution at the root is changed to $\node$. In particular, 
$\Tree(\node,\edge,\infty)$ is clearly conditionally independent.
Other examples of conditionally independent trees include: 
$(a)$
deterministic trees with bounded degree;
$(b)$ 
percolation clusters on such trees;
$(c)$ 
multi-type branching processes.
% (for example, random bipartite trees).

When working with random graph ensembles, it is often
convenient to work with the \emph{configuration models} \cite{Bol80}
defined as follows. In the case of the Erd\"os-Renyi random graph,
one draws $m$ i.i.d. edges by choosing their endpoints $i_a,j_a$
independently and uniformly at random for $a=1,\dots,m$.
For a graph with given degree distribution $\{P_k\}$, one first
partitions the vertex sets into subsets  $V_0$, of $\lfloor nP_0\rfloor$
vertices,  $V_1$ of $\lfloor nP_1\rfloor$ vertices, 
$V_2$ of $\lfloor nP_2\rfloor$ vertices, etc.
Then associate $k$ half-edges to the vertices in $V_k$ for each $k$
(eventually adding one half edge to the last node, to make their total 
number even). Finally, recursively match two uniformly random 
half edges until there is no unmatched one. Whenever we need to 
make the distinction we denote by $\prob_*(\,\cdot\,)$ probabilities 
under the corresponding configuration model.

The following simple observation transfers results from configuration
models to the associated uniform models.
\begin{lemma}\label{lemma:Contiguous}
Let $A_n$ be a sequence of events, such that, 
under the configuration model
\begin{eqnarray}
\sum_{n}\prob_*(G_n\not\in A_n)<\infty\, .
\end{eqnarray}
Further, assume $m =\lfloor \alpha n \rfloor$ with $\alpha$ fixed
(for Erd\"os-Renyi random graphs), or $\{P_k\}$ fixed, with bounded first moment
(for general degree distribution).
Then, almost surely under the uniform model, property $A_n$ holds for all 
$n$ large enough.
\end{lemma}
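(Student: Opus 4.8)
The plan is to exploit the standard fact that the configuration model conditioned on producing a simple graph yields exactly the uniform model on graphs with the prescribed degree sequence, and that this conditioning event has probability bounded away from zero (in the Erd\"os--Renyi case one must additionally discard multi-edges, but again with probability bounded below). Concretely, let $\prob_*$ denote the configuration-model law and let $\mathsf{Simp}_n$ be the event that $G_n$ has no self-loops and no multiple edges. Classical results (see \cite{Bol80}, or \cite{Rgraphs}) give that under the stated hypotheses --- $m = \lfloor \alpha n\rfloor$ with $\alpha$ fixed, or $\{P_k\}$ fixed with bounded first moment --- there is a constant $c>0$ (depending only on $\alpha$, resp.\ on $\{P_k\}$) with $\prob_*(\mathsf{Simp}_n) \ge c$ for all $n$, and that, conditioned on $\mathsf{Simp}_n$, $G_n$ is distributed as the uniform model. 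Write $\prob$ for the uniform-model law, so that $\prob(\,\cdot\,) = \prob_*(\,\cdot \mid \mathsf{Simp}_n)$.

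First I would record the trivial inequality: for any event $A_n$,
\begin{eqnarray*}
\prob(G_n \notin A_n) = \prob_*(G_n \notin A_n \mid \mathsf{Simp}_n)
\le \frac{\prob_*(G_n \notin A_n)}{\prob_*(\mathsf{Simp}_n)}
\le \frac{1}{c}\, \prob_*(G_n \notin A_n)\, .
\end{eqnarray*}
Summing over $n$ and using the hypothesis $\sum_n \prob_*(G_n \notin A_n) < \infty$, we get $\sum_n \prob(G_n \notin A_n) < \infty$. By the first Borel--Cantelli lemma, almost surely under the uniform model only finitely many of the events $\{G_n \notin A_n\}$ occur; equivalently, $A_n$ holds for all $n$ large enough, which is the claim. (Here one reads the conclusion as a statement about an independent sequence $\{G_n\}_n$, one graph drawn at each size, so that Borel--Cantelli applies directly.)

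The only genuine content, and hence the main obstacle, is the quantitative lower bound $\prob_*(\mathsf{Simp}_n) \ge c > 0$ uniformly in $n$; everything else is the soft Borel--Cantelli argument above. For the Erd\"os--Renyi configuration model with $m = \lfloor\alpha n\rfloor$ i.i.d.\ edges, the number of self-loops and the number of coincident edge-pairs are each asymptotically Poisson with bounded mean, so $\prob_*(\mathsf{Simp}_n)$ converges to a positive constant $e^{-\Theta(1)}$; a crude second-moment or inclusion--exclusion estimate suffices. For the degree-sequence configuration model, the analogous statement requires the bounded first moment of $\{P_k\}$ (this is exactly where that hypothesis enters): the expected number of loops and multi-edges is then $O(1)$, uniformly in $n$, and again Poisson approximation --- or simply Markov's inequality combined with a matching lower bound on $\prob_*(\mathsf{Simp}_n)$ via a short moment computation --- gives the desired uniform positive lower bound. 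I would cite \cite{Bol80} (and \cite{Rgraphs}) for these facts rather than reprove them, since they are entirely standard and orthogonal to the rest of the paper.
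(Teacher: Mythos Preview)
Your proof is correct and essentially identical to the paper's own argument: both use that the uniform model is the configuration model conditioned on simplicity (the paper calls this event $L_n$, you call it $\mathsf{Simp}_n$), bound $\prob(G_n\notin A_n)\le \prob_*(G_n\notin A_n)/\prob_*(L_n)$, invoke the cited uniform lower bound on $\prob_*(L_n)$, and conclude via Borel--Cantelli. The only difference is that you spell out in slightly more detail why $\prob_*(\mathsf{Simp}_n)$ is bounded away from zero, whereas the paper simply cites \cite{Rgraphs}.
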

\begin{proof} 
The point is that, the graph chosen under the configuration model 
is distributed uniformly when further conditional on the 
property $L_n$ that it has neither 
self-loops nor double edges (see \cite{Rgraphs}). 
Consequently, 
$$
\prob(G_n\not\in A_n)=
\prob_*(G_n\not\in A_n|L_n)\le \prob_*(G_n\not\in A_n)/\prob_*(L_n) \,.
$$
The thesis follows by recalling that $\prob_*(L_n)$ is bounded 
away from $0$ uniformly in $n$ for the models described here 
(c.f. \cite{Rgraphs}), and applying the Borel-Cantelli lemma.
\end{proof}

Our next lemma ensures that we only need to check 
the local (weak) convergence in expectation with respect 
to the configuration model.
\begin{lemma}\label{lemma:GraphConcentration}
Given a finite  rooted tree $T$ of at most $t$ generations, 
assume that 
\begin{eqnarray}\label{eq:mean-conv-ball}
\lim_{n\to\infty}\prob_*\{\Ball_i(t)\simeq T\} = \covm_T\,,
\end{eqnarray}
for a uniformly random vertex $i \in G_n$.
Then, under both the configuration and the uniform models
of Lemma \ref{lemma:Contiguous},
$\prob_n\{\Ball_i(t)\simeq T\} \to \covm_T$ almost surely.
\end{lemma}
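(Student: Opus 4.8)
The plan is to establish the claimed convergence first under the configuration model, where the edges are produced by an explicit sequence of (conditionally) independent choices, and then to transfer it to the uniform model via Lemma~\ref{lemma:Contiguous}. Write
\[
Z_n \;=\; Z_n(T) \;:=\; \frac{1}{n}\sum_{i\in V_n}\ind\big(\Ball_i(t)\simeq T\big)\,,\qquad n=|V_n|\,,
\]
so that $\prob_n\{\Ball_i(t)\simeq T\}=Z_n$ is a random variable depending on $G_n$. By linearity, $\E_*[Z_n]=\prob_*\{\Ball_i(t)\simeq T\}$, which by the hypothesis (\ref{eq:mean-conv-ball}) converges to $\covm_T$. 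Hence it suffices to prove, for each fixed $\epsilon>0$, a deviation bound of the form $\prob_*\big(|Z_n-\E_*[Z_n]|\ge\epsilon\big)\le e^{-c(\epsilon)\,n+o(n)}$: this gives $Z_n\to\covm_T$ almost surely under $\prob_*$ by Borel--Cantelli, and, since then $\sum_n\prob_*\big(|Z_n-\covm_T|>\epsilon\big)<\infty$, Lemma~\ref{lemma:Contiguous} applied to the events $A_n^{\epsilon}=\{|Z_n-\covm_T|\le\epsilon\}$ yields the same convergence almost surely under the uniform model; intersecting over a sequence $\epsilon_k\downarrow 0$ then finishes the argument for both models.

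To obtain the deviation bound I would run a bounded-differences (Doob martingale / McDiarmid) argument on the edge-generation process. For Erd\"os--Renyi graphs this process is the choice of the $m=\lfloor\alpha n\rfloor$ i.i.d.\ edges $e_1,\dots,e_m$; for a prescribed degree sequence it is the sequential pairing of the $\frac{1}{2}\sum_{i\in V_n}|\di|=O(n)$ half-edges, each step being a conditionally uniform match. Let $M_j=\E_*[Z_n\mid e_1,\dots,e_j]$ be the Doob martingale. Re-sampling a single step alters only $O(1)$ edges of $G_n$, and $\ind(\Ball_i(t)\simeq T)$ can change for a vertex $i$ only if $i$ lies within graph-distance $t$ of one of the $O(1)$ endpoints involved; hence, on the event that the maximum degree of $G_n$ is at most $D_n$, each increment obeys $|M_j-M_{j-1}|\le C D_n^{\,t}/n$. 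Choosing $D_n=C'\log n/\log\log n$ makes the complement of this event have probability at most $n^{-2}$, say, for the Poisson-type degrees of the Erd\"os--Renyi configuration model, and the standard truncated form of Azuma's inequality then gives
\[
\prob_*\big(|Z_n-\E_*[Z_n]|\ge\epsilon\big)\;\le\;2\exp\!\Big(-\frac{c\,\epsilon^2 n}{D_n^{\,2t}}\Big)\;+\;\prob_*\big(\max_{i\in V_n}|\di|> D_n\big)\,,
\]
which is summable in $n$ because $D_n^{\,2t}=(\log n)^{O(1)}$. An alternative avoiding martingales is a direct fourth-moment estimate $\E_*[(Z_n-\E_*[Z_n])^4]=O\big((\log n)^{O(1)}/n^{2}\big)$ --- the leading contribution coming from quadruples of vertices whose depth-$t$ balls are pairwise edge-disjoint, so that the indicators factorize --- followed by Markov's inequality, which gives the same summability.

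The step I expect to be the main obstacle is making precise the bound on the per-step influence, i.e.\ converting ``re-sampling one edge changes $O(1)$ edges'' into a uniform bound on the number of indicators $\ind(\Ball_i(t)\simeq T)$ that are affected. For the Erd\"os--Renyi and bounded-degree ensembles this is immediate from the logarithmic bound on the maximum degree. For a general prescribed degree distribution with only a bounded first moment one cannot preclude a few vertices of large degree; there the point is that the number of vertices $i$ whose depth-$t$ ball comes anywhere near such a high-degree vertex is $o(n)$ (since $\sum_{k>D}kP_k\to 0$ as $D\to\infty$), so these vertices are negligible both in $Z_n$ and in the martingale increments, and the argument above applies after excising them. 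Everything else --- the identity $\E_*[Z_n]=\prob_*\{\Ball_i(t)\simeq T\}$, the Borel--Cantelli step, and the transfer through Lemma~\ref{lemma:Contiguous} --- is routine.
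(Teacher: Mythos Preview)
Your overall architecture matches the paper's exactly: set $Z_n=\prob_n\{\Ball_i(t)\simeq T\}$, run a Doob martingale on the edge-revealing filtration, apply Azuma--Hoeffding to get summable deviation probabilities, then invoke Borel--Cantelli and Lemma~\ref{lemma:Contiguous}. Where you diverge is precisely the step you flag as the ``main obstacle,'' and the paper resolves it with an observation you are missing. Let $r$ be the maximum degree of the fixed target tree $T$. If altering a single edge $(j,k)$ flips $\ind(\Ball_i(t)\simeq T)$, then in one of the two states $\Ball_i(t)\simeq T$; in that state every vertex of $\Ball_i(t)$ has degree at most $r$, and one of $j,k$ lies in $\Ball_i(t)$. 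Hence $i$ is connected to $\{j,k\}$ by a path of length $\le t$ along which all degrees are $\le r$, and the number of such $i$ is at most $u(r,t)=2\sum_{\ell=0}^t r^{\ell}$ --- a constant depending only on $T$, not on the maximum degree of $G_n$. This gives a uniform Lipschitz bound $2u/n$ per edge (and after noting that the conditional laws given $G_n(s)$ and $G_n(s+1)$ can be coupled so that the two realizations differ in at most two edges, the martingale increments are $O(u/n)$), so Azuma yields a clean $e^{-c_0 n}$ bound with no truncation. Your route via $D_n^{\,t}$ and high-degree excision is salvageable for the specific ensembles at hand but is unnecessary: the constraint that the ball must look like $T$ already caps the relevant degrees.
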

\begin{proof} Per given value of $n$ consider the random variable 
$Z \equiv \prob_n \{\Ball_i(t) \simeq T\}$.
In view of Lemma \ref{lemma:Contiguous} and the assumption 
(\ref{eq:mean-conv-ball}) that $\E_* [Z] = \prob_* \{\Ball_i(t) \simeq T\}$
converges to $\covm_T$, it suffices 
to show that $\prob_*\{|Z - \E_*[Z]| \ge \delta\}$ is summable 
(in $n$), for any fixed $\delta>0$. To this end, let
$r$ denote the maximal degree of $T$. The
presence of an edge $(j,k)$ in the resulting 
multi-graph $G_n$ affects the event $\{\Ball_i(t) \simeq T\}$ 
only if there exists a path 
of length at most $t$ in $G_n$ between $i$ and $\{j,k\}$, 
the maximal degree along which is at most $r$.
Per given choice of $(j,k)$ there are at most 
$u=u(r,t) \equiv 2 \sum_{l=0}^t r^l$ 
such values of $i \in [n]$, hence the Lipschitz norm 
of $Z$ as a function of the location of the 
$m$ edges of $G_n$ is bounded by $2 u/n$. 
Let $G_n(t)$ denote the graph formed by the first $t$
edges (so $G_n(m)=G_n$), and introduce the martingale
$Z(t) = \E_* [Z | G_n(t)]$, so $Z(m)=Z$ and $Z(0)=\E_*[Z]$. 
A standard argument 
(c.f. \cite{AlonSpencer,UrbankeBook}),
shows that the conditional 
laws $\prob_*(\, \cdot\,|G_n(t))$ and $\prob_*(\, \cdot\,|G_n(t+1))$ of
$G_n$ can be coupled in such a way that the resulting 
two (conditional) 
realizations of $G_n$ differ by at most two edges. Consequently, 
applying Azuma-Hoeffding inequality we deduce 
%(c.f. \cite[Corollary 2.4.??]{D-Z}), 
that for 
any $T$, $M$ and $\delta>0$, some $c_0=c_0 (\delta,M,u)$ 
positive and all $m \le n M$,
\begin{equation}\label{eq:conc-bd}
\prob_* ( \big|Z - \E_*[Z] \,\big| \ge \delta) =
\prob_* ( \big|Z_m - Z_0 \, \big| \ge \delta) \le 2 e^{-c_0 n}\,,
% c_0 = \delta^2/(8 M u^2) 
\end{equation}
which is more than enough for completing the proof.
\end{proof}

\begin{propo}\label{prop:local-trees1}
Given a distribution $\{\node_l\}_{l \ge 0}$ 
of finite mean, let $\{G_n\}_{n\ge 1}$ be a sequence of graphs whereby 
$G_n$ is distributed according to the ensemble $\graph(\node,n)$
with degree distribution $\node$.
Then the sequence $\{G_n\}$ is almost surely uniformly sparse
and converges locally to $\Tree(\node,\edge,\infty)$. 
\end{propo}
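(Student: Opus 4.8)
The plan is to establish two separate assertions: almost-sure uniform sparsity, and almost-sure local convergence to $\Tree(\node,\edge,\infty)$. For the uniform sparsity, I would first observe that under the ensemble $\graph(\node,n)$ (or equivalently, by Lemma \ref{lemma:Contiguous}, under the configuration model), the degree of a uniformly random vertex has distribution converging to $\{\node_l\}$, and in fact the empirical degree distribution is $\{\lfloor n\node_l\rfloor/n\}$ by construction, so it equals $\node$ up to $O(1/n)$ corrections. Hence
\begin{eqnarray*}
\frac{1}{|V_n|}\sum_{i\in V_n}|\di|\,\ind(|\di|\ge l) \longrightarrow \sum_{k\ge l} k\,\node_k
\end{eqnarray*}
as $n\to\infty$, and this is actually a deterministic limit since the left-hand side is a deterministic function of the degree sequence, which is (essentially) fixed. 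Since $\node$ has finite mean $\andeg$, the tail sum $\sum_{k\ge l}k\node_k\to 0$ as $l\to\infty$, giving uniform sparsity; no concentration argument is even needed here.

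For the local convergence, by Lemma \ref{lemma:GraphConcentration} it suffices to show that, under the configuration model, for each finite rooted tree $T$ of depth at most $t$,
\begin{eqnarray*}
\lim_{n\to\infty}\prob_*\{\Ball_i(t)\simeq T\} = \prob\{\Tree(\node,\edge,\infty)(t)\simeq T\}\,,
\end{eqnarray*}
for a uniformly random vertex $i$. The standard way to do this is to explore $\Ball_i(t)$ in breadth-first order, revealing half-edge matchings one at a time. The root $i$ has degree $k$ with probability $\approx \node_k$ (the empirical fraction). Then each subsequent vertex reached through a half-edge: the matched half-edge lands on a vertex whose \emph{total} degree is $k$ with probability proportional to $k\node_k$ (size-biasing, since there are $k$ half-edges at each degree-$k$ vertex), so its number of \emph{further} offspring is $k-1$ with probability $\propto k\node_k$, i.e. distributed as $\edge_{k-1}$ with $\edge_k=(k+1)\node_{k+1}/\andeg$. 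One checks that after revealing any bounded number of half-edges (bounded because $T$ is finite and we only go to depth $t$), the pool of unmatched half-edges is still of size $\Theta(n)$ with the degree-class proportions essentially unchanged, so these branching probabilities hold up to $o(1)$ errors and are asymptotically independent across the explored vertices. One also needs that no ``collision'' occurs, i.e. within depth $t$ the explored neighborhood is genuinely a tree (two explored half-edges never match to each other or to an already-seen vertex); the probability of such a collision is $O(1/n)$ since at each of the boundedly-many steps we match into a pool of size $\Theta(n)$. Multiplying the per-vertex probabilities reproduces exactly the law of $\Tree(\node,\edge,t)$, which matches $\Tree(\node,\edge,\infty)(t)$ by definition.

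The main obstacle — really the only non-bookkeeping point — is making precise the claim that the half-edge pool statistics do not drift during the exploration: one must quantify that after revealing $O_t(1)$ half-edges, the number of remaining half-edges attached to degree-$k$ vertices is $n\node_k \cdot k(1+o(1))/\andeg$ uniformly, and that the matching of the next half-edge is, conditionally, close to uniform over this pool. This is a routine but slightly delicate counting argument (the kind done carefully in \cite{Rgraphs,UrbankeBook}); one typically phrases it as: condition on the partial exploration, note only finitely many half-edges and vertices have been exposed, and bound the total variation distance between the true conditional matching law and the idealized size-biased-independent one by $O(1/n)$. Combining the convergence in expectation under $\prob_*$ with Lemma \ref{lemma:GraphConcentration} upgrades this to almost-sure convergence of $\prob_n\{\Ball_i(t)\simeq T\}$ under both the configuration and uniform models, and together with the uniform sparsity established above this completes the proof. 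Finally, $\Tree(\node,\edge,\infty)$ is conditionally independent, as already noted in the text, so the limiting tree is of the asserted type.
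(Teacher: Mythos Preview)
Your proposal is correct and follows essentially the same approach as the paper. Both arguments treat uniform sparsity as immediate from the deterministic degree sequence, then reduce local convergence via Lemma \ref{lemma:GraphConcentration} to convergence of $\prob_*\{\Ball_i(t)\simeq T\}$ under the configuration model, established by a breadth-first exploration coupled to the idealized Galton--Watson tree; the paper formalizes your ``pool statistics don't drift'' and ``$O(1/n)$ collision'' heuristics by introducing a modified procedure that replaces each revealed vertex by a fresh copy (so the exploration is exactly $\Tree(\widetilde P^{(n)},\widetilde\rho^{(n)},t)$) and bounding the per-step total variation between the original and modified matchings by $2|T|/n$.
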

\begin{proof}
Note that for any random graph $G_n$ of degree distribution $\node$,
\begin{equation}\label{eq:enl-def}
E_n(l) \equiv 
\sum_{i\in V_n}|\di|\, \ind(|\di|\ge l) \le 1 + n \sum_{k \ge l} k P_k 
\equiv 1 + n \andeg_l \,.
\end{equation}
Our assumption that $\andeg = \sum_k k P_k$ is finite implies that
$\andeg_l \to 0$ as $l \to \infty$, so 
any such sequence of graphs $\{G_n\}$ is uniformly sparse. 

As the collection of finite rooted trees of {\em finite depth} is countable,
by Lemma \ref{lemma:GraphConcentration} we have the almost sure 
local convergence of $\{G_n\}$ to $\Tree(\node,\edge,\infty)$
once we show that 
$\prob_*(\Ball_i(t)\simeq T)\to \prob(\Tree(\node,\edge,t)\simeq T)$
as $n \to \infty$, where  $i \in G_n$ is a uniformly random vertex 
and $T$ is any fixed finite, rooted tree of at most $t$ generations.

To this end, we opt to describe the distribution of $\Ball_i(t)$ under 
the configuration model as follows. First fix a non-random partition 
of $[n]$ to subsets $V_k$ with $|V_k| = \lfloor n P_k\rfloor$,
and assign $k$ half-edges to each vertex in $V_k$.
Then, draw a uniformly random vertex $i \in [n]$. 
Assume it is in $V_k$, i.e.
has $k$ half-edges. Declare these half-edges `active'. 
Recursively sample $k$ unpaired (possibly active) 
half-edges, and pair the active half-edges to them. 
Repeat this procedure for the vertices thus connected to $i$ and 
proceed in a breadth first fashion 
for $t$ generations (i.e. until all edges of $\Ball_i(t)$ are
determined). Consider now the modified procedure in which, each time an 
half-edge is selected, the corresponding vertex is put in a separate list,
and replaced by a new one with the same number of half-edges, in the graph.
Half-edges in the separate list are active, but they are not among the
candidates in the sampling part. This modification yields 
$\Ball_i(t)$ which is a random tree, specifically, an instance of 
$\Tree(\widetilde{\node}^{(n)},\widetilde{\edge}^{(n)},t)$, where 
$\widetilde{\node}^{(n)}_k = \lfloor n\, P_k\rfloor/ \sum_l\lfloor n\, P_l\rfloor$.
Clearly, $\Tree(\widetilde{\node}^{(n)},\widetilde{\edge}^{(n)},t)$ 
converges in distribution as $n \to \infty$ to $\Tree(\node,\edge,t)$.
The proof is thus complete by providing a coupling in which 
the probability that either $\Ball_i(t) \simeq T$ under the modified
procedure and $\Ball_i(t) \not\simeq T$ under the original procedure
(i.e. the configurational model), or vice versa, is at most $4 |T|^2/n$. 
Indeed, after $\ell$ steps, a new vertex $j$ is sampled by the pairing
with probability $p_j\propto k_j(\ell)$ in
the original procedure and  $p_j'\propto k_j(0)$ in the modified one, where 
$k_j(\ell)$  is the number of free half-edges associated to vertex $j$ at 
step $\ell$. Having to consider at most $|T|$ steps and stopping once 
the original and modified samples differ, we get the stated coupling upon 
noting that $||p-p'||_{\rm TV}\le 2 |T|/n$ (as both samples must then be 
subsets of the given tree $T$).  
\end{proof}

\begin{propo}\label{prop:local-trees2}
Let $\{G_n\}_{n\ge 1}$ be a sequence of Erd\"os-Renyi random graphs,
i.e. of graphs drawn either from the ensemble $\graph(\alpha,n)$
or from the uniform model with $m=m(n)$ edges, where 
$m(n)/n \to \alpha$. Then, the sequence $\{G_n\}$ is
almost surely uniformly sparse and converges locally to 
the Galton-Watson tree $\Tree(\node,\edge,\infty)$
with Poisson$(2\alpha)$ offspring 
distribution $\node$
%(i.e. with $\node_k=(2\alpha)^ke^{-2\alpha}/k!$, 
(in which case $\edge_k=\node_k$). 
\end{propo}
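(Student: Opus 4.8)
\textbf{Proof proposal for Proposition~\ref{prop:local-trees2}.}
The plan is to reduce the claim to Proposition~\ref{prop:local-trees1} by exhibiting Erd\"os-Renyi graphs as graphs with an (asymptotically) prescribed degree distribution, and then running the same breadth-first exploration argument. First I would handle the uniform sparsity. For the ensemble $\graph(\alpha,n)$ one has exactly $m=\lfloor \alpha n\rfloor$ edges, hence $\sum_{i\in V_n}|\di|=2m\le 2\alpha n$ deterministically, and a standard concentration estimate (e.g. the degree of a fixed vertex is $\bin(2m,1/n)$ up to $O(1/n)$ corrections in the configuration model) shows that with probability $1-o(1)$ at most $n\aden_l$ vertices have degree $\ge l$, where $\aden_l=\prob\{\poisson(2\alpha)\ge l\}\cdot(1+o(1))$, which together with Borel--Cantelli along the configuration model and Lemma~\ref{lemma:Contiguous} gives almost sure uniform sparsity exactly as in the proof of Proposition~\ref{prop:local-trees1}. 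Alternatively, one can simply invoke Proposition~\ref{prop:local-trees1} after conditioning on the degree sequence, since conditionally the Erd\"os-Renyi graph is uniform over graphs with that degree sequence.

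Next, for the local convergence, I would again work under the configuration model of Lemma~\ref{lemma:GraphConcentration}: it suffices to prove that for any fixed finite rooted tree $T$ of at most $t$ generations, $\prob_*\{\Ball_i(t)\simeq T\}\to\prob\{\Tree(\node,\edge,t)\simeq T\}$ with $\node=\edge=\poisson(2\alpha)$. For Erd\"os-Renyi the natural construction is to draw the $m$ edges by choosing $2m$ endpoints uniformly and independently in $[n]$, i.e. put $2m$ half-edges down uniformly at random and pair them consecutively; equivalently each of the $2m$ half-edge slots lands on a uniformly random vertex. Then the exploration of $\Ball_i(t)$ in breadth-first order reveals, at each discovered vertex, a number of incident half-edges which — in the idealized (tree) version where already-used slots are not removed — is distributed as the number of the remaining $2m-O(1)$ uniform half-edges landing on that vertex, namely $\bin(2m-O(1),1/n)$, converging to $\poisson(2\alpha)$ and independent across vertices. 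So the idealized exploration produces exactly a $\Tree(\widetilde\node^{(n)},\widetilde\edge^{(n)},t)$ with $\widetilde\node^{(n)}=\widetilde\edge^{(n)}=\bin(2m,1/n)\Rightarrow\poisson(2\alpha)$, hence converges in distribution to $\Tree(\node,\edge,t)$.

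Finally I would bound the coupling error between the true exploration and the idealized one. Two discrepancies arise: (i) in the true graph the half-edge count at a newly discovered vertex is sampling-without-replacement from the slots not yet paired, rather than with replacement, a distortion of total variation $O(|T|/n)$ over the at most $|T|$ steps; and (ii) short cycles — a discovered half-edge pairing back into an already-explored vertex — which again has probability $O(|T|^2/n)$ since at most $|T|$ vertices and $O(|T|)$ half-edges have been touched and each pairing hits an already-used slot with probability $O(|T|/n)$. Stopping the coupling the first time the two explorations disagree, the probability of disagreement within $t$ generations is $O(|T|^2/n)\to 0$, exactly parallel to the coupling bound in the proof of Proposition~\ref{prop:local-trees1}. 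Combining this with Lemma~\ref{lemma:GraphConcentration} (and Lemma~\ref{lemma:Contiguous} to pass from the configuration model to the uniform model with $m(n)/n\to\alpha$, noting $\bin(2m,1/n)\Rightarrow\poisson(2\alpha)$ whenever $m/n\to\alpha$) yields almost sure local convergence to $\Tree(\node,\edge,\infty)$ with Poisson$(2\alpha)$ offspring. The only mildly delicate point — and the one I would write most carefully — is keeping the $\poisson(2\alpha)$ normalization straight (each edge contributes two half-edges, so the relevant mean is $2m/n\to 2\alpha$, not $\alpha$), together with the self-consistency $\edge_k=\node_k$ that is special to the Poisson case; everything else is a routine repetition of the argument already given for Proposition~\ref{prop:local-trees1}.
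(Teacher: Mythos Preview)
Your exploration/coupling argument is essentially correct and would yield the result, but it takes a genuinely different route from the paper. The paper does \emph{not} adapt the breadth-first exploration of Proposition~\ref{prop:local-trees1}; instead it computes $\prob_*^{\langle m\rangle}\{\Ball_i(t)\simeq T\}$ by direct combinatorial counting: it enumerates the $\prod_v (n-v)/\Delta_v!$ labelled embeddings of $T$ in $[n]$ rooted at $i$, and for a fixed embedding computes exactly the probability that the $m$ uniformly drawn edges realize the $b=|T|-1$ edges of $T$ and no edge between $T(t-1)$ and the complement of $T$. This produces a closed-form expression
\[
\prob_*^{\langle m\rangle}\{\Ball_i(t)\simeq T\}=\frac{2^b m!}{n^b(m-b)!}\Big(1-\frac{a+b}{n_2}\Big)^{m-b}\prod_{v=1}^b\frac{n-v}{(n-1)\Delta_v!}\,,
\]
whose $n\to\infty$, $m/n\to\alpha$ limit is visibly $\covm_T=\prod_v \node_{\Delta_v}$ with $\node=\poisson(2\alpha)$. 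Your coupling argument is more conceptual and parallel to Proposition~\ref{prop:local-trees1}; the paper's counting is more explicit and makes the convergence \emph{uniform} over $m$ in a window $I_n=[\alpha n-n^\gamma,\alpha n+n^\gamma]$ transparent.

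That uniformity is where your write-up has a small gap. The proposition covers two ensembles: the uniform model with a prescribed $m=m(n)$, and $\graph(\alpha,n)$ in the independent-edge sense (each edge present with probability $2\alpha/(n-1)$). The paper treats the latter by writing its law as a mixture over $m$ of the fixed-$m$ laws, showing $|E_n|\in I_n$ with high probability, and invoking the uniform-in-$m$ convergence above (together with a uniform lower bound on $\prob_*^{\langle m\rangle}(L_n)$). Your proposal addresses only the fixed-$m$ case and uses Lemma~\ref{lemma:Contiguous} to pass from configuration to uniform; the independent-edge ensemble is not mentioned. Your coupling error $O(|T|^2/n)$ is in fact uniform over $m\le Mn$, so the mixture step goes through with your method too, but you should say so explicitly and supply the concentration of $|E_n|$. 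Two smaller points: your uniform-sparsity sketch controls the \emph{number} of high-degree vertices rather than $E_n(l)=\sum_i|\di|\ind(|\di|\ge l)$ itself, and the paper closes this via Azuma--Hoeffding on $n^{-1}E_n(l)$ (Lipschitz constant $2l/n$ per edge), which you should reproduce; and your ``alternatively invoke Proposition~\ref{prop:local-trees1} after conditioning on the degree sequence'' is morally right (conditionally on the degree sequence the Erd\"os--Renyi configuration model \emph{is} a configuration model with that sequence), but Proposition~\ref{prop:local-trees1} is stated for a fixed $P$, so a black-box appeal still needs a short continuity argument in the empirical degree distribution.
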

\begin{proof} We denote by 
$\prob^{\<m\>}(\cdot)$ and $\E^{\<m\>}(\cdot)$ the 
probabilities and expectations with respect to a 
random graph $G_n$ chosen uniformly 
from the ensemble of all graphs of $m$ edges, with
$\prob_*^{\<m\>}(\cdot)$ and $\E^{\<m\>}_*(\cdot)$ 
in use for the corresponding configuration model. 

We start by proving the almost sure uniform sparsity for 
graphs $G_n$ from the uniform ensemble of $m=m(n)$ edges provided
$m(n)/n \le M$ for all $n$ and some finite $M$. To this end, 
by Lemma \ref{lemma:Contiguous} it suffices to prove 
this property for the corresponding configuration model. 
Setting 
$Z \equiv n^{-1} E_n(l)$ for $E_n(l)$ of (\ref{eq:enl-def})
and $\node^{\<m\>}$ to be the Binomial$(2m,1/n)$ distribution of 
the degree of each vertex of $G_n$ in this configuration model, 
note that $\E_*^{\<m\>}[Z] = \andeg^{\<m\>}_l \le \andeg_l$ for
$\andeg_l \equiv \sum_{k \ge l} k \node_k$ of  
the Poisson$(4 M)$ degree distribution $\node$,
any $n \ge 2$ and $m \le n M$.
% Bernoulli(1/n) bounded above by Poisson(lambda) for exp(-lambda)=1-1/n
% So get the stated bound whenever 4 M \ge - 2 m \log(1-1/n), i.e.
% M n \ge m f(n) where f(n) = 0.5 n \log [1+1/(n-1)] \le 1 for n \ge 2.
Since $\sum_k k P_k$ is finite, necessarily 
$\andeg_l \to 0$ as $l \to \infty$ and  
the claimed almost sure uniform sparsity follows 
from the summability in $n$, per fixed $l$ and $\delta>0$ of  
$\prob_*^{\<m\>} \{Z - \E_*^{\<m\>} [Z] \ge \delta\}$, 
uniformly in $m \le n M$. Recall that the presence of an edge
$(j,k)$ in the resulting multi-graph $G_n$ changes the  
value of $E_n(l)$ by at most $2 l$, hence the Lipschitz norm 
of $Z$ as a function of the location of the 
$m$ edges of $G_n$ is bounded by $2 l/n$. Thus, 
%setting now $u=l$, 
applying the Azuma-Hoeffding inequality along the lines of 
the proof of Lemma \ref{lemma:GraphConcentration}
we get here a uniform in $m \le n M$ and 
summable in $n$ bound of the form of (\ref{eq:conc-bd}). 

As argued in proving Proposition \ref{prop:local-trees1}, by Lemma
\ref{lemma:GraphConcentration} we further have the claimed 
almost sure local convergence of graphs from the 
uniform ensembles of $m=m(n)$ edges, once we verify that 
(\ref{eq:mean-conv-ball}) holds for $\prob_*^{\<m\>}(\cdot)$ 
and $\covm_T = \prob \{ \Tree(\node,\edge,t) \simeq T \}$ with
the Poisson$(2\alpha)$ offspring distribution $\node$. 
To this end, fix a finite rooted tree $T$ of depth at most $t$ and   
order its vertices from $1$ (for $\root$) to $|T|$ 
in a breadth first fashion following lexicographic 
order among siblings. Let $\Delta_v$ denote
the number of offspring of $v \in T$ with 
$T(t-1)$ the 
sub-tree of vertices within distance $t-1$ 
from the root of $T$ (so $\Delta_v=0$ 
for $v \notin T(t-1)$), and denoting by 
$b \equiv \sum_{v \le T(t-1)} \Delta_v = |T|-1$ 
the number of edges of $T$. Under our 
equivalence relation between trees there are 
$$
\prod_{v=1}^{b} \frac{n-v}{\Delta_v!} 
$$
distinct embeddings of $T$ 
in $[n]$ for which the root of 
$T$ is mapped to $1$. Fixing such an embedding, 
the event $\{ \Ball_1(t) \simeq T \}$ specifies 
the $b$ edges in the restriction of 
$E_n$ to the vertices of $T$ and further
forbids having any edge in $E_n$ between 
$T(t-1)$ and a vertex outside $T$. Thus, 
under the configuration model $\prob_*^{\<m\>}(\cdot)$ 
with $m$ edges chosen with replacement uniformly 
among the $n_2 \equiv \binom{n}{2}$ possible edges, 
%$(i,j)$, $i,j \in [n]$, 
the event $\{ \Ball_1(t) \simeq T \}$ occurs 
per such an embedding for precisely 
$(n_2-a-b)^{m-b} m!/(m-b)!$ of the
$n_2^m$ possible edge selections, 
where $a =(n-|T|) |T(t-1)| + \binom{b}{2}$.
With $\prob_*^{\<m\>} (\Ball_i(t) \simeq T)$ independent 
of $i \in [n]$, it follows that 
$$
\prob_*^{\<m\>} (\Ball_i(t) \simeq T) =
\frac{2^b m!}{n^b (m-b)!} \, \Big(1-\frac{a+b}{n_2}\Big)^{m-b} 
\prod_{v=1}^{b} \frac{n-v}{(n-1) \Delta_v!} \,.
$$
Since $b$ is independent of $n$ and $a = n|T(t-1)| + O(1)$,
it is easy to verify that for $n \to \infty$ and $m/n \to \alpha$  
the latter expression converges to 
$$
\covm_T \equiv 
(2 \alpha)^b e^{-2\alpha |T(t-1)|} \prod_{v=1}^{b} \frac{1}{\Delta_v!}
= \prod_{v=1}^{|T(t-1)|} \node_{\Delta_v} 
= \prob \{ \Tree(\node,\edge,t) \simeq T \} 
$$
(where $\node_k = (2\alpha)^k e^{-2\alpha}/k!$, hence 
$\edge_k=\node_k$ for all $k$). 
Further, fixing $\gamma < 1$ and denoting by $I_n$ the interval 
of width $2 n^\gamma$ around $\alpha n$, it is not hard to check that 
$\prob_*^{\<m\>} (\Ball_i(t) \simeq T) \to \covm_T$ 
uniformly over $m \in I_n$. 

Let $\prob^{(n)}(\cdot)$ and $\E^{(n)}(\cdot)$ denote 
the corresponding laws and expectations with respect  
to random graphs $G_n$ from the ensembles $\graph(\alpha,n)$, 
i.e. where each edge is chosen independently with probability 
$q_n = 2\alpha/(n-1)$. The preceding almost sure local convergence 
and uniform sparseness extend to these graphs since  
each law $\prob^{(n)}(\cdot)$ is a mixture
of the laws $\{ \prob^{\<m\>}(\cdot), m=1,2,\ldots\}$ with 
mixture coefficients $\prob^{(n)}(|E_n|=m)$ that are
concentrated on $m \in I_n$. Indeed, 
by the same argument as in the proof of 
Lemma \ref{lemma:Contiguous}, for any sequence of events $A_n$, 
\begin{equation}\label{eq:mix-bc}
\prob^{(n)}(G_n \notin A_n) \le 
\prob^{(n)}(|E_n| \notin I_n) + 
\eta^{-1} \sup_{m \in I_n} \prob_*^{\<m\>} (G_n \notin A_n) \,,
\end{equation}
where 
$$
\eta = \liminf_{n \to \infty} \inf_{m \in I_n} \prob_*^{\<m\>}(L_n) \,,
$$ 
is strictly positive (c.f. \cite{Rgraphs}). Under $\prob^{(n)}(\cdot)$ 
the random variable $|E_n|$ has the Binomial$(n(n-1)/2,q_n)$ 
distribution (of mean $\alpha n$). Hence, upon 
applying Markov's inequality, 
we find that for some finite $c_1=c_1(\alpha)$ and all $n$,
$$
\prob^{(n)}(|E_n| \notin I_n) 
\le n^{-4 \gamma} \, \E^{(n)} [(|E_n|-\alpha n)^4] 
\le c_1 n^{2-4 \gamma} \,,
$$
so taking $\gamma>3/4$ guarantees the summability (in $n$), 
of $\prob^{(n)}(|E_n| \notin I_n)$. For given
$\delta>0$ we already proved the summability in $n$ of 
$\sup_{m \in I_n} \prob_*^{\<m\>} (G_n \notin A_n)$ both for
$A_n = \{ n^{-1} E_n(l) < \andeg_l + \delta \}$ and for
$A_n = \{ |\prob_n(\Ball_i(t) \simeq T) - \covm_T| < 2\delta\}$.
In view of this, considering (\ref{eq:mix-bc}) for the former choice
of $A_n$ yields the almost sure uniform sparsity of 
Erd\"os-Renyi random graphs from $\graph(\alpha,n)$, while
the latter choice of $A_n$ yields the almost 
sure local convergence of these random graphs to 
the Galton-Watson tree $\Tree(\node,\edge,\infty)$
with Poisson$(2\alpha)$ offspring distribution.
\end{proof}

\begin{remark}\label{rem:local-trees1}
As a special case of Proposition \ref{prop:local-trees1}, almost 
every sequence of uniformly random $k$-regular graphs 
of $n$ vertices converges locally to the (non-random) 
rooted $k$-regular infinite tree $T_k(\infty)$. 
\end{remark}

Let $T_k(\ell)$ denote the tree induced by the first 
$\ell$ generations of $T_k(\infty)$,
i.e.  $T_k(0) = \{\root\}$ and for $\ell \ge 1$ the tree
$T_k(\ell)$ has $k$ offspring at $\root$ and $(k-1)$ offspring 
for each vertex at generations $1$ to $\ell-1$.
It is easy to check that for any $k \ge 3$,
the sequence of finite trees $\{T_k(\ell)\}_{\ell\ge 0}$ 
\emph{does not converge locally to $T_k(\infty)$}.
Instead, it converges to the following random $k$-canopy tree 
(c.f. \cite{AizenmanCanopy} for a closely related definition).
\begin{lemma}\label{dfn:canopy} 
For any $k \ge 3$, 
the sequence of finite trees $\{T_k(\ell)\}_{\ell\ge 0}$ 
converges locally to the $k$-\emph{canopy tree}. This 
random infinite tree, denoted $\CTree_k$, is formed by the union
of the infinite ray $\vec{R} \equiv 
\{(r,r+1), r \ge 0 \}$ 
and additional finite trees $\{ T_{k-1}(r), r \ge 0\}$
such that $T_{k-1}(r)$ is rooted at the $r$-th vertex 
along $\vec{R}$. The root of $\CTree_k$ is 
%  a vertex 
on $\vec{R}$ with 
$\prob(\CTree_k$ rooted at $r)=(k-2)/(k-1)^{r+1}$ for $r\ge 0$.
\end{lemma}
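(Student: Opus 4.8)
The plan is to reduce the statement to two elementary facts about the deterministic trees $T_k(\ell)$. First, a uniformly random vertex lies within bounded distance of the leaves with probability tending to one, and this (random) distance has an explicit limit law; second, once such a vertex is at distance at least $t$ from the root, the labelled isomorphism type of $\Ball_i(t)$ is completely determined and agrees with the canopy picture. Note that the $T_k(\ell)$ are deterministic, so here $\prob_n$ is simply the uniform law on the vertices of $T_k(\ell)$.

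First I would record the counts. The tree $T_k(\ell)$ has one vertex at generation $0$ and exactly $k(k-1)^{j-1}$ vertices at each generation $1 \le j \le \ell$, hence $|T_k(\ell)| = N_\ell = 1 + k\{(k-1)^\ell - 1\}/(k-2)$ and $N_\ell = \frac{k}{k-2}(k-1)^\ell(1+o(1))$ as $\ell \to \infty$. Writing the generation of $i$ as $\ell - s$ with $s \ge 0$, a uniformly random vertex therefore sits at generation $\ell - s$ with probability $k(k-1)^{\ell-s-1}/N_\ell$, which converges for each fixed $s$ to $(k-2)/(k-1)^{s+1}$ (the root, $s = \ell$, receiving vanishing probability). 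These limits sum to $1$ and already coincide with the announced law of the root of $\CTree_k$.

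Next I would pin down $\Ball_i(t)$. Since all vertices of $T_k(\ell)$ at a common generation $j$ have the same labelled rooted neighbourhood of any given radius---a structure depending only on $j$ and $\ell$---the ball $\Ball_i(t)$ depends (up to $\simeq$) only on $s$ when $i$ is at generation $\ell - s$; call it $B^{(s,\ell)}(t)$. The key observation is that as soon as $\ell - s - t \ge 1$ no vertex within distance $t$ of $i$ is the root, so every such vertex is either internal of degree $k$ or a leaf of $T_k(\ell)$; tracing the paths out of $i$ one then checks that $B^{(s,\ell)}(t)$ does not depend on $\ell$ at all---call it $B^{(s)}(t)$---and that it coincides, as a labelled rooted tree, with the depth-$t$ ball around the $s$-th vertex of $\vec{R}$ in $\CTree_k$: the $t$ ancestors of $i$ realize the bottom of the ray $\vec{R}$, while the subtrees hanging off $i$ and off those ancestors, each cut at the appropriate depth, realize the truncated pendant trees $T_{k-1}(\cdot)$.

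Finally I would assemble the limit. For a fixed rooted tree $T$ of depth at most $t$,
\begin{eqnarray*}
\prob_n\{\Ball_i(t) \simeq T\} = \sum_{s=0}^{\ell} \frac{\#\{i:\, i \text{ at generation } \ell - s\}}{N_\ell}\, \ind\big(B^{(s,\ell)}(t) \simeq T\big)\, .
\end{eqnarray*}
Given $\delta > 0$, choose $L$ with $(k-1)^{-(L+1)} < \delta$. The total fraction carried by the terms $s > L$ converges (by the counts above) to $(k-1)^{-(L+1)} < \delta$, while for each $s \le L$ and all $\ell$ large enough (so that $\ell - s - t \ge 1$) the $s$-th term equals $(k-2)(k-1)^{-(s+1)}\ind(B^{(s)}(t) \simeq T) + o(1)$. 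Letting $n \to \infty$ and then $\delta \downarrow 0$ gives $\prob_n\{\Ball_i(t) \simeq T\} \to \sum_{s \ge 0}(k-2)(k-1)^{-(s+1)}\ind(B^{(s)}(t) \simeq T) = \prob\{\CTree_k(t) \simeq T\}$, which is the assertion. I expect the only genuinely delicate step to be the third one---checking the $\ell$-independence of $B^{(s,\ell)}(t)$ and its exact identification with the canopy neighbourhood, with the breadth-first/lexicographic labelling in $\simeq$ kept straight; the remainder is bookkeeping with geometric sums together with the routine tail truncation just described.
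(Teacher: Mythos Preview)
Your proposal is correct and follows essentially the same approach as the paper's own proof. The paper's argument is a one-sentence sketch---compute $n_{\ell-r}/|T_k(\ell)| \to (k-2)/(k-1)^{r+1}$ and note that the ball around a vertex at generation $\ell-r$ matches the ball around vertex $r$ in $\CTree_k$---and you have simply written out the details, including the explicit tail truncation and the observation that $B^{(s,\ell)}(t)$ stabilizes once $\ell-s-t\ge 1$.
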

\begin{proof} This local convergence is immediate upon noting that 
there are exactly $n_r = k(k-1)^{r-1}$ vertices at generation 
$r \ge 1$ of $T_k(\ell)$, hence 
$|T_k(\ell)| = [k(k-1)^{\ell}-2]/(k-2)$ 
and $n_{\ell-r}/|T_k(\ell)| \to \prob(
\CTree_k$ rooted at $r)$ as 
$\ell \to \infty$, for each fixed $r \ge 0$ and $k \ge 3$
(and $\Ball_i(\ell)$ matches for each $i$ of generation 
$\ell-r$ in $T_k(\ell)$ the 
ball $\Ball_r(\ell)$ of the $k$-canopy tree). 
\end{proof}

\begin{remark}
%\begin{exercise}
Note that the $k$-canopy tree is not conditionally independent.
%(for any $k \ge 3$).
%\end{exercise}
\end{remark}

\subsection{Ising models on conditionally independent trees}
Following \cite{ising} it is convenient to extend the
model (\ref{eq:IsingModel}) by 
allowing for vertex-dependent magnetic fields $B_i$, i.e. to consider 
\begin{eqnarray}
\mu(\ux) =\frac{1}{Z(\beta,\uB)}\, \exp\Big\{\beta\sum_{(i,j)\in E}x_ix_j
+\sum_{i\in V}B_ix_i\Big\}\,  .
\label{eq:IsingModelGen}
\end{eqnarray}
In this general context, it is possible to
prove correlation decay results for Ising models on 
conditionally independent trees. Beyond their  independent interest,
such results play a crucial role in our analysis of models on sparse graph
sequences.

To state these results
denote 
by $\mu^{\ell,0}$ the Ising model (\ref{eq:IsingModelGen}) on 
$\Tree(\ell)$ with magnetic fields $\{B_i\}$ 
(also called free boundary conditions),  
and by $\mu^{\ell,+}$ the modified Ising 
model corresponding to the limit $B_i \uparrow +\infty$ 
for all $i\in \partial\Tree(\ell)$ (also called plus boundary conditions), 
using $\mu^{\ell}$ for statements that apply to both free 
and plus boundary conditions. 
\begin{thm}\label{thm:TreeDecay}
Suppose $\Tree$ is a conditionally independent infinite tree
of average offspring numbers bounded by $\Delta$,
as in Definition \ref{def-cit}.
Let $\<\,\cdot\,\>^{(r)}_{i}$ denote the expectation with respect to
the Ising distribution 
on the subtree of $i$ and all its descendants in
$\Tree(r)$ and $\<x;y\>\equiv \<xy\>-\<x\>\<y\>$ denotes the 
centered two point correlation function.
There exist $A$ finite and $\lambda$ positive, depending only on 
$0 < B_{\min} \le B_{\max}$, $\beta_{\max}$ and $\Delta$ finite, 
such that if $B_i \le B_{\max}$ for all $i \in \Tree(r-1)$ and 
$B_i\ge B_{\min}$ for all $i \in \Tree(\ell)$, then for any $r \le \ell$ 
and $\beta \le \beta_{\max}$, 
\begin{eqnarray}
\E\Big\{\sum_{i\in \partial\Tree(r)} \,\<x_{\root};x_i\>_{\root}^{(\ell)}
\Big\}\le A \, e^{-\lambda r} \,. 
\label{eq:ExponentialDecay}
\end{eqnarray}
If in addition $B_i \le B_{\max}$ for all $i \in \Tree(\ell-1)$ then 
for some $C=C(\beta_{\max},B_{\max})$ finite 
\begin{eqnarray}\label{eq:bd-influence}
\E\, ||\mu_{\Tree(r)}^{\ell,+}-\mu_{\Tree(r)}^{\ell,0}||_{\sTV}\le
A e^{-\lambda (\ell-r)} \, \E\{C^{|\Tree(r)|}\}\, .
\end{eqnarray}
\end{thm}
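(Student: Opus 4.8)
The plan is to reduce both assertions to the scalar ``effective-field'' recursion on the tree and then to extract a correlation-decay estimate for it, following \cite{ising}. Recall that for a ferromagnetic Ising model on a finite tree the marginal of $x_v$ under the measure on the subtree rooted at $v$ is proportional to $e^{h_v x_v}$, where the effective fields solve $h_v = B_v + \sum_{u \in \mathrm{ch}(v)} g(h_u)$ with $g(h) = \atanh(\tanh\beta\,\tanh h)$ and $\mathrm{ch}(v)$ the children of $v$, the boundary convention being $h_j = B_j$ at a leaf $j$ (and $h_j = +\infty$ under plus boundary conditions); in particular $\<x_v\> = \tanh h_v$. Two monotonicity facts are used throughout: by the GKS inequalities the centered correlations $\<x_\root; x_i\>$ are nonnegative and each $\<x_v\>$ is nondecreasing in every field, and since $B_i \ge B_{\min}>0$ on $\Tree(\ell)$ every $h_v \ge B_{\min}$, whence also $h_v \ge B_{\min} + |\mathrm{ch}(v)|\,g(B_{\min})$. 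The first step for \eqref{eq:ExponentialDecay} is the identity $\sum_{i\in\partial\Tree(r)} \<x_\root; x_i\>^{(\ell)}_\root = (1-\tanh^2 h_\root)\,\partial_s h_\root\big|_{s=0}$, where $h_\root(s)$ is recomputed after shifting every field at generation $r$ by $s$; differentiating the recursion gives $\partial_s h_\root = \sum_{P} \prod_{v\in P\setminus\{\root\}} g'(h_v)$, the sum running over all root-to-generation-$r$ paths $P$.

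The heart of the matter is to bound this weighted path count in expectation by $A\,e^{-\lambda r}$, with $A,\lambda$ depending only on $B_{\min},B_{\max},\beta_{\max},\Delta$ and --- crucially --- with \emph{no} lower bound imposed on $B_{\min}$. The mechanism is the self-reinforcement built into the recursion: one always has $g'(h)<\tanh\beta$, but $g'(h)$ decays like $e^{-2h}$ for large $h$, while $h_v \ge B_{\min} + |\mathrm{ch}(v)|\,g(B_{\min})$ grows with the degree of $v$, so a high-degree vertex strongly damps every path through it. A single-generation version of this estimate only yields geometric decay once $B_{\min}$ is large relative to $\Delta$ and $\beta_{\max}$; to cover all $B_{\min}>0$ one must iterate the feedback over many generations, using the conditional-independence structure (conditionally on $\Tree(k)$ the offspring numbers at generation $k$ are independent with means $\le\Delta$) to take expectations generation by generation and exhibit a genuinely sub-critical effective branching number. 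This multi-scale correlation-decay estimate --- which must fail at $B=0$ and $\beta$ large, where the recursion has an unstable fixed point and coexistence occurs --- is the main obstacle, and it is the substance of \cite[Section 4]{ising}.

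Granting \eqref{eq:ExponentialDecay} (and its analogue for subtrees), the bound \eqref{eq:bd-influence} follows quickly. The marginal of $\mu^{\ell,\cdot}$ on $\Tree(r)$ is itself a ferromagnetic Ising measure on $\Tree(r)$ with the original fields, except that the field at each $v\in\partial\Tree(r)$ is shifted by the effective field $h_v^{\ell,\cdot}$ produced by the subtree of $v$ below generation $r$; the free and plus boundary conditions at generation $\ell$ affect $\mu_{\Tree(r)}$ only through these $|\partial\Tree(r)|$ shifts, and $h_v^{\ell,+}\ge h_v^{\ell,0}\ge 0$. A routine perturbation bound then gives $\| \mu^{\ell,+}_{\Tree(r)}-\mu^{\ell,0}_{\Tree(r)} \|_{\sTV} \le \sum_{v\in\partial\Tree(r)} (h_v^{\ell,+}-h_v^{\ell,0})$ up to a universal constant, since moving one field by $\delta$ moves the measure by $O(\delta)$ in total variation. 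Conditionally on $\Tree(r)$ the subtrees below the generation-$r$ vertices are independent conditionally-independent trees of depth $\ell-r$ with interior fields in $[B_{\min},B_{\max}]$ (here the hypothesis $B_i\le B_{\max}$ for $i\in\Tree(\ell-1)$ enters, to control the difference fed in at generation $\ell-1$), so running the contraction of the previous step from generation $\ell$ down to generation $r$ gives $\E[\,h_v^{\ell,+}-h_v^{\ell,0}\mid \Tree(r)\,]\le A\,e^{-\lambda(\ell-r)}$ uniformly in $v$. Summing over $v\in\partial\Tree(r)$ and then averaging over $\Tree(r)$, with the per-vertex constants and the number $|\partial\Tree(r)|\le|\Tree(r)|$ of summands absorbed into a factor $\le C^{|\Tree(r)|}$, yields the asserted bound.
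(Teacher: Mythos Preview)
Your outline is sound and, like the paper itself, defers the hard estimate to \cite[Section 4]{ising}; the difference is in the advertised toolkit. You work with the cavity recursion and the exact path-product identity $\<x_\root;x_i\>^{(\ell)}_\root=(1-\tanh^2 h_\root)\prod_{v\in P\setminus\{\root\}}g'(h_v)$, while the paper names the GHS inequality and a tree extension of Simon's inequality, Eq.~\eqref{ineq-simon}:
\[
\<x_\root;x_k\>^{(\ell)}_\root\le\cosh^2(2\beta+B_i)\,\<x_\root;x_i\>^{(t)}_\root\,\<x_j;x_k\>^{(\ell)}_j\, .
\]
The point of the Simon route is that the first factor on the right lives on the \emph{truncated} tree $\Tree(t)$ and is therefore decoupled from everything below generation $t$; this is exactly what makes a block iteration clean once one shows (with GHS supplying the needed monotonicity of correlations in the fields) that $C\,\E\sum_{i\in\partial\Tree(t_0)}\<x_\root;x_i\>^{(t_0)}_\root<1$ for some fixed $t_0=t_0(B_{\min},\beta_{\max},\Delta)$. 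In your formula, by contrast, every $g'(h_v)$ depends on the entire subtree below $v$ down to generation $\ell$, so ``taking expectations generation by generation'' is not immediate; the one-step bound $h_v\ge B_{\min}+d_v\,g(B_{\min})$ you write is, as you note, too weak for small $B_{\min}$, and the multi-scale repair you allude to is precisely what Simon+GHS packages. Your route can be made to work (on trees the recursion encodes the same concavity GHS provides), but it is the harder presentation of the same mechanism.

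For \eqref{eq:bd-influence} your reduction is correct: the TV distance is bounded by $\sum_{v\in\partial\Tree(r)}(h_v^{\ell,+}-h_v^{\ell,0})$, and iterating $g(h^+)-g(h^0)\le g'(h^0)(h^+-h^0)$ down each subtree yields a path-product sum of depth $\ell-r$. One point worth making explicit: the first part controls $\E\big[(1-\tanh^2 h_\root)\sum_P\prod g'(h_v)\big]$, whereas here you need $\E\big[\sum_P\prod g'(h_v^0)\big]$ without the $\mathrm{sech}^2$ prefactor. Absorbing that discrepancy (via $1-\tanh^2 h_v\ge c$ when $h_v\le B_{\max}+\beta\,|\partial v|$) is where the hypothesis $B_i\le B_{\max}$ on $\Tree(\ell-1)$ and the eventual factor $C^{|\Tree(r)|}$ enter.
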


The proof of this theorem, given in \cite[Section 4]{ising}, relies on  
monotonicity properties of the Ising measure, and in particular on the 
following classical inequality.
\begin{propo}[Griffiths inequalities]
Given a finite set $V$ and parameters 
$\bJ=(J_R, R \subseteq V)$ with $J_R \geq 0$,
consider the extended ferromagnetic Ising measure 
\begin{eqnarray}
\mu_{\bJ}(\ux) = 
\frac{1}{Z(\bJ)} 
\exp\Big\{\sum_{R \subseteq V} J_R x_R\Big\}\, ,\label{eq:extIsing}
\end{eqnarray}
where $\ux \in \{+1,-1\}^V$ and 
$x_R\equiv \prod_{u \in R} \, 
x_u$. Then, for $\uX$ of law $\mu_{\bJ}$ and any $A,B \subseteq V$, 
\begin{align}
%\label{eq:Gpos}
\E_\bJ [ X_A ] &= \frac{1}{Z(\bJ)}\, \sum_{\ux} x_A 
\exp\Big\{\sum_{R \subseteq V} J_R x_R\Big\} \geq 0 \,,
\\
\frac{\partial\phantom{J_B}}{\partial J_B} 
\E_\bJ [X_A] &= {\rm Cov}_\bJ (X_A,X_B) \geq 0 \,.
\end{align}
\end{propo}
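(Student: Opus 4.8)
These two assertions are the first and second Griffiths (GKS) inequalities, and the plan is to establish them in turn. For the first, I would expand the Boltzmann weight in powers of the couplings: since $V$ is finite,
\[
\exp\Big\{\sum_{R\subseteq V}J_R x_R\Big\}=\prod_{R\subseteq V}e^{J_R x_R}
=\sum_{(k_R)}\ \prod_{R\subseteq V}\frac{J_R^{k_R}}{k_R!}\,x_R^{k_R}
\]
with absolute convergence. Multiplying by $x_A$ and summing over $\ux\in\{+1,-1\}^V$, each monomial contributes $\prod_{v\in V}\big(\sum_{x_v\in\{+1,-1\}}x_v^{m_v}\big)$, where $m_v$ is the total power of $x_v$; by the elementary identity $\sum_{s\in\{+1,-1\}}s^{m}=2\,\ind(m\text{ even})$ this is $2^{|V|}$ when every $m_v$ is even and $0$ otherwise. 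Hence $Z(\bJ)\,\E_\bJ[X_A]=\sum_\ux x_A\exp\{\sum_R J_R x_R\}$ is a sum of terms $\big(\prod_R J_R^{k_R}/k_R!\big)\,2^{|V|}\,\ind(\text{all }m_v\text{ even})$, each nonnegative because $J_R\ge0$; dividing by $Z(\bJ)>0$ then gives $\E_\bJ[X_A]\ge0$.

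The identity $\partial_{J_B}\E_\bJ[X_A]={\rm Cov}_\bJ(X_A,X_B)$ is the routine exponential-family computation: differentiating $\E_\bJ[X_A]=Z(\bJ)^{-1}\sum_\ux x_A e^{\sum_R J_Rx_R}$ and using $\partial_{J_B}e^{\sum_R J_Rx_R}=x_B e^{\sum_R J_Rx_R}$ yields $\E_\bJ[X_AX_B]-\E_\bJ[X_A]\E_\bJ[X_B]$ (which, since $x_v^2=1$, also equals $\E_\bJ[X_{A\triangle B}]-\E_\bJ[X_A]\E_\bJ[X_B]$). To see that this covariance is nonnegative, I would introduce a pair of independent replicas $(\uX,\uX')$, each with law $\mu_\bJ$, so the pair has weight $\exp\{\sum_R J_R(x_R+x_R')\}$ and, because $\E_\bJ[X_A]\E_\bJ[X_B]$ equals the product-measure expectation of $x_Ax_B'$,
\[
{\rm Cov}_\bJ(X_A,X_B)=\frac{1}{Z(\bJ)^2}\sum_{\ux,\ux'}x_A\,(x_B-x_B')\,\exp\Big\{\sum_R J_R(x_R+x_R')\Big\}\,.
\]

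Then I would apply the bijective change of variables $\sigma_v:=x_v$, $\tau_v:=x_vx_v'$ on $\{+1,-1\}^V\times\{+1,-1\}^V$ (with inverse $x_v=\sigma_v$, $x_v'=\sigma_v\tau_v$). Under it $x_R=\sigma_R$, $x_R'=\sigma_R\tau_R$, the doubled energy becomes $\sum_R J_R\,\sigma_R(1+\tau_R)$, and $x_A(x_B-x_B')=\sigma_A\sigma_B(1-\tau_B)$. Expanding $\exp\{\sum_R J_R\sigma_R(1+\tau_R)\}$ in powers of the couplings as before and interchanging with the finite sum over $(\sigma,\tau)$, it suffices to show that for each fixed multi-index $(k_R)$,
\[
\Big(\sum_{\sigma}\sigma_A\sigma_B\prod_R\sigma_R^{k_R}\Big)\Big(\sum_{\tau}(1-\tau_B)\prod_R(1+\tau_R)^{k_R}\Big)\ \ge\ 0 .
\]
The first factor equals $\prod_v\big(\sum_{\sigma_v}\sigma_v^{m_v}\big)\in\{0,2^{|V|}\}$ as above; in the second factor every summand is nonnegative, being a product of $1-\tau_B\in\{0,2\}$ and the $(1+\tau_R)^{k_R}\ge0$ (since $1+\tau_R\in\{0,2\}$). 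Multiplying by $\prod_R J_R^{k_R}/k_R!\ge0$ and summing over $(k_R)$ yields ${\rm Cov}_\bJ(X_A,X_B)\ge0$.

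The hard part is the replica substitution in the second inequality: one has to spot that $x_v'=\sigma_v\tau_v$ turns the two-replica Hamiltonian into $\sum_R J_R\sigma_R(1+\tau_R)$, whose power-series coefficients, paired with the factor $1-\tau_B$, become manifestly nonnegative after the variables $\sigma$ and $\tau$ are summed out. Everything else --- the first inequality and the covariance identity --- is bookkeeping resting on $\sum_{s\in\{+1,-1\}}s^m=2\,\ind(m\text{ even})$ and on $Z(\bJ)>0$.
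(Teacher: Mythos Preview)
Your proof is correct. The paper itself does not give a proof of this proposition: it simply refers the reader to \cite[Theorem IV.1.21]{Liggett} and to \cite{Ginibre} for generalizations. What you have written is precisely the classical Ginibre duplication argument (the change of variables $\sigma_v=x_v$, $\tau_v=x_vx_v'$ turning $x_R+x_R'$ into $\sigma_R(1+\tau_R)$), together with the standard power-series expansion for the first inequality; so you have supplied the details the paper chose to omit, and your approach coincides with the one in the cited references.
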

\noindent{\bf Proof.}
See \cite[Theorem IV.1.21]{Liggett} (and consult \cite{Ginibre}
for generalizations of this result). 

Note that the measure $\mu(\cdot)$ of (\ref{eq:IsingModelGen}) 
is a special case of $\mu_{\bJ}$ 
(taking $J_{\{i\}}=B_i$, $J_{\{i,j\}} = \beta$ for
all $(i,j) \in E$ and $J_R=0$ for all other subsets of $V$).
Thus, Griffiths inequalities allow us to compare certain marginals
of the latter measure 
for a graph $G$ and non-negative $\beta$, $B_i$ with 
those for other choices of $G$, $\beta$ and $B_i$. To demonstrate
this, we state (and prove) the following well 
known general comparison results. 
\begin{lemma}\label{lem-comp}
Fixing $\beta \ge 0$ and $B_i \geq 0$, for any finite graph $G=(V,E)$  
and $A \subseteq V$ let $\<x_A\>_G=\mu(x_A=1)-\mu(x_A=-1)$ denote the
mean of $x_A$ under the corresponding Ising measure on $G$.
%(which for $A=\{v\}$ is the magnetization induced at $v \in V$
% and for $A=\{v,u\}$ is the correlation between the spin values
%$x_v$ and $x_u$). 
Similarly, for $U \subseteq V$ 
let $\<x_A\>^0_U$ and $\<x_A\>^+_U$ denote 
the magnetization induced by the Ising measure 
subject to free (i.e. $x_u=0$) and plus (i.e. $x_u=+1$) 
boundary conditions, respectively, at all $u \notin U$.
Then, $\<x_A\>^0_U \leq \<x_A\>_G \leq \<x_A\>^+_U$ for any $A \subseteq U$. 
Further, $U \mapsto \<x_A\>^0_U$ is monotone non-decreasing and 
$U \mapsto \<x_A\>^+_U$ is monotone non-increasing, both with respect
to set inclusion (among sets $U$ that contain $A$). 
\end{lemma}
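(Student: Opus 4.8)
The plan is to recognize every measure in the statement as an instance of the extended ferromagnetic Ising measure $\mu_{\bJ}$ of~(\ref{eq:extIsing}) with \emph{non-negative} couplings, and then to read off all four assertions from the second Griffiths inequality, namely from the fact that $\bJ\mapsto\E_{\bJ}[X_A]$ is non-decreasing in each coordinate $J_R$ on the region $\{J_R\ge 0\}$.

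First I would set up the dictionary. The Ising measure on $G$ is $\mu_{\bJ}$ with $J_{\{i\}}=B_i\ge 0$, $J_{\{i,j\}}=\beta\ge 0$ for $(i,j)\in E$ and $J_R=0$ otherwise, so $\<x_A\>_G=\E_{\bJ}[X_A]$. Imposing the free boundary condition on $U$ (``$x_u=0$'' for $u\notin U$) simply deletes every edge incident to $V\setminus U$ and every field at a site of $V\setminus U$; it is therefore $\mu_{\bJ^{0}}$, where $\bJ^{0}$ agrees with $\bJ$ on subsets of $U$ and vanishes on every other $R$ (the spins on $V\setminus U$ then decouple and, since $A\subseteq U$, do not affect $X_A$). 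Imposing the plus boundary condition, i.e.\ fixing $x_u=+1$ for $u\notin U$, turns each edge $(i,j)$ with $i\in U$, $j\notin U$ into an additional field $\beta x_i$ at $i$ and each edge inside $V\setminus U$ into a multiplicative constant; thus $\<\,\cdot\,\>^{+}_{U}=\mu_{\bJ^{+}}$ with $J_{\{i\}}=B_i+\beta\,\bigl|\{j\notin U:(i,j)\in E\}\bigr|\ge 0$, $J_{\{i,j\}}=\beta$ for edges inside $U$, and all other $J_R=0$. I would also note that fixing a spin to $+1$ is the monotone limit $B_u\uparrow+\infty$: by the second Griffiths inequality $\E_{\bJ}[X_A]$ is non-decreasing in $B_u$ along the way, so $\<x_A\>^{+}_{U}=\lim\E_{\bJ}[X_A]$ along a path of non-negative couplings obtained by raising some fields from their values in $\bJ$ towards $+\infty$.

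With this dictionary in hand the proof is a one-line application of Griffiths II in each case. For $A\subseteq U\subseteq U'$, the coupling vector $\bJ^{0}$ associated with $U'$ dominates coordinate-wise the one associated with $U$ (passing from $U$ to $U'$ only switches on extra non-negative couplings $\beta$ on the edges of $E[U']\setminus E[U]$ and extra fields $B_u\ge 0$ at the sites $u\in U'\setminus U$), hence $\<x_A\>^{0}_{U}\le\<x_A\>^{0}_{U'}$, which is the claimed monotonicity of the free case; taking $U'=V$, where the free boundary condition is vacuous, gives $\<x_A\>^{0}_{U}\le\<x_A\>^{0}_{V}=\<x_A\>_G$. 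Symmetrically, $\mu^{+}_{U}$ is obtained from $\mu^{+}_{U'}$ by further fixing $x_u=+1$ for $u\in U'\setminus U$, which by the monotone-limit observation above only increases $\E[X_A]$; hence $\<x_A\>^{+}_{U'}\le\<x_A\>^{+}_{U}$, the claimed monotonicity of the plus case, and with $U'=V$ this yields $\<x_A\>_G=\<x_A\>^{+}_{V}\le\<x_A\>^{+}_{U}$, completing the sandwich.

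There is no serious obstacle here; the only points requiring care are purely bookkeeping: checking that every effective coupling produced by a boundary condition is still $\ge 0$ so that Griffiths II applies throughout, and confirming that conditioning boundary spins to $+1$ is genuinely a monotone $B\uparrow+\infty$ limit rather than a new operation outside the scope of the comparison inequality. Once these are spelled out, the lemma follows directly from the Griffiths inequalities recorded above.
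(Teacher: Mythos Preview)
Your proposal is correct and follows essentially the same approach as the paper: both arguments identify each boundary condition with a coupling vector $\bJ$ in the extended ferromagnetic Ising model~(\ref{eq:extIsing}), realize the plus boundary as the monotone limit $B_u\uparrow+\infty$, and then read off all four inequalities from the coordinate-wise monotonicity $\bJ\mapsto\E_{\bJ}[X_A]$ given by Griffiths~II. The only cosmetic difference is that you also spell out the alternative description of $\<\,\cdot\,\>^+_U$ via absorbed boundary fields, which the paper omits but which is harmless.
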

\begin{proof} From Griffiths inequalities we know that 
$\bJ \mapsto \E_\bJ [X_A]$ is monotone non-decreasing (where 
$\bJ \geq \widehat{\bJ}$ if and only if $J_R \geq \widehat{J}_R$
for all $R \subseteq V$).  
Further, $\<x_A\>_G=\E_{\bJ^0} [X_A]$
where $J^0_{\{i\}}=B_i$, $J^0_{\{i,j\}}=\beta$ when $(i,j)\in E$ and all 
other values of $\bJ^0$ are zero. Considering 
$$
J^{\eta,U}_R = J^0_R + \eta \ind(R \subseteq U^c, |R|=1) \,,
$$ 
with $\eta \mapsto \bJ^{\eta,U}$ non-decreasing, 
so is $\eta \mapsto \E_{\bJ^{\eta,U}}[X_A]$.
In addition, $\mu_{\bJ^{\eta,U}}\,(x_u=-1) \leq C\, e^{-2 \eta}$
% for c = (|\cX| e^{2 \beta})^{|V|} 
whenever $u \notin U$.   
Hence, as $\eta \uparrow \infty$ the measure $\mu_{\bJ^{\eta,U}}$ 
converges to $\mu_\bJ$ 
subject to plus boundary conditions $x_u=+1$ for $u \notin U$. 
Consequently,
$$
\<x_A\>_G \leq \E_{\bJ^{\eta,U}}[X_A] \uparrow \<x_A\>^+_U\,.
$$ 
Similarly, let $J^U_R=J^0_R \ind(R \subseteq U)$ 
noting that under $\mu_{\bJ^U}$ the random vector $\ux_U$ is 
distributed according to the Ising measure $\mu$ restricted to 
$G_U$ (alternatively, having free boundary conditions $x_u=0$
for $u \notin U$). With $A \subseteq U$ we thus deduce that 
$$
\<x_A\>^0_U = \E_{\bJ^U} [X_A] \leq \E_{\bJ^0} [X_A] = \<x_A\>_G
\,.
$$ 
Finally,  
the stated monotonicity of $U \mapsto \<x_A\>^0_U$ and 
$U \mapsto \<x_A\>^+_U$ 
are in view of Griffiths inequalities the direct consequence of 
the monotonicity
(with respect to set inclusions) of $U \mapsto \bJ^U$ 
and $U \mapsto \bJ^{\eta,U}$, respectively. 
\end{proof}

In addition to Griffiths inequalities, the proof of 
Theorem \ref{thm:TreeDecay} uses also the GHS inequality 
\cite{GHS} which regards the effect of a magnetic field $\uB$ on 
the local magnetizations at various vertices. It further uses 
an extension of Simon's inequality (about the
centered two point correlation functions 
in ferromagnetic Ising models with zero magnetic field,
see \cite[Theorem 2.1]{Simon}), to arbitrary magnetic 
field, in the case of Ising models on trees. 
Namely, \cite[Lemma 4.3]{ising} 
states that 
if edge $(i,j)$ is on the
unique path from $\root$ to $k\in\Tree(\ell)$, with 
$j$ a descendant of $i \in \partial \Tree(t)$, $t \ge 0$, then 
\begin{eqnarray}\label{ineq-simon}
\<x_{\root};x_k\>^{(\ell)}_{\root}\le 
\cosh^2(2\beta+B_i) 
\; \<x_{\root};x_{i}\>^{(t)}_{\root}
\<x_{j};x_{k}\>^{(\ell)}_{j}\, .
\end{eqnarray}

\subsection{Algorithmic implications: belief propagation}\label{sec:algorithmic}

The `belief propagation' (BP) algorithm consists of 
solving by iterations a collection of  Bethe-Peierls (or cavity) 
mean field equations. 
More precisely, for the Ising model (\ref{eq:IsingModel}) 
we associate to each directed edge in the graph $i\to j$,
with $(i,j)\in G$, a distribution (or `message')
$\nu_{i\to j}(x_i)$ over $x_i\in \{+ 1,-1\}$, 
using then the following update rule 
\begin{eqnarray}
\nu_{i\to j}^{(t+1)}(x_i) =\frac{1}{z^{(t)}_{i\to j}}\, e^{Bx_i}
\prod_{l\in\di\setminus j}\sum_{x_l}e^{\beta x_ix_l}\nu_{l\to i}^{(t)}
(x_l)
\label{eq:BPIteration}
\end{eqnarray}
starting at a \emph{positive} initial condition, namely where
$\nu_{i\to j}^{(0)}(+1)\ge
\nu_{i\to j}^{(0)}(-1)$ at each directed edge.

Applying Theorem \ref{thm:TreeDecay} we establish 
in \cite[Section 5]{ising} the uniform exponential 
convergence of the BP iteration to the same fixed
point of (\ref{eq:BPIteration}), irrespective of its   
positive initial condition. As we further show there,
for tree-like graphs the limit of the BP iteration 
accurately approximates local marginals of the 
Boltzmann measure (\ref{eq:IsingModel}).
\begin{thm}\label{thm:BPConvergence}
Assume $\beta \ge 0$, $B>0$ and $G$ is a graph of finite
maximal degree $\Delta$. Then, there exists $A=A(\beta,B,\Delta)$ 
and $c=c(\beta,B,\Delta)$ finite, $\lambda=\lambda(\beta,B,\Delta)>0$ 
and a fixed point $\{\nu^*_{i\to j}\}$ of the BP iteration 
(\ref{eq:BPIteration}) such that for any positive initial condition 
$\{\nu^{(0)}_{l \to k}\}$ and all $t \ge 0$, 
\begin{eqnarray}
\label{eq:BPIexp-conv}
\sup_{(i,j) \in E} \| \nu_{i\to j}^{(t)} - \nu^*_{i \to j} \|_{\sTV}
\le A \exp(-\lambda t) \,.
\end{eqnarray}
Further, for any $i_o \in V$, if $\Ball_{i_o}(t)$ is a tree then
for $U \equiv \Ball_{i_o}(r)$ 
\begin{eqnarray}
||\mu_U - \nu_U||_{\sTV} \le 
\exp\Big\{c^{r+1}-\lambda(t-r)\Big\}\, ,
\end{eqnarray}
where $\mu_U(\, \cdot\, )$ is the law 
of $\ux_U \equiv \{x_i:\, i\in U\}$ 
under the Ising model (\ref{eq:IsingModel}) and  
$\nu_U$ the probability distribution 
\begin{eqnarray}
\nu_U(\ux_U)= \frac{1}{z_U}\exp\Big\{\beta
\sum_{(i,j)\in E_U}x_ix_j+B\sum_{i\in U\setminus \partial U}x_i\Big\}
\prod_{i\in\partial U}\nu^*_{i\to j(i)}(x_i)\, ,\label{eq:LocalMarg}
\end{eqnarray}
with $E_U$ the edge set of $U$ whose border is
$\partial U$ 
(i.e. the set of its vertices at distance $r$ from $i_o$), 
and $j(i)$ is any fixed neighbor in $U$ of $i$.
\end{thm}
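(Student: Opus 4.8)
The plan is to prove the two assertions of Theorem~\ref{thm:BPConvergence} as follows. First, for the convergence statement \eqref{eq:BPIexp-conv}, I would exploit the correspondence between the BP iteration on a graph $G$ and the Ising model on the computation tree. Namely, $\nu^{(t)}_{i \to j}$ is (after normalization) exactly the law of $x_i$ under the Ising model on the subtree of depth $t$ rooted at $i$ obtained by `unrolling' the neighborhood of $i \to j$, with the initial condition $\{\nu^{(0)}_{l\to k}\}$ playing the role of a boundary condition at the leaves. Since the initial condition is positive, it stochastically dominates the free boundary condition (and is dominated by the plus boundary condition) in the FKG sense; this is where Lemma~\ref{lem-comp} and Griffiths inequalities enter. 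Because $B>0$ and the maximal degree is $\Delta<\infty$, the relevant unrolled tree has offspring numbers bounded by $\Delta-1$ and is (trivially) conditionally independent with $B_{\min}=B_{\max}=B$, so Theorem~\ref{thm:TreeDecay}, in particular the influence bound \eqref{eq:bd-influence} with $r=0$, applies and gives that the marginal at the root is insensitive, up to an $A e^{-\lambda t}$ error, to anything happening at depth $t$. Comparing two positive initial conditions by sandwiching both between the free and plus boundary marginals, and using monotonicity in the boundary condition, yields the Cauchy property and hence existence of the limit $\nu^*_{i \to j}$, together with the claimed uniform exponential rate. One must check that $\lambda, A$ can be taken uniform over the finitely many `shapes' of neighborhoods, which is immediate since they depend only on $\beta_{\max}=\beta$, $B$, $\Delta$.

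Second, for the marginal-approximation statement, I would proceed by a telescoping/`surgery' argument on the tree $\Ball_{i_o}(t)$. The true marginal $\mu_U$ on $U=\Ball_{i_o}(r)$ is obtained from the Ising model on all of $G$; but conditioning on $\ux_{\partial \Ball_{i_o}(t)}$ and using that $\Ball_{i_o}(t)$ is a tree, the law of $\ux_U$ depends on the rest of $G$ only through an effective boundary field on $\partial \Ball_{i_o}(t)$ — equivalently through a product boundary condition. On the other hand, $\nu_U$ is exactly the law one gets by replacing that genuine boundary condition at depth $t$ by the BP fixed-point messages $\nu^*$ at depth $r$. So the difference $\|\mu_U-\nu_U\|_{\sTV}$ is controlled by (i) the error from moving the boundary from depth $t$ in the true model to depth $r$, which by \eqref{eq:bd-influence} is at most $A e^{-\lambda(t-r)}\,\E\{C^{|\Ball_{i_o}(r)|}\}$-type, and since $\Ball_{i_o}(t)$ is a fixed finite tree here $|\Ball_{i_o}(r)| \le \Delta^{r+1}$ deterministically, giving a factor $\exp(c^{r+1})$; plus (ii) the error from replacing the depth-$r$ messages induced by the true model with the fixed-point messages $\nu^*$, which is again $O(e^{-\lambda(t-r)})$ by the first part of the theorem. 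Tensorizing over the $|\partial U|\le \Delta^{r+1}$ boundary vertices and summing these contributions produces the stated bound $\exp\{c^{r+1}-\lambda(t-r)\}$ after adjusting the constant $c$.

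The main obstacle I expect is the second step: carefully setting up the comparison so that moving the boundary condition across the annulus $\Ball_{i_o}(t)\setminus \Ball_{i_o}(r)$ is legitimate, and that the error accumulates only multiplicatively in the (exponentially many in $r$) boundary vertices rather than catastrophically. The clean way is to condition on $\ux$ on the sphere of radius $t$, observe that on the tree this makes $\ux_{\Ball_{i_o}(t)}$ a product of independent subtree models hanging off $i_o$, write each single-vertex message as a function of the nested subtree magnetizations, and then invoke \eqref{eq:bd-influence} to bound the total-variation distance between this conditioned law and the one with messages truncated at radius $r$. The dependence on the random tree structure in \eqref{eq:bd-influence} is harmless because here the tree $\Ball_{i_o}(t)$ is deterministic with degrees bounded by $\Delta$, so $\E\{C^{|\Tree(r)|}\}$ is just a deterministic $\exp(O(\Delta^{r+1}))$. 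Everything else (monotonicity, stochastic domination of positive initial conditions, uniformity of constants) is routine given Theorem~\ref{thm:TreeDecay} and Lemma~\ref{lem-comp}.
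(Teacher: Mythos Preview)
Your proposal is correct and follows exactly the route the paper indicates (the paper itself defers the details to \cite[Section~5]{ising}, saying only that both claims follow by applying Theorem~\ref{thm:TreeDecay}): the computation-tree interpretation plus free/plus sandwiching via Lemma~\ref{lem-comp} for \eqref{eq:BPIexp-conv}, and then the same sandwiching of both $\mu_U$ and $\nu_U$ between $\mu_U^{t,0}$ and $\mu_U^{t,+}$, combined with \eqref{eq:bd-influence} and $|U|\le\Delta^{r+1}$, for the marginal bound. One caution about the ``clean way'' you sketch in the obstacle paragraph: conditioning on a specific boundary configuration $\omega$ at radius $t$ can produce negative effective fields at vertices where $\omega_i=-1$, so \eqref{eq:bd-influence} (which only compares free to plus) does not apply pointwise in $\omega$; the argument that actually goes through is the Griffiths sandwich you already use in part one, applied directly to the moments $\langle x_A\rangle$ for $A\subseteq U$ rather than via conditioning.
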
 

\subsection{Free entropy density, from trees to graphs}\label{sec:Ising-freeent}

Bethe-Peierls approximation (we refer to 
Section \ref{sec:BetheInformal}
for a general introduction), allows 
us
to predict the asymptotic
free entropy density for sequences of graphs that converge
locally to conditionally independent trees. We
start by explaining this prediction in a general setting,
then state a rigorous result which verifies it for a specific
family of graph sequences. 

To be definite, assume that $B>0$.
Given a graph sequence $\{G_n\}$ that converges to a conditionally
independent 
tree $\Tree$ with bounded average offspring number, 
let $L=\Delta_\root$ be the degree of its 
root. Define the 'cavity fields' $\{h_1,\dots,h_L\}$ by 
letting $h_j=\lim_{t\to\infty} h_j^{(t)}$ with 
$h_j^{(t)} \equiv \atanh[\<x_j\>^{(t)}_{j}]$, where 
$\<\,\cdot\,\>^{(t)}_{j}$ denotes expectation with respect
to the Ising distribution 
on the sub-tree induced by $j \in \partial \root$ and all its descendants in
$\Tree(t)$ (with free boundary conditions). We note in passing
that $t \mapsto h_j^{(t)}$ is stochastically monotone  
(and hence has  a limit in law)  by Lemma \ref{lem-comp}.
Further $\{h_1,\dots,h_{L}\}$ are conditionally independent given $L$.
Finally, define $\theta=\tanh(\beta)$ and 
\begin{equation}\label{eq:rec-mes}
h_{-j} = B + \sum_{k=1, k \ne j}^{L} \atanh[\theta \tanh (h_k)]\,.
\end{equation}

The Bethe-Peierls free energy density is 
given by 
\begin{align}\label{eqn:phi}
\varphi(\beta &,B) \equiv  
\frac{1}{2}\E\{L\} \ggm (\theta) -
\frac{1}{2}\, \E\Big\{ \sum_{j=1}^L
\log[1+\theta \tanh(h_{-j})\tanh(h_j)] \Big\}
\nonumber \\ 
&+ \E\log\big\{ e^B \prod_{j=1}^{L} [1 +  
\theta \tanh(h_j)] + e^{-B} \prod_{j=1}^{L} 
[1 - \theta \tanh(h_j) ] \big\}\, ,
\end{align}
for $\ggm(u)=-\frac{1}{2}\log(1-u^2)$.
We refer to Section \ref{sec:Bethe-examples} 
where this formula is obtained as a special case of the 
general expression for a Bethe-Peierls free energy.
%-------------------------------------------------
The prediction is extended to $B<0$ by letting 
$\varphi(\beta,B)=\varphi(\beta,-B)$, and to $B=0$ by letting
$\varphi(\beta,0)$ be the limit of $\varphi(\beta,B)$ as $B \to 0$.

As shown in \cite[Lemma 2.2]{ising}, when $\Tree=\Tree(\node,\edge,\infty)$
is a Galton-Watson tree,
the random variables $\{h_j\}$ 
have a more explicit characterization in 
terms of the following fixed point distribution.
\begin{lemma}\label{lemma:Recursive}
In case $\Tree=\Tree(\node,\edge,\infty)$ 
consider the random variables $\{h^{(t)}\}$ where $h^{(0)} \equiv 0$
and for $t\ge 0$,
\begin{equation}\label{eqn:h_recursion}
h^{(t+1)} \ed B + \sum_{i=1}^{K} \atanh[\theta \tanh (h^{(t)}_i)]\,,
\end{equation}
with $h^{(t)}_i$ i.i.d. copies of $h^{(t)}$ that are independent of the 
variable $K$ of distribution $\edge$. 
If $B>0$ and $\aedeg < \infty$ then  
$t \mapsto h^{(t)}$ 
is stochastically monotone (i.e. there exists a coupling under 
which $\prob(h^{(t)} \le h^{(t+1)})=1$ for all $t$), and 
converges in law to the unique fixed point $h^*$ 
of (\ref{eqn:h_recursion}) that is 
supported on $[0,\infty)$. In this case, $h_j$ of (\ref{eqn:phi})
are i.i.d. copies of $h^*$ that are independent of $L$.
\end{lemma}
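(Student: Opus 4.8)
The plan is to establish Lemma~\ref{lemma:Recursive} in three stages: first the monotonicity and convergence of the recursion (\ref{eqn:h_recursion}), then the uniqueness of the fixed point supported on $[0,\infty)$, and finally the identification of the limit with the cavity fields $\{h_j\}$ appearing in (\ref{eqn:phi}). For the first stage, I would argue by induction on $t$ that the map $\Phi$ sending the law of $h$ to the law of $B + \sum_{i=1}^{K} \atanh[\theta \tanh(h_i)]$ (with $K \sim \edge$ independent) is monotone with respect to stochastic ordering: since $u \mapsto \atanh[\theta \tanh u]$ is non-decreasing on $\reals$ (its derivative is $\theta(1-\tanh^2 u)/(1-\theta^2\tanh^2 u) \ge 0$ for $\theta \in [0,1)$), a coupling in which each summand is monotone lifts to a coupling of the sums, and the independent sum over a fixed $K$ preserves this. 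Starting from $h^{(0)} \equiv 0$, we have $h^{(1)} \ed B \ge 0 = h^{(0)}$ (using $B>0$ and $\atanh[\theta\tanh 0]=0$), so by induction $h^{(t)}$ is stochastically non-decreasing in $t$. Each $h^{(t)}$ is supported on $[0,\infty)$ and, since $\atanh[\theta\tanh u] \le u$, one sees $\E h^{(t+1)} \le B + \aedeg \cdot \E h^{(t)}$ only gives a useful a priori bound when $\theta\aedeg<1$; in general I would instead bound $\atanh[\theta\tanh u] \le \atanh\theta$ uniformly, so $\E h^{(t)} \le B + \aedeg\,\atanh(\theta) < \infty$ for all $t$. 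A monotone sequence of laws on $[0,\infty)$ with uniformly bounded means is tight, hence converges weakly (and, being monotone, in the stochastic-domination sense) to a limit law, which by continuity of $\Phi$ on this tight family is a fixed point $h^*$.

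For uniqueness of the $[0,\infty)$-supported fixed point, I would appeal to the Ising monotonicity already packaged in the excerpt rather than reprove it: this recursion is exactly the cavity/BP recursion for the Ising model on the Galton--Watson tree with $B>0$, and Lemma~\ref{lem-comp} together with Theorem~\ref{thm:TreeDecay} (the correlation-decay statement, applied with $B_{\min}$ close to $B$) shows that the free-boundary and plus-boundary cavity fields at the root of $\Tree(\node,\edge,\ell)$ converge to the same limit as $\ell\to\infty$. Concretely, if $\widetilde h^{(t)}$ denotes the recursion started from the law of $\atanh[\<x_j\>^{+}]$ (the plus boundary condition, the stochastically largest admissible start), then $\widetilde h^{(t)}$ is stochastically non-increasing and dominates $h^{(t)}$, while Theorem~\ref{thm:TreeDecay} forces $\E\,|\widetilde h^{(t)} - h^{(t)}| \to 0$; squeezing, both converge to $h^*$, and any fixed point supported on $[0,\infty)$ is sandwiched between them and hence equals $h^*$. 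This is the step I expect to be the main obstacle, since it requires translating the abstract influence bound (\ref{eq:bd-influence}) into a statement about the one-dimensional cavity-field recursion and checking the integrability/finite-mean hypotheses ($\aedeg<\infty$) enter precisely where the $\E\{C^{|\Tree(r)|}\}$ factor must be controlled at a single generation $r$; I would keep $r$ fixed (e.g. $r=1$) so that $\E\{C^{|\Tree(1)|}\}=\E\{C^{1+K}\}<\infty$ is automatic once we note the recursion only needs finitely many generations to propagate.

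The final stage identifies the graph/tree cavity fields. By Proposition~\ref{prop:local-trees1} (or Proposition~\ref{prop:local-trees2}), the relevant tree is the Galton--Watson tree $\Tree(\node,\edge,\infty)$, whose root has degree $L \sim \node$, and conditionally on $L$ the $L$ subtrees hanging off the root are independent copies of $\Tree(\edge,\infty)$, i.e. Galton--Watson trees with offspring law $\edge$. The field $h_j = \lim_{t\to\infty} h_j^{(t)}$ with $h_j^{(t)} = \atanh[\<x_j\>^{(t)}_j]$ is by definition the free-boundary cavity field at the root of the $j$-th such subtree; unrolling one step of the tree recursion shows $h_j^{(t)}$ satisfies exactly (\ref{eqn:h_recursion}) with the root offspring number distributed as $\edge$ (not $\node$), so $h_j \ed h^*$, and the $h_j$ are i.i.d.\ across $j$ and independent of $L$ because the subtrees are. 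The stochastic monotonicity of $t\mapsto h_j^{(t)}$ is then the special case of the first stage (it was already invoked informally via Lemma~\ref{lem-comp} right after (\ref{eq:rec-mes})), completing the proof. The remaining items — weak continuity of $\Phi$ on tight families, measurability — are routine and I would not belabor them.
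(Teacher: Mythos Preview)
Your proposal is essentially correct, and since the paper itself does not prove this lemma but merely cites \cite[Lemma 2.2]{ising}, there is no in-paper proof to compare against; your three-stage outline (monotonicity/tightness, uniqueness via the free/plus sandwich and Theorem~\ref{thm:TreeDecay}, identification of $h_j$ with $h^*$ via the subtree structure of $\Tree(\node,\edge,\infty)$) is exactly the natural route.

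One genuine slip to fix: you assert that with $r=1$ the quantity $\E\{C^{|\Tree(1)|}\}=\E\{C^{1+K}\}$ is ``automatic'', but this requires an exponential moment of $K\sim\edge$, which is \emph{not} implied by the hypothesis $\aedeg<\infty$. The repair is immediate: take $r=0$ in (\ref{eq:bd-influence}), so that $\E\{C^{|\Tree(0)|}\}=C$ is trivially finite and the bound reads $\E\,\|\mu_{\root}^{\ell,+}-\mu_{\root}^{\ell,0}\|_{\sTV}\le AC\,e^{-\lambda\ell}$. This controls $\E\,|\langle x_{\root}\rangle^{\ell,+}-\langle x_{\root}\rangle^{\ell,0}|$, and since on each tree realization the two magnetizations are monotone in $\ell$ (Lemma~\ref{lem-comp}) with limits in $(-1,1)$ almost surely (bounded below by $\tanh B$ and above by $\tanh(B+K\,\atanh\theta)$), you get almost-sure equality of the limits and hence equality of the cavity-field laws $h^{*,0}=h^{*,+}$. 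The rest of your sandwich argument for uniqueness then goes through unchanged.
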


The main result of \cite{ising} confirms
the statistical physics prediction for the free entropy density. 
\begin{thm}\label{thm:free_energy}
If $\aedeg$ is finite then 
for any $B \in \reals$, $\beta \ge 0$ and
sequence $\{G_n\}_{n\in\naturals}$ 
of uniformly sparse graphs
that converges locally to $\Tree(\node,\edge,\infty)$,
\begin{equation}\label{eqn:free_energy}
\lim_{n\to\infty}\frac{1}{n}\log Z_n(\beta,B) =\varphi(\beta,B)\,.
\end{equation}
\end{thm}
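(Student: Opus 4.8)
The plan is to follow the two-step program outlined right before Definition \ref{def-cit}: first express $\frac{1}{n}\log Z_n(\beta,B)$ as an integral of local Boltzmann expectations, and then pass these expectations to the limiting tree. Concretely, I would use the thermodynamic integration identity $\frac{\partial}{\partial\beta}\phi_n(\beta,B)=\frac{1}{n}\sum_{(i,j)\in E_n}\E_{n,\beta,B}\{X_iX_j\}$, so that
\begin{eqnarray*}
\phi_n(\beta,B)=\phi_n(0,B)+\int_0^\beta \frac{1}{n}\sum_{(i,j)\in E_n}\E_{n,\beta',B}\{X_iX_j\}\,\de\beta'\,,
\end{eqnarray*}
with $\phi_n(0,B)=\log(2\cosh B)$ explicit. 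Since $\{G_n\}$ is uniformly sparse, $\frac{1}{n}|E_n|$ converges to $\frac12\E\{L\}$ (the expected half-degree of the root of $\Tree$), and the edge $(i,j)$ seen from a uniformly random endpoint looks locally like a pair $\root,j$ in $\Tree$, with $j$ a uniformly chosen child. So the integrand should converge to $\frac12\E\{L\}\,\E\{\<X_\root X_j\>_{\Tree}\}$, where $\<\cdot\>_{\Tree}$ is the (appropriate, say plus or free) Gibbs expectation on the infinite conditionally independent tree. Differentiating the Bethe formula \eqref{eqn:phi} in $\beta$ and checking it equals this limiting edge correlation then identifies the limit as $\varphi(\beta,B)$.

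The substantive work lies in step (b): showing that the finite-graph edge correlation $\E_{n,\beta,B}\{X_iX_j\}$ is, for $n$ large, close to the corresponding quantity on $\Tree$. This is exactly where Theorem \ref{thm:TreeDecay} enters. First, by Lemma \ref{lem-comp} (Griffiths monotonicity), for a vertex $i_o$ whose neighborhood $\Ball_{i_o}(\ell)$ is a tree, the true correlation $\<X_{i_o}X_j\>_{G_n}$ is sandwiched between the value on $\Ball_{i_o}(\ell)$ with free boundary and the value with plus boundary. Theorem \ref{thm:TreeDecay}, specifically \eqref{eq:bd-influence}, bounds the gap between these two boundary conditions on a conditionally independent tree by $Ae^{-\lambda(\ell-r)}\E\{C^{|\Tree(r)|}\}$; taking $r$ a slowly growing function of $\ell$ (so that $\E\{C^{|\Tree(r)|}\}$ stays controlled under the bounded-average-offspring assumption), this gap tends to $0$. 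Meanwhile local convergence (Propositions \ref{prop:local-trees1}--\ref{prop:local-trees2}, or just the hypothesis) forces $\Ball_{i_o}(\ell)$ to be a tree for all but a vanishing fraction of vertices, and makes the law of $\Ball_{i_o}(\ell)$ converge to $\Tree(\ell)$. Combining: the average edge correlation on $G_n$ converges to the average edge correlation on $\Tree$, uniformly over $\beta'\in[0,\beta]$ by the uniformity of the constants in Theorem \ref{thm:TreeDecay}, which legitimizes passing the limit through the $\beta'$-integral via dominated convergence (the integrand is bounded by $1$ in absolute value).

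I expect two main obstacles. The first is the interchange of $\lim_{n\to\infty}$ with the integral over $\beta'$ together with the $\ell,r\to\infty$ limits inside step (b): one needs the error bounds in Theorem \ref{thm:TreeDecay} to hold uniformly in $\beta'\le\beta$, and one must manage the moment $\E\{C^{|\Tree(r)|}\}$, which is where conditional independence and the uniform bound $\Delta$ on average offspring are essential (for Galton--Watson trees this is finite because $|\Tree(r)|$ has finite exponential moments when the offspring law has finite mean, at least for $r$ not too large relative to $n$). The second, more bookkeeping-heavy obstacle is verifying the identity $\frac{\partial}{\partial\beta}\varphi(\beta,B)=\frac12\E\{L\}\,\E\{\<X_\root X_j\>_{\Tree}\}$ — i.e. that the derivative of the Bethe formula \eqref{eqn:phi}, after using the fixed-point relations \eqref{eq:rec-mes} and \eqref{eqn:h_recursion} to cancel the many terms coming from the implicit $\beta$-dependence of the cavity fields $h_j$, collapses to the single edge-marginal expression. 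This is a finite but delicate computation; the cancellation of the $\partial h_j/\partial\beta$ terms is precisely the hallmark of a correct Bethe free energy (it is the stationarity of $\varphi$ with respect to the messages), and once $B>0$ ensures the fixed point $h^*$ is unique (Lemma \ref{lemma:Recursive}) there is no ambiguity in which solution to use. The extension to $B=0$ follows by continuity, using that both sides of \eqref{eqn:free_energy} are continuous (indeed monotone) in $B$, and the case $B<0$ by the symmetry $x\mapsto -x$.
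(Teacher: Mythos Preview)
Your proposal is essentially the paper's own proof: reduce to $B>0$ by symmetry and Lipschitz continuity, differentiate $\phi_n$ in $\beta$ to get $\frac12\E_n[\sum_{j\in\di}\<x_ix_j\>_n]$, sandwich this via Lemma~\ref{lem-comp} between free and plus boundary on $\Ball_i(t)$, pass to $\Tree(t)$ by local convergence plus uniform sparsity (Lemma~\ref{lemma:EdgeTree}), close the sandwich with Theorem~\ref{thm:TreeDecay}, and verify that $\partial_\beta\varphi$ equals the limiting edge correlation (the cancellation of $\partial h_j/\partial\beta$ being exactly the Bethe stationarity you anticipate). One small correction: apply \eqref{eq:bd-influence} with $r$ \emph{fixed} (in effect $r=0$, to each subtree rooted at $j\in\partial\root$, so $|\Tree(r)|=1$), not with $r$ growing --- this is what the paper does, and it removes your worry about controlling $\E\{C^{|\Tree(r)|}\}$, which would otherwise require exponential moments of the offspring distribution that the hypothesis $\aedeg<\infty$ does not provide.
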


We proceed to sketch the outline of 
the proof of Theorem \ref{thm:free_energy}.
For uniformly sparse graphs that converge locally
to $\Tree(\node,\edge,\infty)$ the model (\ref{eq:IsingModel})  
has a line of first order phase transitions for $B=0$ 
and $\beta>\beta_{\rm c}$ (that is, where the 
continuous function $B \mapsto \varphi(\beta,B)$ 
exhibits a discontinuous derivative). Thus, the
main idea 
is to utilize 
the magnetic field $B$ to explicitly break the $+/-$ symmetry, 
and to carefully exploit the monotonicity properties of the 
ferromagnetic Ising model in order to establish the result 
even at $\beta>\beta_{\rm c}$. 

Indeed, since $\phi_n(\beta,B) \equiv \frac{1}{n}\log Z_n(\beta,B)$
is invariant under $B\to-B$ and is uniformly (in $n$)
Lipschitz continuous in $B$ with Lipschitz constant one,
for proving the theorem it suffices 
to fix $B>0$ and show that $\phi_n(\beta,B)$ 
converges as $n \to \infty$ to 
the predicted expression $\varphi(\beta,B)$ of (\ref{eqn:phi}). 
% for then the limit $\varphi(\beta,B)$
% is also Lip. continuous in B with Lip. constant one, hence
% possess a limit as B \downarrow 0, which by Lip. cont. is
% also the limit of \phi_n(\beta,0). Symmetry gives the B<0 case. 
This is obviously true for $\beta=0$ since 
$\phi_n(0,B) = \log (2\cosh B) = \varphi (0,B)$. 
Next, denoting by $\< \, \cdot \, \>_n$ the expectation 
with respect to the Ising measure on $G_n$ (at
parameters $\beta$ and $B$), it is easy to see that 
%the well known fact that 
%the derivative of the free entropy density 
%with respect to $\beta$ yields the internal energy:
\begin{eqnarray}
\partial_\beta \phi_n(\beta,B) = \frac{1}{n}\sum_{(i,j)\in E_n}
\<x_i x_j\>_n = \frac{1}{2} \E_n \Big[ \sum_{j \in \di} \<x_i x_j\>_n \Big] \,.
\label{eq:DerivativeGraph}
\end{eqnarray}
With $|\partial_\beta \phi_n(\beta,B)| \le |E_n|/n$ 
bounded by the assumed uniform sparsity, it is thus 
enough to show that the expression in (\ref{eq:DerivativeGraph})
converges to the partial derivative of 
$\varphi(\beta,B)$ with respect to $\beta$. 
Turning to compute the latter derivative, 
after a bit of real analysis we find that 
the dependence of $\{h_j,h_{-j}\}$ on $\beta$ can be ignored
(c.f. \cite[Corollary 6.3]{ising} for the proof of this fact
in case $\Tree=\Tree(\node,\edge,\infty)$).
That is, hereafter we simply compute the partial derivative 
in $\beta$ of the expression (\ref{eqn:phi}) while 
considering the law of $\{h_j\}$ and $\{h_{-j}\}$ 
to be independent of $\beta$. To this end, setting 
$z_j=\tanh(h_j)$ and $y_j=\tanh(h_{-j})$, 
the relation (\ref{eq:rec-mes}) amounts to 
$$
y_j = \frac{e^B \prod_{k \ne j} (1+\theta z_k) - e^{-B} \prod_{k \ne j} (1-\theta z_k)}
{e^B \prod_{k \ne j} (1+\theta z_k) + e^{-B} \prod_{k \ne j} (1-\theta z_k)} 
$$
for which it follows that  
\begin{eqnarray*}
\frac{\partial\phantom{\theta}}{\partial \theta} 
\Big\{ \sum_{j=1}^l \log (1+\theta z_j y_j) \Big\} &=&
\frac{\partial\phantom{\theta}}{\partial \theta}
 \log\Big\{ e^B \prod_{j=1}^l (1 + \theta z_j) +  
e^{-B} \prod_{j=1}^l (1-\theta z_j) \Big\} \\
&=&
\sum_{j=1}^l \frac{z_j y_j}{1+\theta  z_j y_j} 
\end{eqnarray*}
and hence a direct computation of the derivative in (\ref{eqn:phi})
leads to  
\begin{eqnarray}
\partial_\beta \,\varphi (\beta,B) = 
\frac{1}{2} 
\E\Big[ \sum_{j \in \partial \root} \<x_\root x_j\>_{\Tree} \Big] \, ,
\label{eq:FreeDer}
\end{eqnarray}
where $\< \cdot \>_{\Tree}$ denotes the expectation with 
respect to the Ising model 
\begin{eqnarray}
\mu_{\Tree}(x_\root,x_1,\ldots,x_L) = \frac{1}{z}
\,\exp\Big\{ \beta \sum_{j=1}^L x_\root x_j+ B x_\root + \sum_{j=1}^L h_j x_j\Big\}\, ,\label{eq:muTree}
\end{eqnarray} 
on the `star' $\Tree(1)$ rooted at $\root$ and the 
random cavity fields $h_j$ of (\ref{eqn:phi}). 

In comparison, fixing a positive integer $t$ and  
considering Lemma \ref{lem-comp} for 
$A \equiv \{i,j\}$ and $U \equiv \Ball_i(t)$, 
we find that the correlation 
$\<x_i x_j\>_n$ lies between the correlations 
$\<x_i x_j\>^0_{\Ball_{i}(t)}$ and $\<x_i x_j\>^+_{\Ball_{i}(t)}$
for the Ising model on the subgraph  
$\Ball_{i}(t)$ with free and plus, respectively, 
boundary conditions at $\partial \Ball_{i}(t)$. 
Thus, in view of (\ref{eq:DerivativeGraph}) 
$$ 
\frac{1}{2}\E_n \{ F_0(\Ball_{i}(t)) \} \le
\partial_\beta \, \phi_n(\beta,B) \le 
\frac{1}{2}\E_n \{ F_+(\Ball_{i}(t)) \} \, ,
$$
where $F_{0/+} (\Ball_{i}(t)) \equiv \sum_{j \in \di} 
\<x_i x_j\>^{0/+}_{\Ball_i(t)}$. 

Next, taking $n \to \infty$ we rely on 
the following consequence of the local convergence 
of a uniformly sparse graph sequence $\{G_n\}$ 
(c.f. \cite[Lemma 6.4]{ising} for the derivation of
a similar result).
\begin{lemma}\label{lemma:EdgeTree}
Suppose a uniformly sparse graph sequence $\{G_n\}$ 
converges locally to the random tree $\Tree$.
Fix an integer $t \ge 0$ and a function $F(\cdot)$ on the collection 
of all possible subgraphs that may occur as $\Ball_i(t)$, 
such that $F(\Ball_i(t))/(|\di|+1)$ is uniformly bounded and 
$F(T_1)=F(T_2)$ whenever $T_1 \simeq T_2$. Then,
\begin{eqnarray}\label{eq:edge-conv}
\lim_{n\to\infty} \E_n \{ F(\Ball_{i}(t)) \}  =
\E\{ F(\Tree(t))\}\, .
\end{eqnarray}
\end{lemma}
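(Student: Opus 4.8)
The plan is to expand $\E_n\{F(\Ball_i(t))\}$ over the possible isomorphism types of $\Ball_i(t)$, truncate this a priori infinite sum to finitely many small trees, pass each surviving term to the limit via local convergence, and control the discarded part using a uniform integrability estimate extracted from uniform sparsity. Since $F(T_1)=F(T_2)$ whenever $T_1\simeq T_2$, for every integer $N$ one may write
\begin{eqnarray*}
\E_n\{F(\Ball_i(t))\} = \sum_{T\in\mathcal{T}_N} F(T)\,\prob_n\{\Ball_i(t)\simeq T\} + \E_n\{F(\Ball_i(t))\,\ind(A_{n,N})\}\, ,
\end{eqnarray*}
where $\mathcal{T}_N$ denotes the finite set of isomorphism classes of rooted trees of depth at most $t$ with at most $N$ vertices, and $A_{n,N}$ is the event that $\Ball_i(t)$ is not isomorphic to any member of $\mathcal{T}_N$ (in particular $A_{n,N}$ occurs whenever $\Ball_i(t)$ contains a cycle).

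The key input is uniform integrability. By hypothesis $|F(\Ball_i(t))|\le C(|\di|+1)$ for some constant $C$ independent of $n$, while uniform sparsity — together with the finiteness of each $G_n$ — says precisely that $\sup_n \E_n\{|\di|\,\ind(|\di|\ge l)\}\to 0$ as $l\to\infty$; thus $\{|\di|+1\}$, and hence $\{F(\Ball_i(t))\}$, is uniformly integrable with respect to $\{\prob_n\}$, and in particular $\sup_n\E_n\{|\di|\}<\infty$. Combining this with local convergence (which forces $|\di|\to\Delta_\root$ in law) and Fatou's lemma gives $\E\{\Delta_\root\}\le\liminf_n\E_n\{|\di|\}<\infty$, so $\Tree$ is a.s.\ locally finite, $\Tree(t)$ is a.s.\ a finite tree, and $F(\Tree(t))$ is integrable with $|F(\Tree(t))|\le C(\Delta_\root+1)$.

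It remains to assemble these pieces. Fix $\eta>0$ and let $\delta=\delta(\eta)>0$ be such that $\prob_n(B)<\delta$ forces $\E_n\{(|\di|+1)\ind(B)\}<\eta$ for all $n$ (uniform integrability). Local convergence gives $\prob_n\{\Ball_i(t)\simeq T\}\to\prob\{\Tree(t)\simeq T\}$ for each $T$, so — $\mathcal{T}_N$ being finite — $\prob_n(A_{n,N})\to \ve_N\equiv\prob\{|\Tree(t)|>N\}$, and $\ve_N\downarrow 0$ as $N\to\infty$ since $|\Tree(t)|<\infty$ a.s. Choose $N$ large enough that $\ve_N<\delta$ and, using dominated convergence with integrable dominating function $C(\Delta_\root+1)$, also $C\,\E\{(\Delta_\root+1)\ind(|\Tree(t)|>N)\}<\eta$. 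For this $N$ the truncated sum converges as $n\to\infty$ to $\sum_{T\in\mathcal{T}_N}F(T)\prob\{\Tree(t)\simeq T\}=\E\{F(\Tree(t))\ind(|\Tree(t)|\le N)\}$, which differs from $\E\{F(\Tree(t))\}$ by $|\E\{F(\Tree(t))\ind(|\Tree(t)|>N)\}|\le C\,\E\{(\Delta_\root+1)\ind(|\Tree(t)|>N)\}<\eta$; meanwhile $\prob_n(A_{n,N})<\delta$ for all large $n$, whence $|\E_n\{F(\Ball_i(t))\ind(A_{n,N})\}|\le C\,\E_n\{(|\di|+1)\ind(A_{n,N})\}<C\eta$ for all large $n$. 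Therefore $\limsup_n|\E_n\{F(\Ball_i(t))\}-\E\{F(\Tree(t))\}|\le (1+C)\eta$, and since $\eta>0$ is arbitrary the limit is $0$.

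The only genuine difficulty is that $F$ is unbounded, so one cannot naively interchange the limit in $n$ with the sum over the (infinitely many) isomorphism types, nor simply discard the non-tree balls, whose $\prob_n$-probability vanishes but which might a priori carry non-negligible $F$-mass. The uniform-sparsity hypothesis is exactly what removes this obstacle: it upgrades the type-by-type convergence supplied by local convergence to convergence of expectations, via uniform integrability of the degree of the center $i$.
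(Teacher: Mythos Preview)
Your proof is correct. The paper does not actually prove this lemma---it is stated without proof, with a pointer to \cite[Lemma 6.4]{ising} ``for the derivation of a similar result''---and your argument (truncate the expectation over finitely many isomorphism types, pass the truncated sum through via local convergence, and bound the tail using the uniform integrability of $|\di|$ that uniform sparsity supplies) is exactly the standard route one finds in that reference.

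One minor remark: the inference ``$\E\{\Delta_\root\}<\infty$, so $\Tree$ is a.s.\ locally finite'' does not quite follow as stated, since integrability of the root degree alone says nothing about degrees deeper in the tree. But this is harmless: local finiteness of $\Tree$ (equivalently, $|\Tree(t)|<\infty$ a.s.) is already built into the hypothesis that $\{G_n\}$ converges locally to the random rooted tree $\Tree$, since the very definition requires $\prob\{\Tree(t)\simeq T\}$ to be a probability distribution over finite rooted trees. The Fatou step is still needed, and used correctly, to obtain integrability of $\Delta_\root$ and hence of $F(\Tree(t))$.
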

Indeed, applying this lemma for the functions $F_0(\cdot)$ and
$F_+(\cdot)$ we find that 
\begin{eqnarray*}
\frac{1}{2} \E \{ F_0(\Tree(t)) \} \le
\liminf_{n\to\infty}  \partial_\beta \, \phi_n(\beta,B)  
\le \limsup_{n\to\infty}  \partial_\beta \, \phi_n(\beta,B)  
\le 
\frac{1}{2}
\E \{ F_+ (\Tree(t)) \} \,.
\end{eqnarray*}
To compute $F_{0/+} (\Tree(t))$ we first sum over the 
values of $x_k$ for $k \in \Tree(t) \setminus \Tree(1)$. 
This has the effect of reducing $F_{0/+} (\Tree(t))$ to the 
form of $\sum_{j \in \partial\root} \<x_\root x_j\>_{\Tree}$
and the cavity fields are taken as $h_j^{(t),0/+} 
\equiv \atanh[\<x_j\>^{(t),0/+}_{j}]$. 
Further, from (\ref{eq:bd-influence}) we deduce that 
as $t \to \infty$ 
both sets of cavity fields converge in law to the same limit
$\{h_j\}$. Since $\E[ \<x_\root x_j\>_{\Tree}]$ 
are continuous with respect to such convergence 
in law, we get by (\ref{eq:FreeDer}) that 
$$
\lim_{t \to \infty} \frac{1}{2} \E \{ F_{0/+} (\Tree(t)) \} =
\partial_\beta \,\varphi (\beta,B) \,, 
$$
which completes the proof of the theorem.
%
%********************************************************

\subsection{Coexistence at low temperature}

We focus here on the ferromagnetic Ising model on a random $k$-regular
graph with $k \ge 3$ and zero magnetic field. 
In order to simplify derivations, it is convenient to 
use the so-called \emph{configuration model} for random regular graphs
\cite{Bol80}. 
A graph from this ensemble is generated by associating $k$ half
edges to each $i\in [n]$ (with $kn$ even) and pairing
them uniformly at random. In other words, the collection $E_n$ of $m
\equiv kn/2$ edges is obtained by pairing the $kn$ half-edges. 
Notice that the resulting object is in fact a \emph{multi-graph}
i.e. it might include double edges and self-loops. However
the number of such `defects' is $O(1)$ as $n\to\infty$ and hence
the resulting random graph model shares many properties with random 
$k$-regular graphs.
With a  slight abuse of notation, we keep denoting by
$\graph(k,n)$  the  multi-graph ensemble.

For $\beta \ge 0$ we consider the distribution 
\begin{eqnarray}\label{eq:kreg-Ising} 
\mu_{n,\beta,k}
(\ux) = \frac{1}{Z(G_n)} \,\exp\Big\{\beta\sum_{(i,j)\in E_n}x_ix_j\Big\}\,.
\end{eqnarray} 

Recall Remark \ref{rem:local-trees1}
that any sequence of random graphs 
$G_n$ from the ensembles $\graph(k,n)$ is 
almost surely uniformly sparse and converges 
locally to the infinite $k$-regular tree. Thus, considering
the function 
\begin{align*}
%\label{eq:kreg-phi}
\varphi_k (\beta,h) \equiv
& \frac{k}{2} \big\{  
\ggm(\theta) - \log[1+\theta \tanh^2(h)] \big\}
\nonumber \\
&+ \log\big\{ [1 + \theta \tanh(h)]^k +  
[1 - \theta \tanh(h) ]^k \big\} \,,
\end{align*} 
of $h \in \reals$ and $\theta=\tanh(\beta)$, 
we have from Theorem \ref{thm:free_energy} that 
\begin{equation}\label{eqn:kreg-free_energy}
\lim_{n\to\infty}\frac{1}{n}\log Z(G_n) = \varphi_k (\beta,h^*) \,,
\end{equation}
where the cavity field $h^*$ is the largest solution of
\begin{equation}\label{eqn:fx-point}
g(h) \equiv (k-1) \atanh[\theta \tanh (h)] - h =0 \,.
\end{equation} 
Indeed, the expression for $\varphi_k(\beta,h^*)$
is taken from (\ref{eqn:phi}), noting that here 
$L=K+1=k$ is non-random, hence so are $h_{-j}=h_j=h^*$.
It is not hard to check by calculus that the limit as 
$B \downarrow 0$ of the unique positive solution of 
$g(h)=-B$ is strictly positive if and only if 
$\beta > \beta_{\rm c} \equiv \atanh(1/(k-1))$,
in which case $g(-h)=-g(h)$ is zero if and only if
$h \in \{0,\pm h^*\}$ with $g'(0)>0$ and $g'(h^*)<0$
(c.f. \cite{Martinelli}).

We expect coexistence in this model if and only if 
$\beta>\beta_{\rm c}$ (where we have a line of 
first order phase transitions for the
asymptotic free entropy at $B=0$), and 
shall next prove the `if' part.  
\begin{thm}
With probability one, 
the ferromagnetic Ising measures 
$\mu_{n,\beta,k}$ 
%of (\ref{eq:kreg-Ising}) 
on uniformly random $k$-regular 
multi-graphs from the ensemble $\graph(k,n)$
exhibit coexistence 
if $(k-1) \tanh(\beta)>1$. 
\end{thm}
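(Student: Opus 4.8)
The plan is to mimic the Curie--Weiss argument of Theorem~\ref{thm:cw-coex}, using spin-flip symmetry for condition~(a) in the definition of coexistence and replacing the mean-field geometry, in the verification of the bottleneck condition~(b), by the edge-expansion of $\graph(k,n)$ together with the free-energy identity of Theorem~\ref{thm:free_energy}. Fix $\beta$ with $(k-1)\tanh\beta>1$, i.e.\ $\beta>\beta_{\rm c}\equiv\atanh(1/(k-1))$, write $\bX=n^{-1}\sum_i x_i$, and take the partition of $\{+1,-1\}^n$ into $\Omega_{+}=\{\ux:\sum_i x_i\ge 0\}$ and $\Omega_{-}=\{\ux:\sum_i x_i<0\}$. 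Since $\mu_{n,\beta,k}(\ux)=\mu_{n,\beta,k}(-\ux)$, one has $\mu_{n,\beta,k}(\Omega_{\pm})=\tfrac12(1\pm p_n)$ with $p_n=\mu_{n,\beta,k}(\sum_i x_i=0)$, so condition~(a) holds with any $\delta<\tfrac12$ once $p_n\to 0$, which will fall out of the bottleneck estimate. Moreover, flipping one coordinate shifts $\sum_i x_i$ by $\pm 2$, so $d(\ux,\Omega_{+})=\lceil-\tfrac12\sum_i x_i\rceil$ for $\ux\in\Omega_{-}$; hence $\partial_{\epsilon}\Omega_{+}\subseteq\{-2\epsilon\le\bX<0\}$ and, symmetrically, $\partial_{\epsilon}\Omega_{-}\subseteq\{0\le\bX\le 2\epsilon+2/n\}$, so both $\epsilon$-boundaries lie inside $\{|\bX|\le 3\epsilon\}$ for $n$ large. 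As $\mu_{n,\beta,k}(\Omega_{\pm})\to\tfrac12$, the theorem follows once we exhibit $\epsilon_0>0$ with, almost surely over $G_n\sim\graph(k,n)$, $\mu_{n,\beta,k}(|\bX|\le\epsilon_0)\to 0$ (and then take $\epsilon=\epsilon_0/3$ as the boundary parameter).

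To produce such an $\epsilon_0$, write $\sum_{(i,j)\in E_n}x_ix_j=|E_n|-2\,\mathrm{cut}(\ux)$, with $\mathrm{cut}(\ux)$ the number of bichromatic edges, so that $\mu_{n,\beta,k}(|\bX|\le\epsilon_0)=Z(G_n)^{-1}e^{\beta kn/2}\sum_{\ux:|\bX|\le\epsilon_0}e^{-2\beta\,\mathrm{cut}(\ux)}$. For the denominator, Remark~\ref{rem:local-trees1} and Theorem~\ref{thm:free_energy} give $n^{-1}\log Z(G_n)\to\varphi_k(\beta,h^*)$ almost surely, cf.~\eqref{eqn:kreg-free_energy}, and $\beta>\beta_{\rm c}$ is exactly the condition under which the root $h^*$ of \eqref{eqn:fx-point} is strictly positive, hence $\varphi_k(\beta,h^*)>\varphi_k(\beta,0)$. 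For the numerator I would use that, because $\graph(k,n)$ is here the configuration model, $\mathrm{cut}(\ux)$ has the same law for every $\ux$ of a given magnetization $m$: a first-moment computation (with $\binom{n}{(1+m)n/2}=e^{nH((1+m)/2)+o(n)}$, $H$ the binary entropy of Lemma~\ref{lemma:CWProbM}, and $g_k(m,\beta)\equiv\lim_n n^{-1}\log\E[e^{-2\beta\,\mathrm{cut}(\ux)}\mid\bX=m]$ obtained by the standard half-edge pairing / Laplace estimate) gives $n^{-1}\log\E[\sum_{\ux:|\bX|\le\epsilon_0}e^{-2\beta\,\mathrm{cut}(\ux)}]\to\max_{|m|\le\epsilon_0}\{H((1+m)/2)+g_k(m,\beta)\}$, whence, by Markov's inequality and the Borel--Cantelli lemma, almost surely $\limsup_n n^{-1}\log(e^{\beta kn/2}\sum_{\ux:|\bX|\le\epsilon_0}e^{-2\beta\,\mathrm{cut}(\ux)})\le\Phi(\beta,\epsilon_0)$, where $\Phi(\beta,\epsilon_0)\equiv\tfrac{\beta k}{2}+\max_{|m|\le\epsilon_0}\{H((1+m)/2)+g_k(m,\beta)\}$.

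The main obstacle is to choose $\epsilon_0>0$ with $\Phi(\beta,\epsilon_0)<\varphi_k(\beta,h^*)$; granting this, $\mu_{n,\beta,k}(|\bX|\le\epsilon_0)\le e^{-cn+o(n)}$ with $c=\varphi_k(\beta,h^*)-\Phi(\beta,\epsilon_0)>0$, so $p_n\to 0$ and $\mu_{n,\beta,k}(\partial_{\epsilon}\Omega_{\pm})/\mu_{n,\beta,k}(\Omega_{\pm})\le\mu_{n,\beta,k}(|\bX|\le\epsilon_0)/(\tfrac12-o(1))\to 0$, i.e.\ coexistence. Since $\epsilon_0\mapsto\Phi(\beta,\epsilon_0)$ is continuous and non-decreasing, it suffices to show $\Phi(\beta,0)=\tfrac{\beta k}{2}+\log 2+g_k(0,\beta)<\varphi_k(\beta,h^*)$. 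This is the analytic heart: $\Phi(\beta,0)$ is the annealed free-energy density of the balanced spin configurations, and one wants it to coincide with the paramagnetic value $\varphi_k(\beta,0)$, which for $\beta>\beta_{\rm c}$ is strictly below the ferromagnetic value $\varphi_k(\beta,h^*)$. The delicate point — which I expect to be the real work, and which is where expansion genuinely enters — is upgrading the annealed bound $\Phi(\beta,0)$ to the quenched value $\varphi_k(\beta,0)$ in the balanced sector: on an expander balanced configurations cannot phase-separate cheaply ($\mathrm{cut}(\ux)\ge c_k n/2$ for some $c_k>0$, see \cite{Rgraphs}), so the balanced ensemble should be replica-symmetric, and this can be made rigorous by a second-moment estimate for the magnetization-restricted partition function together with the Azuma--Hoeffding concentration of Lemma~\ref{lemma:GraphConcentration}. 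An alternative, fully parallel to the Peierls argument given above for $\integers^2$, is to bound $\mu_{n,\beta,k}(|\bX|\le\epsilon_0)$ directly by summing $e^{-2\beta\,\mathrm{cut}}$ over the minority sets $V_{-}(\ux)$ of linear size, whose edge boundaries grow linearly by the edge-isoperimetric inequality for random $k$-regular graphs and whose number, for $\beta>\beta_{\rm c}$, is exponentially dominated by that weight.
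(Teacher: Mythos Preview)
Your overall architecture is exactly the paper's: the same $\Omega_{\pm}$ partition, the same reduction via symmetry to an exponential upper bound on $\mu_{n,\beta,k}(|\bX|\le\epsilon)$, the same splitting into $Z_r(G_n)/Z(G_n)$, Theorem~\ref{thm:free_energy} for the denominator, and the first-moment/Markov/Borel--Cantelli route for the numerator. Where you part company with the paper is in your assessment of the ``analytic heart.''

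You describe the identification $\Phi(\beta,0)=\varphi_k(\beta,0)$ as a delicate upgrading of an annealed bound to a quenched one, to be obtained via a second-moment argument or edge expansion. In the paper this step involves no probability at all: the first moment $\E Z_r(G_n)$ is computed \emph{exactly} from the configuration-model pairing count (an explicit sum over the number $\Delta$ of bichromatic edges), and Stirling/Laplace gives a closed-form rate $\eta_k(\beta,u)$ with $u=(1-m)/2$. One then checks by direct substitution that $\eta_k(\beta,1/2)$, i.e.\ your $\Phi(\beta,0)$, equals the Bethe expression $\varphi_k(\beta,0)$ (using that at $m=0$ the optimal bichromatic fraction is $\delta_*(\beta,1/2)=1/[2(1+e^{2\beta})]$). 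So the annealed rate \emph{is} the paramagnetic Bethe value on the nose --- there is nothing to upgrade, and neither second-moment control nor the isoperimetric inequality for random regular graphs is needed.

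The remaining inequality $\varphi_k(\beta,0)<\varphi_k(\beta,h^*)$ is likewise just calculus: the paper computes $\partial_h\varphi_k(\beta,h)$ and shows it is positive on $(0,h^*)$ whenever $\beta>\beta_{\rm c}$, using only the sign of $g(h)=(k-1)\atanh(\theta\tanh h)-h$ there. Your Peierls-style alternative is therefore unnecessary as well. In short, the proof is complete once you actually carry out the explicit first-moment computation rather than deferring it; the expansion properties of $\graph(k,n)$ play no role.
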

\noindent{\bf Proof.} 
As in the proof of Theorem \ref{thm:cw-coex} 
(for the Curie-Weiss model), we again 
consider the partition of $\cX^n$ to  
$\Omega_{+} \equiv\{\ux:\, \sum_i x_i \ge 0\}$ and
$\Omega_{-} \equiv\{\ux:\, \sum_i x_i < 0 \}$. From 
the invariance of $\mu_{n,\beta,k}$ with respect to
the sign change $\ux \mapsto -\ux$ it follows that 
$\mu_{n,\beta,k}(\Omega_+) = \mu_{n,\beta,k}(\Omega_0)
+ \mu_{n,\beta,k}(\Omega_-)$ where
$\Omega_r \equiv \{ \ux: \sum_i x_i =r \}$. 
Hence, to prove coexistence it suffices to show that for
$\epsilon>0$ small enough, with probability one
\begin{equation}\label{eq:basic-bd}
\limsup_{n \to \infty} \frac{1}{n} \log \Big\{
\sum_{|r| \le n \epsilon} \mu_{n,\beta,k}(\Omega_r) \Big\} < 0 \,.
\end{equation} 
To this end, note that 
$\mu_{n,\beta,k}(\Omega_r) = Z_r (G_n) / Z(G_n)$ for  
the \emph{restricted partition function} 
\begin{eqnarray}\label{eq:rest-def} 
Z_r (G_n) \equiv
\sum_{\ux \in \Omega_r} \exp\Big\{\beta\sum_{(i,j)\in E_n }x_ix_j\Big\}\,.
\end{eqnarray} 
Further, 
recall that by Markov's inequality and the Borel-Cantelli lemma, for any
positive random variables $Y_n$, with probability one
$$
\limsup_{n \to \infty} \frac{1}{n} \log Y_n \leq 
\limsup_{n \to \infty} \frac{1}{n} \log \E (Y_n) \,.
$$ 
Thus, combining (\ref{eqn:kreg-free_energy}) with the latter inequality for 
$Y_n = \sum_{|r| \le n \epsilon} Z_r (G_n)$ we arrive 
at the inequality (\ref{eq:basic-bd}) upon proving the following lemma 
% The lemma is same as Lemma 5.3 in work with Gerschen. 
% but proof omitted there since you call it
% "a standard exercise in combinatorics".
(c.f. \cite[Section 5]{GerschenMon1}).
\begin{lemma} 
Considering even values of $n$ and assuming 
$\beta > \beta_{\rm c}$, we have that 
\begin{eqnarray}
\lim_{\epsilon \to 0} \limsup_{n \to \infty} 
\frac{1}{n} \log \Big\{ \sum_{|r| \le n \epsilon}
\E Z_r (G_n) \Big\} = \varphi_k(\beta,0) < \varphi_k(\beta,h^*) \,.
\end{eqnarray}
\end{lemma}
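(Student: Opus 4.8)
The proof has two essentially independent parts: an asymptotic evaluation of the annealed restricted partition functions $\E Z_r(G_n)$, and the purely analytic comparison $\varphi_k(\beta,0)<\varphi_k(\beta,h^*)$. The plan is to work throughout in the configuration model. Fix a configuration $\ux$ with $\sum_i x_i=r$ and write $n_\pm=(n\pm r)/2$. Since $\sum_{(i,j)\in E_n}x_ix_j=|E_n|-2e_{+-}(\ux)$, where $e_{+-}(\ux)$ is the number of bichromatic edges and $|E_n|=kn/2$, and since by symmetry of the uniform pairing the law of $e_{+-}(\ux)$ depends only on $n_+$, one obtains the exact identity
\begin{equation*}
\E Z_r(G_n)=e^{\beta kn/2}\binom{n}{n_+}\E\big[e^{-2\beta e_{+-}}\big],
\end{equation*}
where now $e_{+-}$ counts the bichromatic edges of a uniformly random pairing of $kn_+$ ``$+$'' and $kn_-$ ``$-$'' half-edges. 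Classifying pairings by their number $q$ of bichromatic edges yields the closed form
\begin{equation*}
\E\big[e^{-2\beta e_{+-}}\big]=\frac{1}{(kn-1)!!}\sum_{q}\binom{kn_+}{q}\binom{kn_-}{q}q!\,(kn_+-q-1)!!\,(kn_--q-1)!!\;e^{-2\beta q}.
\end{equation*}

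Next I would apply Stirling's formula to every factorial and double factorial (the $n\log n$ terms cancel because $\E[e^{-2\beta e_{+-}}]\le 1$) and Laplace's method to the sum over $q$, to obtain, uniformly over $m=r/n$ in a fixed neighbourhood of $0$,
\begin{equation*}
\frac1n\log\E Z_{\lfloor nm\rfloor}(G_n)\longrightarrow F_k(\beta,m),
\end{equation*}
for an explicit $F_k(\beta,\cdot)$ given by a one-dimensional maximization over the density of bichromatic edges, which is continuous and, by the colour-swap symmetry $\E Z_r=\E Z_{-r}$, even in $m$. Since there are only $O(n)$ values of $r$ with $|r|\le n\epsilon$, the sum is controlled by its largest term, so $\limsup_{n\to\infty}\frac1n\log\{\sum_{|r|\le n\epsilon}\E Z_r(G_n)\}=\max_{|m|\le\epsilon}F_k(\beta,m)$, which tends to $F_k(\beta,0)$ as $\epsilon\to0$ by continuity. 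It then remains to identify $F_k(\beta,0)=\varphi_k(\beta,0)$: setting $m=0$, the maximization reduces after the substitution $\theta=\tanh\beta$ to a routine computation whose stationarity condition is exactly the trivial fixed point $h=0$ of (\ref{eqn:fx-point}); evaluating there gives $F_k(\beta,0)=\log 2-(k/4)\log(1-\theta^2)$, which is precisely the value $\varphi_k(\beta,0)$ obtained by plugging $h=0$ into the definition of $\varphi_k$. This proves that the stated limit equals $\varphi_k(\beta,0)$.

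For the strict inequality I would analyse $\phi(h)\equiv\varphi_k(\beta,h)$ as a function of a single real variable. Writing $t=\tanh h$, $u=\theta t$, $P=(1+u)^{k-1}+(1-u)^{k-1}$ and $Q=(1+u)^{k-1}-(1-u)^{k-1}$, a direct differentiation using $(1+u)^k+(1-u)^k=P+uQ$ gives
\begin{equation*}
\phi'(h)=\frac{k\theta\,(1-t^2)\,(\theta Q-uP)}{(\theta+u^2)(P+uQ)}.
\end{equation*}
Since $Q/P=\tanh\big((k-1)\atanh(u)\big)$ and $u=\theta\tanh h$, the factor $\theta Q-uP=P\big(\theta\tanh((k-1)\atanh u)-\theta\tanh h\big)$ has the same sign as $(k-1)\atanh(\theta\tanh h)-h=g(h)$, and all other factors are positive; hence $\sign\phi'(h)=\sign g(h)$ for every $h>0$. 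When $(k-1)\tanh\beta>1$ we are given that $g(0)=g(h^*)=0$ with $g'(0)>0$, $g'(h^*)<0$ and $h^*>0$, so $g>0$ on $(0,h^*)$; therefore $\phi$ is strictly increasing there and $\varphi_k(\beta,0)=\phi(0)<\phi(h^*)=\varphi_k(\beta,h^*)$, as required.

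The main obstacle is the first part: carrying out the uniform Stirling and Laplace asymptotics for $\E Z_{\lfloor nm\rfloor}(G_n)$ and verifying that the resulting (somewhat involved) variational expression collapses, at $m=0$, to the clean value $\varphi_k(\beta,0)$. By contrast, the combinatorial identity for $\E Z_r(G_n)$, the replacement of the $O(n)$-term sum by its maximum, and the sign identity $\sign\phi'=\sign g$ are comparatively routine.
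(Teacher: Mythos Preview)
Your proposal is correct and follows essentially the same route as the paper: the paper too writes $\E Z_r(G_n)$ via the configuration-model count of pairings with a prescribed number $\Delta$ of bichromatic edges (your $q$), applies Stirling's formula, optimizes explicitly over $\delta=\Delta/(kn)$ to obtain a rate function $\eta_k(\beta,u)$ in the balance parameter $u=s/n$, and then checks directly that $\eta_k(\beta,1/2)=\log 2+\tfrac{k}{2}\log\cosh\beta=\varphi_k(\beta,0)$; likewise, the paper's proof of the strict inequality is exactly your sign identity $\mathrm{sign}\,\partial_h\varphi_k(\beta,h)=\mathrm{sign}\,g(h)$, merely written with $f(x,c)=x/(1+cx)$ in place of your $P,Q$ notation. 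One small remark: your sentence ``whose stationarity condition is exactly the trivial fixed point $h=0$'' is a bit of a conflation---the maximization at $m=0$ is over the bichromatic-edge density, not over $h$---but the value you report is correct.
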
 
\begin{proof} 
First, following the calculus preceding (\ref{eq:FreeDer}) we get 
after some algebraic manipulations  that 
$$
\partial_h \varphi_k(\beta,h) = \frac{k \theta}{\cosh^2(h)} 
[ f(\tanh(u),c) - f(\tanh(h),c) ]
$$ 
for $c = c(h) \equiv \theta \tanh(h)$ and $u=u(h) \equiv
(k-1) \atanh(c)$, 
where for $c \ge 0$ the function 
$f(x,c)=x/(1+cx)$ is monotone increasing in $x \ge 0$. 
With $\beta > \beta_{\rm c}$ we know already that 
$g(h)>0$
(for $g(\cdot)$ of (\ref{eqn:fx-point})), 
hence $u(h)>h$ for any $h \in (0,h^*)$. From 
the preceding expression for $\partial_h \varphi_k(\beta,h)$ 
and the monotonicity of $f(\cdot,c)$ we thus deduce 
that $\varphi_k(\beta,h^*) > \varphi_k(\beta,0)$.

Next, since $Z_r(G)=Z_{-r}(G)$ we  shall consider hereafter only
$r \ge 0$, setting $s \equiv (n-r)/2$. Further,    
let $\Delta_G(\ux)$ denote the number of edges
$(i,j)\in E$ such that $x_i\neq x_j$ and $Z_r (G,\Delta)$ be the number of
configurations $\ux \in \Omega_r$ such that $\Delta_G(\ux) = \Delta$.
Since $|E|=m$ it follows that
$\sum_{(i,j)\in E} x_ix_j = m-2\Delta_G(\ux)$ and hence 
\begin{eqnarray*} 
Z_r (G) = e^{\beta m}\sum_{\Delta=0}^{m} Z_r (G,\Delta)\,
e^{-2\beta\Delta}\, .  
\end{eqnarray*} 
By the linearity of the expectation
and since the distribution of $G_n$ 
(chosen uniformly from $\graph(k,n)$)
is invariant under any permutation of the vertices, we have that 
\begin{align*} \E\{\,Z_r (G_n,\Delta)\}
&= \sum_{\ux \in \Omega_r} \prob\{\Delta_{G_n}(\ux)=\Delta\} =
\binom{n}{s}\prob\{\Delta_{G_n}(\ux^*)=\Delta\} \\ &= \binom{n}{s}
\, \frac{|\{G\in\graph(k,n)\mbox{ and } \Delta_G(\ux^*)=\Delta\}| }{
|\graph(k,n)|}\, , 
\end{align*} 
where $x^*_i=-1$ for $i \le s$ and
$x^*_i=1$ for $s < i \le n$.

The size of the ensemble $\graph(k,n)$ is precisely
the number of pairings of $nk$ objects, i.e.
\begin{eqnarray*}
|\graph(k,n)| = \Pair(nk) \equiv \frac{(nk)!}{(nk/2)!2^{nk/2}}\, .
\end{eqnarray*}
Similarly, the number of such pairings with exactly $\Delta$
edges of unequal end-points is 
\begin{eqnarray*}
|\{ G\in\graph(k,n)\mbox{ and } 
\Delta_G(\ux^*)=\Delta\}| =  
\binom{ks}{\Delta}
\binom{\hat{n}}{\Delta}
\; \Delta!\; 
\Pair(ks-\Delta)
\Pair(\hat{n}-\Delta)
\,,
\end{eqnarray*}
where $\hat{n} \equiv k(n-s)$.
Putting everything together we get that 
\begin{align}\label{eq:Zgn-star}
\E\{ Z_r & (G_n)\}=  \nonumber \\
& \frac{e^{\beta m}}{\Pair(2m)} \binom{n}{s}
 \sum_{\Delta=0}^{ks}\binom{ks}{\Delta}
\binom{\hat{n}}{\Delta}
\, \Delta!\, 
\Pair(ks-\Delta)
\Pair(\hat{n}-\Delta)
e^{-2\beta\Delta}\,.
\end{align}
Recall that for any  $q\in [0,1]$
\begin{eqnarray*}
n^{-1} \log \binom{n}{nq} = H(q) + o(1)\, ,\;\;\;\;\;\;\;\;\;
n^{-1} \log \Pair(n) = \frac{1}{2} \log \Big(\frac{n}{e}\Big) + o(1) \, ,
\end{eqnarray*}
where $H(x) \equiv -x\log x-(1-x)\log(1-x)$ denotes
the binary entropy function.

Setting $\Delta= \delta k n$, $s = u n$ and
$$
\psi_\beta(u,\delta) \equiv 
(u-\delta) \log (u-\delta) +(1-u-\delta) \log (1-u-\delta) + 
2 \delta \log \delta + 4\beta \delta \,, 
$$
we find upon substituting these estimates in the expression 
(\ref{eq:Zgn-star}) that  
\begin{eqnarray*}
n^{-1} \log \E\{Z_r (G_n)\} = \frac{\beta k}{2} + (1-k)H(u) 
- \frac{k}{2} \inf_{\delta \in [0,u]} \psi_\beta(u,\delta) +o(1)\,.
\end{eqnarray*}
Differentiating $\psi_\beta(u,\delta)$ in $\delta$ 
we deduce that the infimum 
in the preceding expression is achieved for the positive solution 
$\delta = \delta_* (\beta,u)$ of $(u-\delta)(1-u-\delta)=\delta^2 e^{4 \beta}$.
%, that is, for  
%$$
%\delta = \delta_* (\beta,u)\equiv 
%\frac{\sqrt{1+4 u(1-u) (e^{4\beta}-1)}-1}{2(e^{4\beta}-1)} \,.
%$$
Using this value of $\delta$ we get that
$n^{-1} \log \E\{Z_r (G_n)\} = \eta_k(\beta,u) + o(1)$, where  
$$
\eta_k (\beta,u) \equiv 
\frac{\beta k}{2} + (1-k)H(u) 
- \frac{k}{2} 
\big\{ 
u \log (u-\delta_*(\beta,u)) +(1-u) \log (1-u-\delta_*(\beta,u))
\big\}  \,.
$$
Next, note that $\delta_*(\beta,1/2)=1/[2(1+e^{2\beta})]$
from which we obtain after some 
elementary algebraic manipulations 
that $\eta_k(\beta,1/2)=\varphi_k(\beta,0)$. Further,
as $\eta_k(\beta,u)$ is continuous in $u \ge 0$,
we conclude that
$$
\limsup_{n \to \infty}
n^{-1} \log \big\{ \sum_{|r| \le \epsilon n} 
\E Z_r(G_n) \big\} = \sup_{|2u-1| \le \epsilon} \eta_k(\beta,u)
\,,
$$
which for $\epsilon \to 0$ converges 
to $\eta_k(\beta,1/2)=\varphi_k(\beta,0)$, as claimed.
\end{proof}
%

%
%*********************************************************************
%
\section{The Bethe-Peierls approximation}
\label{ch:Bethe}
\setcounter{equation}{0}

Bethe-Peierls approximation reduces the problem 
of computing partition functions and expectation values to the
one of solving a set of non-linear equations. While in general this
`reduction' involves an uncontrolled error, for 
mean-field models it
is expected to be asymptotically exact in the large 
system limit. In fact, in Section \ref{ch:IsingChapter} 
we saw such a result for 
the ferromagnetic Ising model on sparse tree-like graphs.

\emph{Bethe states}, namely those distributions 
that are well approximated within the Bethe-Peierls scheme
play for mean-field models the role that  
pure Gibbs states do on infinite lattices (for the latter
see \cite{Georgii}). For example, it is 
conjectured by physicists that a large class of models, 
including for instance the examples in Section 
\ref{ch:Intro}, decompose into convex combinations of Bethe states.

In the context of mean field spin glasses, the Bethe-Peierls 
method was significantly extended by M\'ezard, Parisi and Virasoro
to deal with proliferation of pure states \cite{SpinGlass}.
In the spin glass jargon, this phenomenon is referred to 
as `replica symmetry breaking,' and the whole approach is 
known as the `cavity method'. 
A closely related approach
is provided by the so-called TAP (Thouless-Anderson-Palmer) equations
\cite{SpinGlass}.

Section \ref{sec:BetheInformal} outlines the rationale behind 
the Bethe-Peierls approximation of local marginals, based on
the Bethe mean field equations (and the belief propagation
algorithm for iteratively solving them). Complementing it,
Section \ref{sec:BetheEntropy} introduces the
Bethe free entropy. In Section \ref{sec:Bethe-examples} we
explain how these ideas apply to 
the ferromagnetic Ising, the Curie-Weiss model, the 
Sherrington-Kirkpatrick model and the independent set model. 
Finally, in Section \ref{sec:BetheFormal} we define a 
notion of correlation decay which 
%is weaker than uniqueness
generalizes the so called `extremality condition'
in trees. We show that if the graphical model associated with 
a permissive graph-specification pair $(G,\upsi)$ 
satisfies such correlation decay condition then it 
is a Bethe state. Subject to  
a slightly stronger condition, \cite{bethe} validates also the
Bethe-Peierls approximation for its free entropy.

While in general extremality on the graph $G$ does not coincide with 
extremality on the associated tree model, in Section \ref{ch:Reco}
we shall provide a sufficient condition for this to happen
for models on random graphs.

%************************************************************************
%
\subsection{Messages, belief propagation and Bethe equations}
\label{sec:BetheInformal}

Given a variable domain $\cX$ and a simple finite graph 
$G\equiv (V, E)$ without double edges or self loops, let
$\vE\equiv \{i\to j: \; (i,j)\in E\}$
denote the induced set of directed edges. 
The Bethe-Peierls method provides an approximation for the marginal on 
$U \subset V$ of the probability measure $\mu \equiv \mu_{G,\upsi}$ 
cf. Eq.~(\ref{eq:Canonical}). The basic idea is to describe  
the influence of the factors outside $U$ via 
factorized boundary conditions. Such a boundary law
is fully specified by a collection of distributions
on $\cX$ indexed by the directed edges 
on the `internal' boundary 
$\dU=\{i \in U: \di \not\subseteq U\}$ of $U$ (where
as usual $\di$ is the set of neighbors of $i \in V$).
More precisely, this is described by appropriately
choosing a \emph{set of messages}.
\begin{definition}\label{def:smessage}
A \emph{set of messages} is a collection 
$\{\nu_{i\to j}(\, \cdot\, ):\; i\to j\in\vE\}$
of probability distributions over $\cX$
indexed by the directed edges in $G$.

A set of messages is \emph{permissive}
for a permissive graph-specification pair $(G,\upsi)$
if $\nu_{i \to j}(x_i^{\rm p})$ are positive and further
$\nu_{i \to j}(\, \cdot \,) = \psi_i(\, \cdot\, )/z_i$ 
whenever $\di=\{j\}$.
\end{definition}

As we shall soon see, in this context 
the natural candidate for the Bethe-Peierls approximation is 
the following \emph{standard message set}. 
\begin{definition}
The \emph{standard message set} for the canonical probability 
measure $\mu$
associated to a permissive graph-specification pair $(G,\upsi)$ 
is $\nu_{i\to j}^* (x_i) \equiv \mu^{(ij)}_i(x_i)$, 
that is, the marginal on $i$ of the probability measure
on $\cX^{V}$ 
\begin{eqnarray}
\mu^{(ij)}(\ux) = \frac{1}{Z_{ij}}\, 
\prod_{(k,l)\in E\setminus (i,j)}\,
\psi_{kl}(x_k,x_l) \prod_{k\in V} \psi_{k}(x_k)
\, ,
\label{eq:CavityMu}
%\label{eq:Canonical_Inform}
\label{eq:EdgeMEasure}
\end{eqnarray}
obtained from equation (\ref{eq:Canonical}) 
upon `taking out' the contribution 
$\psi_{ij}(\cdot,\cdot)$ of edge $(i,j)$ 
(and with $Z_{ij}$ an appropriate normalization constant).
\end{definition}
\begin{remark}\label{rem:standard-permissive}
Since $\upsi$ is permissive, the measure
$\mu^{(ij)}(\cdot)$ is well defined and strictly positive
at $\ux=(x_1^{\rm p},\ldots,x_n^{\rm p})$. Further, 
the marginal on $i$ of $\mu^{(ij)}(\cdot)$ 
is precisely $\psi_i(x_i)/\sum_x \psi_i(x)$ whenever $\di = \{j\}$, so 
the collection $\{ \nu_{i\to j}^* (\cdot) \}$ 
is indeed a permissive set of messages (per Definition \ref{def:smessage}).  
\end{remark}

In order to justify the Bethe-Peierls method
let $\mu^{(i)}(\,\cdot\,)$ denote the probability measure
obtained from the canonical measure of (\ref{eq:Canonical}) 
when the vertex $i \in V$ and all edges incident on $i$ 
are removed from $G$. 
That is, 
\begin{eqnarray}
\mu^{(i)}(\ux) \equiv \frac{1}{Z_i}\, \prod_{(k,l)\in E,\; i\not\in(k,l)}\,
\psi_{kl}(x_k,x_l)
\prod_{k \in V, k \ne i} \psi_k(x_k)
\, .\label{eq:VertexMeasure}
\end{eqnarray}
For any $U\subseteq V$ we let $\mu_U$ 
(respectively, $\mu^{(ij)}_U$, $\mu^{(i)}_U$), denote the
marginal distribution of $\ux_U\equiv\{x_i:\, i\in U\}$ when $\ux$
is distributed according to $\mu$ (respectively $\mu^{(ij)}$, $\mu^{(i)}$).

Clearly, finding 
good approximations to the marginals
of the modified models $\mu^{(ij)}$, $\mu^{(i)}$ is essentially equivalent
to finding good approximations for the original model $\mu$.
Our first step consists of deriving an identity between certain marginals of 
$\mu^{(ij)}$ in terms of marginals of $\mu^{(i)}$.
Hereafter, we write 
$f(\cdot)\normeq g(\cdot)$ whenever two non-negative functions
$f$ and $g$ on the same domain
differ only by a positive normalization constant.  
By definition we then have that 
\begin{eqnarray}
\mu^{(ij)}_{ij}(x_i,x_j) \normeq
\psi_i(x_i) \sum_{\ux_{\di\setminus j}}\mu^{(i)}_{\di}(\ux_{\di})\,
\prod_{l\in\di\setminus j}\psi_{il}(x_i,x_l)\, .\label{eq:Exact}
\end{eqnarray}  
To proceed, we let $\me_{i\to j}^* (x_i) \equiv \mu^{(ij)}_i (x_i)$
and make the crucial approximate independence assumptions
\begin{eqnarray}
\mu_{ij}^{(ij)}(x_i,x_j) & =&  \me_{i\to j}^*(x_i)\me^*_{j\to i}(x_j)+ 
{\sf ERR}\, ,
\label{eq:BasicAssumption1}
\\
\mu^{(i)}_{\di} 
(\ux_{\di}) &=&\prod_{l\in\di} \me^*_{l\to i}(x_l) + {\sf ERR}\, ,
\label{eq:BasicAssumption2}
\end{eqnarray}
where the error terms {\sf ERR} are assumed to be small. Indeed, 
upon neglecting the error terms, plugging these expressions in 
equation (\ref{eq:Exact}), setting $x_j=x_j^{\rm p}$
and dividing by the positive 
common factor $\me^*_{j\to i}(x_j)$, we get the following
\emph{Bethe equations}.
%(also called the \emph{Bethe-Peierls} or \emph{belief propagation} 
%equations), which characterize the set
\begin{definition}
\label{def:BetheEq}
Let $\M(\cX)$ denote the space of probability measures over $\cX$
and consider the Bethe (or \emph{belief propagation, BP}) mapping 
$\Tr$ of the space $\M(\cX)^{\vE}$ of possible message sets to itself,
whose value at $\nu$ is
\begin{eqnarray}\label{def:BetheMapping}
(\Tr\nu)_{i\to j}(x_i)\equiv \frac{\psi_i(x_i)}{z_{i\to j}}
\prod_{l\in\di\backslash{j}}\Big[ \sum_{x_l \in \cX}
\psi_{il}(x_i,x_l)\nu_{l\to i}(x_l) \Big] \, ,
\end{eqnarray}
where $z_{i\to j}$ is determined by the normalization condition 
$\sum_{x\in\cX} (\Tr\nu)_{i\to j}(x) = 1$. The 
\emph{Bethe equations} 
characterize fixed points of the BP mapping. That is, 
\begin{eqnarray}
\me_{i\to j}(x_i) \equiv 
(\Tr\nu)_{i\to j}(x_i) \,.
\label{eq:BPeq}
\end{eqnarray}
\end{definition}

\begin{remark}\label{rem:positivity}
The BP mapping $\Tr$ is well defined 
when the specification $\upsi$ is permissive. Indeed, 
in such a case there exists for each $i \to j \in \vE$ and
any message set $\nu$, a positive constant $z_{i\to j} 
\ge \psi_{\min}^{|\di|}$ 
for which $(\Tr\nu)_{i\to j}\in\M(\cX)$. 

Moreover, in this case 
%for any message set $\nu$ and each $i \to j \in \vE$,
by definition $(\Tr \nu)_{i \to j}(x)$
is positive at $x=x_i^{\rm p}$ 
and further, equals $\psi_i(x)/z_i$ 
whenever $\di = \{j\}$. 
%Hence, the range of the 
%BP mapping consists of permissive sets of messages 
In particular, any solution of the Bethe equations
is a permissive set of messages.
\end{remark}
%
%*****************************************************
%

These equations characterize the set of messages $\{ \me^*_{i\to j}(\cdot)\}$ 
to be used in the approximation. 
Bethe-Peierls method estimates marginals of the 
graphical model $\mu_{G,\upsi}$ in a manner similar to that 
expressed by 
(\ref{eq:BasicAssumption1}) and (\ref{eq:BasicAssumption2}).
For instance, $\mu_i(\cdot)$ is then approximated by  
\begin{eqnarray}
\mu_i(x_i) \normeq 
\psi_i(x_i) 
\prod_{j\in\di}\sum_{x_j}\psi_{ij}(x_i,x_j)\me^*_{j\to i} (x_j) \, .
\label{eq:LocalMarg1}
\end{eqnarray}
A more general expression will be provided in 
Section \ref{sec:BetheFormal}.

At this point the reader can verify 
that if $G$ is a (finite) tree then the error terms in
equations (\ref{eq:BasicAssumption1}) and
(\ref{eq:BasicAssumption2}) vanish, hence in this case
the Bethe equations have a unique solution, which is 
precisely the 
standard message set for the canonical measure $\mu$.
More generally, 
%\begin{exercise}\label{exer:Bethe-factor}
it is not hard to verify that in the framework of 
a (permissive) specification $\upsi$
for a factor graph $G=(V,F,E)$
the Bethe equations are then  
\begin{eqnarray*}
\nu_{a \to i} (x_i)&\normeq& \sum_{\ux_{\da \setminus i}} 
\psi_a (\ux_{\da}) \prod_{l \in \da \setminus i} \nu_{l \to a} (x_l) \,,
\\
\nu_{i \to a} (x_i) &\normeq& \prod_{b \in \di \setminus a} \nu_{b \to i} (x_i)
\end{eqnarray*}
and that when the factor graph is a (finite) 
tree these equations have a unique solution 
which is precisely 
the standard message set for the (canonical) measure 
$\mu_{G,\upsi}(\cdot)$ of (\ref{eq:FCanonical}). That is,
$\nu_{i \to a}(\cdot)$ and $\nu_{a \to i}(\cdot)$ are
then the marginals on variable $i$ for factor graphs in 
which factor $a$ and all factors in $\di \setminus a$ are
removed, respectively.
%\end{exercise}

In view of the preceding, we expect such an approximation to be 
tight as soon as $G$ lacks short cycles or for a sequence 
of graphs that converges locally to a tree. 

%
%*******************************************************************
%
\subsection{The Bethe free entropy}
\label{sec:BetheEntropy} 

Within the Bethe approximation all marginals are expressed in 
terms of the permissive messages $\{\nu_{i\to j}\}$ that 
solve the Bethe equations (\ref{eq:BPeq}). Not surprisingly, 
the free entropy $\log Z(G,\upsi)$ can also be 
approximated in terms as the \emph{Bethe free entropy} at this message set.
\begin{definition}\label{def:BetheFree}
The real valued function on the space of permissive message sets  
\begin{eqnarray}
\Phi_{G,\upsi} (\nu) &=& -\sum_{(i,j)\in E} \log \Big\{
\sum_{x_i,x_j}\psi_{ij}(x_i,x_j)\nu_{i\to j}(x_i)\nu_{j\to i}(x_j)\Big\}
\nonumber \\
&& + \sum_{i\in V}\log\Big\{\sum_{x_i}\psi_i(x_i) \prod_{j\in\di}\sum_{x_j}
\psi_{ij}(x_i,x_j)\nu_{j\to i}(x_j)\Big\} \,,
\label{eq:BetheFree}
\end{eqnarray}
is called the \emph{Bethe free entropy} associated with   
the given permissive graph-specification pair $(G,\upsi)$. 
In the following we shall often drop the subscripts and write
$\Phi(\nu)$ for the Bethe free entropy.
\end{definition}

In the spirit of the observations made at the end of Section 
\ref{sec:BetheInformal},
this approximation is exact whenever $G$ is a tree and 
the Bethe messages are used. 
\begin{propo}\label{prop:Bethe-free-tree} 
Suppose $G$ is a tree and let $\me^*$ denote the unique solution
of the Bethe equations (\ref{eq:BPeq}). 
Then, $\log Z(G,\upsi) = \Phi_{G,\upsi} (\nu^*)$.
\end{propo}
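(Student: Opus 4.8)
The plan is to reduce the identity to a short computation organized around an arbitrary rooting of $G$, using only the Bethe fixed-point equations (\ref{eq:BPeq})--(\ref{def:BetheMapping}) and a one-line induction for partition functions of subtrees. Fix a root $\root\in V$; for $i\ne\root$ let $p(i)$ be the parent of $i$, so that $\di\setminus p(i)$ is the set of children of $i$, and let $D_i\subseteq V$ be the subtree hanging below $i$ (the component of $i$ in $G$ after deleting the edge $\{i,p(i)\}$). Let $W_i$ be the partition function of the graphical model restricted to $D_i$ (keeping $\psi_k$ for $k\in D_i$ and $\psi_{kl}$ for edges of $G$ inside $D_i$). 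The first step is to show, by induction on $|D_i|$ from the leaves upward, that $\nu^*_{i\to p(i)}$ is the marginal at $i$ of the normalized restricted model on $D_i$ and that
\begin{equation*}
W_i = z_{i\to p(i)}\prod_{l\in\di\setminus p(i)} W_l ,
\end{equation*}
where $z_{i\to p(i)}$ is the normalizing constant in (\ref{def:BetheMapping}). For a leaf $i$, $\di\setminus p(i)=\emptyset$ and the Bethe equation reads $\nu^*_{i\to p(i)}(x_i)=\psi_i(x_i)/z_{i\to p(i)}$ with $z_{i\to p(i)}=\sum_{x_i}\psi_i(x_i)=W_i$, so both claims hold; the inductive step follows by summing the product form of the restricted measure over the children's subtrees and recognizing the emerging factors as the $\nu^*_{l\to i}$. (This is just the standard-message-set description of Section \ref{sec:BetheInformal}, specialized to edges oriented toward $\root$.)

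Next I peel off the root: splitting the canonical weight according to $\root$, its incident edges, and the disjoint subtrees $D_l$, $l\in\partial\root$, and carrying out the sums inside each $D_l$ via the first step,
\begin{equation*}
Z(G,\upsi)=\Big(\prod_{l\in\partial\root}W_l\Big)\,z_\root, \qquad
z_\root\equiv\sum_{x_\root}\psi_\root(x_\root)\prod_{l\in\partial\root}\sum_{x_l}\psi_{\root l}(x_\root,x_l)\,\nu^*_{l\to\root}(x_l).
\end{equation*}
Observe that $\log z_\root$ is exactly the $\root$-summand of the second sum in (\ref{eq:BetheFree}). Since the $D_l$, $l\in\partial\root$, partition $V\setminus\{\root\}$, unrolling the recursion $W_i=z_{i\to p(i)}\prod_{l\in\di\setminus p(i)}W_l$ down to the leaves gives $\log Z(G,\upsi)=\log z_\root+\sum_{i\ne\root}\log z_{i\to p(i)}$.

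Finally I match the summands of (\ref{eq:BetheFree}). Write $\mathrm{VT}_i$ for its vertex summand at $i$ and $\mathrm{ET}_{\{i,j\}}$ for its edge summand at $\{i,j\}$, so that $\Phi_{G,\upsi}(\nu^*)=\sum_{i\in V}\mathrm{VT}_i-\sum_{(i,j)\in E}\mathrm{ET}_{\{i,j\}}$ and $\mathrm{VT}_\root=\log z_\root$. Inserting the Bethe equation in the form $\psi_i(x_i)\prod_{l\in\di\setminus p(i)}[\sum_{x_l}\psi_{il}(x_i,x_l)\nu^*_{l\to i}(x_l)]=z_{i\to p(i)}\,\nu^*_{i\to p(i)}(x_i)$ into the definition of $\mathrm{VT}_i$ gives, for each $i\ne\root$, the identity $\mathrm{VT}_i=\log z_{i\to p(i)}+\mathrm{ET}_{\{i,p(i)\}}$. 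As $i\mapsto\{i,p(i)\}$ is a bijection from $V\setminus\{\root\}$ onto $E$, summing over $i\ne\root$ yields $\sum_{i\ne\root}\log z_{i\to p(i)}=\sum_{i\ne\root}\mathrm{VT}_i-\sum_{(i,j)\in E}\mathrm{ET}_{\{i,j\}}$; substituting this into the expression for $\log Z(G,\upsi)$ from the previous step and adding $\mathrm{VT}_\root$ gives precisely $\Phi_{G,\upsi}(\nu^*)$.

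There is no real obstacle; the only points to watch are that all rooting-dependent objects ($p(i)$, $D_i$, $z_\root$, the $W_i$) cancel out of the final, rooting-free identity --- which happens automatically once the bijection $i\mapsto\{i,p(i)\}$ is used --- and that one should invoke the uniqueness of the Bethe solution on a tree, recorded in Section \ref{sec:BetheInformal}, so the same $\nu^*$ is used throughout. An equivalent route avoiding rooting is induction on $|V|$: delete a leaf $i$ with neighbor $j$, absorb $\sum_{x_i}\psi_{ij}(x_i,x_j)\psi_i(x_i)$ into a modified (still permissive) vertex factor $\psi'_j$ to obtain a tree $(G',\upsi')$ with the same partition function, note that the restriction of $\nu^*$ to the directed edges of $G'$ is the unique Bethe solution there (only the equations for messages leaving $j$ change, and they match since $\nu^*_{i\to j}\propto\psi_i$ is absorbed into $\psi'_j$), and check $\Phi_{G',\upsi'}(\nu^*)=\Phi_{G,\upsi}(\nu^*)$ using the leaf identity $\mathrm{VT}_i=\mathrm{ET}_{\{i,j\}}+\log\sum_{x_i}\psi_i(x_i)$.
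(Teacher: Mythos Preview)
Your argument is correct. The paper, however, proves the proposition by a different recursion: it does not root the tree but instead removes one undirected edge $(i,j)$ at a time. Removing $(i,j)$ splits $G$ into two subtrees; the paper forms two modified graph--specification pairs $(G^{(i\to j)},\upsi^{(i\to j)})$ and $(G^{(j\to i)},\upsi^{(j\to i)})$ in which the absent half is replaced by a single leaf carrying the incoming message as its vertex factor. A tailored algebraic identity (if $f=f_1f_2/f_3$ with $f_a\normeq p$ then $\log\sum f=\log\sum f_1+\log\sum f_2-\log\sum f_3$) then yields $\log Z(G)=\log Z(G^{(i\to j)})+\log Z(G^{(j\to i)})-\log\varphi(i,j)$, where $\varphi(i,j)$ is exactly the edge summand of $\Phi$. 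Iterating over all edges accumulates the edge contributions, and the residual graph is a disjoint union of stars centered at the vertices of $G$, whose log-partition functions are precisely the vertex summands of $\Phi$.

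Your route is a cleaner dynamic-programming computation: with a root fixed, the subtree recursion $W_i=z_{i\to p(i)}\prod_l W_l$ gives $\log Z=\log z_\root+\sum_{i\ne\root}\log z_{i\to p(i)}$ directly, and a single application of the Bethe equation at each non-root $i$ produces the identity $\mathrm{VT}_i=\log z_{i\to p(i)}+\mathrm{ET}_{\{i,p(i)\}}$, after which the bijection $i\mapsto\{i,p(i)\}$ finishes the match. This avoids both the paper's special identity and the bookkeeping of modified specifications, at the cost of the (eventually irrelevant) choice of root. The leaf-deletion induction you sketch at the end is closer in spirit to the paper's edge-deletion recursion, though the paper peels edges rather than leaves and keeps both halves at each step.
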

\begin{proof}
We progressively disconnect the tree $G$ in a recursive fashion. 
In doing so, note that if $f(x)=f_1(x)f_2(x)/f_3(x)$ and 
$f_a(x) \normeq p(x)$ 
for $a\in\{1,2,3\}$ and some probability distribution $p$, then
\begin{eqnarray}
\log\big\{\sum_x f(x) \big\} = 
\log\big\{\sum_xf_1(x)\big\}+
\log\big\{\sum_xf_2(x)\big\}-
\log\big\{\sum_xf_3(x)\big\}
\label{eq:RemarkBethe}
\end{eqnarray}
(adopting hereafter the convention that $0/0=0$).

Proceeding to describe the first step of the recursion,  
fix an edge $(i,j)\in E$. Without this edge the tree
$G$ breaks into disjoint subtrees 
$G^{(i)}$ and $G^{(j)}$ such that $i \in G^{(i)}$
and $j \in G^{(j)}$. Consequently, the measure 
$\mu^{(ij)}(\cdot)$ of (\ref{eq:CavityMu}) 
is then the product of two canonical measures, 
corresponding to the restriction of the specification
$\upsi$ to $G^{(i)}$ and to $G^{(j)}$, respectively. 
Let $Z_{i\to j}(x)$ denote the constrained 
partition function for the specification $\upsi$ 
restricted to the subtree $G^{(i)}$  
whereby we force the variable $x_i$ to take the value $x$.
With $Z_{j\to i}(x)$ defined similarly for
the subtree $G^{(j)}$, we obviously have that
\begin{eqnarray*}
Z(G,\upsi) 
= \sum_{x_i,x_j} Z_{i\to j}(x_i)\psi_{ij}(x_i,x_j) Z_{j\to i}(x_j)\,.
\end{eqnarray*}
Further, recall our earlier observation that for a tree $G$ 
the unique solution $\{\me^*_{i \to j}(\cdot)\}$ 
of (\ref{eq:BPeq}) is $\{\mu^{(ij)}_i(\cdot)\}$. Hence,
in this case $Z_{i\to j}(x_i) \normeq \me^*_{i\to j}(x_i)$,
$Z_{j\to i}(x_j) \normeq \me^*_{j\to i}(x_j)$ and 
$\me^*_{i\to j}(x_i)\psi_{ij}(x_i,x_j)\me^*_{j\to i}(x_j)
\normeq\mu_{ij}(x_i,x_j)$. Setting
$\psi^*_{i \to j}(x_i,x_j)
\equiv \me^*_{i \to j}(x_i) \psi_{ij}(x_i,x_j)$ we 
next apply 
the identity (\ref{eq:RemarkBethe}) for $x=(x_i,x_j)$, 
$f_1(x)=Z_{i\to j}(x_i)\psi^*_{j \to i}(x_i,x_j)$,
$f_2(x)=\psi^*_{i \to j}(x_i,x_j)Z_{j\to i}(x_j)$ 
and 
$f_3(x)=
\me^*_{i\to j}(x_i)\psi_{ij}(x_i,x_j)\me^*_{j\to i}(x_j)$
to get that 
\begin{eqnarray*}
\log Z(G,\upsi) = \log Z(G^{(i \to j)},\upsi^{(i \to j)}) +
\log Z(G^{(j \to i)},\upsi^{(j \to i)}) - \log \varphi(i,j) \,,
\end{eqnarray*}
where for each edge $(i,j)\in E$, 
\begin{eqnarray*}
\varphi(i,j) \equiv
\sum_{x_i,x_j}\me^*_{i\to j}(x_i)\psi_{ij}(x_i,x_j)
\me^*_{j\to i}(x_j)
\end{eqnarray*}
and the term 
\begin{eqnarray*}
Z(G^{(i \to j)},\upsi^{(i \to j)}) \equiv
\sum_{x_i,x_j}
Z_{i\to j}(x_i)\psi^*_{j \to i}(x_i,x_j) \,,
\end{eqnarray*}
is the partition function for the (reduced size) 
subtree $G^{(i \to j)}$ obtained when adding 
to $(G^{(i)},\upsi^{(i)})$ 
the edge $(i,j)$ and 
the vertex $j$ whose specification  
is now $\psi^*_{j} \equiv \me^*_{j \to i}$. We 
have the analogous representation for 
\begin{eqnarray*}
Z(G^{(j \to i)},\upsi^{(j \to i)}) \equiv
\sum_{x_i,x_j} \psi^*_{i\to j}(x_i,x_j) 
Z_{j\to i}(x_i) \,.
\end{eqnarray*}
It is not hard to verify that the unique 
solution of the Bethe equations (\ref{eq:BPeq}) 
for the graph-specification
$(G^{(i \to j)},\upsi^{(i \to j)})$ coincides 
with $\me^*(\cdot)$ at all directed edges
of $G^{(i \to j)}$.
Likewise, the unique
solution of the Bethe equations (\ref{eq:BPeq})
for the graph-specification
$(G^{(j \to i)},\upsi^{(j \to i)})$ 
coincides with $\me^*(\cdot)$  
at all directed edges 
of $G^{(j \to i)}$. Thus,  
recursively repeating this operation until 
we have dealt once with each edge of $G$, 
we find a contribution 
$-\log \varphi(k,l)$ from each $(k,l) \in E$,
the sum of which is precisely the first 
term in (\ref{eq:BetheFree}), evaluated 
at the permissive set of messages
$\me^*_{i \to j}(\cdot)$.
The residual graph remaining at this stage consists of 
disconnected `stars' centered at vertices of $G$,
with specification 
$\me^*_{l \to k} (x_l)$ at vertices 
$l \in \partial k$ for 
the `star' centered at $k \in V$ (and original specification 
at vertex $k$ and the edges $(k,l) \in E$). The 
log-partition function for such star is 
$\log \big\{ \sum_{x_k} \psi_k(x_k) \prod_{l\in\partial k}\sum_{x_l}
\psi^*_{l \to k}(x_l,x_k) \big\}$ so
the aggregate of these contributions over 
all vertices of $G$ is precisely the 
second term in (\ref{eq:BetheFree}), evaluated at
$\me^*_{i \to j}(\cdot)$.
\end{proof}

\begin{lemma}\label{lem:Bethe-stat}
Solutions of the Bethe equations (\ref{eq:BPeq}) 
for a given permissive graph-specification pair $(G,\upsi)$
are stationary points of the corresponding 
Bethe free entropy $\Phi_{G,\upsi} (\cdot)$. The converse holds when
the $|\cX|$-dimensional matrices $\{\psi_{ij}(x,y)\}$
are invertible for all $(i,j)\in E$.
\end{lemma}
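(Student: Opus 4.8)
The plan is to compute the gradient of $\Phi_{G,\upsi}(\nu)$ on the product of simplices $\M(\cX)^{\vE}$ and show that its vanishing is equivalent to the Bethe equations (\ref{eq:BPeq}). Since each $\nu_{i\to j}$ lives on the simplex $\M(\cX)$, I would introduce Lagrange multipliers for the normalization constraints $\sum_{x}\nu_{i\to j}(x)=1$, or equivalently work with the free derivatives $\partial/\partial\nu_{i\to j}(x)$ and require that, for each fixed directed edge $i\to j$, the quantity $\partial\Phi/\partial\nu_{i\to j}(x)$ be independent of $x\in\cX$. First I would observe that a given message $\nu_{i\to j}(\cdot)$ appears in exactly two terms of (\ref{eq:BetheFree}): the edge term for $(i,j)$ and the vertex term for $i$. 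Differentiating the edge term in $\nu_{i\to j}(x)$ gives $-\,\bigl[\sum_{x'}\psi_{ij}(x,x')\nu_{j\to i}(x')\bigr]\big/\bigl[\sum_{x_i,x_j}\psi_{ij}(x_i,x_j)\nu_{i\to j}(x_i)\nu_{j\to i}(x_j)\bigr]$, and differentiating the vertex term for $i$ (using $\log'$ and the product rule over $l\in\di$) produces $\bigl[\psi_i(x)\prod_{l\in\di\setminus j}\sum_{x_l}\psi_{il}(x,x_l)\nu_{l\to i}(x_l)\cdot\bigl(\sum_{x'}\psi_{ij}(x,x')\nu_{j\to i}(x')\bigr)'\ldots\bigr]$ — more cleanly, the derivative of the $i$-vertex term in $\nu_{j\to i}$ is what couples to the edge. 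I need to be careful about which message I differentiate in; the symmetric bookkeeping is that $\partial\Phi/\partial\nu_{i\to j}(x)$ picks up the edge-$(i,j)$ term and the vertex-$j$ term (since $\nu_{i\to j}$ feeds into node $j$'s local computation), not vertex $i$.

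Carrying this out, the vertex-$j$ term contributes $+\,\psi_j$-weighted factors in which $\nu_{i\to j}$ sits as one of the incoming messages; its derivative in $\nu_{i\to j}(x)$ is $\bigl[\sum_{x_j}\psi_j(x_j)\psi_{ij}(x,x_j)\prod_{l\in\dj\setminus i}\sum_{x_l}\psi_{jl}(x_j,x_l)\nu_{l\to j}(x_l)\bigr]\big/\bigl[\sum_{x_j}\psi_j(x_j)\prod_{l\in\dj}\sum_{x_l}\psi_{jl}(x_j,x_l)\nu_{l\to j}(x_l)\bigr]$. Recognizing the numerator, up to the $\psi_{ij}(x,\cdot)$ insertion, as (proportional to) $(\Tr\nu)_{j\to i}(x_j)$ times the obvious normalization, the stationarity condition "$\partial\Phi/\partial\nu_{i\to j}(x)$ is constant in $x$" becomes, after clearing the common normalizers, the statement
$$
\sum_{x'}\psi_{ij}(x,x')\,\nu_{j\to i}(x')\ \propto\ \sum_{x'}\psi_{ij}(x,x')\,(\Tr\nu)_{j\to i}(x')\,,
$$
for all $x$, with the proportionality constant independent of $x$. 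If $\nu$ solves the Bethe equations then $(\Tr\nu)_{j\to i}=\nu_{j\to i}$ and this holds trivially with constant $1$; that is the easy "forward" direction and requires only substitution. I would also note that the Bethe free entropy and the mapping $\Tr$ are only defined on the permissive message sets, so by Remark \ref{rem:positivity} there is no boundary-of-simplex subtlety — every BP fixed point lies in the relative interior.

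**The converse and the main obstacle.** The hard part is the converse: from "$\sum_{x'}\psi_{ij}(x,x')\bigl(\nu_{j\to i}(x')-c\,(\Tr\nu)_{j\to i}(x')\bigr)=0$ for all $x$" I must deduce $\nu_{j\to i}=(\Tr\nu)_{j\to i}$. This is exactly where invertibility of the matrices $\{\psi_{ij}(x,y)\}_{x,y\in\cX}$ enters: the displayed relation says that the vector $\nu_{j\to i}-c\,(\Tr\nu)_{j\to i}\in\reals^{\cX}$ lies in the kernel of the matrix $\psi_{ij}$, so if $\psi_{ij}$ is invertible this vector is zero, forcing $\nu_{j\to i}=c\,(\Tr\nu)_{j\to i}$, and since both sides are probability distributions, $c=1$. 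I would do this for every directed edge; doing it at $i\to j$ for all $j\in\di$ recovers all outgoing messages from $i$ as BP-updates, and ranging over all $i$ gives that $\nu$ is a full fixed point of $\Tr$. The only delicate points to watch are (i) getting the pairing of "which message $\leftrightarrow$ which vertex term" right so the algebra closes, and (ii) confirming that the Lagrange-multiplier constant absorbed along the way is genuinely $x$-independent — both are routine once the derivative computation is organized edge-by-edge. I expect essentially all the real content to be in this linear-algebra step; the differentiation itself is mechanical, and no deep input beyond Proposition-style bookkeeping and the invertibility hypothesis is needed.
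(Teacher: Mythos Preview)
Your approach is correct and essentially the same as the paper's: compute $\partial\Phi/\partial\nu_{i\to j}(x)$, recognize the vertex-$j$ contribution as $\sum_{x'}\psi_{ij}(x,x')(\Tr\nu)_{j\to i}(x')$ up to normalization, and for the converse use invertibility of $\psi_{ij}$ to cancel the $\psi_{ij}$-kernel and conclude $\nu_{j\to i}=(\Tr\nu)_{j\to i}$. One cosmetic difference: the paper dispenses with Lagrange multipliers and sets the free partial derivatives to zero directly, which is legitimate because $\Phi$ is homogeneous of degree $0$ in each message (the $-\log\lambda$ from the edge term cancels the $+\log\lambda$ from the vertex term under $\nu_{i\to j}\mapsto\lambda\nu_{i\to j}$), so constrained and unconstrained stationarity coincide; also note that differentiating in $\nu_{i\to j}$ yields the BP condition for the \emph{reverse} message $\nu_{j\to i}$, not for $\nu_{i\to j}$ itself as your closing sentence suggests, though ranging over all directed edges of course still covers every message.
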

\begin{proof} From the formula (\ref{eq:BetheFree}) and 
our definition (\ref{def:BetheMapping}) we find   
that for any $j \to i \in \vE$ and $x_j \in \cX$,
\begin{eqnarray*}
\frac{\partial\Phi(\nu)}{\partial \nu_{j\to i}(x_j)}=
&-&\frac{\sum_{x_i}\nu_{i\to j}(x_i)\psi_{ij}(x_i,x_j)}
{\sum_{x'_i,x'_j}\nu_{i\to j}(x'_i)\nu_{j\to i}(x'_j)\psi_{ij}(x'_i,x'_j)}\\
&+&
\frac{\sum_{x_i}(\Tr\nu)_{i\to j}(x_i)\psi_{ij}(x_i,x_j)}
{\sum_{x'_i,x'_j}(\Tr\nu)_{i\to j}(x'_i)\nu_{j\to i}(x'_j)\psi_{ij}(x'_i,x'_j)}
\;.
\end{eqnarray*}
Hence, if $\{\nu_{i \to j}(\cdot)\}$ satisfies  
the Bethe equations (\ref{eq:BPeq}), then
$\partial\Phi(\nu)/\partial \nu_{j\to i}(x) =0$ for
all $x \in \cX$ and any $j \to i \in \vE$, as claimed.

Conversely, given a permissive specification, 
if a permissive set of messages $\nu$ is a stationary point of 
$\Phi(\cdot)$, then by the preceding we have that for 
any $i \to j \in \vE$, some positive $c_{i \to j}$ and all $y \in \cX$,
$$
\sum_{x} \big[ (\Tr\nu)_{i\to j}(x) -c_{i \to j} \nu_{i\to j}(x) \big]
\psi_{ij}(x,y) = 0 \,.
$$
By assumption the matrices $\{\psi_{ij}(x,y)\}$
are invertible, hence $\nu_{i\to j}(x) \normeq
(\Tr\nu)_{i\to j} (x)$ for any $i \to j \in \vE$.
The probability measures $\nu_{i \to j}$ and 
$(\Tr\nu)_{i\to j}$ are thus identical, for 
each directed edge $i \to j$. That is, 
the set of messages $\nu$ satisfies the
Bethe equations for the given specification.
\end{proof}

%\begin{exercise}
%Show  
%that the Bethe equations have at least one 
%solution for any given permissive graph-specification pair.
%
%\noindent 
%[Hint: Use either Lemma \ref{lem:Bethe-stat} or Brouwer fixed point theorem.]
%\end{exercise}
%
%\begin{exercise}
%Building on Exercise \ref{exer:Bethe-factor}
%state and prove the version of Lemma \ref{lem:Bethe-stat}
%corresponding to a (permissive) specification $\upsi$
%for a factor graph $G=(V,F,E)$. Assuming this factor graph
%is a tree, show that further
%$\log Z(G,\upsi) = \Phi_{G,\upsi} (\nu^*)$ where 
%\begin{eqnarray*}
%\Phi_{G,\upsi} (\nu) &=& 
%- \sum_{(ai) \in E} \log \Big\{ \sum_{x_i} \nu_{i \to a}(x_i) 
%\nu_{a \to i} (x_i) \Big\} \\
%&& + 
%\sum_{a \in F} \log \Big\{
%\sum_{\ux_{\da} }\psi_a (\ux_{\da}) \prod_{i \in \da} 
%\nu_{i \to a} (x_i) \Big\}
%+ \sum_{i\in V}\log\Big\{\sum_{x_i} \prod_{a\in\di} \nu_{a \to i} (x_i) 
%\Big\}
%\,,
%\end{eqnarray*}
%is the Bethe free entropy for the given permissive factor graph 
%specification $(G,\upsi)$ and $\nu^*$ denotes
%the unique solution of the corresponding Bethe equations.
%\end{exercise}

%
%*******************************************************************
%
\subsection{Examples: Bethe equations and free entropy}
\label{sec:Bethe-examples} 

In most of this section we consider the extension 
of the Ising measure (\ref{eq:IsingModelGen}) on 
$\{+1,-1\}^V$, of the form
\begin{eqnarray}
\mu_{\beta,\uB,\bJ}(\ux) = \frac{1}{Z(\beta,\uB,\bJ)} 
\, \exp\Big\{\beta\sum_{(i,j)\in E}J_{ij}x_ix_j + \sum_{i \in V} B_i x_i
\Big\}\,, \label{eq:GeneralIsing}
\end{eqnarray}
where 
$\bJ = \{J_{ij}, (i,j) \in E\}$ for generic `coupling constants'
$J_{ij} \in \reals$ as in the spin-glass example of (\ref{eq:spin-glass}).
This model corresponds to the permissive specification 
$\psi_{ij}(x_i,x_j)=\exp(\beta J_{ij}x_ix_j)$ and $\psi_i(x_i)=\exp(B_i x_i)$. 
Since $\cX=\{+1,-1\}$, any set of 
messages $\{ \me_{i\to j} \}$ is effectively encoded through the 
`cavity fields' 
\begin{eqnarray}\label{eq:def-cav-field}
h_{i\to j} \equiv \frac{1}{2}\, \log\frac{\me_{i\to j}(+1)}{\me_{i\to j}(-1)} 
\,.
\end{eqnarray}
Using these cavity fields, we find the following formulas.
\begin{propo}
The Bethe equations for the cavity fields and the measure  
$\mu_{\beta,\uB,\bJ}(\cdot)$ are
\begin{eqnarray}
h_{i\to j} = B_i + \sum_{l\in\di\setminus j}\atanh\left\{\theta_{il} 
\tanh (h_{l\to i})\right\} \,,
\label{eq:BPIsing}
\end{eqnarray}
where $\theta_{il} \equiv \tanh(\beta J_{il})$.
The expected magnetization $\< x_i \>$ for this  
measure is approximated (in terms 
of the Bethe cavity fields $h^*_{i \to j}$), as
\begin{eqnarray}
\<x_i\> = \tanh\Big\{B_i+ \sum_{l\in\di}\atanh\big\{\theta_{il} 
\tanh (h^*_{l\to i})\big\}\Big\} \,,
\label{eq:Bethe-mag}
\end{eqnarray}
and the Bethe free entropy of any permissive cavity field 
$\uh = \{ h_{i \to j} \}$ is 
\begin{align}\label{eq:BetheFreeIsing}
&\Phi_{G,\beta,\uB,\bJ} (\uh) = \frac{1}{2}  \sum_{i \in V} 
\sum_{j \in \di} \Big\{ \ggm (\theta_{ij}) 
 -\log \big[ 1 + \theta_{ij} \tanh(h_{i\to j})
\tanh(h_{j \to i}) \big] \Big\} 
\\
&+
\sum_{i\in V}\log\Big\{ e^{B_i} 
\prod_{j\in\di} [ 1 + \theta_{ij} \tanh(h_{j \to i}) ]
+ e^{-B_i} \prod_{j \in \di} [ 1- \theta_{ij} \tanh(h_{j \to i}) ]
\Big\} \,,
\nonumber
\end{align}
where $\ggm (u) \equiv -\frac{1}{2} \log (1-u^2)$.
\end{propo}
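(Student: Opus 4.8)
\noindent
The plan is to substitute the parametrization $\nu_{i\to j}(x_i)\normeq e^{h_{i\to j}x_i}$ of (\ref{eq:def-cav-field}) into each of the three general objects introduced earlier --- the belief propagation mapping $\Tr$ of Definition~\ref{def:BetheEq}, the local marginal approximation (\ref{eq:LocalMarg1}), and the Bethe free entropy (\ref{eq:BetheFree}) --- and to collapse the resulting sums over $\{+1,-1\}$ using two elementary identities: for $\sigma\in\{+1,-1\}$ one has $e^{\beta J\sigma}=\cosh(\beta J)(1+\sigma\theta)$ with $\theta=\tanh(\beta J)$, and $\tfrac12\log\frac{\cosh(a+b)}{\cosh(a-b)}=\atanh(\tanh a\,\tanh b)$ (the latter because the left side $u$ obeys $e^{2u}=\cosh(a+b)/\cosh(a-b)$, whence $\tanh u=\frac{\cosh(a+b)-\cosh(a-b)}{\cosh(a+b)+\cosh(a-b)}=\tanh a\,\tanh b$).

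For the Bethe equations (\ref{eq:BPIsing}): inserting $\psi_i(x_i)=e^{B_ix_i}$, $\psi_{il}(x_i,x_l)=e^{\beta J_{il}x_ix_l}$ and $\nu_{l\to i}(x_l)\normeq e^{h_{l\to i}x_l}$ into the definition (\ref{def:BetheMapping}) of $(\Tr\nu)_{i\to j}$, the inner sum over $x_l$ equals $2\cosh(\beta J_{il}x_i+h_{l\to i})$, so $(\Tr\nu)_{i\to j}(x_i)\normeq e^{B_ix_i}\prod_{l\in\di\setminus j}\cosh(\beta J_{il}x_i+h_{l\to i})$; hence $h_{i\to j}=\tfrac12\log\frac{(\Tr\nu)_{i\to j}(+1)}{(\Tr\nu)_{i\to j}(-1)}=B_i+\tfrac12\sum_{l\in\di\setminus j}\log\frac{\cosh(\beta J_{il}+h_{l\to i})}{\cosh(\beta J_{il}-h_{l\to i})}$, and the second identity (with $a=\beta J_{il}$, $b=h_{l\to i}$) rewrites each summand as $\atanh(\theta_{il}\tanh h_{l\to i})$. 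Equation (\ref{eq:Bethe-mag}) is the identical computation applied to (\ref{eq:LocalMarg1}), with the product now over the full neighborhood $\di$ rather than $\di\setminus j$, followed by $\<x_i\>=\mu_i(+1)-\mu_i(-1)$, which equals $\tanh$ of half the corresponding log-ratio.

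For the free entropy (\ref{eq:BetheFreeIsing}): writing $\nu_{i\to j}(x_i)=e^{h_{i\to j}x_i}/(2\cosh h_{i\to j})$, the first identity gives $\sum_{x_i,x_j}\psi_{ij}(x_i,x_j)\nu_{i\to j}(x_i)\nu_{j\to i}(x_j)=\cosh(\beta J_{ij})\,[1+\theta_{ij}\tanh h_{i\to j}\tanh h_{j\to i}]$ for the edge factors in (\ref{eq:BetheFree}), and $\sum_{x_j}\psi_{ij}(x_i,x_j)\nu_{j\to i}(x_j)=\cosh(\beta J_{ij})\,[1+x_i\theta_{ij}\tanh h_{j\to i}]$ for each factor appearing in the vertex term, so the latter equals $\big(\prod_{j\in\di}\cosh(\beta J_{ij})\big)\big[e^{B_i}\prod_{j\in\di}(1+\theta_{ij}\tanh h_{j\to i})+e^{-B_i}\prod_{j\in\di}(1-\theta_{ij}\tanh h_{j\to i})\big]$. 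Taking logarithms, using $\log\cosh(\beta J_{ij})=-\tfrac12\log(1-\theta_{ij}^2)=\ggm(\theta_{ij})$ and $\sum_{(i,j)\in E}=\tfrac12\sum_{i\in V}\sum_{j\in\di}$, assembles (\ref{eq:BetheFree}) into precisely the expression (\ref{eq:BetheFreeIsing}).

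The argument is entirely mechanical; the one place that warrants attention is the bookkeeping of the $\log\cosh(\beta J_{ij})=\ggm(\theta_{ij})$ contributions. They appear with a minus sign from normalizing each edge factor, contributing $-\sum_{(i,j)\in E}\ggm(\theta_{ij})$, and with a plus sign from the product over neighbors in each vertex factor, contributing $+\sum_{i\in V}\sum_{j\in\di}\ggm(\theta_{ij})=+2\sum_{(i,j)\in E}\ggm(\theta_{ij})$; it is the residue of this incomplete cancellation that yields the $+\ggm$ prefactor in (\ref{eq:BetheFreeIsing}) rather than a $-\ggm$.
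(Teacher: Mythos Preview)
Your proof is correct and follows essentially the same route as the paper's: both substitute the cavity-field parametrization into the general BP mapping, the local marginal formula, and the Bethe free entropy, then reduce the $\{+1,-1\}$ sums using the same hyperbolic identities (the paper packages your factorization $e^{\beta J\sigma}=\cosh(\beta J)(1+\sigma\theta)$ into the summed identities $\tfrac14\sum_{x,y}e^{axy+bx+cy}/(\cosh a\cosh b\cosh c)=1+\tanh a\tanh b\tanh c$ and its one-variable analogue, but the manipulations are identical). Your explicit remark on the $\ggm$ bookkeeping matches exactly the paper's accounting of the $-\sum_{(i,j)\in E}\ggm(\theta_{ij})$ edge contribution against the $+\sum_{i\in V}\sum_{j\in\di}\ggm(\theta_{ij})$ vertex contribution.
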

\begin{proof} Expressing the BP mapping for the Ising measure 
$\mu(\ux) \equiv \mu_{\beta,\uB,\bJ}(\ux)$
in terms of cavity fields we find that  
$$
(\Tr h)_{i\to j}(x_i)\equiv \frac{e^{B_i x_i}}{\tilde{z}_{i\to j}}
\prod_{l\in\di\backslash{j}} \cosh (h_{l \to i} + \beta J_{il} x_i) 
$$
for some positive normalization constants $\tilde{z}_{i\to j}$. 
Thus, the identity 
$$
\frac{1}{2} \, \log \frac{\cosh(a+b)}{\cosh(a-b)} = \atanh(\tanh(a)\tanh(b))
\,,
$$
leads to the formula (\ref{eq:BPIsing}) for 
the Bethe equations. 
The approximation (\ref{eq:LocalMarg1})
of local marginals then results with 
$\mu_i(x_i)\normeq e^{B_i x_i} \prod_{l \in \di}
\cosh(h^*_{l \to i} + \beta J_{il} x_i)$, out of which
we get the formula (\ref{eq:Bethe-mag}) for 
$\<x_i\>=\mu_i(+1)-\mu_i(-1)$ 
by the identity $\frac{1}{2}\log(a)=\atanh(\frac{a-1}{a+1})$. 
Next note that if $u=\tanh(b)$ then $\ggm (u)=\log \cosh (b)$
and recall that 
by definition, for any $(i,j) \in E$ and $x \in \cX$, 
\begin{equation}\label{eq:ident1}
\me_{j\to i}(x)=\frac{\exp(x h_{j\to i})}{2\cosh(h_{j\to i})} \,.
\end{equation}
Hence, using the identity
$$
\frac{1}{4} \sum_{x,y \in \{+1,-1\}} 
\frac{e^{axy} e^{bx} e^{cy}}{\cosh(a)\cosh(b)\cosh(c)} 
= 1 + \tanh(a) \tanh(b) \tanh(c) \,,
$$ 
the first term in the formula (\ref{eq:BetheFree}) of 
the Bethe free entropy $\Phi(\cdot)$ is in this case
$$
 -\sum_{(i,j)\in E} \ggm (\theta_{ij})
 -\sum_{(i,j)\in E} \log \big[ 1 + \theta_{ij} \tanh(h_{i\to j})
\tanh(h_{j \to i}) \big] \,.
$$
Similarly, using (\ref{eq:ident1}) and the identity
$$
\frac{1}{2} \sum_{x \in \{+1,-1\}} \, 
\frac{e^{axy} e^{bx}}{\cosh(a)\cosh(b)} 
= 1 + y \tanh(a) \tanh(b)  \,,
$$ 
for $y=x_i \in \{+1,-1\}$, we find that the second term in the 
formula (\ref{eq:BetheFree}) is in our case
\begin{align*}
\sum_{i\in V} &\sum_{j \in \di} \ggm (\theta_{ij}) 
\\
&+ 
\sum_{i\in V}\log\Big\{ e^{B_i} 
\prod_{j\in\di} [ 1 + \theta_{ij} \tanh(h_{j \to i}) ]
+ e^{-B_i} \prod_{j \in \di} [ 1- \theta_{ij} \tanh(h_{j \to i}) ]
\Big\} \,.
\end{align*}
Combining the preceding expressions for the two terms 
of (\ref{eq:BetheFree}) we arrive at the formula of 
(\ref{eq:BetheFreeIsing}). 
\end{proof}

We proceed with a few special models of interest.

\vspace{0.1cm}

\noindent{\bf The Curie-Weiss model.} This model, which we already considered
in Section \ref{sec:Curie-Weiss}, corresponds to $G=K_n$ (the complete
graph of $n$ vertices), with $B_i=B$ and 
$J_{ij} = 1/n$ for all $ 1 \le i \ne j \le n$. 
Since this graph-specification pair is invariant under re-labeling of
the vertices, the corresponding Bethe equations
(\ref{eq:BPIsing}) admit at least one constant solution 
$h^*_{i\to j} = h^*(n)$, possibly dependent on $n$, such that
$$
h^*(n)  = B + (n-1) \atanh \{ \tanh(\beta/n) \tanh(h^*(n) )\} \,.
$$
These cavity fields converge as $n \to \infty$ to  
solutions of the (limiting) equation 
$h^*  = B + \beta \tanh(h^*)$. Further, 
the Bethe approximations (\ref{eq:Bethe-mag}) for the magnetization 
$m(n)=\<x_i\>$ are of the form $m(n)=\tanh(h^*(n)) + O(1/n)$ 
and thus converge as $n \to \infty$ to solutions of the 
(limiting) equation $m= \tanh(B+\beta m)$. Indeed, we have already 
seen in Theorem \ref{thm:LargeDevMagn} 
that the Curie-Weiss magnetization (per spin) 
concentrates for large $n$ around the relevant solutions 
%$\pm m_*$ 
of the latter equation. 

\vspace{0.15cm}

\noindent{\bf Ising models on random $k$-regular graphs.}
By the same reasoning as for the Curie-Weiss model, 
in case of a $k$-regular graph of $n$ vertices 
with $J_{ij}= +1$, 
and $B_i=B$, the Bethe equations
admit a constant solution $h_{i\to j} = h^*$ such that
$$ 
h^* = B + (k-1)\atanh\{\theta \tanh(h^*)\} \,,
$$ 
for $\theta \equiv \tanh(\beta)$, with the corresponding 
magnetization approximation 
$m=\tanh\big( B + k\atanh\{\theta \tanh(h^*)\} \big)$
and Bethe free entropy 
\begin{eqnarray*}
n^{-1} \Phi_n(h^*)  
&\!\!\!\!=\!\!\!&  
 \frac{k}{2} \Big\{  
 \ggm (\theta) 
 -\log \big[ 1 + \theta \tanh^2 (h^*)
 \big] \Big\} \\
&\!\!\!\!+\!\!\!& \log\Big\{ e^{B} 
 [ 1 + \theta \tanh(h^*) ]^k 
+ e^{-B} [ 1-\theta \tanh(h^*) ]^k 
\Big\}\,. 
\end{eqnarray*}

\vspace{0.15cm}

\noindent{\bf Ising models on $k$-regular trees.}
It is instructive to contrast the above free entropy 
with the analogous result for  rooted $k$-regular trees $T_k(\ell)$.  From 
Proposition \ref{prop:Bethe-free-tree} we know 
that the free entropy $\log Z_\ell(B,\beta)$ for 
the Ising measure on the finite tree $T_k(\ell)$ 
is precisely the Bethe free entropy of 
(\ref{eq:BetheFreeIsing}) for the unique 
solution of the Bethe equations (\ref{eq:BPIsing})
with $J_{ij}=+1$ and $B_i=B$.

We 
denote 
by $n_{t}$ the number of vertices
at generation $t\in \{0,\dots, \ell\}$ (thus $n_0=1$ and
$n_t = k(k-1)^{t-1}$ for $t\ge 1$), and by
$$
n(\ell)=|T_k(\ell)|= k((k-1)^{\ell}-1)/(k-2)\,,
$$ 
the total number of vertices in $T_k(\ell)$. Due to symmetry 
of $T_k(\ell)$, the Bethe cavity field 
assumes the same value $h_r$ on all 
directed edges leading from a vertex at generation 
$\ell-r$ to one at generation $\ell-r-1$ of $T_k(\ell)$. 
Thus, we have   
\begin{equation}\label{eq:IsingTreeRec-n}
h_r  = B + (k-1)\atanh( \theta \tanh h_{r-1})  \,,   
\end{equation}
with initial condition $h_{-1} =0$. 
Similarly, we denote by $h_{r}^{\ell}$ 
of the Bethe cavity field on the $n_{\ell-r}$ directed edges 
leading from a vertex at generation $\ell-r-1$ to one at
generation $\ell-r$.
We then have
$$
h_r^{\ell} = B + (k-2)\atanh (\theta \tanh h_r) 
+ \atanh(\theta \tanh h_{r+1}^{\ell}) \,,
$$ 
for $r=\ell-1,\ell-2,\ldots,0$, with initial condition 
$h_\ell^{\ell}= 
h_{\ell-1}$.
The (Bethe) free 
entropy is in this case 
\begin{align*}
\log Z_\ell(B,\beta) &= (n(\ell)-1) \ggm (\theta) 
-\sum_{r=0}^{\ell-1} n_{\ell-r} 
\log \big[ 1 + \theta \tanh h_r \tanh h_r^{\ell} \big] \\
&+ \sum_{r=0}^{\ell} n_{\ell-r} \log \Big\{ e^{B} 
 [ 1 + \theta\tanh h_{r-1}  ]^{k-1} [ 1 + \theta \tanh h_r^{\ell}  ] 
\\
&\;\;\;\;\;\quad\qquad + e^{-B} 
 [ 1 - \theta \tanh h_{r-1}  ]^{k-1} [ 1 - \theta \tanh h_r^{\ell}  ] \Big\} \,.
\end{align*}
Using the relation (\ref{eq:IsingTreeRec-n}) you can verify 
that the preceding formula simplifies to
\begin{align*}
\log & Z_\ell(B,\beta) = (n(\ell)-1) \ggm (\theta) 
\\
&+ \log \Big\{ e^{B} 
 [ 1 + \theta\tanh h_{\ell-1}  ]^{k} 
+ e^{-B} 
 [ 1 - \theta \tanh h_{\ell-1}  ]^{k} \Big\}\\
&+\sum_{r=0}^{\ell-1} n_{\ell-r} \log \Big\{ e^{B} 
 [ 1 + \theta\tanh h_{r-1}  ]^{k-1} 
+ e^{-B} 
 [ 1 - \theta \tanh h_{r-1}  ]^{k-1} \Big\}
\,.
\end{align*}
The $\ell\to\infty$ limit can then be
expressed in terms of the $k$-canopy tree $\CTree_k$
(c.f. Lemma \ref{dfn:canopy}).
If $R$ denotes the random location of the root of 
$\CTree_k$, then we get
\begin{align*}
\lim_{\ell \to\infty} & \frac{1}{n(\ell)} \log Z_\ell(B,\beta) = \\
&\ggm (\theta) +  \E \log \Big\{ e^{B} 
 [ 1 + \theta\tanh h_{R-1}  ]^{k-1} 
+ e^{-B} 
 [ 1 - \theta \tanh h_{R-1}  ]^{k-1} \Big\} \,.
\end{align*}

\vspace{0.15cm}

\noindent
{\bf Locally tree-like graphs.} Recall Remark \ref{rem:local-trees1}, that 
$k$-regular graphs converge locally
to the Galton-Watson tree $\Tree(\node,\edge,\infty)$ with 
$\node_k=1$. More generally, consider 
the ferromagnetic Ising model $\mu_{\beta,B}(\ux)$ of 
(\ref{eq:IsingModel}), namely, with $J_{ij}=+1$ and $B_i=B$,
for a uniformly sparse 
graph sequence $\{G_n\}$ that converges locally
to the random rooted tree $\Tree$. Then, for any $n$ and
cavity field $\uh= \{h_{i \to j}\}$ we have from 
(\ref{eq:BetheFreeIsing}) that  
\begin{align*}
 n^{-1} \Phi_n &(\uh) = 
 \frac{1}{2} \E_n \Big[ \sum_{j \in \di}  
\big\{ \ggm (\theta) -
\log [ 1 + \theta \tanh(h_{i\to j})
\tanh(h_{j \to i}) ] \big\} \Big] \\
&+
\E_n \Big[ \log\big\{ e^{B} 
\prod_{j\in\di} [ 1 + \theta \tanh(h_{j \to i}) ]
+ e^{-B} \prod_{j \in \di} [ 1-\theta \tanh(h_{j \to i}) ]
\big\} \Big] \,,
\end{align*}
where $\E_n$ corresponds to expectations with 
respect to a uniformly chosen $i \in V_n$.
For $n \to \infty$, as shown in Lemma \ref{lemma:EdgeTree}
we have by local convergence and 
uniform sparsity that these expectations converge to the
corresponding expectations on the  
tree $\Tree$ rooted at $\root$. Consequently, we expect to have
\begin{align*}
%\label{eq:Bethe-prediction}
\lim_{n \to \infty} n^{-1} & \Phi_n (\uh^*_n)  = 
 \frac{1}{2} \E \Big[ \sum_{j=1}^L \big\{ \ggm (\theta)  
 -  \log [ 1 + \theta \tanh(h^*_{\root \to j})
\tanh(h^*_{j \to \root}) ] \big\} \Big] \\
&+
\E \Big[ \log\big\{ e^{B} 
\prod_{j=1}^L [ 1 + \theta \tanh(h^*_{j \to \root}) ]
+ e^{-B} \prod_{j=1}^L [ 1-\theta \tanh(h^*_{j \to \root}) ]
\big\} \Big] \,,
\nonumber
\end{align*}
where $L=|\partial \root|$, the variables 
$\{\tanh(h_{j \to \root}^*)\}$ are the limit as $t \to \infty$ 
of the Ising magnetizations $\<x_j\>^{(t)}_{j}$ 
on the sub-trees of $j \in \partial \root$ and all its descendants 
(in $\Tree(t)$, either with free or plus boundary conditions),
and for $j=1,\ldots,L$,
$$
h_{\root \to j}^* = B 
+ \sum_{k=1, k \ne j}^L \atanh\{ \theta \tanh(h^*_{k \to \root}) \} \,.
$$
Indeed, this 
%the right side of (\ref{eq:Bethe-prediction}) 
is precisely
the prediction (\ref{eqn:phi}) for the free entropy density
of ferromagnetic Ising models on such graphs (which is 
proved in \cite{ising} to hold in case
$\Tree$ is a Galton-Watson tree).

\vspace{0.15cm}

\noindent 
{\bf The Sherrington-Kirkpatrick model.}
The Sherrington-Kirkpatrick spin-glass model 
corresponds to the complete graph $G_n=K_n$
with the scaling 
$\beta\to\beta/\sqrt{n}$, constant 
$B_i=B$ and 
%couplings 
$J_{ij}$ which are i.i.d. standard normal random variables.
Expanding the corresponding Bethe equations (\ref{eq:BPIsing}), we 
find that for large $n$ and any $i,j$, 
\begin{eqnarray}
h_{i\to j} = B+ \frac{\beta}{\sqrt{n}} 
\sum^n_{l=1, l \ne i,j} J_{il}
\tanh (h_{l\to i}) + o(\frac{1}{\sqrt{n}})\, .
\label{eq:CavitySK}
\end{eqnarray}
Similarly, expanding the formula (\ref{eq:Bethe-mag}), we get 
for the local magnetizations $m_i \equiv \<x_i\>$ and large $n$ that
\begin{eqnarray*}
\atanh (m_i) = h_{i \to j} +
\frac{\beta J_{ij}}{\sqrt{n}} \tanh (h_{j\to i}) + o(\frac{1}{\sqrt{n}}) = 
h_{i\to j} +\frac{\beta J_{ij}}{\sqrt{n}}\, m_j+o(\frac{1}{\sqrt{n}})\, .
\end{eqnarray*}
Substituting this in both sides of equation (\ref{eq:CavitySK}), 
and neglecting terms of $O(n^{-1/2})$ yields the so-called 
TAP equations
\begin{eqnarray}
\atanh (m_i) = B + \frac{\beta}{\sqrt{n}} \sum^n_{l=1, l \ne i} 
J_{il}m_l-m_i \frac{\beta^2}{n} \sum^n_{l=1, l \ne i} J_{il}^2 (1-m_l^2)\, .
\end{eqnarray}

\vspace{0.15cm}

\noindent
{\bf The independent set model.}
In this model, which is not within the framework of 
(\ref{eq:GeneralIsing}), we consider the measure
\begin{eqnarray}
\mu_{G,\lambda} (\ux) = \frac{1}{Z(G,\lambda)}\, 
\lambda^{|\ux|}\prod_{(i,j)\in E}\,
\ind((x_i,x_j) \neq (1,1))\, ,
\label{eq:IndependentSet}
\end{eqnarray}
where $|\ux|$ denotes the number of non-zero entries in 
the vector $\ux \in \{0,1\}^V$. It corresponds to the permissive 
specification $\psi_{ij}(x,y) = \ind\big((x,y)\neq (1,1)\big)$,
and $\psi_i(x)=\lambda^x$, having $x_i^{\rm p} = 0$ for all 
$i \in V$. In this case the Bethe equations are 
$$
\me_{i \to j} = \frac{1}{1+\lambda \prod_{l \in \di \setminus j} 
\me_{l \to i}} \,,
$$
for $\me_{i \to j} \equiv \me_{i \to j}(0)$ and their solution
$\{\me^*_{i \to j}\}$ provides the approximate densities 
$$
\mu(x_i=1) = 
\frac{\lambda \prod_{j \in \di} \me^*_{j \to i}}
{1+\lambda \prod_{j \in \di} \me^*_{j \to i}} \,,
$$
and the approximate free entropy 
$$
\Phi(\me^*) 
= \sum_{i \in V} \log \Big\{1+\lambda \prod_{j \in \di} \me^*_{j\to i}\Big\}
- \sum_{(i,j) \in E} \log [ \me^*_{i\to j} + \me^*_{j \to i}
- \me^*_{i \to j} \me^*_{j \to i} ] \,.
$$

%
%************************************************************************
%
\subsection{Extremality, Bethe states and Bethe-Peierls approximation}
\label{sec:BetheFormal}

Following upon Section \ref{sec:BetheInformal}
we next define the Bethe-Peierls approximation of 
local marginals in terms of a given set of messages.
To this end, recall that each 
subset 
$U\subseteq V$ 
%of vertices
has a (possibly infinite) diameter 
$\diam(U) = \max \{ d(i,j) : i,j\in U \}$
(where $d(i,j)$ is the number of edges traversed in the 
shortest path on $G$ from $i \in V$ to $j \in V$), and it induces 
the subgraph $G_U = (U,E_U)$ such that $E_U = \{ (i,j)\in E : i,j \in U \}$.
%Given a vertex $i\in V$ and a non-negative integer $r$,
%the \emph{ball of radius} $r$ \emph{centered at} $i$, $\Ball(i,r)$
%is the subgraph induced by the vertices $j\in V$ such that 
%$d(i,j)\le r$ and for $i\to j\in\vE$ the 
%\emph{directed neighborhood} (or \emph{ball})
%$\Ball(i\to j, r)$ of radius $r$ around $i$ is the 
%ball of radius $r$ centered at $i$ in $G_{V \setminus \{j\}}$
%= subgraph induced by vertices $l$ such 
%that there exists a path of length at most 
%$r$ in $G$, which connects $i$ to $l$ without passing through $j$.
%(so in particular, $\Ball(i\to j,r)\subseteq \Ball(i,r)$). 
%

\begin{definition}
Let $\cU$ denote the collection of $U \subseteq V$ 
for which $G_U=(U,E_U)$ is a tree and 
each $i \in \dU$ is a leaf of $G_U$ 
(i.e. $|\di\cap U|=1$ whenever $i \in \dU$).
A set of messages $\{\nu_{i\to j}\}$ 
induces on each $U \in \cU$ the probability measure 
\begin{eqnarray}
\nu_{U}(\ux_U) = \frac{1}{Z_U}\, \prod_{i\in U} \psi^*_i(x_i)
\prod_{(ij)\in E_U}\psi_{ij}(x_i,x_j) \, ,
\label{eq:PatchDefinition}
\end{eqnarray}
where $\psi^*_i(\cdot)=\psi_i(\cdot)$ except for $i \in \dU$ 
in which case $\psi_i(\cdot)=\nu_{i\to u(i)}(\cdot)$ with 
$\{u(i)\} = \di \cap U$.

A probability measure $\rho(\ux)$ on $\cX^V$ is 
$(\ve,r)$-\emph{Bethe approximated} by a set of
messages $\{\nu_{i\to j}\}$ if
\begin{eqnarray}
\sup_{U \in \cU, \diam(U) \le 2r} \, ||\rho_U-\nu_{U} ||_{\sTV}\le \ve\, ,
\label{eq:DistanceCondition}
\end{eqnarray}
where $\rho_U(\,\cdot\,)$ denotes the marginal distribution of $\ux_U$
under $\rho(\cdot)$. We call any such $\rho(\cdot)$ an 
\emph{$(\ve,r)$-Bethe state} 
for the graph-specification pair $(G,\upsi)$.
\end{definition}
\begin{remark}
Note that if $i \notin \dU$ is a leaf of an 
induced tree $G_U$ then $\di=\{u(i)\}$ and if
$\{\nu_{i \to j}\}$ is a permissive set of messages then  
$\nu_{i\to u(i)}(\cdot) \normeq \psi_i(\cdot)$.
Consequently, in (\ref{eq:PatchDefinition}) we
may and shall not distinguish between $\dU$ 
and the collection of all leaves of $G_U$.
\end{remark}

We phrase our error terms and correlation properties 
in terms of valid rate functions, and consider 
graphs that are \emph{locally tree-like}. Namely,
\begin{definition}
A \emph{valid rate function} is a monotonically non-increasing 
function $\delta:\naturals\to [0,1]$ that decays to zero
as $r\to\infty$. By (eventually) increasing $\delta(r)$,
we assume, without loss of generality,
that $\delta(r+1)\ge\delta_* \delta(r)$ for some positive
$\delta_*$ and all $r \in \naturals$.

Given an integer $R\ge 0$ we say that $G$ is $R$-\emph{tree like}
if its girth exceeds $2R+1$ (i.e. 
$\Ball_i(R)$ is a tree for every $i \in V$).
\end{definition}

We show in the sequel 
that the Bethe approximation 
holds when the canonical measure on a tree like graph
satisfies the following correlation decay hypotheses. 
\begin{definition}\label{def:Extremal}
A probability measure $\rho$ on $\cX^{V}$ 
is \emph{extremal} for $G$ with valid rate function 
$\delta(\cdot)$ if for any $A,B\subseteq V$, 
\begin{eqnarray}
||\rho_{A,B}(\,\cdot\, ,\,\cdot\,)-\rho_A(\,\cdot\,)
\rho_B(\,\cdot\,)||_{\sTV}\le \delta(d(A,B))\, ,
\end{eqnarray}
where $d(A,B)=\min \{ d(i,j): i \in A, j \in B \}$ is the length of
the shortest path in $G$ between $A \subseteq V$ and $B \subseteq V$. 
%Such measure is 
%further \emph{strongly extremal} for $G$ with valid rate function 
%$\delta(\,\cdot\,)$ if for any $A,B,C \subseteq V$
%\begin{eqnarray}
%\sum_{\ux_C}\rho_C(\ux_C)
%||\rho_{A,B|C}(\,\cdot\, ,\,\cdot\,|\ux_C)-\rho_{A|C}(\,\cdot\,|\ux_C)
%\rho_{B|C}(\,\cdot\,|\ux_C)||_{\sTV}\le \delta(d(A,B))\, .
%\end{eqnarray}
\end{definition}

We consider the notions of Bethe measure and extremality for
general probability distributions over $\cX^V$ (and not only
for the canonical measure $\mu_{G,\upsi}(\,\cdot\, )$). The key (unproven)
assumption of statistical physics approaches is that the canonical
measure (which is ultimately, the object of interest),
can be decomposed as a unique convex combination of extremal measures,
up to small error terms. This motivates the name `extremal'.
Further, supposedly each element of this decomposition can 
then be treated accurately within its Bethe approximation.

Here is the first step in verifying this broad conjecture, dealing with 
the case where the canonical measure $\mu_{G,\upsi}(\,\cdot\,)$ 
is itself extremal.
\begin{thm}\label{thm:Bethe}
Let $\upsi$ be a permissive specification
for an $R$-tree like graph $G$ and $\delta(\, \cdot\, )$
a valid rate function. If $\mu_{G,\upsi}(\cdot)$  
is extremal with  rate $\delta(\, \cdot\, )$ then it is 
$(\ve,r)$-Bethe approximated by its standard message set 
for $\ve = \exp(c^r) \delta(R-r)$ and all $r<R-1$,
where the (universal) constant $c$ depends only 
on $|\cX|$, $\delta_*$, $\kappa$ and 
the maximal degree $\Delta \ge 2$ of $G$.
In particular, $\mu_{G,\upsi}(\,\cdot\,)$ is then
an $(\ve,r)$-Bethe state for this graph-specification pair.
%
%Whenever we mention `constants' 
%we refer in fact to quantities that might depend on 
%$|\cX|$, $\Delta$, $\kappa$ and $\delta_*$, 
%but neither on the graph $G$ nor on the permissive bounded
%specification $\psi$. 
\end{thm}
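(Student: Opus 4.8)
Fix $U\in\cU$ with $\diam(U)\le 2r$ and $r<R-1$, and let $o$ be a center of the tree $G_U$, so that $U\subseteq\Ball_o(r)$. Since $G$ is $R$-tree like and $r<R$, the subgraph induced on $\Ball_o(R)$ is a tree; hence $G_U$ is a subtree of $\Ball_o(R)$, paths inside $\Ball_o(R)$ are unique, and $|U|\le|\Ball_o(r)|\le\Delta^{r+1}$. This last bound is what ultimately produces the prefactor $e^{c^r}$, since $\Delta^{r+1}\le c^r$ for a suitable $c=c(\Delta)$ once $r\ge1$ (the cases $r\le1$ being trivial as the total variation distance is always at most $1$).

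\medskip\noindent
Regrouping the factors of $\mu_{G,\upsi}$ into those internal to $U$ and those meeting $V\setminus U$ — the latter touch $U$ only along $\dU$, each $i\in\dU$ being a leaf of $G_U$ — gives the exact identity
\[
\mu_U(\ux_U)\ \normeq\ \Big[\prod_{i\in U\setminus\dU}\psi_i(x_i)\prod_{(ij)\in E_U}\psi_{ij}(x_i,x_j)\Big]\,W(\ux_{\dU}),
\]
where $W(\ux_{\dU})\normeq\mu'_{\dU}(\ux_{\dU})$ for $\mu'$ the canonical measure of the (still permissive, still $R$-tree like) pair $(G',\upsi)$ with $G'=(V,E\setminus E_U)$. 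By definition $\nu^*_U$ has the identical internal prefactor, with $\prod_{i\in\dU}\nu^*_{i\to u(i)}(x_i)$ in place of $W$. Consequently $\mu_U$ and $\nu^*_U$ share the same conditional law of $\ux_{U\setminus\dU}$ given $\ux_{\dU}$, so $\|\mu_U-\nu^*_U\|_{\sTV}=\|\mu_{\dU}-(\nu^*_U)_{\dU}\|_{\sTV}$, and both of these marginals equal $\hat g(\ux_{\dU})$ — where $\hat g(\ux_{\dU})=\sum_{\ux_{U\setminus\dU}}(\text{internal prefactor})$ — times a probability measure on $\cX^{\dU}$, namely $\mu'_{\dU}$ respectively $\prod_i\nu^*_{i\to u(i)}$. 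A routine stability estimate for such reweightings — using the uniform lower bound $\kappa_0\equiv\kappa^{\Delta+1}/|\cX|$ on the single-site marginal at a permitted state, valid for every permissive model, together with $|E_U|,|U|\le\Delta^{r+1}$ to bound $\hat g_{\max}$ above and the two normalizing constants below — yields
\[
\|\mu_U-\nu^*_U\|_{\sTV}\ \le\ e^{c_1\Delta^{r+1}}\,\big\|\mu'_{\dU}(\cdot)-\textstyle\prod_{i\in\dU}\nu^*_{i\to u(i)}(\cdot)\big\|_{\sTV}.
\]

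\medskip\noindent
It remains to bound the right-hand difference by $\delta(R-r)$ up to a combinatorial factor. Split it into (i) $\|\mu'_{\dU}-\prod_{i\in\dU}\mu'_i\|_{\sTV}$ and (ii) $|\dU|\max_{i\in\dU}\|\mu'_i-\nu^*_{i\to u(i)}\|_{\sTV}$. For (i): in $G'$ each $i\in\dU$ is joined to the rest only through its edges into $V\setminus U$, and any $G'$-path between distinct $i,i'\in\dU$ must leave $\Ball_o(R)$ — inside the tree $\Ball_o(R)$ the unique $i$–$i'$ path runs in $G_U$, whose edges were all deleted — hence has length $\ge 2(R-r)$; applying extremality with $A=\{i\}$, $B=\dU\setminus\{i\}$ and peeling off the vertices of $\dU$ one at a time bounds (i) by $|\dU|\,\delta'(2(R-r)-O(1))$. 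For (ii): both $\mu'_i$ and $\nu^*_{i\to u(i)}=\mu^{(i\,u(i))}_i$ arise from deleting the edge $(i,u(i))$, while $\mu'$ deletes in addition the edges of $E_U\setminus\{(i,u(i))\}$, each of which sits at distance $\ge2(R-r)$ from $i$ in $G^{(i\,u(i))}$ (deleting $(i,u(i))$ having severed the only short route from $i$ into $U$); removing these far edges one at a time, using that a far factor is nearly independent of $x_i$, bounds (ii) by $|\dU|\cdot C\,\delta'(2(R-r)-O(1))$. Here $\delta'$ is the valid rate function got from $\delta$ by the same mechanism: deleting a bounded number of edges lying in $\Ball_o(r)$ perturbs $\mu$ only near $\Ball_o(r)$, so extremality of $\mu$ is inherited by $\mu'$ and the $\mu^{(i\,u(i))}$ with rate $\delta'(t)=C\,\delta(t-O(r))$, the additive loss being absorbed using $\delta(t+1)\ge\delta_*\delta(t)$. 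Combining the displays with $|\dU|\le\Delta^{r+1}$, monotonicity of $\delta$, and $\delta(t+a)\ge\delta_*^{\,a}\delta(t)$ gives $\|\mu_U-\nu^*_U\|_{\sTV}\le e^{c^r}\delta(R-r)$ with $c$ depending only on $|\cX|,\delta_*,\kappa,\Delta$; as this holds for every admissible $U$, $\mu_{G,\upsi}$ is $(\ve,r)$-Bethe approximated by its standard message set, i.e.\ an $(\ve,r)$-Bethe state.

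\medskip\noindent
The crux — and the step I expect to be the main obstacle — is the transfer of extremality from $\mu$ to the edge-deleted measures $\mu'$ and $\mu^{(i\,u(i))}$ with only an additive $O(r)$ loss in the distance scale, while keeping every combinatorial prefactor singly exponential in $|U|\le\Delta^{r+1}$. The delicate situation is a specification with hard constraints, where deleting an edge is \emph{not} a bounded Radon--Nikodym reweighting of $\mu$; there one must instead realize the deleted factor as being summed out and control the resulting conditional measures directly through the extremality of $\mu$ itself.
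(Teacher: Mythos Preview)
Your reduction is correct: $\mu_U$ and $\nu^*_U$ do share the same internal factor and differ only through the boundary weight, $\mu'_{\dU}$ versus $\prod_{i\in\dU}\nu^*_{i\to u(i)}$. The geometric observations are also right --- in $G'$ (and in $G^{(i\,u(i))}$) the relevant pairs are at distance at least $2(R-r)$.

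The gap you flag is, however, genuine and is not a technicality. For step (i) you need decorrelation of $\{x_i:i\in\dU\}$ under $\mu'$, with distances measured in $G'$. But the vertices of $\dU$ are at distance $\le 2r$ from one another in $G$, so the extremality hypothesis for $\mu$ says nothing about them; you must really establish extremality (or at least the specific decorrelation you need) for $\mu'$. Attempting to transfer it by writing $\mu'$ as a reweighting of $\mu$ fails for permissive specifications because $\psi_{ij}$ may vanish; attempting to peel off the $|E_U|$ edges one at a time via a Lemma~\ref{lem:andrea}--type statement fails for the same reason --- after removing the first edge you would need extremality of $\mu^{(ij)}$, and the $G$-distance between the set you care about and the next edge to be removed is again $\le 2r$. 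There is no evident induction here that stays within the hypothesis on $\mu$.

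The paper's proof sidesteps exactly this difficulty by \emph{conditioning} on the far shell $\overline U_{R'}=\{k:d(k,U)\ge R-r\}$ rather than deleting edges. Because $\Ball_o(R)$ is a tree, the conditional law $\mu_{U\mid\overline U_{R'}}(\,\cdot\,|\wuX)$ is \emph{exactly} of the form (\ref{eq:PatchDefinition}) with modified messages $\wnu_{i\to u(i)}=\mu^{(i\,u(i))}_{i\mid \cBall_i(R')}(\,\cdot\,|\wuX)$; no approximate factorization over $\dU$ is needed. One then bounds $\|\mu_U-\nu^*_U\|$ by (a) $\E\|\mu_U-\mu_{U\mid\overline U_{R'}}\|$, which is $\le\delta(R-r)$ directly from extremality of $\mu$, plus (b) the boundary-message discrepancy $\sum_{i\in\dU}\E\|\nu^*_{i\to u(i)}-\wnu_{i\to u(i)}\|$. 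For (b) only a \emph{single} edge is removed, and the set $\cBall_i(R')$ at which one compares $\mu$ and $\mu^{(i\,u(i))}$ is far from that edge in $G$; this is precisely Lemma~\ref{lem:andrea}, proved from extremality of $\mu$ alone by a short conditioning argument that lower-bounds $\mu_B$ on the neighbor set $B=\partial i\cup\partial u(i)\setminus\{i,u(i)\}$. Thus the paper never needs extremality of any edge-deleted measure, whereas your route does, and that is where it gets stuck.
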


To prove the theorem, recall first that for any
probability measures $\rho_a$ on a discrete set $\cZ$ 
and $f: \cZ \mapsto [0,f_{\max}]$ we have the elementary bound 
\begin{eqnarray}
||\rh_1-\rh_2||_{\sTV}\le \frac{3f_{\max}}{2 \langle \rho_1,f \rangle} 
||\rho_1-\rho_2||_{\sTV} \,,
\label{eq:SimpleIneq}
\end{eqnarray}
where $\rh_a(z)\equiv\rho_a(z)f(z)/ \langle \rho_a,f \rangle$ and
$\langle \rho_a,f \rangle \equiv \sum_{z\in\cZ} \rho_a(z)f(z)$
(c.f. \cite[Lemma 3.3]{ising}). Further, it is easy to check 
that if $\mu(\cdot)=\mu_{G,\upsi}(\cdot)$ and 
$(G,\upsi)$ is a permissive graph-specification pair, then 
for any $C\subseteq V$, 
\begin{eqnarray}
\label{eq:LowerBound}
\mu_{C} (\ux_C^{\rm p}) &\ge& \cX^{-|C|} \kappa^{\Delta |C|}\,.  
\end{eqnarray}
In addition, as shown in \cite[Section 3]{bethe}, for such 
$\mu(\cdot)$, if $G_{U'}$ is a tree, $(i,j) \in E_{U'}$
and $j \notin A \supseteq \dU'$, then 
\begin{eqnarray}\label{eq:LinkDistribution}
||\mu^{(ij)}_{i|A}(\,\cdot\,|\ux_{A})-
\mu^{(ij)}_{i|A}(\,\cdot\,|\uy_{A})||_{\sTV}\le
b
 ||\mu_{ij|A}(\,\cdot\,|\ux_{A})-
\mu_{ij|A}(\,\cdot\,|\uy_{A})||_{\sTV}\,, 
\end{eqnarray}
for $b \equiv 2 |\cX| \kappa^{-(\Delta+1)}$ and 
all $\ux, \uy\in\cX^V$. Finally, the following lemma 
is also needed for our proof of the theorem.
\begin{lemma}\label{lem:andrea}
If the canonical measure $\mu$
for $2$-tree like graph and a permissive specification
is extremal of valid rate function $\delta(\cdot)$ then 
for some finite $K=K(|\cX|,\kappa,\Delta)$ and any $A \subseteq V$ 
$$
||\mu^{(ij)}_{A}-\mu_{A} ||_{\sTV} \le K \delta\big(d(\{i,j\},A)\big) \,.
$$
\end{lemma}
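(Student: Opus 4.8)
The plan is to compare $\mu^{(ij)}$ with $\mu$ by reintroducing the single edge $(i,j)$ that has been removed, writing $\mu$ as a reweighting of $\mu^{(ij)}$ by the factor $\psi_{ij}(x_i,x_j)$, and then controlling the effect of that reweighting on the marginal over an arbitrary set $A$. Precisely, for any $A\subseteq V$ we have
$$
\mu_A(\ux_A) \normeq \sum_{x_i,x_j} \mu^{(ij)}_{A\cup\{i,j\}}(\ux_A,x_i,x_j)\, \psi_{ij}(x_i,x_j)\,,
$$
so that $\mu_A$ is obtained from $\mu^{(ij)}_A$ by tilting with the bounded, nonnegative function $f(\ux_A) \equiv \sum_{x_i,x_j}\mu^{(ij)}_{ij|A}(x_i,x_j|\ux_A)\psi_{ij}(x_i,x_j)$, which takes values in $[\psi_{\min},\psi_{\max}]$ because $\upsi$ is permissive. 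Applying the elementary tilting bound (\ref{eq:SimpleIneq}) with $\rho_1=\rho_2=\mu^{(ij)}_A$ is not quite what is needed; instead I would use (\ref{eq:SimpleIneq}) in the form that controls how far the tilted measure $\mu_A$ is from $\mu^{(ij)}_A$ in terms of how far $f$ is from a constant. Concretely, $\|\mu_A-\mu^{(ij)}_A\|_{\sTV}$ is bounded by $(3\psi_{\max}/2\psi_{\min})$ times the oscillation of $f(\ux_A)$, i.e. $\tfrac12\sup_{\ux_A,\uy_A}|f(\ux_A)-f(\uy_A)|$.

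The next step is to bound that oscillation of $f$, which amounts to bounding $\|\mu^{(ij)}_{ij|A}(\cdot,\cdot|\ux_A) - \mu^{(ij)}_{ij|A}(\cdot,\cdot|\uy_A)\|_{\sTV}$ uniformly over $\ux_A,\uy_A$. Since $j\notin A$ in the relevant term only if $A$ is far from $\{i,j\}$, but here $A$ is arbitrary, I would first reduce to the case $\{i,j\}\cap A=\emptyset$: if either $i$ or $j$ lies in $A$ the quantity $f$ is a constant (conditioning on $\ux_A$ already fixes that spin), so $d(\{i,j\},A)=0$ and the bound $K\delta(0)\le K$ is trivial once $K\ge 1$. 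Assuming then $\{i,j\}\cap A=\emptyset$, I would pass from the conditional law of $(x_i,x_j)$ under $\mu^{(ij)}$ to the conditional law under $\mu$ using the link-stability estimate (\ref{eq:LinkDistribution}) (which trades $\mu^{(ij)}_{i|A}$ for $\mu_{ij|A}$ at the price of the constant $b$), and then invoke extremality of $\mu$: $\|\mu_{ij|A}(\cdot|\ux_A)-\mu_{ij|A}(\cdot|\uy_A)\|_{\sTV}$ is controlled by $\|\mu_{\{i,j\},A} - \mu_{\{i,j\}}\otimes\mu_A\|_{\sTV}$ via an averaging argument over $\ux_A,\uy_A$, and the latter is at most $\delta(d(\{i,j\},A))$ by Definition \ref{def:Extremal}. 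Collecting the constants $3\psi_{\max}/2\psi_{\min}$, $b=2|\cX|\kappa^{-(\Delta+1)}$, and a combinatorial factor from the sum over the $|\cX|^2$ values of $(x_i,x_j)$, together with $\psi_{\max}/\psi_{\min}=\kappa^{-1}$, yields a finite $K=K(|\cX|,\kappa,\Delta)$ as claimed, and the hypothesis that $G$ is $2$-tree like guarantees $\Ball_i(2)$ is a tree so that $G_{U'}$ can be taken to be a tree containing the edge $(i,j)$, making (\ref{eq:LinkDistribution}) applicable.

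I expect the main obstacle to be the passage from the \emph{conditional} total-variation oscillation of $\mu^{(ij)}_{ij|A}(\cdot|\cdot)$ to the \emph{unconditional} decoupling bound $\delta(d(\{i,j\},A))$ supplied by extremality: extremality is stated for $\|\rho_{A',B'}-\rho_{A'}\rho_{B'}\|_{\sTV}$, whereas what arises naturally here is $\sup_{\ux_A,\uy_A}\|\mu_{ij|A}(\cdot|\ux_A)-\mu_{ij|A}(\cdot|\uy_A)\|_{\sTV}$. Bridging the two requires the standard fact that the latter is at most twice the former divided by $\min_{\ux_A}\mu_A(\ux_A)$, which is where the lower bound (\ref{eq:LowerBound}) on marginals of permissive canonical measures enters — but note (\ref{eq:LowerBound}) only lower-bounds $\mu_A$ at the \emph{permitted} configuration $\ux_A^{\rm p}$, not at every $\ux_A$, so some care is needed: one should condition only on configurations of positive probability and handle the supremum over those. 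This bookkeeping, rather than any single hard inequality, is the delicate part; everything else is assembling bounds already available in the excerpt.
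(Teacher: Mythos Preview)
Your tilting identity $\mu_A(\ux_A)\normeq \mu^{(ij)}_A(\ux_A)\,f(\ux_A)$ with $f(\ux_A)=\E_{\mu^{(ij)}}[\psi_{ij}(X_i,X_j)\mid \uX_A=\ux_A]$ is correct, and the reduction of $\|\mu_A-\mu^{(ij)}_A\|_{\sTV}$ to the oscillation of $f$ is fine. The gap is exactly the one you flag at the end, and it is not a bookkeeping issue but a structural obstruction: to pass from
\[
\sup_{\ux_A,\uy_A}\big\|\mu_{ij|A}(\cdot\mid\ux_A)-\mu_{ij|A}(\cdot\mid\uy_A)\big\|_{\sTV}
\]
to the extremality bound $\|\mu_{\{i,j\},A}-\mu_{\{i,j\}}\mu_A\|_{\sTV}\le\delta(d(\{i,j\},A))$ you must divide by $\min_{\ux_A}\mu_A(\ux_A)$. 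The set $A$ is arbitrary and may have size of order $|V|$, so this minimum is at best $|\cX|^{-|A|}\kappa^{\Delta|A|}$ and there is no way to absorb it into a constant $K=K(|\cX|,\kappa,\Delta)$. The inequality (\ref{eq:LowerBound}) only helps at the single permitted configuration $\ux_A^{\rm p}$, which is not enough for a supremum. Your invocation of (\ref{eq:LinkDistribution}) does not rescue this: that bound requires $A\supseteq\dU'$ for a tree $G_{U'}$ containing $(i,j)$, which is not the hypothesis here, and in any case it converts one conditional oscillation into another rather than removing the dependence on $|A|$.

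The paper avoids this by never conditioning on $A$. It introduces the \emph{small} separating set $B=\di\cup\dj\setminus\{i,j\}$, of size at most $2(\Delta-1)$. Two things follow: (i) because $B$ is bounded and, by the $2$-tree-like hypothesis, $G_B$ has no edges, one gets a uniform lower bound $\mu_B(\ux_B)\ge c_0(|\cX|,\kappa,\Delta)>0$ for \emph{every} $\ux_B$; (ii) because $B$ is a Markov blanket for $\{i,j\}$, the conditional law $\mu_{A|B}$ does not see whether the edge $(i,j)$ is present, so $\mu^{(ij)}_{A|B}=\mu_{A|B}$. One then writes
\[
\|\mu^{(ij)}_A-\mu_A\|_{\sTV}\le \sup_{\ux_B,\ux'_B}\|\mu_{A|B}(\cdot\mid\ux_B)-\mu_{A|B}(\cdot\mid\ux'_B)\|_{\sTV},
\]
replaces the supremum by $c_0^{-2}$ times the expectation over independent $\uX^{(1)}_B,\uX^{(2)}_B\sim\mu$, and bounds that expectation by $2\delta(d(A,B))$ via extremality. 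The whole point is that the division is by the marginal of a \emph{bounded-size} set $B$, not of $A$. Your approach can be salvaged by inserting this same $B$: bound the oscillation of $f$ by first conditioning on $B$ rather than on $A$, using that under $\mu^{(ij)}$ the pair $(X_i,X_j)$ is conditionally independent of $\uX_A$ given $\uX_B$.
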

\begin{proof} Set $B=\di\cup\dj\setminus \{i,j\}$ and 
$C=\bigcup_{l \in B} \, \partial l$ noting that $|B| \le 2(\Delta-1)$,
$|C| \le 2\Delta(\Delta-1)$ and since $G$ is $2$-tree like,
necessarily the induced subgraph $G_B$ has no edges. Hence,
\begin{align*}
\mu_{B}(\ux_B)\ge& 
\mu_C(\ux_C^{\rm p})\mu_{B|C}(\ux_B|\ux^{\rm p}_C)\\
\ge &\mu_C(\ux_C^{\rm p})\prod_{l\in B} \Big( \frac{\psi_l(x_l) 
\prod_{k\in\partial l}\psi_{lk}(x_l,x_k^{\rm p})}{
\sum_{x'_l}
\psi_l(x_l') \prod_{k\in\partial l}\psi_{lk}(x_l',x_k^{\rm p})}\Big)\ge
\mu_C(\ux_C^{\rm p})\, \kappa^{2(\Delta^2-1)}\, 
\end{align*}
so by the bound (\ref{eq:LowerBound}) we deduce that 
$\mu_B(\ux_B)\ge c_0$ for all $\ux_B$ and some positive
$c_0=c_0(|\cX|,\kappa,\Delta)$. 
Next assume, without loss of generality,
that $A\cap B = \emptyset$. Then 
\begin{align*}
||\mu^{(ij)}_{A}-\mu_{A} ||_{\sTV}&= \frac{1}{2} 
\sum_{\ux_A}
\Big|\sum_{\ux_B}\mu^{(ij)}_{B}(\ux_B)\mu_{A|B}(\ux_A|\ux_B)-
\sum_{\ux'_B}\mu_{B}(\ux'_B)\mu_{A|B}(\ux_A|\ux'_B)\Big|\\
&\le
\sup_{\ux_B,\ux'_B}||\mu_{A|B}(\,\cdot\,|\ux_B)-
\mu_{A|B}(\,\cdot\,|\ux'_B)||_{\sTV} \\
&\le 
\frac{1}{c_0^2}
\E \Big\{ ||\mu_{A|B}(\,\cdot\,|\uX^{(1)}_B)-
\mu_{A|B}(\,\cdot\,|\uX^{(2)}_B)||_{\sTV} \Big\}\, ,
\end{align*}
where $\uX^{(1)}$ and $\uX^{(2)}$ are independent random 
configurations, each of distribution $\mu$. Next,
from the extremality of $\mu(\cdot)$ we deduce that 
\begin{eqnarray*}
\E\big\{ ||\mu_{A|B}(\,\cdot\,|\uX^{(1)}_B)-
\mu_{A|B}(\,\cdot\,|\uX^{(2)}_B)||_{\sTV} \big\} 
\le 2 \delta\big(d(A,B)\big)\,, 
\end{eqnarray*}
so taking $K=2/c_0^2$ we arrive at our thesis.
\end{proof}

%\noindent{\bf Proof of Theorem \ref{thm:Bethe}.} 
%\label{sec:Proof1}
\begin{proof}[Proof of Theorem \ref{thm:Bethe}] 
Fixing $r<R-1$, a permissive 
graph-specification pair $(G,\upsi)$  
that is extremal for $R$-tree like graph
$G$ with valid rate function $\delta(\, \cdot\, )$ and
$U \in \cU$ with $\diam(U)\le 2r$, 
let $\oU_{R'}=\{k \in V: d(k,U) \ge R'\}$
for $R'=R-r>1$. 
Note that 
\begin{align}\label{eq:triangtv}
||\mu_U (\cdot)  - \nu_U (\cdot) ||_{\sTV} \, &\le \E||\mu_U(\,\cdot\,)
-\mu_{U|\overline{U}_{R'}}(\,\cdot\,|\wuX_{\overline{U}_{R'}})
||_{\sTV} \nonumber \\
&+ \E|| \mu_{U|\overline{U}_{R'}}(\,\cdot\,|
\wuX_{\overline{U}_{R'}})-\nu_U(\,\cdot\,)||_{\sTV}\, ,
\end{align}
where $\nu_U$ corresponds to the standard message set 
(i.e. $\nu_{i\to j} = \mu^{(ij)}_i$ for the measure 
$\mu^{(ij)}(\cdot)$ of (\ref{eq:CavityMu})), and  
the expectation is with respect to the random configuration 
$\wuX$ of distribution $\mu$. The first term on the right side 
is precisely 
$||\mu_{U,\overline{U}_{R'}}(\,\cdot\, , \,\cdot\,)
-\mu_U(\,\cdot\,)\mu_{\overline{U}_{R'}}(\,\cdot\,)||_{\sTV}$ 
which for $\mu(\cdot)$ extremal of valid rate function 
$\delta(\cdot)$ is bounded by $\delta(d(U,\overline{U}_{R'}))=\delta(R-r)$.
Turning to the second term, 
consider the permissive set of messages 
\begin{eqnarray*}
\wnu_{i\to j} (x_i) = 
\mu^{(ij)}_{i|\cBall_i(R')}(x_i|\wuX_{\cBall_i(R')}) \, ,
\end{eqnarray*}
where $\cBall_i(t)$ denotes the  
collection of vertices of distance at least $t$ from $i$.
Since $\diam(U) \le 2r$ there exists
$i_o \in V$ such that $U \subseteq \Ball_{i_o}(r)$ and as 
$\Ball_{i_o}(R)$ is a tree, 
the canonical measure for $\Ball_{i_o}(R) \setminus G_U$ is the 
product of the corresponding measures for the subtrees rooted 
at $i\in\dU$. Noting that $V \setminus \Ball_{i_o}(R) \subseteq
\overline{U}_{R'}$, 
it is thus not hard to verify that we have the representation 
\begin{eqnarray}
\mu_{U|\overline{U}_{R'}}(\ux_U|\wuX_{\overline{U}_{R'}}) = 
\frac{1}{\widetilde{Z}_U}
\, \prod_{i\in U} \wpsi^*_i(x_i)
\prod_{(ij)\in E_U}\psi_{ij}(x_i,x_j) \, ,
\label{eq:LocalCond}
\end{eqnarray}
as in (\ref{eq:PatchDefinition}), corresponding to the 
messages $\{\wnu_{i \to j}\}$ (i.e. with 
$\wpsi^*_i(\cdot)=\psi_i(\cdot)$ except for $i \in \dU$ 
in which case $\wpsi_i(\cdot)=\wnu_{i\to u(i)}(\cdot)$).
Consequently, we proceed to bound $|| \wnu_U -\nu_U||_{\sTV}$ 
by applying the inequality (\ref{eq:SimpleIneq}) for  
the function 
$$
f(\ux_U) = \prod_{i \in U \setminus \dU} \psi_i(x_i) 
\prod_{(ij)\in E_U}\psi_{ij}(x_i,x_j)
$$ 
on $\cZ=\cX^{U}$ and probability measures $\rho_a$ that are uniform 
on $\cX^{U\setminus\dU}$ with 
$\rho_1(\ux_{\dU}) = \prod_{i\in\dU}\nu_{i\to u(i)}(x_i)$ and 
$\rho_2(\ux_{\dU}) = \prod_{i\in\dU}\wnu_{i\to u(i)}(x_i)$.  
To this end, recall that $f(\ux_U) \le  
f_{\max}= \psi_{\max}^{M}$ for $M=|U|-|\dU|+|E_U|$.
Further, since $G_U$ is a tree (hence $|E_U| \le |U|$), 
and $\upsi$ is a permissive specification 
(also when $(i,j)$ is removed from $E$), upon 
applying (\ref{eq:LowerBound}) for $|C|=1$, we have that 
\begin{align*}
\langle \rho_1 , f \rangle \ge& 
\prod_{i \in U \setminus \dU} \frac{\psi_i(x^{\rm p}_i)}{|\cX|} 
\prod_{(ij)\in E_U}
\psi_{ij}(x^{\rm p}_i,x^{\rm p}_j)\prod_{i\in\dU}
\nu_{i\to u(i)}(x_i^{\rm p}) \\
\ge& 
f_{\max} |\cX|^{-|U|} \kappa^{M+\Delta |\dU|} 
\ge 
f_{\max} c_1^{-|U|}\, ,
\end{align*}
where $c_1=|\cX| \kappa^{-(\Delta+1)}$ is a finite constant. 
Consequently, we deduce upon applying (\ref{eq:SimpleIneq}) that   
\begin{align}\label{eq:ineq1}
||\mu_{U|\overline{U}_R}(\,\cdot\,|\wuX_{\overline{U}_R})-\nu_U(\,\cdot\,)
||_{\sTV} &=
||\rh_2-\rh_1||_{\sTV} \le 2 c_1^{|U|} ||\rho_1-\rho_2||_{\sTV} 
\nonumber \\
&\le 2 c_1^{|U|} \sum_{i\in\dU} 
|| \nu_{i\to u(i)}-\wnu_{i\to u(i)}||_{\sTV}\, .
\end{align}
Following \cite{bethe} we show in the sequel that 
\begin{eqnarray}\label{eq:amir-new}
\E \big\{ || \nu_{i\to u(i)}-\wnu_{i\to u(i)}||_{\sTV} \big\} 
\le c_2 \delta(R-r)\, ,
\end{eqnarray}
for some finite $c_2=c_2(|\cX|,\Delta,\kappa,\delta_{*})$ 
and all $i \in \dU$.
As $|\dU| \le |U|\le |\Ball_{i_o}(r)| \le \Delta^{r+1}$, we can 
choose $c=c(|\cX|,\Delta,\kappa,\delta_*)$ finite such that 
$1 + 2 c_1^{|U|} |\dU| c_2 \le \exp(c^r)$. Then, combining 
the inequalities (\ref{eq:triangtv}), (\ref{eq:ineq1}) and
(\ref{eq:amir-new}) results with 
$$
||\mu_U-\nu_U||_{\sTV} \le \exp(c^r) \, \delta(R-r)\, ,
$$
for every $U \in \cU$ of $\diam(U) \le 2r$ and $r<R-1$, which is 
the thesis of Theorem \ref{thm:Bethe}.

As for the proof of (\ref{eq:amir-new}), fixing $i \in \dU$ 
let $A=\cBall_i(R')$ and 
$\nu'_{i \to j}=\mu^{(ij)}_{i|A}(\cdot|\uX'_{A})$ 
where $\uX'$ of distribution $\mu^{(ij)}$ is independent of $\wuX$.
Then,
\begin{align}\label{eq:ineq1amir}
\E \big\{ || \nu_{i\to j}-\wnu_{i\to j}||_{\sTV} \big\} &=
\E \big\{ || \, \E \nu'_{i\to j}-\wnu_{i\to j}||_{\sTV} \}
\nonumber\\ 
&\le \E \big\{ || \nu'_{i\to j}-\wnu_{i\to j}||_{\sTV} \} \,.
\end{align}
Further, setting $U'=\Ball_i(R')$ note that 
$G_{U'}$ is a tree (since $G$ is $R$-tree like),
such that $\dU' \subseteq A$ (while $\di$ and $A$ are disjoint).
Thus, from (\ref{eq:LinkDistribution}) we have that 
for any $j \in \di$, 
\begin{align}\label{eq:ineq2}
|| \nu'_{i\to j}-\wnu_{i\to j}||_{\sTV}
&= 
||\mu^{(ij)}_{i|A}(\,\cdot\,|\uX'_{A})-
\mu^{(ij)}_{i|A}(\,\cdot\,|\wuX_{A})||_{\sTV}
\nonumber \\
&\le
b
% b = 2 |\cX| \kappa^{-(\Delta+1)} 
\, ||\mu_{ij|A}(\,\cdot\,|\uX'_{A})-
\mu_{ij|A}(\,\cdot\,|\wuX_{A})||_{\sTV}\, .
\end{align}
Taking the expectation with respect to the independent random 
configurations $\uX'$ (of law $\mu^{(ij)}$) and $\wuX$ (of law
$\mu$), leads to  
\begin{align*}
\E \big\{ ||
\mu_{ij|A}(\,\cdot\,|\uX'_{A})
&- \mu_{ij|A}(\,\cdot\,|\wuX_{A})
||_{\sTV} \big\} \\
&\le 2 ||\mu_{\{ij\},A} -
\mu_{\{ij\}} \mu_{A} 
||_{\sTV} 
+ ||\mu^{(ij)}_{A}-\mu_{A} ||_{\sTV} \,.
\end{align*}
For $\mu$ extremal of valid rate function $\delta(\cdot)$
the latter expression is, due to Lemma \ref{lem:andrea}, bounded  
by $(2 + K) \delta(R'-1) \le (2+K) \delta(R-r)/\delta_*$, 
which together with (\ref{eq:ineq1amir}) and 
(\ref{eq:ineq2}) results with (\ref{eq:amir-new}).
%for $c_2=2|\cX|\kappa^{-(\Delta+1)} (2+K)/\delta_*$.
%\endproof
\end{proof}

%Further, as shown in \cite{bethe}, in this case one has a 
%sharp approximation by the Bethe free entropy.
%\begin{thm}\label{thm:FreeEnergy}
%If $\nu$ is the standard message set corresponding 
%to a canonical measure $\mu_{G,\upsi}(\,\cdot\,)$ for 
% a graph-specification pair $(G,\upsi)$  
%that is strongly extremal for $R$-tree like graph
%$G$ with valid rate function $\delta(\, \cdot\, )$,
%then $|V|^{-1} |\log Z(G,\upsi)-\Phi_{G,\upsi}(\nu)|\to 0$ as 
%$R \to \infty$.
%\end{thm}
%
%
\section{Colorings of random graphs}
\label{chap:Coloring}
\setcounter{equation}{0}

Given a graph $G=(V,E)$, recall that a proper $q$-coloring of $G$
is an assignment of colors to the vertices of $G$ such that no edge has both
end-points of the same color.
Deciding whether a graph is $q$-colorable is a classical NP-complete 
constraint satisfaction problem. Here we shall study this problem when 
$G$ is sparse and random.  More precisely,  
we shall consider the uniform measure $\mu_G(\,\cdot\,)$ over 
proper $q$-colorings of $G$, with $q\ge 3$.

As the average degree of $G$ increases, the measure $\mu_G(\,\cdot\,)$
undergoes several phase transitions and exhibits
coexistence when the average degree is within a certain interval. 
Eventually, for any $q$, if the average degree is large enough,
a random graph becomes, with high probability, non $q$-colorable.
Statistical physicists have put forward a series of exact 
conjectures on these phase transitions 
\cite{FlorentLenka,OurPNAS,FirstColoring},
but as of now most of it can not be rigorously verified (c.f. 
\cite{AchlioptasCoja,AchlioptasNaor,AchlioptasNaorPeres}
for what has been proved so far).

We begin in Section \ref{sec:Broad} with an
overview of the 
various phase transitions as they emerge from the statistical mechanics picture.
Some 
bounds on the $q$-colorability of a random graph are
proved in Section \ref{sec:COL-UNCOL}. Finally, Section 
\ref{sec:Clust} 
explores the nature of the coexistence threshold 
for $q$-coloring, in particular, connecting it with the question 
of information reconstruction, to which Section \ref{ch:Reco} is devoted.

\subsection{The phase diagram: a broad picture}
\label{sec:Broad}

Let $\ux = \{x_i:\, i\in V\}$ denote a $q$-coloring of the
graph $G=(V,E)$  (i.e. for each vertex $i$, 
let $x_i\in\{1,\dots,q\} \equiv \cX_q$). 
Assuming that the graph $G$ admits a proper $q$-coloring, the 
uniform measure over the set of proper $q$-colorings of $G$ is
\begin{eqnarray}
\mu_G(\ux) = \frac{1}{Z_G}\, \prod_{(i,j)\in E}\,\ind(x_i\neq x_j)\, ,
\label{eq:CanonicalCol}
\end{eqnarray}
with $Z_G$ denoting the number of proper $q$-colorings of $G$.
We shall consider the following two examples of a random graph
$G=G_n$ over the vertex set $V=[n]$:
\begin{enumerate}
\item[(a).] $G=G_{n,\alpha}$ is uniformly chosen from the
Erd\"os-Renyi ensemble $\graph(\alpha,n)$ of 
graphs of $m=\lfloor n\alpha \rfloor$ edges (hence of average 
degree $2\alpha$).
\item[(b).] $G=G_{n,k}$ is a uniformly chosen random $k$-regular graph.
\end{enumerate}

Heuristic statistical mechanics studies suggest a rich phase transition
structure for the measure $\mu_G(\, \cdot\,)$. For 
any
$q\ge 4$, different regimes are separated by three distinct
critical values of the average degree:
$0<\alpha_{\rm d}(q)<\alpha_{\rm c}(q)<\alpha_{\rm s}(q)$
(the case $q=3$ is special in that $\alpha_{\rm d}(q)=
\alpha_{\rm c}(q)$, whereas 
$q=2$ is rather trivial, as $2$-colorability 
is equivalent to having no odd cycles, in which case 
each connected component of $G$ admits two proper
colorings, independently of the coloring of the rest of $G$).
In order to characterize such phase transitions we will use two notions
(apart from colorability), namely coexistence and \emph{sphericity}.
To define the latter notion we recall that the 
joint type of two color assignments $\ux= \{x_i:\, i\in V\}$ and
$\uy = \{y_i:\, i\in V\}$ is a $q\times q$ matrix whose $x,y$ entry 
(for $x,y\in\{1,\dots,q\}$) is the fraction of vertices 
with color $x$ in the first assignment and color $y$ in the second.
\begin{definition}\label{def:Spherical}
Let $\nu = \{\nu(x,y)\}_{x,y\in [q]}$ be the joint type of
two independent color assignments, 
each distributed according to $\mu_G(\,\cdot\,)$, 
with $\onu(x,y) = 1/q^{2}$ denoting the uniform joint type. 
We say that $\mu_G$ is $(\ve,\delta)$-\emph{spherical} 
if $||\nu-\onu||_2
 \le \ve$ with probability at least $1-\delta$.
\end{definition}

The various regimes of $\mu_G(\cdot)$ are characterized as follows
(where all statements are to hold with respect to the uniform choice of 
$G \in \graph(\alpha,n)$ 
with probability approaching one as $n \to \infty$):
\begin{itemize}
\item[I.] For $\alpha<\alpha_{\rm d}(q)$
the set of proper $q$-colorings forms a unique compact lump:
there is no coexistence. Further, $\mu_G(\,\cdot\,)$ 
is with high probability $(\ve,\delta)$-spherical for any $\ve,\delta>0$.
\item[II.]
For $\alpha_{\rm d}(q)<\alpha<\alpha_{\rm c}(q)$ the measure $\mu_G$ 
exhibits coexistence in the sense of Section
\ref{sec:GeneralDef}. More precisely, there exist $\epsilon>0$, $C>0$
and for each $n$ a partition of the space of configurations $\cX_q^n$ 
into $\cN=\cN_n$ sets $\{\Omega_{\ell,n}\}$ such that for any 
$n$ and $1 \le \ell \le \cN$, 
\begin{eqnarray*}
\frac{\mu_G(\partial_{\epsilon}\Omega_{\ell,n})}
{\mu_G(\Omega_{\ell,n})}\le e^{-C\, n}\, .
\end{eqnarray*}
Furthermore, 
there exists $\Sigma =\Sigma(\alpha)>0$, called 
\emph{complexity} or \emph{configurational entropy}
and a subfamily $\Typ=\Typ_n$ 
of the partition $\{\Omega_{\ell,n}\}_{\ell \in \Typ}$ such that 
$$
\sum_{\ell\in\Typ} \mu_G(\Omega_{\ell,n})\ge 1-e^{-C'n} \,,
$$
for some $C'>0$ independent of $n$ and
$$
e^{-n\Sigma-o(n)}\le \inf_{\ell \in \Typ} \mu_G(\Omega_{\ell,n}) 
\le \sup_{\ell \in \Typ} \mu_G(\Omega_{\ell,n}) \le e^{-n\Sigma+o(n)} 
$$ 
so in particular, $|\Typ_n| = e^{n\Sigma+o(n)}$. 
\item[III.] For $\alpha_{\rm c}(q)<\alpha<\alpha_{\rm s}(q)$ the situation is analogous to 
the last one, but now $\cN_n$ is sub-exponential in $n$. More precisely,
for any $\delta>0$, a fraction $1-\delta$ of the measure $\mu_G$
is comprised of $\cN(\delta)$ elements of the partition, whereby $\cN(\delta)$ converges 
as $n \to \infty$ to a finite random variable. Furthermore, $\mu_G(\,\cdot\,)$ is no longer spherical.
\item[IV.] For $\alpha_{\rm s}(q)<\alpha$ the random graph $G_n$ is, 
with high probability, uncolorable (i.e. non $q$-colorable).
\end{itemize}

Statistical mechanics methods provide semi-explicit expressions
for the threshold 
values $\alpha_{\rm d}(q)$, $\alpha_{\rm c}(q)$ and $\alpha_{\rm s}(q)$
in terms of the solution of a certain identity whose argument 
is a probability measure on the $(q-1)$-dimensional simplex.
%
%**************************************************************
%
\subsection{The COL-UNCOL transition}
\label{sec:COL-UNCOL}

Though the existence of a colorable-uncolorable transition is not 
yet established, $q$-colorability is a monotone graph property
(i.e. if $G$ is $q$-colorable, so is any subgraph of $G$). As such,
Friedgut's theory \cite{AchlioptasFriedgut,Friedgut}
provides the first step in this direction. Namely, 
\begin{thm}
Suppose the random graph $G_{n,\alpha}$ is uniformly chosen 
from the Erd\"os-Renyi graph ensemble $\graph(\alpha,n)$.
Then, for any $q\ge 3$ there 
exists $\alpha_{\rm s}(q;n)$ such that 
for any $\delta>0$,
\begin{eqnarray} \lim_{n\to\infty}\prob\{G_{n,\alpha_{\rm
s}(q;n)(1-\delta)}\, \mbox{is $q$-colorable}\} = 1\, ,\\
\lim_{n\to\infty}\prob\{G_{n,\alpha_{\rm s}(q;n)(1+\delta)}\, \mbox{is
$q$-colorable}\} = 0\, .  
\end{eqnarray} 
\end{thm}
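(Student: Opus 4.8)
The plan is to treat this as an application of Friedgut's sharp-threshold machinery for monotone graph properties, along the lines of Achlioptas and Friedgut \cite{AchlioptasFriedgut}. First I would record the elementary fact that $q$-colorability is a \emph{monotone decreasing} graph property --- if $G$ is $q$-colorable then so is every subgraph of $G$ --- so that non-$q$-colorability is monotone increasing. Since monotone properties have the same thresholds in the edge-count ensemble $\graph(\alpha,n)$ with $m=\lfloor\alpha n\rfloor$ edges and in the binomial model with edge probability $p=2\alpha/n$ (by the standard sandwiching between $G(n,m)$ and $G(n,p)$), I would work with $\mu(p)\equiv\prob\{G(n,p)\text{ is }q\text{-colorable}\}$, a non-increasing function of $p$, and \emph{define} $p_{\rm c}(q;n)$, equivalently $\alpha_{\rm s}(q;n)$, by $\mu(p_{\rm c})=1/2$. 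The Bollob\'as--Thomason theorem already yields a threshold; the content here is that it is \emph{sharp}, i.e.\ $\mu((1-\delta)p_{\rm c})\to1$ and $\mu((1+\delta)p_{\rm c})\to0$ for every fixed $\delta>0$.

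The next step is to argue by contradiction via the Friedgut--Bourgain dichotomy for symmetric monotone properties: were the threshold \emph{coarse}, there would exist a fixed finite graph $H$ (balanced, with the edge density that makes copies of $H$ appear at scale $1/n$) and a constant $\epsilon>0$ such that, along a subsequence of $n$ and for $p$ of order $p_{\rm c}(n)$, placing a copy $H^{*}$ of $H$ on a uniformly random set $W$ of $v(H)=O(1)$ vertices decreases colorability by at least $\epsilon$, namely $\prob\{G(n,p)\cup H^{*}\text{ is }q\text{-colorable}\}\le\prob\{G(n,p)\text{ is }q\text{-colorable}\}-\epsilon$. In words, a coarse threshold would force non-colorability near $p_{\rm c}$ to be ``explained'' by a bounded local obstruction.

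The heart of the proof is then a \emph{robustness lemma} refuting this: superposing any fixed bounded graph $H$ on random vertices of $G(n,p_{\rm c})$ changes the $q$-colorability probability by only $o(1)$. Here I would exploit the local weak convergence of $\graph(\alpha,n)$ to a Poisson Galton--Watson tree (Proposition~\ref{prop:local-trees2}): for any fixed radius $t$, with probability $1-o(1)$ the vertices of $W$ are pairwise at graph-distance exceeding $2t$ in $G(n,p_{\rm c})$ and each of their depth-$t$ neighborhoods is a tree of size bounded by a constant (depending on $t$ and an error tolerance, but not on $n$), so that $B_W(t)\cup H^{*}$ is a forest plus $e(H)=O(1)$ extra edges. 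On this event one shows that a proper $q$-coloring of $G(n,p_{\rm c})$ can be altered inside this bounded, nearly tree-like region --- keeping it fixed outside --- so as to properly color the edges of $H^{*}$ as well; hence $G(n,p_{\rm c})$ colorable implies $G(n,p_{\rm c})\cup H^{*}$ colorable outside an event of probability $o(1)$, contradicting the displayed inequality and yielding sharpness.

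I expect the genuine obstacle to be precisely this local recoloring step. It is \emph{not} true that a tree with arbitrarily prescribed colors on its leaves admits a proper $q$-coloring extending them (already for $q=3$ an internal vertex may see three distinct forbidden colors), so one cannot simply freeze an arbitrary coloring outside $B_W(t)$ and patch greedily. Making the argument work --- which is the technical core of \cite{AchlioptasFriedgut} --- requires revealing the bulk of $G$ away from $W$ first, coloring it, and then arguing that the (random, Galton--Watson) neighborhood of $W$ together with its induced boundary coloring is $q$-colorable with probability $1-o(1)$, with the depth $t$ chosen large enough uniformly in $n$. A secondary, routine point is the transfer between the $\graph(\alpha,n)$ ensemble of the statement and the $G(n,p)$ model in which Friedgut's theorem is phrased; and I would stress that this argument only pins down $\alpha_{\rm s}(q;n)$ implicitly and says nothing about whether it converges as $n\to\infty$.
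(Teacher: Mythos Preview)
Your proposal is correct in spirit and is in fact considerably more detailed than what the paper does. The paper does not prove this theorem at all: it simply records that $q$-colorability is a monotone graph property and then states the result as a direct consequence of ``Friedgut's theory,'' citing \cite{AchlioptasFriedgut,Friedgut}. There is no argument given beyond that single sentence of attribution.

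What you have written is essentially an outline of the proof in \cite{AchlioptasFriedgut} itself --- the Friedgut--Bourgain dichotomy, the reduction to ruling out a bounded local obstruction $H$, and the recoloring argument exploiting the locally tree-like structure near the planted copy of $H$. You have also correctly flagged the genuine technical difficulty (that one cannot naively extend an arbitrary boundary coloring over a tree) and correctly located its resolution in the cited reference rather than attempting to re-derive it. So your sketch is both accurate and appropriately honest about where the real work lies; it simply goes well beyond the level of detail the paper chose to include.
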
 
We start with a simple upper bound on the COL-UNCOL transition threshold.
\begin{propo}\label{prop:ubd-qcol}
The COL-UNCOL threshold is upper bounded as
\begin{eqnarray}
\alpha_{\rm s}(q;n)\le
\overline{\alpha}_{\rm s}(q) \equiv \frac{\log q}{\log(1-1/q)}\, .
\end{eqnarray}
\end{propo}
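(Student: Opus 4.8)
The plan is to bound $\overline{\alpha}_{\rm s}(q)$ by the \emph{first moment method}: I will show that $\E\{Z_{G_{n,\alpha}}\}\to 0$ whenever $\alpha>\overline{\alpha}_{\rm s}(q)$, where $Z_{G}$ denotes the number of proper $q$-colourings of $G$ (the normalizing constant in (\ref{eq:CanonicalCol})). Since $\{G_{n,\alpha}\text{ is }q\text{-colourable}\}=\{Z_{G_{n,\alpha}}\ge 1\}$, Markov's inequality then gives $\prob\{G_{n,\alpha}\text{ is }q\text{-colourable}\}\le\E\{Z_{G_{n,\alpha}}\}\to 0$, from which the asserted bound on the threshold $\alpha_{\rm s}(q;n)$ of the preceding theorem will follow.

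For the first-moment computation I would use linearity, $\E\{Z_{G_{n,\alpha}}\}=\sum_{\ux\in\cX_q^{n}}\prob\{\ux\text{ is proper}\}$. Given a colour assignment $\ux$ with colour-class sizes $n_1,\dots,n_q$ (so $\sum_x n_x=n$), the number of bichromatic unordered pairs of vertices is $P(\ux)=\binom n2-\sum_{x=1}^{q}\binom{n_x}{2}$. For the ensemble $\graph(\alpha,n)$ of $m=\lfloor\alpha n\rfloor$ uniformly random edges, the elementary inequality $\binom a m\big/\binom b m\le(a/b)^{m}$ (valid for $0\le a\le b$) yields $\prob\{\ux\text{ is proper}\}\le\big(P(\ux)/\binom n2\big)^{m}$ (the same bound holds for the independent-edge model, or one may pass between the two via Lemma \ref{lemma:Contiguous}). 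By Cauchy--Schwarz $\sum_x n_x^{2}\ge n^{2}/q$, hence $\sum_x\binom{n_x}{2}\ge\tfrac{1}{q}\binom n2\cdot\tfrac{n-q}{n-1}$, and therefore, uniformly in $\ux$,
\[
\prob\{\ux\text{ is proper}\}\ \le\ \Big((1-1/q)\,\tfrac{n}{n-1}\Big)^{m}.
\]

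Summing over the $q^{n}$ assignments, and using that $(n/(n-1))^{m}\le(1+\tfrac1{n-1})^{\alpha n}$ is bounded uniformly in $n$, I obtain for a finite constant $C_\alpha$
\[
\E\{Z_{G_{n,\alpha}}\}\ \le\ q^{n}\Big((1-1/q)\,\tfrac{n}{n-1}\Big)^{m}\ \le\ C_\alpha\,\exp\big\{n\log q+\lfloor\alpha n\rfloor\log(1-1/q)\big\}.
\]
The exponent equals $n\big(\log q+\alpha\log(1-1/q)\big)+o(n)$, which is strictly negative for all $n$ large as soon as $\log q+\alpha\log(1-1/q)<0$, that is, as soon as $\alpha>\log q/\log\tfrac{q}{q-1}=\overline{\alpha}_{\rm s}(q)$. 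For such $\alpha$ we get $\E\{Z_{G_{n,\alpha}}\}\to 0$, so $G_{n,\alpha}$ is with high probability not $q$-colourable. Since $q$-colourability is monotone decreasing in the edge set (use the coupling that adds edges one at a time, so that $\prob\{G_{n,\alpha'}\text{ is }q\text{-colourable}\}$ is non-increasing in $\alpha'$), this combined with the sharp-threshold statement of the preceding theorem forces $\limsup_{n}\alpha_{\rm s}(q;n)\le\overline{\alpha}_{\rm s}(q)$.

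I do not expect a serious obstacle here; the whole argument is a textbook union bound. The only points that need a little care are: (i) the minimization of $\sum_x\binom{n_x}{2}$ over colour-class sizes, for which the crude Cauchy--Schwarz bound above already suffices, with no sharp multinomial asymptotics required; (ii) reconciling the uniform $m$-edge ensemble with the independent-edge ensemble, handled by $\binom a m/\binom b m\le(a/b)^m$ (or by Lemma \ref{lemma:Contiguous}); and (iii) converting "$\E\{Z_{G_{n,\alpha}}\}\to 0$ for $\alpha>\overline{\alpha}_{\rm s}(q)$" into the stated inequality on the threshold $\alpha_{\rm s}(q;n)$, which is just the monotonicity bookkeeping indicated above.
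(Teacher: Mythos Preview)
Your proof is correct and follows essentially the same first-moment argument as the paper: bound $\E\{Z_G\}$ by $q^n$ times the $m$-th power of the maximal bichromatic-edge probability, minimize $\sum_x\binom{n_x}{2}$ via Cauchy--Schwarz, and conclude by Markov's inequality. Your write-up is in fact slightly more careful (making the $\binom a m/\binom b m\le(a/b)^m$ step and the monotonicity bookkeeping explicit), but the approach is the same.
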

\begin{proof} 
A $q$-coloring is a partition of the vertex set $[n]$
into $q$ subsets of sizes $n_x$, $x \in \cX_q$. Given a 
$q$-coloring, the probability that a uniformly chosen
edge has both end-points of the same color is 
$$
\sum_{x \in \cX_q} 
\binom{n_x}{2}/\binom{n}{2} \ge \frac{1}{q} - \frac{2}{n-1} \,.
$$
Consequently, 
%----------
choosing first the $q$-coloring and then choosing 
uniformly the $m$ edges to be included in $G=G_{n,\alpha}$ 
we find that the expected number of proper $q$-colorings 
for our graph ensemble is bounded by 
\begin{eqnarray*}
\E\{Z_G\} \le q^{n}\Big(\frac{n+1}{n-1}-\frac{1}{q}\Big)^m\, .
\end{eqnarray*}
Since $\E\{Z_{G}\}\to 0$ for $\alpha>\oalpha_{\rm s}(q)$ our 
thesis follows from Markov's inequality.
\end{proof}

Notice that $\oalpha_{\rm s}(q) = q\log q [1+o(1)]$ as $q\to\infty$.
This asymptotic behavior is known to be tight, for it is 
shown in \cite{AchlioptasNaor} that 
\begin{thm}\label{thm:lbd-qcol}
The COL-UNCOL threshold is lower bounded as
\begin{eqnarray}
\alpha_{\rm s}(q;n)\ge
\underline{\alpha}_{\rm s}(q) \equiv  (q-1)\log(q-1)\, .
\end{eqnarray}
\end{thm}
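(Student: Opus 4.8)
The plan is to prove the lower bound by the \emph{second moment method}, following Achlioptas and Naor. Applying it directly to $Z_G$, the number of proper $q$-colorings, is known to fail: $\E[Z_G^2]$ is dominated by pairs of colorings whose color-class sizes are far from uniform, and one recovers only a bound with the wrong constant. The fix is to count instead the proper $q$-colorings that are \emph{balanced}, i.e.\ in which every one of the $q$ color classes has size $n/q$. Assume $q\mid n$ (otherwise delete $O(1)$ vertices, which is harmless), let $G=G_{n,\alpha}$ be uniform in $\graph(\alpha,n)$ with $m=\lfloor n\alpha\rfloor$, $N=\binom n2$, and let $Z=Z(G)$ be the number of balanced proper $q$-colorings of $G$. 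The number of balanced color assignments is $\binom{n}{n/q,\dots,n/q}=\exp(n\log q+o(n))$, and for a fixed such assignment the probability that all $m$ distinct edges of $G$ are bichromatic is $\binom{N-e_{\rm mono}}{m}\big/\binom Nm=\exp\big(m\log(1-1/q)+o(m)\big)$ with $e_{\rm mono}=q\binom{n/q}{2}$. Hence $n^{-1}\log\E[Z]\to\log q+\alpha\log(1-1/q)$, which is strictly positive whenever $\alpha<\oalpha_{\rm s}(q)$; since $(q-1)\log(q-1)<\oalpha_{\rm s}(q)$ for every $q\ge 3$, the first moment is exponentially large throughout the range of interest.

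The second step is the second-moment computation. Writing $\E[Z^2]=\sum_{\ux,\uy}\prob(\ux,\uy\text{ both proper})$ over ordered pairs of balanced assignments and grouping the pairs by their joint type $\rho=(\rho_{ab})_{a,b\in[q]}$, a $q\times q$ matrix all of whose row- and column-sums equal $1/q$, a routine counting/Laplace argument gives
\[
n^{-1}\log\E[Z^2]=\max_{\rho}\,\Phi_\alpha(\rho)+o(1),\qquad
\Phi_\alpha(\rho)\equiv\mathcal H(\rho)+\alpha\log\!\big(1-\tfrac2q+\|\rho\|_F^2\big),
\]
where $\mathcal H(\rho)=-\sum_{a,b}\rho_{ab}\log\rho_{ab}$ and $1-2/q+\|\rho\|_F^2$ is the probability that a uniformly chosen pair of distinct vertices forms an edge bichromatic under \emph{both} $\ux$ and $\uy$. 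At the barycenter $\rho^\circ_{ab}\equiv 1/q^2$ one has $\Phi_\alpha(\rho^\circ)=2\log q+\alpha\log\!\big((1-1/q)^2\big)=\lim n^{-1}\log(\E Z)^2$. Thus, \emph{provided} the maximum above is attained at $\rho^\circ$, we get $\E[Z^2]\le e^{o(n)}(\E Z)^2$, and the Paley--Zygmund inequality gives $\prob(Z>0)\ge(\E Z)^2/\E[Z^2]$; after the standard control of the polynomial-order fluctuations coming from short cycles (e.g.\ via small-subgraph conditioning) this is bounded below by a constant $c(q,\alpha)>0$ uniformly in $n$, whence $\prob(G_{n,\alpha}\text{ is }q\text{-colorable})\ge c(q,\alpha)$.

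The crux, and the step I expect to be the main obstacle, is the variational claim: for every $\alpha\le(q-1)\log(q-1)$ the barycenter $\rho^\circ$ is the \emph{global} maximizer of $\Phi_\alpha$ over the polytope of $q\times q$ matrices with row- and column-sums $1/q$. A purely local analysis does \emph{not} suffice: expanding $\Phi_\alpha$ at $\rho^\circ$ along a tangent direction $W$ (zero row and column sums) gives $\Phi_\alpha(\rho^\circ+\eps W)-\Phi_\alpha(\rho^\circ)=\eps^2\|W\|_F^2\big(\alpha(1-1/q)^{-2}-q^2/2\big)+O(\eps^3)$, so $\rho^\circ$ remains a local maximum for all $\alpha\le(q-1)^2/2$, a strictly larger range; the constant $(q-1)\log(q-1)$ must therefore come from a genuinely global comparison. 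I would (i) reduce the problem, using the permutation symmetry of $\Phi_\alpha$ together with convexity of $\mathcal H$, to a low-dimensional subfamily of overlap matrices interpolating between $\rho^\circ$ and the ``frozen''/permutation types $\rho=\tfrac1q P$ ($P$ a permutation matrix); (ii) check that along this subfamily $\Phi_\alpha$ stays below $\Phi_\alpha(\rho^\circ)$ exactly when $\alpha\le(q-1)\log(q-1)$ — the permutation types themselves contribute only at rate $\log q+\alpha\log(1-1/q)=\tfrac12\Phi_\alpha(\rho^\circ)$ and so are harmless, the danger being an interior maximum; and (iii) rule out any remaining stationary points and boundary maxima. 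This is precisely the technical heart of Achlioptas--Naor, and for it I would invoke their analysis.

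Finally I would convert $\prob(G_{n,\alpha}\text{ colorable})\ge c(q,\alpha)>0$ into the stated threshold inequality using the sharp-threshold theorem quoted just above. Fix $\eps>0$ and set $\alpha_0=(q-1)\log(q-1)-\eps$, so $\prob(G_{n,\alpha_0}\text{ colorable})\ge c(q,\alpha_0)>0$ for all large $n$. If $\liminf_n\alpha_{\rm s}(q;n)<\alpha_0$, pass to a subsequence $n_k$ with $\alpha_{\rm s}(q;n_k)\to L<\alpha_0$ and pick $\delta>0$ with $(1+\delta)L<\alpha_0$; by monotonicity of $q$-colorability in the edge set, $\prob(G_{n_k,\alpha_0}\text{ colorable})\le\prob\big(G_{n_k,\alpha_{\rm s}(q;n_k)(1+\delta)}\text{ colorable}\big)\to 0$ by Friedgut's theorem, a contradiction. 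Hence $\liminf_n\alpha_{\rm s}(q;n)\ge(q-1)\log(q-1)-\eps$ for every $\eps>0$, i.e.\ $\alpha_{\rm s}(q;n)\ge\underline{\alpha}_{\rm s}(q)$, as claimed.
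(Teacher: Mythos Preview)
Your proposal is correct and takes essentially the same route as the paper's own (very brief) sketch: second moment method applied to the count $Z$ of \emph{balanced} proper $q$-colorings, with the global optimization over doubly-stochastic overlap matrices deferred to Achlioptas--Naor, followed by Friedgut's sharp-threshold theorem to upgrade $\prob(Z>0)\ge c>0$ to the threshold bound. Your write-up is considerably more detailed than the paper's sketch (in particular, your explicit identification of $\Phi_\alpha(\rho)=\mathcal H(\rho)+\alpha\log(1-2/q+\|\rho\|_F^2)$ and the local-vs-global remark explaining why $(q-1)\log(q-1)$ rather than $(q-1)^2/2$ is the operative constant), but the strategy is identical.
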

\begin{proof}[Sketch of proof] 
Let $Z$ denote the number of \emph{balanced}
$q$-colorings, namely $q$-colorings having exactly $n/q$ 
vertices of each color. A computation similar to the one 
we used when proving Proposition \ref{prop:ubd-qcol} yields
the value of $\E Z$. It captures enough of $\E Z_G$
to potentially yield a tight lower bound on $\alpha_{\rm s}(q)$
by the second moment method, namely, using the bound 
$\prob(Z_G>0) \ge \prob(Z>0) \ge (\E Z)^2/\E Z^2$. 
The crux of the matter is of course to control the second
moment of $Z$, for which we defer to \cite{AchlioptasNaor}.
\end{proof}

The proof of Theorem \ref{thm:lbd-qcol} is non-constructive.
In particular, it does not suggest a way of efficiently 
finding a $q$-coloring when $\alpha$ is near $\alpha_{\rm s}(q;n)$
(and as of now, it is not even clear if this is possible).
In contrast, we provide next a simple, `algorithmic' 
(though sub-optimal), lower bound on $\alpha_{\rm s}(q;n)$.
To this end,
recall that the $k$-core of a graph $G$ is the 
largest induced subgraph of $G$ having minimal degree at least $k$.
\begin{propo}
If $G$ does not have a non-empty $q$-core then it is $q$-colorable.
\end{propo}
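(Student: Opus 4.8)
The plan is to prove this by induction on the number of vertices of $G$, peeling off low-degree vertices in the same spirit that defines the $q$-core. The claim is that if $G$ has no non-empty $q$-core, then $G$ is $q$-colorable. First I would observe that the hypothesis means the iterative $q$-core algorithm --- repeatedly delete any vertex of degree at most $q-1$ --- eventually deletes every vertex of $G$. Indeed, the $q$-core is exactly the (unique) fixed point of this deletion process, and it being empty is precisely the statement that the process terminates with the empty graph. This gives a total ordering $v_1, v_2, \dots, v_n$ of the vertices of $G$, namely the order in which they are deleted, with the key property that each $v_t$ has at most $q-1$ neighbors among $\{v_t, v_{t+1}, \dots, v_n\}$ (equivalently, at most $q-1$ neighbors that are deleted at the same time or later).

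Given such an ordering, I would colour the vertices greedily in \emph{reverse} order $v_n, v_{n-1}, \dots, v_1$. When it is time to colour $v_t$, all of its neighbors that lie later in the deletion order --- i.e. those that were coloured already --- number at most $q-1$, so they use at most $q-1$ of the $q$ available colours, leaving at least one colour free for $v_t$. Assigning $v_t$ any such free colour maintains properness of the partial colouring. Carrying this out down to $v_1$ produces a proper $q$-coloring of all of $G$, which is the thesis.

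The only point that deserves a careful word is the equivalence ``no non-empty $q$-core $\iff$ the peeling process empties $G$.'' I would justify it as follows: the peeling process is confluent --- the set of vertices surviving all possible peelings does not depend on the order in which low-degree vertices are removed --- and the surviving set, call it $C$, induces a subgraph of minimum degree at least $q$ (nothing in $C$ could be peeled) that contains every induced subgraph of minimum degree at least $q$ (such a subgraph is never touched by peeling). Hence $C$ is the $q$-core, and $C = \emptyset$ exactly when peeling runs to completion. This is standard, so I would state it briefly rather than belabor it.

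There is essentially no hard obstacle here; the main thing to get right is the direction of the greedy colouring (reverse deletion order, so that each vertex sees only its already-coloured ``late'' neighbors, of which there are at most $q-1$), together with the clean statement of the peeling characterization of the $q$-core. If one wanted to avoid invoking confluence, one could instead phrase everything directly as an induction: if $G$ has no non-empty $q$-core then $G$ has a vertex $v$ of degree at most $q-1$ (else $G$ itself would be a non-empty subgraph of minimum degree $\geq q$, i.e. a non-empty $q$-core); the graph $G - v$ still has no non-empty $q$-core (any $q$-core of $G-v$ would be one of $G$); by the induction hypothesis $G-v$ is $q$-colorable; and $v$, having at most $q-1$ neighbors, can be assigned a colour avoiding all of them. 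The base case of the empty graph is trivial. I would most likely present this last, inductive version, since it is self-contained and avoids any appeal to the confluence of core-peeling.
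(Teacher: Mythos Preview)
Your proposal is correct and matches the paper's own proof essentially line for line: the paper also orders the vertices by the peeling algorithm and then observes that if $G\setminus\{i\}$ is $q$-colorable and $i$ has degree less than $q$, then $G$ is $q$-colorable. Your inductive phrasing is exactly this argument, and your greedy-in-reverse-order version is the same thing unrolled.
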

\begin{proof}
Given a graph $G$ and a vertex $i$, denote by $G\setminus\{i\}$ the
graph obtained by removing vertex $i$ and all edges incident to it.
If $G$ does not contain a $q$-core, then 
we can sequentially remove vertices of degree less than $q$ 
(and the edges incident to them), one at a time, until we have 
decimated the whole graph. This simple `peeling algorithm' 
provides an
ordering $i(1),i(2),\dots$, $i(n)$ of the vertices, such that
setting $G_0=G$ and $G_t= G_{t-1}\setminus \{i(t)\}$,
we have that for any $t\le n$, the degree of $i(t)$ 
in $G_{t-1}$ is smaller than $q$. 
Our thesis follows from the observation that if $G\setminus\{i\}$
is $q$-colorable, and $i$ has degree smaller than $q$, then $G$
is $q$-colorable as well. 
\end{proof}

As mentioned before, this proof outlines an efficient 
algorithm for constructing a $q$-coloring 
for any graph $G$ whose $q$-core is empty, 
and in principle, also for enumerating in this 
case the number of $q$-colorings of $G$.
The threshold for the appearance of a $q$-core in a 
random Erd\"os-Renyi graph chosen uniformly from $\graph(\alpha,n)$ 
was first determined in \cite{PittelEtAl}.
\begin{propo}
Let $h_\alpha (u) = \prob\{\poisson (2\alpha u)\ge q-1\}$,
and define (for $q\ge 3$)
\begin{eqnarray}
\alpha_{\rm core}(q) = \sup\{\alpha \ge 0 :\, h_\alpha (u)\le u
\quad \forall u \in [0,1]\}\, .
\end{eqnarray}
Then, with high probability, a uniformly random graph $G$ 
from $\graph(\alpha,n)$ has a $q$-core if $\alpha>\alpha_{\rm core}(q)$,
and does not have one if $\alpha<\alpha_{\rm core}(q)$. 
\end{propo}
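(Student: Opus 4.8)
The plan is to track the \emph{peeling} (stripping) algorithm described above: in each round delete every vertex whose current degree is below $q$, and iterate; the $q$-core is the set of vertices that are never deleted. Write $V^{(t)}$ for the set of vertices surviving the first $t$ rounds, so that the $q$-core equals $\bigcap_{t\ge 0}V^{(t)}$ and the event $\{i\in V^{(t)}\}$ is determined by the depth-$t$ ball $\Ball_i(t)$. This is exactly the regime where the local-convergence tools of Section~\ref{sec:loc-conv} apply, and they produce the function $h_\alpha$: on the Galton--Watson tree $\Tree$ with $\poisson(2\alpha)$ offspring to which $G_{n,\alpha}$ converges locally (Proposition~\ref{prop:local-trees2}), let $q_t$ be the probability that a child subtree supports its parent's survival through $t$ rounds; thinning the $\poisson(2\alpha)$ grandchildren independently by $q_t$ and requiring at least $q-1$ survivors gives $q_0=1$ and $q_{t+1}=h_\alpha(q_t)$, while $\prob\{\root\in\Tree^{(t+1)}\}=\prob\{\poisson(2\alpha q_t)\ge q\}$. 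Since $h_\alpha$ is non-decreasing with $h_\alpha(1)\le 1$, the sequence $(q_t)$ decreases to the largest fixed point $q_*$ of $h_\alpha$ in $[0,1]$, and the two halves of the statement come down to: $q_*=0$ when $\alpha<\alpha_{\rm core}(q)$, and $q_*>0$ when $\alpha>\alpha_{\rm core}(q)$.

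For the subcritical direction ($\alpha<\alpha_{\rm core}(q)$) I would argue as follows. First, for each fixed $t$, $n^{-1}|V^{(t)}|\to\theta_t:=\prob\{\poisson(2\alpha q_{t-1})\ge q\}$ in probability: convergence of the mean is immediate from local weak convergence and uniform sparsity, since the summand is bounded, is a function of $\Ball_i(t)$ only, and is invariant under tree isomorphism, so a Lemma~\ref{lemma:EdgeTree}-type statement applies; and concentration follows from the two-point version of the same convergence together with uniform sparsity, which forces all but $o(n^2)$ pairs $i,j$ to have disjoint depth-$t$ neighbourhoods and hence asymptotically independent survival. Second, $\alpha<\alpha_{\rm core}(q)$ implies $h_\alpha(u)<u$ for every $u\in(0,1]$: if $h_\alpha(u_0)=u_0$ for some $u_0>0$, then since $\alpha'\mapsto h_{\alpha'}(u_0)$ is strictly increasing one would get $h_{\alpha'}(u_0)>u_0$ for $\alpha'\in(\alpha,\alpha_{\rm core}(q))$, contradicting the definition of $\alpha_{\rm core}$. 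Hence $q_t\downarrow0$ and $\theta_t\downarrow0$, so for any $\epsilon>0$ the $q$-core has at most $\epsilon n$ vertices with high probability (take $t$ with $\theta_t<\epsilon/2$ and use $q$-core $\subseteq V^{(t)}$). Finally, a standard first-moment bound rules out a nonempty $q$-core on $o(n)$ vertices: for $q\ge 3$ there is $\epsilon_0=\epsilon_0(q,\alpha)>0$ such that whp $G_{n,\alpha}$ has no induced subgraph on $s$ vertices with $q+1\le s\le\epsilon_0 n$ and minimum degree $\ge q$, the expected number of such subgraphs being, with $x=s/n$ and up to subexponential factors, at most $[\,(e/x)(2e\alpha x/q)^{q/2}\,]^{\,xn}$, whose base tends to $0$ as $x\downarrow0$ precisely because $q>2$. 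Combining the two bounds shows the $q$-core is empty with high probability.

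For the supercritical direction ($\alpha>\alpha_{\rm core}(q)$), continuity of $h_\alpha$ together with $h_\alpha(0)=0$, $h_\alpha(u_1)>u_1$ for some $u_1\in(0,1)$, and $h_\alpha(1)\le1$ forces a fixed point $q_*\ge u_1>0$, so the limiting ``core density'' $\prob\{\poisson(2\alpha q_*)\ge q\}$ is positive. However this alone does \emph{not} prove the core is nonempty: $V^{(t)}$ strictly overcounts the core for every finite $t$, and the transition is first-order, so one cannot simply exchange the limits $t\to\infty$ and $n\to\infty$. The plan here is the classical fluid-limit analysis of the stripping process. Pass to the configuration model (legitimate by Lemma~\ref{lemma:Contiguous}), run the stripping to completion, and track the Markov chain recording the number $D_j$ of currently-degree-$j$ vertices, together with the number of remaining half-edges, as one vertex is removed at a time; by Wormald's differential-equation method --- equivalently, by the martingale concentration argument of Pittel--Spencer--Wormald --- the rescaled trajectory stays within $o(n)$, uniformly in the elapsed time, of the solution of an explicit autonomous system of ODEs. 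One then checks that the terminal (stopping) value of this solution is governed by the same fixed-point picture: the surviving half-edge fraction hits $0$ before deletions are exhausted when $\alpha<\alpha_{\rm core}(q)$, but stops at the strictly positive stable fixed point $q_*$, where $h_\alpha'(q_*)<1$, when $\alpha>\alpha_{\rm core}(q)$, leaving a $q$-core of linear size. The main obstacle is precisely this fluid-limit step: keeping the concentration valid \emph{all the way to the end} of the trajectory, where the vector field loses its Lipschitz bound and the last $o(n)$ deletions must be absorbed by a separate argument, and verifying that for \emph{every} $\alpha>\alpha_{\rm core}(q)$ --- including values just above threshold, where the core density jumps discontinuously --- the fluid limit genuinely terminates at a positive value rather than grazing $0$. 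Alternatively, one may simply invoke the theorem of \cite{PittelEtAl}, to which the statement attributes the result.
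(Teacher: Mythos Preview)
Your proposal is correct, and for the supercritical direction it coincides with the paper's own sketch: the paper argues (without details) that the one-at-a-time peeling process is an inhomogeneous Markov chain whose rescaled trajectory is governed by an ODE, and defers the analysis to Section~\ref{sec:ODE}, where the same machinery is worked out for the $2$-core of a random hypergraph. This is exactly the Pittel--Spencer--Wormald fluid-limit argument you outline, and your cautionary remarks about the end of the trajectory and the discontinuous jump at threshold are precisely the technical points that make the supercritical side nontrivial.

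For the subcritical direction you take a genuinely different route from the paper. The paper's sketch treats both sides uniformly via the ODE method, whereas you use the local-weak-convergence framework of Section~\ref{sec:loc-conv} directly: parallel stripping for $t$ rounds is a local function of $\Ball_i(t)$, so Lemma~\ref{lemma:EdgeTree} and a second-moment bound give $n^{-1}|V^{(t)}|\to\theta_t$; the fixed-point analysis of $h_\alpha$ forces $\theta_t\downarrow 0$; and a first-moment count on small dense subgraphs rules out nonempty cores of size $o(n)$. This is more elementary than the ODE analysis and fits naturally with the tools the paper has already developed, at the price of needing the separate small-core argument (which the ODE approach absorbs automatically, since the fluid limit reaches zero in finite time when $\alpha<\alpha_{\rm core}(q)$). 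Both approaches are standard; yours makes the role of local tree structure more transparent, while the paper's has the advantage of handling the two sides of the threshold in one stroke and of yielding the size of the core as a by-product.
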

\begin{proof}[Sketch of proof] 
Starting the peeling algorithm at 
such graph $G_0=G_{n,\alpha}$ yields an inhomogeneous 
Markov chain $t \mapsto G_t$ which is well approximated
by a chain of reduced state space $\integers_+^q$ and
smooth transition kernel. The asymptotic behavior of 
such chains is in turn governed by the solution of 
a corresponding ODE, out of which we thus deduce the 
stated asymptotic of the probability that a uniformly 
random graph $G$ from $\graph(\alpha,n)$ has a $q$-core. 
%AD - for long version only.
We shall not detail this approach here,
as we do so in Section \ref{sec:ODE} for the
closely related problem of finding the threshold
for the appearance of a $2$-core in a
uniformly random hypergraph.
%-----------------
\end{proof}

We note in passing that the value of $\alpha_{\rm core}(q)$
can be a-priori predicted by the following 
elegant heuristic `cavity' argument. For a vertex $i\in V$
we call `$q$-core induced by $i$' the largest induced subgraph having 
minimum degree at least $q$ except possibly at $i$.
We denote by $u$ the probability that for a uniformly chosen
random edge $(i,j)$, its end-point $i$ belongs to the $q$-core 
induced by $j$. 
Recall that for large $n$ the degree $\Delta$ 
of the uniformly chosen vertex $i$ of $G_{n,\alpha}$, excluding 
the distinguished edge $(i,j)$, is approximately 
a $\poisson(2\alpha)$ random variable. We expect each of 
these $\Delta$ edges to connect $i$ to a vertex from the 
$q$-core induced by $j$ with probability $u$ and 
following the Bethe ansatz, these events should be 
approximately independent of each other. Hence, under 
these assumptions the vertex $i$ is in the $q$-core 
induced by $j$ with probability $h_\alpha(u)$, leading 
to the self-consistency equation $u=h_\alpha(u)$. 
The threshold $\alpha_{\rm core}(q)$ then corresponds to the 
appearance of a positive solution of this equation.

\subsection{Coexistence and clustering: the physicist's approach}
\label{sec:Clust}

For $\alpha<\alpha_{\rm s}(q)$, the measure $\mu_G(\,\cdot\,)$ 
is well defined but can have a highly non-trivial structure,
as discussed in Section \ref{sec:Broad}.
We describe next the physicists conjecture 
for the corresponding threshold $\alpha_{\rm d}(q)$ and 
the associated complexity function $\Sigma(\alpha)$.
For the sake of simplicity, we shall write the explicit formulae 
in case of random $(k+1)$-regular ensembles instead of the
Erd\"os-Renyi ensembles $\graph(\alpha,n)$ we use in our overview. 

\subsubsection{Clustering and reconstruction thresholds: a conjecture}
\label{sec:Clust=Rec}

Following \cite{MezardMontanariRec},
the conjectured value for $\alpha_{\rm d}(q)$ has a particularly
elegant interpretation in terms of a phase transition for a model 
on the rooted Galton-Watson tree $\Tree=\Tree(P,\infty)$ with 
offspring distribution $P=\poisson(2\alpha)$. 
With an abuse of notation, let $\mu$ also denote the 
free boundary 
Gibbs 
measure over proper $q$-colorings of $\Tree$
(recall that every tree is $2$-colorable). More explicitly, a 
proper $q$-coloring $\ux = \{x_i \in \cX_q :i\in \Tree\}$ is sampled from 
$\mu$ as follows. First sample the root color uniformly at random. Then, 
recursively, 
for each colored node $i$, sample the colors of its 
offspring
uniformly at random among the colors that are different from $x_i$.

We denote by $\root$ the root of $\Tree$ and by $\cBall_\root(t)$ 
the set of vertices of $\Tree$ whose distance from the root is at least $t$.
Finally, for any subset of vertices $U$, we let $\mu_U(\,\cdot\,)$
be the marginal law of the corresponding color assignments.

For small $\alpha$ the color at the root de-correlates from colors 
in $\cBall_\root(t)$ when $t$ is large, whereas at large $\alpha$ they 
remain correlated at any distance $t$.
The `reconstruction threshold' separates these two regimes.
\begin{definition}\label{def:Rec}
The reconstruction threshold $\alpha_{\rm r}(q)$ is the maximal value of
$\alpha$ such that
\begin{eqnarray}
\lim_{t\to\infty} \E \{ \, ||\mu_{\root,\cBall_\root(t)}-
\mu_\root \times \mu_{\cBall_\root(t)}||_{\sTV} \, \} = 0
\end{eqnarray}
(where the expectation is over the random tree $\Tree$).
If the limit on the left-hand side is positive, we say that the
reconstruction problem is \emph{solvable}.
\end{definition}

It is conjectured that the coexistence threshold 
$\alpha_{\rm d}(q)$ for locally tree like random graphs 
coincides with the reconstruction threshold 
$\alpha_{\rm r}(q)$ for the corresponding random trees. 
We next 
present a statistical physics argument in favor 
of this conjecture. There are various non-equivalent
versions of this argument, all 
predicting the same location for the threshold.
The argument that we will reproduce was first developed in
\cite{CarParPatSou,FraPar_pot,MezPar_glasses},
to explore the physics of glasses and spin glasses.

Note that the major difficulty 
in trying to identify the existence of `lumps' 
is that we do not know, a priori, where these lumps are in the
space of configurations. However,
if $\uX^*$ is a configuration sampled from $\mu(\,\cdot\,)$, it
will fall inside one such lump so the idea is to study how a second
configuration $\ux$ behaves when tilted towards the first one.
Specifically, 
fix $\ux^*= \{x^*_i \in \cX_q :\, i\in V\}$ and consider the tilted 
measures 
\begin{eqnarray*}
\mu^*_{G,\ux^*,\epsilon} 
(\ux) = \frac{1}{Z_\epsilon} \prod_{(i,j)\in E}\ind(x_i\neq x_j)\prod_{i\in V}
\psi_{\epsilon}(x_i^*,x_i)\, ,
\end{eqnarray*}
where $\psi_{\epsilon}(x,y)$ 
is a tilting function depending continuously on $\epsilon$,   
such that $\psi_0(x,y)=1$ (so 
$\mu^*_0$ reduces to the uniform measure over proper colorings), and 
which favors $x=y$ when $\epsilon>0$. For instance, we might take
\begin{eqnarray*}
\psi_{\epsilon}(x,y) = \exp\Big\{\epsilon \, \ind (x=y)\Big\}\, .
\end{eqnarray*}

While the study of the measure
$\mu^*_{G,\ux^*,\epsilon}$ is beyond our current means, 
we gain valuable insight from examining its Bethe approximation. 
Specifically, in this setting messages depend in addition to the graph also 
on $\ux^*$ and $\epsilon$, and the Bethe equations 
of Definition \ref{def:BetheEq} are 
\begin{eqnarray}\label{eq:Bethe-color}
\nu_{i\to j}(x_i) =  z_{i \to j}^{-1} \psi_{\epsilon}(x_i^*,x_i)\,
\prod_{l\in\di\setminus j}
\big( 1 -\nu_{l\to i}(x_i) \big) \, ,
\end{eqnarray}
with $z_{i \to j}$ a normalization constant.
In shorthand we write this equation as
\begin{eqnarray*}
\nu_{i\to j} = \F_{\epsilon}\{\nu_{l\to i}:\, l\in\di\setminus j\}\, .
\end{eqnarray*}

Let us now assume that $G$ is a regular graph of degree $k+1$ 
and that $\uX^*$ is a uniformly random proper $q$-coloring of $G$. Then,
the message $\nu_{i\to j}$ is itself a random variable, taking values in 
the $(q-1)$-dimensional probability simplex $\M(\cX_q)$. For
each $x\in \cX_q$ we denote by $Q_x$ (which also depends on 
$\epsilon$), the conditional law of
$\nu_{i\to j}$ given that $X^*_i = x$. In formulae, for any 
Borel measurable 
subset $A$ of $\M(\cX_q)$, we have
\begin{eqnarray*}
Q_x(A) \equiv\prob\left\{\nu_{i\to j}(\,\cdot\,)\in A\big|X^*_i = x\right\}\, .
\end{eqnarray*}

Assume that, conditionally on the reference coloring $\uX^*$,
the messages $\nu_{l\to i}$ for 
$l \in \di \setminus j$ are asymptotically independent, 
and have the laws $Q_{X_i^*}$. We then obtain the following recursion for 
$\{Q_x\}$,
\begin{eqnarray*}
Q_x(A) = \sum_{x_1\dots x_k}\mu(x_1,\dots,x_k|x)\int
\ind(\F_{\epsilon}(\nu_1,\dots,\nu_k)\in A)\,\prod_{i=1}^kQ_{x_i}(\de\nu_i)\,,
\end{eqnarray*}
where $(x_1,\ldots,x_k)$ denote the values of $(X_l^*$,
$l \in \di \setminus j)$ and $\mu(x_1,\ldots,x_k|x)$ the 
corresponding conditional marginal of $\mu=\mu^*_0$ 
given $X_i^*=x$. 
Assuming further that for a random regular graph $G=G_{n,k+1}$ 
the measure $\mu(x_1,\ldots,x_k|x)$ converges as $n \to \infty$
to the analogous conditional law for 
the regular $k$-ary tree, we obtain the fixed point equation 
\begin{eqnarray}
\label{eq:fixpt}
Q_x(A) = \frac{1}{(q-1)^k}\sum_{x_1\dots x_k\neq x}\int
\ind(\F_{\epsilon}(\nu_1,\dots,\nu_k)\in A)\,\prod_{i=1}^kQ_{x_i}(\de\nu_i)\,.
\end{eqnarray}
In the limit $\eps = 0$ this equation admits 
a trivial degenerate solution, whereby 
$Q_x =\delta_{\onu}$ is concentrated on one point, 
the uniform vector $\onu(x) = 1/q$ for all $x \in \cX_q$. 
The interpretation of this solution is that, 
as $\eps\downarrow 0$, a random coloring 
from the tilted measure $\mu^*_{G,\uX^*,\eps}$, 
becomes uncorrelated from the reference coloring $\uX^*$. 

It is not hard to verify that this is the only degenerate solution 
(namely, where each measure $Q_x$ is supported on one point),
of (\ref{eq:fixpt}) at $\epsilon=0$. 
% First note that degenerate solution implies 
% nu_x = F_0(nu_{x_1},...,nu_{x_k}) for all x_i \ne x,
% which for q>2 implies nu_x independent of x.
% Then, examining F_0 we see that (1 -nu(x))^k = c nu(x) for all x 
% which has only one solution, so nu = \onu. 
A second scenario is however possible. 
It might be that, as $\eps\downarrow 0$
(and, in particular, for $\eps=0$), Eq.~(\ref{eq:fixpt})
admits also a non-trivial solution, whereby at least one
of the measures $Q_x$ is not supported on the uniform 
vector $\onu$. 
This is interpreted by physicists as implying coexistence: 
the coloring sampled
from the tilted measure $\mu^*_{G,\uX^*,\eps}$ remains trapped
in the same `state' (i.e. in the same subset of configurations
$\Omega_{\ell,n}$), as $\uX^*$. 

Let us summarize the statistical physics conjecture:
the uniform measure $\mu_G(\,\cdot\, )$
over proper $q$-colorings of a random $(k+1)$-regular graph 
exhibits coexistence if and only if Eq.~(\ref{eq:fixpt})
admits a non-trivial solution for $\eps =0$.
In the next subsection we show that this happens if and only
if $k\ge k_{\rm r}(q)$, with $k_{\rm r}(q)$ the reconstructibility
threshold on $k$-ary trees (which is defined analogously to
the Poisson tree threshold $\alpha_{\rm r}(q)$, see Definition 
\ref{def:Rec}).

\subsubsection{The reconstruction threshold for $k$-ary trees}

We say that a probability measure on $\M(\cX_q)$ is 
\emph{color-symmetric} if it is invariant under the action of
color permutations on its argument $\nu \in \M(\cX_q)$. 
Following \cite[Proposition 1]{MezardMontanariRec} we proceed to 
show that the existence of certain non-trivial solutions 
$\{Q_x\}$ of (\ref{eq:fixpt}) at $\epsilon =0$
is equivalent to solvability of the 
corresponding reconstruction problem for $k$-ary trees. 
\begin{propo}\label{prob:k-tree-reco}
The reconstruction problem is solvable on $k$-ary trees 
if and only if Eq.~(\ref{eq:fixpt}) admits 
at $\epsilon=0$ a solution $\{Q_x, x \in \cX_q\}$ such that 
each $Q_x$ has the Radon-Nikodym density $q \nu(x)$ 
with respect to the same color-symmetric, non-degenerate 
probability measure $Q$.  
\end{propo}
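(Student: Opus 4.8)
The plan is to establish a bijection between, on the one hand, the reconstruction problem data on a $k$-ary tree and, on the other hand, the color-symmetric non-degenerate solutions of the fixed-point equation~(\ref{eq:fixpt}) at $\epsilon=0$. The natural object linking the two sides is the sequence of \emph{belief-propagation messages} sent from the leaves of a depth-$t$ tree towards the root, conditioned on the root color. Concretely, fix a root color $x$, sample a proper $q$-coloring $\uX$ of the depth-$t$ regular $k$-ary tree $\Tree_k(t)$ from the free-boundary Gibbs measure $\mu$ conditioned on $X_\root=x$, and let $\nu^{(t)}_{\root}(\cdot)$ be the posterior distribution of $X_\root$ given only the colors $\uX_{\partial\Tree_k(t)}$ of the generation-$t$ vertices. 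Because $\Tree_k(t)$ is a tree, this posterior is computed exactly by the BP recursion~(\ref{eq:Bethe-color}) with $\epsilon=0$ (where $\psi_0\equiv 1$), so the law $Q^{(t)}_x$ of $\nu^{(t)}_\root$ given $X_\root=x$ satisfies precisely the one-step recursion preceding~(\ref{eq:fixpt}): each subtree contributes an independent message distributed according to $Q^{(t-1)}_{X_i^*}$ where the offspring colors $x_1,\dots,x_k$ are drawn (uniformly among colors $\neq x$) from the conditional Gibbs law, which for the $k$-ary tree is exactly the uniform-among-non-$x$ law used in~(\ref{eq:fixpt}). Monotonicity (in $t$) of the mutual information / total-variation quantity and boundedness then give weak convergence $Q^{(t)}_x \to Q_x$, and the limit $\{Q_x\}$ is a fixed point of~(\ref{eq:fixpt}) at $\epsilon=0$.

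The second ingredient is the identity $Q_x(A) = q\,\E_{Q}[\nu(x)\,\ind(\nu\in A)]$ expressing each $Q_x$ as a reweighting of a single color-symmetric measure $Q$. This comes from Bayes' rule: if $Q$ denotes the \emph{unconditional} law of $\nu^{(t)}_\root$ (averaging over the uniformly random root color) then $\prob(X_\root=x\mid \nu^{(t)}_\root=\nu)=\nu(x)$ by definition of $\nu^{(t)}_\root$ as the posterior, while $\prob(X_\root=x)=1/q$; hence $dQ^{(t)}_x/dQ^{(t)} = \nu(x)/(1/q) = q\nu(x)$. Color symmetry of $Q^{(t)}$ is inherited from the color symmetry of the Gibbs measure on the tree, and passes to the limit $Q$. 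So any such limit $\{Q_x\}$ is automatically of the stated Radon–Nikodym form with $Q$ color-symmetric. Conversely, given \emph{any} solution $\{Q_x\}$ of~(\ref{eq:fixpt}) of the form $dQ_x/dQ = q\nu(x)$ with $Q$ color-symmetric, I would check that $Q$ itself then solves the averaged recursion and interpret it as (a fixed point reachable from) the message law of the reconstruction problem.

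Finally, I relate \emph{non-degeneracy} of $Q$ to \emph{solvability} of reconstruction. Reconstruction is solvable iff $\lim_{t\to\infty}\E\|\mu_{\root,\cBall_\root(t)}-\mu_\root\times\mu_{\cBall_\root(t)}\|_{\sTV}>0$, which (since $\mu_\root$ is uniform) is equivalent to the posterior $\nu^{(t)}_\root$ \emph{not} converging in probability to the uniform vector $\onu$, i.e. to the limit law $Q$ not being the point mass $\delta_{\onu}$. On the other side, $Q=\delta_{\onu}$ exactly corresponds to all $Q_x=\delta_{\onu}$ — the degenerate solution singled out in the text — whereas $Q$ non-degenerate is exactly the case where at least one $Q_x$ is not supported on $\onu$. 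Thus non-degeneracy of $Q$ $\Longleftrightarrow$ solvability of reconstruction, completing the equivalence.

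The main obstacle I anticipate is the \emph{converse direction}: showing that an arbitrary (not a priori arising from a finite-tree truncation) color-symmetric non-degenerate fixed point $\{Q_x\}$ of~(\ref{eq:fixpt}) genuinely certifies that reconstruction is solvable. The forward direction is essentially a bookkeeping exercise with the exact tree recursion plus a monotonicity/compactness limit argument; but going backwards one must rule out the possibility that~(\ref{eq:fixpt}) has a ``spurious'' non-trivial fixed point not equal to the actual $t\to\infty$ limit of the reconstruction messages. The standard resolution is to observe that the reconstruction message law $Q^{(t)}$ is the \emph{extremal} (largest, in the appropriate stochastic order induced by the reweighting structure) fixed point — started from $Q^{(0)}_x=\delta_{e_x}$ (full information at the leaves) the iteration decreases monotonically to $Q$ — so the \emph{existence} of any non-degenerate fixed point forces the extremal one to be non-degenerate as well; one then invokes the monotonicity already used in Lemma~\ref{lemma:Recursive}-style arguments for the Ising case. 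Making this monotonicity precise in the $q$-coloring setting (where messages live in the simplex $\M(\cX_q)$ rather than in $\reals$) is the technically delicate point, and is where I would expect to cite \cite{MezardMontanariRec,MezardMontanari} for the detailed argument.
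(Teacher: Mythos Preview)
Your forward direction (reconstruction solvable $\Rightarrow$ non-degenerate fixed point of the stated form) is essentially the paper's argument, with one refinement worth noting: to obtain the weak limit $Q^{(t)}\to Q$ the paper does not appeal to monotonicity of the total-variation gap (which by itself only gives convergence of a scalar functional, not of the measure), but observes that for each fixed $x\in\cX_q$ the sequence $\{\nu^{(t)}(x)\}_{t\ge 0}$ is a reversed martingale with respect to the filtration $\sigma(\uX_{\cBall_{\root}(t)})$, so L\'evy's downward theorem yields almost-sure convergence of $\nu^{(t)}$ and hence weak convergence of its law $Q^{(t)}$.

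Where you genuinely diverge from the paper is in the converse, and the paper's route is both simpler and avoids exactly the obstacle you single out. The paper does \emph{not} argue that the reconstruction limit is an extremal fixed point dominating all others in some stochastic order on $\M(\cX_q)$. Instead, given \emph{any} color-symmetric non-degenerate fixed point $Q$ (so $Q_x(\de\nu)=q\,\nu(x)\,Q(\de\nu)$), it constructs an auxiliary observable $Y(t)\in\M(\cX_q)$ as follows: for each vertex $i\in\dBall_{\root}(t)$ sample $\nu_i\sim Q_{X_i}$ independently given the boundary colors, then propagate these messages toward the root via $\F_0$, setting $Y(t)=\nu_{\root}$. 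The fixed-point property of $\{Q_x\}$ shows by a one-step induction that the conditional law of $Y(t)$ given $X_{\root}=x$ equals $Q_x$, \emph{independently of $t$}. Since $Y(t)$ is a function of $\uX_{\cBall_{\root}(t)}$ and extra independent randomness, it is conditionally independent of $X_{\root}$ given $\uX_{\cBall_{\root}(t)}$, and the data-processing inequality gives
\[
\|\mu_{\root,\cBall_{\root}(t)}-\mu_{\root}\times\mu_{\cBall_{\root}(t)}\|_{\sTV}
\ \ge\ \|\mu_{\root,Y(t)}-\mu_{\root}\times\mu_{Y(t)}\|_{\sTV}
\ =\ \int\|\nu-\onu\|_{\sTV}\,Q(\de\nu),
\]
which is a strictly positive constant since $Q\neq\delta_{\onu}$. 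This is a direct, order-free argument: no comparison between fixed points and no monotonicity on the simplex is required. Your extremality approach might be pushed through, but it would need precisely the simplex stochastic-order machinery you flag as delicate; the paper's data-processing construction sidesteps it entirely.
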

\begin{proof} 
First notice that $\{Q_x, x \in \cX_q\}$ is a 
solution of (\ref{eq:fixpt}) at $\epsilon=0$ if and only if 
for any $x \in \cX_q$ and bounded Borel function $g$, 
\begin{eqnarray}\label{eq:conditionalRec}
\int g(\nu) q^{-1} Q_x(\de\nu) = c_{q,k} \int g(\F_0(\nu_1,\dots,\nu_k))
\, \prod_{i=1}^k [Q_* - q^{-1} Q_x] (\de\nu_i)\,,
\end{eqnarray}
where $Q_* = q^{-1} \sum_{x=1}^q Q_x$ and $c_{q,k}=q^{k-1}(q-1)^{-k}$.
If this solution is of the stated form, then $Q_*=Q$ and upon   
plugging $Q_x(\de\nu)=q \nu(x) Q(\de\nu)$ in the 
identity (\ref{eq:conditionalRec}) we see that for 
any bounded Borel function $h$,
\begin{eqnarray}
\int h(\nu) Q(\de \nu) = 
\int\; \left[\frac{z(\nu_1,\dots,\nu_k)}{z(\onu,\dots,\onu)}\right]\;
h( \F_0(\nu_1,\dots,\nu_k) )\,\prod_{i=1}^k Q(\de\nu_i)\,,
\label{eq:UnconditionalRec}
\end{eqnarray}
where $z(\nu_1,\dots,\nu_k) = \sum_{x=1}^q\prod_{i=1}^k(1-\nu_i(x))$
is the normalization constant of the mapping $\F_0(\cdot)$
(so $c_{q,k}=1/z(\onu,\dots,\onu)$). 
% Note that if $Q$ is degenerate, namely $Q=\delta_{\nu}$, then 
% necessarily $\F_0(\nu,\ldots,\nu)=\nu$ and consequently $\nu=\onu$.
Conversely, for any color-symmetric probability measure $Q$ on 
$\M(\cX_q)$ the value of $\int \nu(x) Q(\de\nu)$ is independent of
$x \in \cX_q$, hence $Q_x(\de\nu)=q \nu(x) Q(\de\nu)$ are then 
also probability measures on $\M(\cX_q)$ and such that $Q=Q_*$. 
Further, recall that for any $x \in \cX_q$ and $\nu_i \in \M(\cX_q)$,
$$
z(\nu_1,\dots,\nu_k)\F_0(\nu_1,\dots,\nu_k)(x) = \prod_{i=1}^k (1-\nu_i(x))\,,
$$
so if such $Q$ satisfies (\ref{eq:UnconditionalRec}), then 
considering there $h(\nu)=g(\nu)\nu(x)$ leads to 
$\{Q_x\}$ satisfying (\ref{eq:conditionalRec}). 

If a solution $Q$ of (\ref{eq:UnconditionalRec}) is degenerate, 
i.e. supported on one point $\nu$, then 
$\nu=\F_0(\nu,\dots,\nu)$, hence $\nu=\onu$. That is, any
non-trivial solution $Q \neq \delta_{\onu}$ is also non-degenerate.
We thus proceed to show that solvability 
of the reconstruction problem on $k$-ary trees is equivalent
to having color-symmetric solution $Q \neq \delta_{\onu}$ of 
(\ref{eq:UnconditionalRec}). To this end, consider a 
proper $q$-coloring $X=\{ X_v : v \in \Tree\}$ 
of the $k$-ary tree, sampled at
random according to the free boundary Gibbs measure $\mu$.
Let $\nu^{(t)}$ denote the marginal distribution of the root color given 
the colors at generation $t$. In formulae, this is the 
$\M(\cX_q)$-valued random variable
such that for $x\in\{1,\dots,q\}$,
\begin{eqnarray*}
\nu^{(t)}(x) = 
\mu_{\root|\cBall_\root(t)}(x|X_{\cBall_\root(t)})
= \prob\{X_\root=x|X_{\cBall_{\root}(t)}\}\, .
\end{eqnarray*}
Denote by $Q^{(t)}_x$ the conditional law of $\nu^{(t)}$ given the 
root value $X_\root=x$. 
The $k$-ary tree of $(t+1)$ generations is
the merging at the root of $k$ disjoint $k$-ary trees, 
each of which has $t$ generations. Thus, conditioning 
on the colors $x_1,\ldots,x_k$ of the root's offspring, 
one finds 
that the probability measures $\{Q^{(t)}_x\}$ satisfy for any 
$x$ and any bounded Borel function $h(\cdot)$ the recursion
$$
\int h(\nu) Q^{(t+1)}_x(\de\nu) = \frac{1}{(q-1)^k}\sum_{x_1,\dots,x_k\neq x}
\int h(\F_0 (\nu_1,\dots,\nu_k))\,
\prod_{i=1}^k Q_{x_i}^{(t)}(\de \nu_i)\, ,
$$
starting at $Q^{(0)}_x=\delta_{\nu_x}$,
where $\nu_x$ denotes the probability vector that puts
weight one on the color $x$. 

Let $Q^{(t)}$ denote the unconditional law of $\nu^{(t)}$. That is,
$Q^{(t)}=q^{-1} \sum_{x=1}^q Q_x^{(t)}$. By the tower 
property of the conditional expectation, for any 
$x \in \cX_q$ and bounded measurable function $h$ on $\M(\cX_q)$,
\begin{align*}
 \int h(\nu) Q^{(t)}_x (\de\nu) &= q \E [ h(\nu^{(t)}) \ind(X_\root=x) ]
\\ &= q \E[ h(\nu^{(t)}) \nu^{(t)} (x) ] 
= q \int \nu(x) h(\nu) Q^{(t)} (\de\nu) \,.
\end{align*}
Consequently, $Q^{(t)}_x$
has the 
%density (i.e. 
Radon-Nikodym derivative
%),  
$q \nu(x)$ with respect to $Q^{(t)}$. Plugging this into the recursion 
for $Q^{(t)}_x$ we find that $Q^{(t)}$ satisfies the 
recursion relation
\begin{equation}\label{eq:recur2}
\int h(\nu) Q^{(t+1)}(\de \nu) = \int
\; \left[\frac{z(\nu_1,\dots,\nu_k)}{z(\onu,\dots,\onu)}\right]\;
h( \F_0(\nu_1,\dots,\nu_k) )\,\prod_{i=1}^k Q^{(t)} (\de\nu_i)\,,
\end{equation}
starting at $Q^{(0)}=q^{-1} \sum_{x=1}^q \delta_{\nu_x}$.  

Note that for each $x \in \cX_q$, 
the sequence $\{\nu^{(t)} (x) \}$ is a reversed martingale 
with respect to the filtration 
${\mathcal F}_{-t} =\sigma(X_{\cBall_{\root}(t)})$, $t \ge 0$,
hence by L\'evy's downward theorem, it has an almost sure limit. 
Consequently, the probability measures $\{Q^{(t)}\}$ 
converge weakly to a limit $Q^{(\infty)}$. 

As $Q^{(0)}$ is color-symmetric and the recursion (\ref{eq:recur2})
transfers the color-symmetry of $Q^{(t)}$ to that of $Q^{(t+1)}$, we
deduce that $Q^{(\infty)}$ is also color-symmetric. 
Further, with the function $\F_0 :\M(\cX_q)^k \to \M(\cX_q)$ continuous 
at any point $(\nu_1,\dots,\nu_k)$ for which 
$z(\nu_1,\dots,\nu_k) >0$, it follows from the recursion 
(\ref{eq:recur2}) that $Q^{(\infty)}$ satisfies 
(\ref{eq:UnconditionalRec}) for any continuous $h$,
hence for any bounded Borel function $h$. By definition,
\begin{eqnarray*}
|| \mu_{\root,\cBall_\root(t)}- \mu_\root \times
\mu_{\cBall_\root(t)}||_{\sTV} = 
\int ||\nu-\onu||_{\sTV}\, Q^{(t)} (\de\nu)\,,
\end{eqnarray*}
and with $\cX_q$ finite, the function $\nu \mapsto ||\nu-\onu||_{\sTV}$ is 
continuous. Hence, the reconstruction problem is solvable if and 
only if $Q^{(\infty)} \neq \delta_{\onu}$. That is, as claimed,
solvability implies the existence of a non-trivial color-symmetric 
solution $Q^{(\infty)}$ of  (\ref{eq:UnconditionalRec}).

To prove the converse assume there exists a color-symmetric solution 
$Q \neq \delta_{\onu}$ of Eq.~(\ref{eq:UnconditionalRec}).  Recall 
that in this case $Q_x(\de\nu)=q \nu(x) Q(\de\nu)$ are probability measures
such that $Q=q^{-1} \sum_{x=1}^q Q_x$. Further, if a random variable $Y(t)$ is 
conditionally independent of $X_\root$ given $X_{\cBall_\root(t)}$ then  
\begin{align*}
|| \mu_{\root,Y(t)} - \mu_{\root} \times \mu_{Y(t)}||_{\sTV}
&\le 
|| \mu_{\root,Y(t),\cBall_{\root(t)}} - 
\mu_{\root} \times \mu_{Y(t),\cBall_{\root(t)}}||_{\sTV}
\\
&=
|| \mu_{\root,\cBall_{\root(t)}} - 
\mu_{\root} \times \mu_{\cBall_{\root(t)}}||_{\sTV} 
\end{align*}
(where $\mu_{\root,Y(t)}$ denotes the joint law of $X_\root$ and $Y(t)$).
Turning to construct such a random variable $Y(t) \in \M(\cX_q)$, 
let $\dBall_{\root}(t)$ denote the vertices of the tree at 
distance $t$ from $\root$ and set $\nu_i \in \M(\cX_q)$ 
for $i\in \dBall_{\root}(t)\}$ to be 
conditionally independent given $X_{\cBall_\root(t)}$, 
with $\nu_i$ distributed according to the random 
measure $Q_{X_i}(\,\cdot\,)$. Then, define recursively 
$\nu_v \equiv \F_0(\nu_{u_1},\ldots,\nu_{u_k})$
for $v \in \dBall_{\root}(s)$, $s=t-1,t-2,\ldots,0$,
where $u_1,\ldots,u_k$ denote the offspring of $v$ in $\Tree$. 
Finally, set $Y(t)=\nu_\root$. 

Under this construction, the law 
$P_{v,x}$ of $\nu_v$ conditional upon $X_v=x$
is $Q_{X_v}$, for any $v \in \dBall_{\root}(s)$, $s = t,\ldots, 0$.
Indeed, clearly this is the case for $s=t$ and proceeding recursively, 
assume it applies at levels $t,\ldots,s+1$. Then, as $\{Q_x, x \in \cX_q\}$ 
satisfy (\ref{eq:conditionalRec}), we see that 
for $v \in \dBall_{\root}(s)$ of offspring $u_1,\ldots,u_k$, 
any $x \in \cX_q$ and bounded Borel function $g(\cdot)$, 
\begin{align*} 
\int g(\nu) P_{v,x}(\de\nu) &=
\E [ \int g(\F_0 (\nu_1,\dots,\nu_k))\,
\prod_{i=1}^k Q_{X_{u_i}} (\de \nu_i) | X_v=x ] \\
&= q c_{q,k} \int g(\F_0 (\nu_1,\dots,\nu_k))\,
\prod_{i=1}^k [Q- q^{-1} Q_x] (\de \nu_i)  
\\
&= \int g(\nu) Q_x(\de\nu) \,.
\end{align*}
That is, $P_{v,x}=Q_x$, as claimed.
In particular, $\mu_{Y(t)|\root}=Q_{X_\root}$,
$\mu_{Y(t)}=Q$ and with $Q_x(\de \nu)=q\nu(x) Q(\de \nu)$,
it follows that 
\begin{eqnarray*}
|| \mu_{\root,Y(t)}- \mu_\root \times
\mu_{Y(t)}||_{\sTV} = 
\frac{1}{q}\sum_{x=1}^q ||Q_x-Q||_{\sTV} 
= \int ||\nu-\onu||_{\sTV}\, Q(\de\nu) \, ,
\end{eqnarray*}
which is independent of $t$ and strictly positive (since 
$Q \ne \delta_{\onu}$). By the preceding inequality, this 
is a sufficient condition for reconstructibility.
\end{proof}

\subsubsection{Complexity: exponential growth of the number of clusters}

We provide next a heuristic derivation of the 
predicted value of the complexity parameter $\Sigma=\Sigma(k)$ 
for proper $q$-colorings of
a uniformly chosen random regular graph $G=G_{n,k+1}$, 
as defined in Section \ref{sec:Broad}, regime II,
namely, when $k_d(q) < k < k_c(q)$.
This parameter is interpreted as  the exponential growth rate 
of the number of `typical' lumps or `clusters' to which the
uniform measure $\mu_G(\, \cdot\,)$ decomposes.
Remarkably, we obtain an expression for $\Sigma(k)$ 
in terms of the non-degenerate solution of (\ref{eq:fixpt})
at $\epsilon=0$.

Recall Definition \ref{def:BetheFree} that the Bethe free entropy 
for proper $q$-colorings of $G$ and a given (permissive)
message set $\{\nu_{i\to j}\}$ is 
\begin{align}
\Phi\{\nu_{i\to j}\} = &-\sum_{(i,j)\in E} \log\big\{ 1 -
\sum_{x=1}^q \nu_{i\to j}(x)\nu_{j\to i}(x)\big\}
\nonumber\\
&+ \sum_{i\in V}\log\Big\{
\sum_{x=1}^q \prod_{j\in\di}
\big( 1 - \nu_{j\to i}(x) \big) \Big\} \, .
\end{align}
According to the Bethe-Peierls approximation, the logarithm of the
number $Z_n$ of proper $q$-colorings for $G=G_{n,k+1}$ is  
approximated for large $n$ by the value of $\Phi\{\nu_{i\to j}\}$
for a message set $\{\nu_{i\to j}\}$ which solves 
%(within some accuracy)
the Bethe-Peierls equations (\ref{eq:Bethe-color}) at $\epsilon=0$. 
One trivial solution of these equations is $\nu_{i\to j}=\onu$
(the uniform distribution over $\{1,\dots,q\}$), and 
%This solution corresponds a system that is 
%completely uncorrelated with the reference configuration and
for $G=G_{n,k+1}$ the corresponding Bethe free entropy is 
\begin{align}
\Phi(\onu) & =  n\Big\{-\frac{k+1}{2}\log\big\{
1 - \sum_{x=1}^q \onu(x)^2 \big\}+
\log\big\{\sum_{x=1}^q (1- \onu(x))^{k+1} \big\}\Big\} 
\nonumber \\
& =  n [ \log q + \frac{k+1}{2} \log (1-1/q) ] \, .
\end{align}

As explained before, when $k_{\rm d}(q) < k < k_{\rm s}(q)$, upon 
fixing $n$ large enough, a regular graph $G_n$ of degree $k+1$ and 
a reference proper $q$-coloring $\ux^*$ of its vertices, 
we expect Eq.~(\ref{eq:Bethe-color}) 
to admit a second solution $\{\nu^*_{i \to j}\}$ for all 
$\epsilon>0$ small enough. In the limit $\epsilon \downarrow 0$,
this solution is conjectured to describe the uniform measure 
over proper $q$-colorings in the cluster $\Omega_{\ell,n}$ 
containing $\ux^*$. In other words, the restricted measure
\begin{eqnarray}
\mu_{\ell,n}(\ux) = \mu_{G_n} (\ux|\Omega_{\ell,n}) = 
\frac{1}{Z_{\ell,n}}\, \prod_{(i,j)\in E} \ind(x_i\neq x_j)\, 
\ind(\ux\in\Omega_{\ell,n})\, ,
\end{eqnarray}
is conjectured to be Bethe approximated by such message set 
$\{\nu^*_{i \to j}\}$. One naturally expects the corresponding
free entropy approximation to hold as well. That is, to have
$$ 
\log Z_{\ell,n} = \Phi\{\nu^*_{i \to j}\} + o(n) \,.
$$
As discussed in Section \ref{sec:Broad}, in regime II,
namely, for $k_{\rm d}(q) < k < k_{\rm c}(q)$,
it is conjectured that for uniformly chosen 
proper $q$-coloring $\uX^*$, the value of 
$n^{-1} \log Z_{\ell,n}$ 
(for the cluster $\Omega_{\ell,n}$ 
containing $\uX^*$), concentrates in probability 
as $n \to \infty$, around a non-random value.
Recall that $\log Z_n = \Phi(\onu) + o(n)$, so with 
most of the $Z_n$ proper $q$-colorings of $G_n$ 
comprised within the $e^{n\Sigma+o(n)}$ 'typical' 
clusters $\Omega_{\ell,n}$, $\ell \in \Typ_n$, each having  
$Z_{\ell,n}$ proper $q$-colorings, we conclude that  
\begin{align}
\Phi(\onu) = \log Z_n +o(n) &= 
\log\Big\{\sum_{\ell=1}^{|\Typ_n|} Z_{\ell,n}
\Big\}+o(n)\nonumber\\
&= n \Sigma +\E [\Phi\{\nu^*_{i\to j}\}]+o(n)\, ,
\end{align}
where the latter expectation is with respect to both the random
graph $G_n$ and the reference configuration $\uX^*$ 
(which together determine the message set $\{\nu^*_{i\to j}\}$).

This argument provides a way to compute
the exponential growth rate $\Sigma(k)$ of the number of clusters, as
\begin{eqnarray*}
\Sigma(k) = \lim_{n\to\infty}
n^{-1}\Big\{ \Phi(\overline{\nu})-\E [\Phi\{\nu^*_{i\to j}\}] 
\Big\}\, .
\end{eqnarray*}
For a uniformly chosen random proper $q$-coloring $\uX^*$, the   
distribution of $\{\nu^*_{i \to j}\}$ can be expressed in 
the $n \to \infty$ limit in terms of the corresponding 
solution $\{Q_x\}$ of the fixed 
point equation (\ref{eq:fixpt}) at $\epsilon=0$. 
Specifically, following the Bethe ansatz, we
expect that for uniformly chosen $i \in [n]$, 
the law of $\{\nu^*_{j\to i}, j \in \di\}$ 
conditional on $\{ X^*_i, X^*_j, j \in \di\}$ 
converges as $n \to \infty$ to 
the product measure $\prod_{j=1}^{k+1} Q_{X^*_j}$ and
the law of $\{\nu^*_{i\to j}, \nu^*_{j \to i}\}$ 
conditional on $X^*_i$ and $X^*_j$ converges 
to the product measure $Q_{X^*_i} \times Q_{X^*_j}$. 
By the invariance of the uniform measure over 
proper $q$-colorings to permutations of the colors,
for any edge $(i,j)$ of $G_{n,k+1}$, the
pair $(X^*_i,X^*_j)$ is uniformly distributed over
the $q(q-1)$ choices of $x_i \ne x_j$ in $\cX_q^2$. Moreover, 
for large $n$ the Bethe approximation predicts that 
$\{ X^*_i, X^*_j, j \in \di\}$ 
is nearly uniformly distributed over the $q(q-1)^{k+1}$ 
choices of $x_j \in \cX_q$, all of which 
are different from $x_i \in \cX_q$. We thus conclude that 
\begin{align}
\Sigma(k) &= -\frac{k+1}{2}\frac{1}{q(q-1)}\sum_{x_1\neq x_2}
\int W_{\rm e}(\nu_1,\nu_2)\, Q_{x_1}(\de\nu_1)\, Q_{x_2}(\de\nu_2)
\nonumber\\
&+\frac{1}{q(q-1)^{k+1}} \sum_{x=1}^q \sum_{x_j \neq x}
\int W_{\rm v}(\nu_1,\dots,\nu_{k+1})\, \prod_{j=1}^{k+1}
Q_{x_j}(\de\nu_j)\, ,
\end{align}
where
\begin{align}
W_{\rm e}(\nu_1,\nu_2) &=  
%\log\left\{ \frac{\sum_{x_1\neq x_2}
%\nu_1(x_1)\nu_2(x_2)}{\sum_{x_1\neq x_2}\onu(x_1)\onu(x_2)}
%\right\} \nonumber\\ &=
\log \Big\{ \frac{1-\sum_{x=1}^q \nu_1(x) \nu_2(x)}{1-1/q}\Big\} \, ,\\
W_{\rm v}(\nu_1,\dots,\nu_{k+1}) &= 
%\log\left\{\frac{\sum_{x}\prod_{j=1}^{k+1}\sum_{x_j\neq x}
%\nu_j(x_j)}{\sum_{x}\prod_{j=1}^{k+1}\sum_{x_j\neq x}
%\onu(x_j)}\right\} \nonumber \\&= 
\log \Big\{ \frac{1}{q} \sum_{x=1}^q \prod_{j=1}^{k+1} 
\frac{1-\nu_j(x)}{1-1/q} \Big\} \, .
\end{align}
%
%************************************************************
%
\section{Reconstruction and extremality}
\label{ch:Reco}  
\setcounter{equation}{0}
%\label{sec:RecoIntro}

As shown in Section \ref{sec:BetheFormal}, the Bethe-Peierls approximation
applies for permissive graph-specification pairs $(G,\upsi)$ such that:
\begin{enumerate}
\item[(a).] The graph $G=(V,E)$ has large girth (and it often suffices
for $G$ to merely have a 
large girth in the neighborhood of most vertices).
\item[(b).] The dependence between the random vectors $\ux_A$ and
$\ux_B$ is weak for subsets $A$ and $B$ which are far apart on $G$ 
(indeed, we argued there that `extremality' is the appropriate 
notion for this property).
\end{enumerate}
While these conditions suffice for Bethe-Peierls
approximation to hold on general graphs with bounded degree,
one wishes to verify them for specific models on sparse 
random graphs. For condition (a) this can be done by
standard random graph techniques (c.f. Section \ref{sec:loc-conv}),
but checking condition (b) is quite an intricate task. Thus,
largely based on \cite{GerschenMon1}, we explore here the 
extremality condition in the context of random sparse graphs. 

Beyond the relevance of extremality for the 
Bethe-Peierls approximation, it is interesting \emph{per se} 
and can be rephrased in terms of the \emph{reconstruction problem}. 
In Section \ref{sec:Clust=Rec} 
we considered the latter in case of proper 
$q$-colorings, where it amounts to estimating 
the color of a distinguished (root) vertex $\root\in V$ 
for a uniformly chosen proper coloring $\uX$ of the given 
graph $G=(V,E)$, when the colors $\{X_j, j\in U\}$ on a
subset $U$ of vertices are revealed. In particular, 
we want to understand whether revealing the colors 
at large distance $t$ from the root, induces a 
non-negligible bias on the distribution of $X_\root$.

It turns out that, for a random Erd\"os-Renyi graph 
chosen uniformly from the ensemble $\graph(\alpha,n)$, 
there exists a critical value $\alpha_{\rm r}(q)$, 
such that reconstruction is possible (in the sense of 
Definition \ref{def:Rec}), when
the number of edges per vertex $\alpha>\alpha_{\rm r}(q)$,
and impossible when $\alpha<\alpha_{\rm r}(q)$. Recall from 
Section \ref{sec:Clust}, that the reconstruction threshold 
$\alpha_{\rm r}(q)$ is conjectured to coincide with the 
so-called `clustering' threshold $\alpha_{\rm d}(q)$. 
That is, the uniform measure over proper $q$-colorings 
of these random graphs should exhibit co-existence 
if and only if $\alpha_{\rm r}(q) = \alpha_{\rm d}(q)\le 
\alpha< \alpha_{\rm c}(q)$. As we will show, this relation 
provides a precise determination of the clustering threshold.

More generally, consider a graph-specification pair 
$(G,\upsi)$, with a distinguished marked vertex $\root\in V$ 
(which we call hereafter the `root' of $G$), and a
sample $\uX$ from the associated  
graphical model $\mu_{G,\upsi}(\ux)$ of (\ref{eq:Canonical}).
The reconstructibility question asks whether `far away' 
variables $\uX_{\cBall_{\root}(t)}$
provide non-negligible information about $X_\root$
(here  
$\cBall_{\root}(t)$ denotes the subset of vertices $i \in V$
at distance $d(\root,i)\ge t$ from the root).
This is quantified by the following definition, where
as usual, for $U \subseteq V$ we  
denote the corresponding marginal distribution
of $\uX_U=\{X_j:\, j\in U\}$ by $\mu_U(\ux_U)$.
\begin{definition}\label{defn:recon}
The reconstruction problem is $(t,\ve)$-\emph{solvable} 
(also called, $(t,\ve)$-\emph{reconstructible}), 
for the graphical model associated with $(G,\upsi)$ 
and rooted at $\root\in V$, if  
\begin{align}
\Vert \mu_{\root,\cBall_{\root}(t)}-
\mu_{\root}\times \mu_{\cBall_{\root}(t)}\Vert_{\sTV} 
\ge\ve\, .
\label{eq:GraphReconstruction}
\end{align}
We say that the reconstruction problem is \emph{solvable} 
(\emph{reconstructible}), 
for a given sequence $\{G_n\}$ of random graphs (and specified 
joint distributions of the graph 
$G_n$, the specification $\upsi$ on it,
and the choice of $\root \in V_n$), 
if for some $\ve>0$ and all $t \ge 0$, the events $A_n(t)$ that the 
reconstruction problem is $(t,\ve)$-solvable on $G_n$, 
occur with positive probability. That is, when
$\inf_t \limsup_{n \to \infty} \prob\{A_n(t)\} > 0$.
\end{definition}
\begin{remark} The inequality (\ref{eq:GraphReconstruction}) 
fails when the connected component of $\root$ in $G$,  
has diameter less than $t$. Hence, for sparse random graphs $G_n$, 
the sequence $n \mapsto \prob\{A_n(t)\}$ is often bounded 
away from one (on account of $\root$ possibly being in a small 
connected component). 
\end{remark}

The rationale for this definition is that 
the total variation distance on the left hand side 
of Eq.~(\ref{eq:GraphReconstruction}) 
measures the information 
about $X_{\root}$
that the variables in $\cBall_{\root}(t)$ provide.
For instance, it is proportional to the difference 
between the probability
of correctly guessing $X_{\root}$ when 
knowing $X_{\cBall_{\root}(t)}$, 
and the a-priori probability of doing so without 
knowing $X_{\cBall_{\root}(t)}$.

Note that non-reconstructibility is slightly 
weaker than the extremality condition
of Section \ref{sec:BetheFormal}. Indeed, we 
require here a decay of the correlations 
between a vertex $\root$ and an arbitrary
subset of vertices at distance $t$ from it, 
whereas in Definition \ref{def:Extremal}, 
we require such decay for \emph{arbitrary} 
subsets of vertices $A$ and $B$ of distance $t$
apart. However, it is not hard to check that 
when proving Theorem \ref{thm:Bethe}
we only consider the extremality condition 
in cases where the size of the subset $B$ 
does not grow with $R$ (or with the size of $G$)
and for graph sequences that converge 
locally to trees, this is in turn 
implied by a non-reconstructibility type
condition, where $B=\{\root\}$ is a single vertex.

Recall Section \ref{sec:loc-conv} that for a uniformly chosen root 
$\root$ and locally tree-like sparse random graphs 
$G_n$, for any $t \ge 0$ fixed, the finite neighborhood 
$\Ball_\root(t)$ converges in distribution to a (typically
random) tree. We expect that with high probability the vertices 
on the corresponding boundary set
$\dBall_{\root}(t)=\{ i \in \Ball_\root(t) : 
\di \not\subseteq \Ball_\root(t)\}$, are `far apart' 
from each other in the complementary subgraph 
$\cBall_{\root}(t)$. This suggests that for the graphical model on 
$\cBall_\root(t)$, the variables $\{X_j, j \in \dBall_\root(t)\}$ 
are then weakly dependent, and so approximating 
$G_n$ by its limiting tree structure 
might be a good way to resolve the reconstruction problem.
In other words, one should expect reconstructibility on $G_n$ 
to be determined by reconstructibility on the associated 
limiting random tree. 

Beware that the preceding argument is circular, 
for we assumed that variables on `far apart' vertices 
(with respect to the residual graph $\cBall_{\root}(t)$), 
are weakly dependent, in order to deduce the same 
for variables on vertices that are `far apart' in $G_n$.
Indeed, its conclusion fails for many
graphical models. For example, \cite{GerschenMon1} shows
that the tree and graph reconstruction thresholds 
do not coincide in the simplest example one can think of, 
namely, ferromagnetic Ising models.

On the positive side, we show in the sequel that 
the tree and graph reconstruction problems 
are equivalent under the sphericity condition
of Definition \ref{def:Spherical}
(we phrased this definition in terms proper colorings,
but it applies verbatim to general graphical models).
More precisely, if for any $\ve,\delta>0$,
the canonical measure $\mu(\,\cdot\,)$ is $(\ve,\delta)$-spherical 
with high probability (with respect to the graph distribution), 
then the graph and tree reconstructions do coincide. 
It can indeed be shown that, under the sphericity condition, 
sampling $\uX$ according to the graphical model 
on the residual graph $\cBall_{\root}(t)$,
results with $\{X_j, j \in \dBall_\root(t)\}$ 
which are approximately independent. 

This sufficient condition was applied in \cite{GerschenMon1}
to the Ising spin glass (where sphericity
can be shown to hold as a consequence of a recent 
result by Guerra and Toninelli \cite{Guerra}).
More recently, \cite{MonResTet} deals with 
proper colorings of random graphs
(building on the work of Achlioptas and Naor, in \cite{AchlioptasNaor}). 
For a family of graphical models parametrized by their
average degree, it is natural to expect reconstructibility to 
hold at large average degrees (as the graph is `more connected'),
but not at small average degrees (since the graph `falls' apart into 
disconnected components). We are indeed able to 
establish a threshold behavior (i.e. a critical degree value above 
which reconstruction is solvable) both for spin glasses 
and for proper colorings.
%
%*******************************************************
%
\subsection{Applications and related work}

Beyond its relation with the Bethe-Peierls 
approximation, the reconstruction problem 
is connected to a number of other 
interesting problems, two of which we 
briefly survey next. 

\vspace{0.05cm}

{\bf Markov Chain Monte Carlo} (MCMC) algorithms provide a well established
way of approximating marginals of the distribution 
$\mu=\mu_{G,\upsi}$ of (\ref{eq:GeneralGraphModel}). The 
idea is to define an (irreducible and
aperiodic) Markov chain whose unique stationary distribution is
$\mu(\cdot)$, so if this chain 
converges rapidly to its stationary state (i.e., 
its mixing time is small), then it can be effectively 
used to generate a sample $\uX$ from $\mu(\cdot)$.

In many interesting cases, the chain is reversible and consists of
local updates (i.e. consecutive states differ in only few  
variables, with transition probabilities determined by 
the restriction of the state to a neighborhood in $G$ of 
the latter set). Under these conditions, the mixing time is 
known to be related to the correlation decay properties
of the stationary distribution $\mu(\,\cdot\,)$ 
(see, \cite{StaticDynamics2,StaticDynamics1}). 
With 
\begin{eqnarray}
\Delta(t;\ux) \equiv 
\Vert \mu_{\root|\cBall_{\root}(t)}(\,\cdot\, |\ux_{\cBall_{\root}(t)})
- \mu_{\root}(\,\cdot\,) \Vert_{\sTV}\, ,
\label{eq:UniformDecay}
\end{eqnarray}
one usually requires in this context
that the dependence between 
$x_{\root}$ and $\ux_{\cBall_{\root}(t)}$ 
decays uniformly, i.e. 
$\sup_{\ux}\Delta(t;\ux) \to 0$ as $t\to\infty$. 
On graphs with sub-exponential growth, a fast enough (uniform) 
decay is necessary and sufficient for fast mixing. However, 
for more general graphs, this \emph{uniform} decay is 
often a too strong requirement, which one might opt 
to replace by the weaker assumption of 
non-reconstructibility (indeed, the 
inequality (\ref{eq:GraphReconstruction}) 
can be re-written as $\E [\Delta(t;\uX)] \ge \ve$, where  
the expectation is with respect to the random sample $\uX$).

In this direction, it was shown in \cite{BergerEtAl} 
that non-reconstructibility is a necessary condition 
for fast mixing. Though the converse may in general fail,
non-reconstructibility is sufficient 
for rapid decay of the variance of local functions (which 
in physics is often regarded as the criterion for fast dynamics, 
see \cite{MonSem}). Further, for certain graphical 
models on trees, \cite{BergerEtAl} shows that non-reconstructibility  
is equivalent to polynomial spectral gap, a result
that is sharpened in \cite{Martinelli} to the equivalence 
between non-reconstructibility and fast mixing 
(for these models on trees). 

\vspace{0.05cm}

{\bf Random constraint satisfaction problems.} 
Given an instance of a constraint satisfaction problem (CSP),
consider the uniform distribution over its solutions. 
As we have seen in Section \ref{sec:random-CSP}, it
takes the form (\ref{eq:FCanonical}), which is an 
immediate generalization of (\ref{eq:GeneralGraphModel}).

Computing the marginal $\mu_{\root}(x_{\root})$ is useful both for
finding a solution and the number of solutions of such a CSP.
Suppose we can generate \emph{only one} uniformly random solution 
$\uX$. In general this is not enough for approximating the
law of $X_\root$ in a meaningful way, but one can try the following: 
First, fix all variables 
`far from $\root$' to take the same value as in the sampled configuration,
namely $X_{\cBall_{\root}(t)}$. Then, compute the conditional distribution 
at $\root$ (which for locally tree-like graphs can be  
done efficiently via dynamic programming). While the resulting 
distribution is in general not a good approximation of 
$\mu_\root(\cdot)$, non-reconstructibility implies that it is, 
with high probability within total variation distance $\ve$ 
of $\mu_\root(\cdot)$. That is, non-reconstructibility yields a
good approximation of $\mu_{\root}(x_{\root})$ based on 
a single sample (namely, a single uniformly random solution $\uX$).
The situation is even simpler under the assumptions of 
our main theorem (Theorem \ref{thm:graph<->tree}), where 
the boundary condition $\uX_{\cBall_{\root}(t)}$ may
be replaced by an i.i.d. uniform boundary condition.

We have explained in Section \ref{chap:Coloring} why 
for a typical sparse random graph of large average degree
one should expect the set of proper colorings 
to form well-separated `clusters'. 
The same rationale should apply, at high constraint density, 
for the solutions of a typical instance of a CSP based on 
large, sparse random graphs 
(c.f. \cite{AchlioRicci,Mora,MarcGiorgioRiccardo}). This in
turn increases the computational 
complexity of sampling even one uniformly random solution.

Suppose the set of solutions partitions into clusters 
and any two solutions that differ on at most $n\ve$ 
vertices, are in the same cluster. Then, knowing the value of all 
`far away' variables $\uX_{\cBall_{\root}(t)}$ 
determines the cluster to which the sample $\uX$ belongs,
which in turn provides some information on $X_{\root}$. 
The preceding heuristic argument connects reconstructibility 
to the appearance of well-separated solution clusters, a 
connection that has been studied for example in 
\cite{OurPNAS,MezardMontanari}.

\vspace{0.05cm}

Reconstruction problems also emerge in a variety of other contexts:
$(i)$ Phylogeny (where given some evolved genomes,
one aims at reconstructing the genome of their common ancestor,
c.f. \cite{Phylogeny}); 
$(ii)$ Network tomography 
(where given end-to-end delays in a computer network,
one aims to infer the link delays in its interior,
c.f. \cite{Tomo}); 
$(iii)$ Gibbs measures theory (c.f. \cite{BleherRuizZagrebnov, Georgii}).

%
%****************************************************
%
\noindent{\bf Reconstruction on trees: A brief survey.}

The reconstruction problem is relatively well understood
in case the graph is a tree (see \cite{TreeRec}).
The fundamental reason for this is that then the canonical
measure $\mu(\ux)$ admits a simple description. More precisely, 
to sample $\uX$ from $\mu(\cdot)$, first sample the value
of $X_{\root}$ according to the marginal law $\mu_{\root}(x_{\root})$,
then recursively for each node $j$, sample its children $\{X_\ell\}$
\emph{independently} conditional on their parent value. 

Because of this Markov structure, one can derive a recursive
distributional equation for the conditional marginal at the root
$\nu^{(t)} (\,\cdot\,)\equiv
\mu_{\root|\cBall_{\root}(t)}(\,\cdot\,|\uX_{\cBall_{\root}(t)})$
given the variable values at generation $t$ (just as we have
done in the course of proving Proposition \ref{prob:k-tree-reco}).
Note that $\nu^{(t)}(\,\cdot\,)$ is
a random quantity even for a deterministic graph $G_n$
(because $\uX_{\cBall_{\root}(t)}$ 
is itself drawn randomly from the distribution 
$\mu(\,\cdot\,)$). Further, it contains all the 
information in the boundary about $X_{\root}$
(i.e. it is a `sufficient statistic'), so  
%The asymptotic behavior of $\nu^{(t)}(\,\cdot\,)$
%as $t\to\infty$ then determines the solvability 
%of the reconstruction problem.
the standard approach to tree reconstruction is to study 
the asymptotic behavior of the distributional recursion 
for $\nu^{(t)}(\,\cdot\,)$.

%Among the other results, 
Indeed, following this approach,
reconstructibility has been thoroughly characterized
for zero magnetic field Ising models on generic trees 
(c.f. \cite{BleherRuizZagrebnov,Asymmetric,EvaKenPerSch}).
More precisely, for such model on 
an infinite tree $\Tree$ of branching number br$(\Tree)$,
the reconstruction problem is solvable 
if and only if br$(\Tree)(\tanh\beta)^2>1$.
For the cases we treat in the sequel, 
br$(\Tree)$ coincides with the mean 
offspring
number of any vertex, hence this result 
establishes a sharp reconstruction threshold 
in terms of the average degree (or in terms of 
the inverse temperature parameter $\beta$), 
that we shall generalize here to random graphs. 
%However, the behavior in the case of a random graph
%is richer than in the tree case.

Reconstruction on general graphs poses new challenges,
since it lacks such recursive description of
sampling from the measure $\mu(\,\cdot\,)$.
The result of \cite{BergerEtAl} allows for deducing 
non-reconstructibility from fast mixing of certain 
reversible Markov chains with local updates. However,
proving such fast mixing is far from being an easy task,
and in general the converse does not hold 
(i.e. one can have slow mixing and non-reconstructibility).

%by the recent paper by Mossel, Weitz and Wormald 
A threshold $\lambda_{\rm r}$ for fast mixing 
has been established in \cite{MWW} 
for the independent set model of (\ref{eq:IndependentSet}), 
in case $G_n$ are random bipartite graphs.
Arguing as in \cite{GerschenMon1}, it can be shown 
that this is also the graph reconstruction threshold. 
An analogous result was proved in \cite{GerschenMon1}
for the ferromagnetic Ising model and random regular graphs
(and it extends also to Poisson random graphs, see
\cite{DemboMonUnpublished}).
In all of these cases, the graph 
reconstruction threshold does not coincide with the 
tree reconstruction threshold, but coincides
instead with the tree `uniqueness threshold' 
(i.e. the critical parameter 
such that the uniform decorrelation condition 
$\sup_{\ux}\Delta(t;\ux)\to 0$ holds).
%
%****************************************************
%
\subsection{Reconstruction on graphs: sphericity and tree-solvability}

For the sake of clarity, we focus hereafter
on \emph{Poisson} graphical models.  
Specifying such an ensemble requires
an alphabet $\cX$, a \emph{density parameter} $\gamma\ge 0$, 
a finite collection of non-negative, symmetric functionals 
$\psi_a(\,\cdot\, ,\,\cdot\,)$ on $\cX \times \cX$, indexed
by $a\in\cC$, and a probability distribution $\{p(a):\, a\in \cC\}$ on $\cC$.
In the random multi-graph $G_n$ the multiplicities of edges 
between pairs of vertices $i \ne j \in [n]$ are 
independent Poisson$(2\gamma/n)$ 
random variables, and $G_n$ has additional independent Poisson$(\gamma/n)$ 
self-loops at each vertex $i \in [n]$. For each occurrence of 
an edge $e=\{e_1,e_2\}$ in $G_n$ (including its self-loops), 
we draw an independent random variable $A_e \in \cC$ 
according to the distribution $\{p(\,\cdot\,)\}$ and 
consider the graphical model of specification 
$\upsi \equiv \{\psi_{A_e}(x_{e_1},x_{e_2}): e\in 
G_n\}$. Finally, the root $\root$ is uniformly chosen in $[n]$,
independently of the graph-specification pair $(G_n,\upsi)$.

For example, the uniform measure over proper $q$-colorings 
fits this framework (simply take 
$\cX = \cX_q$ and $|\cC|=1$ with $\psi(x,y) = \ind(x \ne y))$.

It is easy to couple the multi-graph $G_n$ of the Poisson model 
and the Erd\"os-Renyi random graph from the ensemble 
$\graph(\gamma,n)$ such that the two graphs differ 
in at most 
$\Delta_n = \sum_{1 \le i \le j \le n} Y_{\{i,j\}}$ edges, 
where the independent variables $Y_{\{i,j\}}$ have the
Poisson($\gamma/n$) distribution when $i=j$ and that of
$($Poisson($2\gamma/n$)-1$)_+$ when $i \ne j$.
It is not hard to check that $\Delta_n/(\log n)$ is almost surely
uniformly bounded, and hence by 
Proposition \ref{prop:local-trees2},
almost surely the Poisson multi-graphs $\{G_n\}$ are
uniformly sparse and converge locally to the rooted at 
$\root$,
Galton-Watson tree $\Tree$ of Poisson$(2\gamma)$ 
offspring distribution. Let $\Tree(\ell)$, $\ell\ge 0$ denote
the graph-specification pair on the  
first $\ell$ generations of $\Tree$, where each edge carries the 
specification $\psi_a(\,\cdot\, ,\,\cdot\,)$ with probability $p(a)$,
independently of all other edges and of the realization of $\Tree$.
%Unlike in Section \ref{ch:IsingModel} we do not keep track explicitly
%of the degree distribution, as it will always be Poisson.

It is then natural to ask whether reconstructibility 
of the original graphical models is related to reconstructibility
of the graphical models $\mu^{\Tree(\ell)}(\ux)$ 
per Eq.~(\ref{eq:GeneralGraphModel}) for 
$G=\Tree(\ell)$ and the same specification $\upsi$. 
\begin{definition}
Consider a sequence of random graphical models $\{G_n\}$ converging 
locally to the random rooted tree $\Tree$.
We say that the reconstruction problem is \emph{tree-solvable} for 
the sequence $\{G_n\}$ if 
it is solvable for $\{\Tree(\ell)\}$. That is, 
there exists $\ve>0$ such that, as $\ell\to\infty$,
for any $t\ge 0$,
\begin{eqnarray}
\Vert \mu_{\root,\cBall_{\root}(t)}^{\Tree(\ell)}-
 \mu^{\Tree(\ell)}_{\root}\times
\mu^{\Tree(\ell)}_{\cBall_{\root}(t)}\Vert_{\sTV} 
\ge
\ve\, ,
\label{eq:TreeReconstruction}
\end{eqnarray}
with positive probability. 
\end{definition}
This definition could have been expressed directly in terms of
the free boundary Gibbs measure $\mu^{\Tree}$ on the 
infinite rooted tree $\Tree$. Indeed, the reconstruction 
problem is tree-solvable if and only if with positive probability
$$
\liminf_{t\to\infty} \Vert \mu_{\root,\cBall_{\root} (t)}^{\Tree}- 
\mu^{\Tree}_{\root}\times
\mu^{\Tree}_{\cBall_{\root}(t)}\Vert_{\sTV} > 0 \;.
$$ 
While Eqs.~(\ref{eq:TreeReconstruction}) and 
(\ref{eq:GraphReconstruction}) are similar, as
explained before, passing from the original graph
to the tree is a significant simplification (due to the recursive 
description of sampling from $\mu^{\Tree(\ell)}(\,\cdot\,)$).

%
%****************************************************
%
We proceed with a sufficient condition 
for graph-reconstruction to be equivalent to tree reconstruction.
To this end, we introduce the concept of `two-replicas type' as follows.
Consider a graphical model 
$G$ and two i.i.d. samples $\uX^{(1)}$, $\uX^{(2)}$ 
from the corresponding canonical measure 
$\mu(\,\cdot\,)=\mu_{(G,\upsi)}(\cdot)$
(we will call them \emph{replicas} following the spin glass
terminology). The \emph{two replica type} is a matrix
$\{\nu(x,y):\, x,y\in\cX\}$ where $\nu(x,y)$ counts the fraction 
of vertices $j$ such that $X^{(1)}_j=x$ and $X^{(2)}_j=y$.
We denote by $\Rep$ the set of distributions $\nu$ 
on $\cX\times\cX$ and by $\Rep_n$ the subset of 
\emph{valid two-replicas types}, that is, distributions
$\nu$ with $n\nu(x,y)\in \naturals$ for all $x,y \in \cX$.

The matrix 
$\nu=\nu_n$ 
is a random variable, because the 
graph $G_n$ is random, and the two replicas $\uX^{(1)}$, $\uX^{(2)}$ 
are i.i.d. conditional on $G_n$. If $\mu(\,\cdot\,)$ 
was 
the uniform distribution, then 
$\nu_n$
would concentrate 
(for large $n$),
around $\onu(x,y)\equiv 1/|\cX|^2$. 
Our sufficient condition requires this to be approximately true.
\begin{thm}\label{thm:graph<->tree}
Consider a sequence of random Poisson graphical models $\{G_n\}$.
Let $\nu_n
(\,\cdot\, ,\,\cdot\,)$ be the type of  two i.i.d.
replicas $\uX^{(1)}$, $\uX^{(2)}$, and 
$\Delta\nu_n (x,y)\equiv\nu_n(x,y)-\onu(x,y)$. 
Assume that, for any $x\in\cX$,
\begin{eqnarray}
\lim_{n \to \infty} \E\Big\{\big[\Delta\nu_n(x,x)-2|\cX|^{-1}\sum_{x'}
\Delta\nu_n(x,x')\big]^2\Big\} 
= 0\, . 
\label{eqn:graph<->tree}
\end{eqnarray}
Then,
the reconstruction problem for $\{G_n\}$ is solvable if and only if
it is tree-solvable.
\end{thm}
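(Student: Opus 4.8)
The plan is to establish the two implications separately, with the $(\Leftarrow)$ direction being the essentially trivial one and the $(\Rightarrow)$ direction being where the sphericity hypothesis (\ref{eqn:graph<->tree}) does all the work.

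For the direction ``tree-solvable $\Rightarrow$ graph-solvable'', I would argue that the local convergence of $\{G_n\}$ to $\Tree$ (which holds almost surely by the coupling with Erd\H{o}s--Renyi graphs and Proposition \ref{prop:local-trees2}) already forces graph-reconstructibility. The point is that conditioning the graphical model $\mu_{G_n}$ on $\uX_{\cBall_\root(t)}$ and looking at the marginal of $X_\root$ only depends, up to a total-variation error that vanishes as $n\to\infty$, on the finite neighborhood structure $\Ball_\root(t+s)$ for large $s$, plus the boundary condition transmitted through $\dBall_\root(t)$. Since $\Ball_\root(\cdot)$ converges in distribution to $\Tree(\cdot)$, a reconstruction lower bound of the form (\ref{eq:TreeReconstruction}) on $\Tree(\ell)$ for all large $\ell$ translates into a lower bound of the form (\ref{eq:GraphReconstruction}) on $G_n$ for all large $n$, with the same $\ve$ (up to an arbitrarily small loss). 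Concretely I would take $\ell$ large so that (\ref{eq:TreeReconstruction}) holds with slack $\ve$, then use the fact that $\mu^{\Tree(\ell)}_{\root,\cBall_\root(t)}$ is a continuous function of the finite tree $\Tree(\ell)$ together with the local convergence to conclude $\prob\{A_n(t)\}$ stays bounded away from zero. Here one uses the monotonicity under adding boundary layers: reconstructibility on $\Tree(\ell)$ for all large $\ell$ is what one actually needs, and it passes to $G_n$ because the depth-$\ell$ ball of $G_n$ agrees with $\Tree(\ell)$ with probability tending to $\prob\{\Tree(\ell)\simeq T\}$.

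For the hard direction ``graph-solvable $\Rightarrow$ tree-solvable'', the strategy is the contrapositive combined with a second-moment / replica argument. Assume the tree reconstruction problem is \emph{not} solvable, i.e. $\E\,\Vert\mu^{\Tree(\ell)}_{\root,\cBall_\root(t)}-\mu^{\Tree(\ell)}_\root\times\mu^{\Tree(\ell)}_{\cBall_\root(t)}\Vert_{\sTV}\to 0$ as $\ell\to\infty$ uniformly in $t$ (by the recursive / reversed-martingale structure on the tree, as in the proof of Proposition \ref{prob:k-tree-reco}, non-solvability of tree reconstruction is equivalent to the limiting conditional root marginal $\nu^{(\infty)}$ being concentrated at the uniform vector). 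I then want to show graph reconstruction fails too. The bridge is the two-replica overlap: sample $\uX^{(1)},\uX^{(2)}$ i.i.d.\ from $\mu_{G_n}$, and observe that the amount of information $\uX_{\cBall_\root(t)}$ carries about $X_\root$ is controlled by the correlation between $X^{(1)}_\root$ and $X^{(2)}_\root$ conditioned on agreement of the two replicas far from $\root$ — this is essentially the standard identity relating mutual information at the root to the overlap. The hypothesis (\ref{eqn:graph<->tree}) says precisely that the ``non-trivial part'' of the overlap matrix $\Delta\nu_n$ — namely the combination $\Delta\nu_n(x,x)-2|\cX|^{-1}\sum_{x'}\Delta\nu_n(x,x')$, which is the projection onto the color-symmetry-breaking direction relevant to reconstruction — concentrates at $0$. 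Because $G_n$ is locally tree-like and uniformly sparse, one can compare $\E_n[\cdots]$ for the local neighborhood functionals against the corresponding tree expectations (Lemma \ref{lemma:EdgeTree}), so the vanishing of the overlap fluctuation, fed into the recursive tree computation, forces the conditional root marginal on $G_n$ to concentrate at uniform, i.e.\ $\E\Vert\mu_{\root,\cBall_\root(t)}-\mu_\root\times\mu_{\cBall_\root(t)}\Vert_{\sTV}\to 0$, which is non-reconstructibility on the graph.

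The main obstacle I anticipate is making rigorous the passage from ``the overlap concentrates'' to ``the conditional root distribution on $G_n$ is close to that on the tree''. On the tree this is clean because of the explicit recursive distributional equation for $\nu^{(t)}$; on $G_n$ there is no such exact recursion, and short cycles through $\Ball_\root(t)$ could in principle transmit correlations that the tree does not see. The sphericity condition is exactly what rules this out, but quantifying it requires a careful interpolation: one writes the difference between the graph conditional law and the tree conditional law as a telescoping sum over vertices or edges at which the two structures differ, bounds each term using (\ref{eq:SimpleIneq})-type total-variation inequalities and the permissiveness/lower-bound estimates, and shows the accumulated error is $o(1)$ provided the overlap fluctuation in (\ref{eqn:graph<->tree}) is $o(1)$. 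Controlling this error uniformly in $t$ — so that one genuinely gets non-solvability and not just $(t,\ve)$-non-solvability for each fixed $t$ — is the delicate point, and it is where I would expect to invest most of the technical effort, closely following the argument of \cite{GerschenMon1}.
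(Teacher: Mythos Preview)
Your treatment of the two directions is asymmetric, but the paper's proof is not: it handles both implications at once through a single comparison inequality (Lemma~\ref{lem:Simple2}), and the sphericity hypothesis~(\ref{eqn:graph<->tree}) is needed for \emph{both} directions. Your claim that ``tree-solvable $\Rightarrow$ graph-solvable'' is essentially trivial via local convergence alone is a genuine gap. Local convergence tells you that the \emph{structure} of $\Ball_\root(\ell)$ matches $\Tree(\ell)$ with high probability, but it says nothing about the \emph{measure} $\mu_{G_n}$ restricted to that ball. The graph measure on $\Ball_\root(\ell)$ carries a boundary condition at $\Edge_\root(\ell)$ determined by the marginal $\mu^{>}_{\Edge_\root(\ell)}$ of the outside graph, whereas the tree measure $\mu^{\Tree(\ell)}$ has free boundary. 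Without sphericity these can be very different, and information that is present on the tree can be washed out (or spuriously created) on the graph; Theorem~\ref{thm:ferromagnet} is precisely an example where the two thresholds differ. So your $(\Leftarrow)$ argument, which never invokes~(\ref{eqn:graph<->tree}), cannot be correct as stated.

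The paper's actual mechanism is this: Lemma~\ref{lem:Simple2} bounds the \emph{absolute difference} between the graph reconstruction TV distance and the corresponding quantity for $\mu^{<}$ (the model with all edges outside $\Ball_\root(\ell)$ removed, i.e.\ free boundary) by $5|\cX|^{|\Ball_\root(\ell)|}\,\|\mu^{>}_{\Edge_\root(\ell)}-\rho_{\Edge_\root(\ell)}\|_{\sTV}$. The whole proof then reduces to showing the boundary marginal $\mu^{>}_{\Edge_\root(\ell)}$ is close to uniform. Two ingredients you are missing make this work: Proposition~\ref{prop:poisson_independance} (for Poisson graphs, conditionally on $\Ball_\root(\ell)$ the outside is again a Poisson graphical model, so~(\ref{eqn:graph<->tree}) applies to it), and Proposition~\ref{degenerescence}, which is the real content of the sphericity hypothesis --- it upgrades the two-point condition~(\ref{eqn:graph<->tree}) to control of arbitrary finite-dimensional marginals, via stochastic exchangeability. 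Your sketch for $(\Rightarrow)$ gestures at ``overlap controls conditional root law'' but does not identify either of these steps; in particular, the ``telescoping over edges where tree and graph differ'' plan is not how the argument goes, and would not yield the needed uniformity in~$t$.
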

\begin{remark} 
The expectation in Eq.~(\ref{eqn:graph<->tree})
is with respect to the two replicas $\uX^{(1)}$, $\uX^{(2)}$ 
(which the type $
\nu_n(\,\cdot\,,\,\cdot\,)$ is a function of), 
conditional on $G_n$, as well as with respect to $G_n$. Explicitly, 
\begin{align}
\E\{F(\uX^{(1)},\uX^{(2)})\} &= 
%\E\{\E[F(\uX^{(1)},\uX^{(2)})|G_n]\} \nonumber \\ &= 
\E\Big\{\sum_{\ux^{(1)},\ux^{(2)}}\!\mu_{G_n}(\ux^{(1)})
\mu_{G_n}(\ux^{(2)})\;
F(\ux^{(1)},\ux^{(2)})\Big\} \,.
\end{align}
\end{remark}
\begin{remark}\label{rem:sphericity} 
It is easy to see that the sphericity condition 
of Definition \ref{def:Spherical} implies 
Eq.~(\ref{eqn:graph<->tree}). 
That is, (\ref{eqn:graph<->tree}) holds if $\mu_{G_n}$ are
$(\ve,\delta_n)$-spherical for any $\ve>0$ and some $\delta_n(\ve) \to 0$.
\end{remark}

\begin{remark} 
In fact, as is hinted by the proof, 
condition (\ref{eqn:graph<->tree}) can be weakened, e.g.
$\onu(\,\cdot\,\,\cdot\,)$ can be chosen more generally than the uniform
matrix. Such a generalization amounts to assuming that
`replica symmetry is not broken' 
(in the spin glass terminology, see \cite{MezardMontanari}).
For the sake of simplicity we omit such  generalizations.
\end{remark}

Condition (\ref{eqn:graph<->tree})
emerges naturally in a variety of contexts, a notable one 
being second moment method applied to random constraint 
satisfaction problems.
As an example, consider proper colorings of random graphs, cf. 
Section \ref{chap:Coloring}.
The second moment method was used in   \cite{AchlioptasNaorPeres}
to bound from below the colorability threshold.
The reconstruction threshold on trees was estimated in 
\cite{Bha-Ver-Vig,sly}. Building on these results,
and as outlined at the end of Section \ref{sec:pfs-main-result}
the following statement is obtained in \cite{MonResTet}.
\begin{thm}\label{thm:Colorings}
For proper $q$-colorings of a Poisson random graph of density
$\gamma$, the reconstruction problem is solvable if and only if
$\gamma>\gamma_{\rm r}(q)$, where for large $q$,
\begin{eqnarray}\label{eq:sly}
\gamma_{\rm r}(q)
= \frac{1}{2}\, q\, [\log q+\log\log q+o(1)]\, .
\end{eqnarray}
\end{thm}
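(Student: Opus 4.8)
The plan is to combine the reduction from graph-reconstruction to tree-reconstruction (Theorem \ref{thm:graph<->tree}) with the two external inputs, namely the second-moment estimate of Achlioptas--Naor--Peres on proper colorings of random graphs and the sharp tree-reconstruction threshold of \cite{Bha-Ver-Vig,sly}. First I would verify the hypothesis \eqref{eqn:graph<->tree} of Theorem \ref{thm:graph<->tree} for the uniform measure over proper $q$-colorings of a Poisson random graph of density $\gamma$. The natural route, following Remark \ref{rem:sphericity}, is to show that $\mu_{G_n}$ is $(\ve,\delta_n)$-spherical for any $\ve>0$ with $\delta_n \to 0$; equivalently, that the two-replica type $\nu_n$ concentrates around the uniform matrix $\onu(x,y)=1/q^2$. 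This is exactly the content of the second-moment analysis of \cite{AchlioptasNaorPeres}: one computes $\E Z_n^2$ (with $Z_n$ the number of proper $q$-colorings, or of balanced ones) and shows that, in the regime $\gamma$ below the colorability threshold and of order $q\log q$, the dominant contribution comes from pairs of colorings whose overlap matrix is close to $\onu$, while the off-diagonal overlaps are exponentially suppressed. Since $\gamma_{\rm r}(q) = \tfrac12 q[\log q + \log\log q + o(1)]$ lies safely below the colorability threshold $\underline\alpha_{\rm s}(q) = (q-1)\log(q-1)$ for large $q$, this regime is covered, and one extracts $\E\{\|\nu_n - \onu\|_2^2\} \to 0$, which is more than enough for \eqref{eqn:graph<->tree}.

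Once sphericity is in hand, Theorem \ref{thm:graph<->tree} tells us that the reconstruction problem for $\{G_n\}$ is solvable if and only if it is tree-solvable, i.e. if and only if the reconstruction problem is solvable on the Galton--Watson tree with $\mathrm{Poisson}(2\gamma)$ offspring distribution, for the free-boundary uniform proper-$q$-coloring measure. So the remaining task is purely about trees: determine the critical $\gamma$ for tree-reconstructibility of $q$-colorings on a $\mathrm{Poisson}(2\gamma)$ Galton--Watson tree. Here I would invoke the results of \cite{Bha-Ver-Vig} (an upper bound on the reconstruction threshold via a carefully chosen potential/recursion on the distributional fixed point $Q^{(t)}$ of \eqref{eq:recur2}) and of \cite{sly} (the matching lower bound, showing non-reconstruction below the threshold via a contraction estimate for the same recursion). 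Translating their $k$-ary (or mean-degree $d$) statements to the Poisson mean offspring $2\gamma$ — using that for Poisson trees $\mathrm{br}(\Tree)$ equals the mean offspring number — yields $\gamma_{\rm r}(q) = \tfrac12 q[\log q + \log\log q + o(1)]$, the stated asymptotics.

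The main obstacle is the second-moment computation needed to certify sphericity: one must show not merely that $\E Z_n^2 = O((\E Z_n)^2)$ but the stronger \emph{local} statement that the overlap profile concentrates at the uniform matrix, uniformly enough that $\E\{\|\Delta\nu_n\|^2\} \to 0$ in the precise combination appearing in \eqref{eqn:graph<->tree}. This requires controlling the large-deviation rate function for the overlap of two random colorings at the specific density $\gamma \asymp \gamma_{\rm r}(q)$ and checking that the uniform overlap is its unique global maximizer there — a delicate point precisely because at higher densities (in regime III, past $\alpha_{\rm c}(q)$) sphericity \emph{fails}. This is handled in \cite{AchlioptasNaorPeres,AchlioptasNaor}, and the clean packaging of the resulting equivalence, together with the tree threshold of \cite{Bha-Ver-Vig,sly}, is carried out in \cite{MonResTet}, which is the reference I would ultimately cite for the full argument; here I would present the reduction via Theorem \ref{thm:graph<->tree} and Remark \ref{rem:sphericity} and then quote the tree-reconstruction threshold and the second-moment sphericity input as the two external ingredients.
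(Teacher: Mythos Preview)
Your proposal is correct and follows essentially the same four-step approach as the paper: (i) establish sphericity of $\mu_{G_n}$ from the Achlioptas--Naor second-moment estimates (with the passage from $\E Z_{\rm b}(\nu)/\E Z_{\rm b}(\onu)\to 0$ to sphericity deferred to \cite{MonResTet}, exactly as you anticipate); (ii) invoke Remark~\ref{rem:sphericity} and Theorem~\ref{thm:graph<->tree} to reduce graph reconstruction to tree reconstruction; (iii) quote the tree-reconstruction threshold for $q$-colorings from \cite{sly}, noting that the regular-tree result transfers to Poisson Galton--Watson trees by concentration of the Poisson degree. The only cosmetic difference is that the paper cites \cite{AchlioptasNaor} rather than \cite{AchlioptasNaorPeres} for the second-moment input, and \cite{sly} alone (rather than also \cite{Bha-Ver-Vig}) for the tree threshold.
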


In general the graph and tree reconstruction thresholds do 
not coincide. For example, as mentioned before, 
zero magnetic field ferromagnetic Ising models 
on the Galton-Watson tree $\Tree(\node,\edge,\infty)$
(of Section \ref{ch:IsingModel}), 
are solvable if and only if $\aedeg (\tanh(\beta))^2>1$. 
The situation changes dramatically for graphs, 
as shown in \cite{DemboMonUnpublished,GerschenMon1}.
\begin{thm}\label{thm:ferromagnet}
For both Poisson random graphs and random regular graphs, 
reconstruction is solvable for zero magnetic field, ferromagnetic 
Ising models, if and only if $\aedeg \tanh(\beta)>1$.
\end{thm}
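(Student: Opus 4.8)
The plan is to identify the graph reconstruction threshold with the \emph{tree uniqueness} threshold $\aedeg\tanh\beta=1$, which lies strictly above the tree reconstruction threshold $\aedeg(\tanh\beta)^2=1$ (since $\tanh\beta<1$); so, unlike in the spherical situation of Theorem~\ref{thm:graph<->tree}, here the graph and tree reconstruction thresholds genuinely differ and the proof cannot proceed through condition~(\ref{eqn:graph<->tree}). Both implications will rest only on monotonicity of the zero-field ferromagnetic Ising model (Griffiths/GHS inequalities and Lemma~\ref{lem-comp}) and on the local weak convergence of $\{G_n\}$ to the Galton--Watson tree $\Tree(\node,\edge,\infty)$ of mean offspring $\aedeg$ (Propositions~\ref{prop:local-trees1}--\ref{prop:local-trees2}).

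\emph{Solvability when $\aedeg\tanh\beta>1$.} The key input is quantitative coexistence: in this regime, with probability approaching one over $G_n$, the Ising measure $\mu_{G_n}$ obeys $\mu_{G_n}(|\bX|\le\ve)\to 0$ for a fixed $\ve>0$. For random regular graphs this is exactly the coexistence statement proved in Section~\ref{ch:IsingChapter} via the restricted partition functions $Z_r(G_n)$; for Poisson graphs it follows from the analogous $Z_r(G_n)$ computation together with Theorem~\ref{thm:free_energy} at $B\downarrow 0$, the $O(1)$-size components contributing only $O(\sqrt n)$ to $M\equiv\sum_i X_i$. Hence $\E_{\mu_{G_n}}[\,|\bX|\,]\ge\ve/2$ whp. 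Fix such a good $G_n$, a vertex $\root$, and $t\ge 0$, and use as a test function of the far spins $f(\uX_{\cBall_{\root}(t)})=\sign\bigl(\sum_{i\in\cBall_{\root}(t)}X_i\bigr)$. Since $\sum_{i\in\cBall_{\root}(t)}X_i$ and $M$ differ by at most $|\Ball_{\root}(t)|=O_t(1)$ spins while $|M|\ge n\ve/2$ with $\mu_{G_n}$-probability $1-o(1)$, we get $f=\sign(M)$ with $\mu_{G_n}$-probability $1-o(1)$; using $\E_{\mu_{G_n}}[X_\root]=0$, the positive correlation $\E_{\mu_{G_n}}[X_\root\,\sign(M)]\ge 0$ (FKG), and the elementary bound $\Vert\rho_{XY}-\rho_X\times\rho_Y\Vert_{\sTV}\ge\tfrac12|\mathrm{Cov}(g(X),h(Y))|$ for $|g|,|h|\le 1$, this gives
\[
\Vert \mu_{\root,\cBall_{\root}(t)}-\mu_{\root}\times\mu_{\cBall_{\root}(t)}\Vert_{\sTV}\ \ge\ \tfrac12\,\E_{\mu_{G_n}}[X_\root\,\sign(M)]-o(1)\,.
\]
Averaging the right-hand side over $\root\in[n]$ uniform yields $\tfrac12\E_{\mu_{G_n}}[\bX\,\sign(M)]=\tfrac12\E_{\mu_{G_n}}[\,|\bX|\,]\ge\ve/4-o(1)$; since each summand lies in $[0,1]$, a reverse Markov inequality produces an absolute constant $c>0$ and a set of at least $cn$ roots for which the displayed distance exceeds $c\ve$, and $c\ve$ does not depend on $t$. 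Thus the reconstruction problem is $(t,c\ve)$-solvable on $G_n$ with probability bounded below uniformly in $t$, so reconstruction is solvable.

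\emph{Non-solvability when $\aedeg\tanh\beta\le 1$.} Write $\Delta(t;\ux)$ as in~(\ref{eq:UniformDecay}). By the Markov property, conditioning on $\uX_{\cBall_{\root}(t)}$ leaves an Ising model on $\Ball_{\root}(t-1)$ with a boundary condition supported on $\dBall_{\root}(t-1)$, and by monotonicity of ferromagnetic Ising (Holley's inequality, Lemma~\ref{lem-comp}) the conditional law $\mu_{\root\mid\cBall_{\root}(t)}(\,\cdot\,\mid\ux)$ lies, for \emph{every} $\ux$, between the all-minus and all-plus boundary laws; since $\mu_{\root}$ is uniform at $B=0$ and $\langle x_\root\rangle^{-}=-\langle x_\root\rangle^{+}$ by spin-flip symmetry, this yields the deterministic bound $\Delta(t;\ux)\le\tfrac12\langle x_\root\rangle^{+}_{\Ball_{\root}(t-1)}$, with $\langle\cdot\rangle^{+}$ the plus-boundary Ising expectation on the ball. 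Markov's inequality then gives $\prob\{(t,\ve)\text{-solvable on }G_n\}\le(2\ve)^{-1}\E[\langle x_\root\rangle^{+}_{\Ball_{\root}(t-1)}]$. Now $\Ball_{\root}(t)$ is a tree with probability $\to 1$, and on that event $\langle x_\root\rangle^{+}_{\Ball_{\root}(t)}$ is a bounded function of the isomorphism class of $\Ball_{\root}(t)$, so by local weak convergence and uniform sparsity (Lemma~\ref{lemma:EdgeTree}) one obtains $\E[\langle x_\root\rangle^{+}_{\Ball_{\root}(t)}]\to\E[\langle x_\root\rangle^{+}_{\Tree(t)}]$ as $n\to\infty$. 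Finally $\langle x_\root\rangle^{+}_{\Tree(t)}$ is governed by the field recursion on the Galton--Watson tree driven by $h\mapsto\atanh(\tanh\beta\,\tanh h)$, a concave contraction of $[0,\infty)$ with slope $\tanh\beta$ at the origin, so $\aedeg\tanh\beta\le 1$ forces $\E[\langle x_\root\rangle^{+}_{\Tree(t)}]\to 0$ (the classical tree uniqueness criterion; the critical case is covered by the standard fact that the plus and free states coincide on such trees when $\aedeg\tanh\beta=1$). Hence $\lim_{t\to\infty}\limsup_{n\to\infty}\prob\{(t,\ve)\text{-solvable}\}=0$ for every $\ve>0$, i.e. reconstruction is not solvable.

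\emph{Main obstacle.} The crux is the quantitative coexistence input for the Poisson ensemble: one must push the $Z_r(G_n)$ first/second-moment analysis of Section~\ref{ch:IsingChapter} through the giant-component structure and verify that the independently magnetizing $O(1)$-size components do not destroy the sign of $M$ — this is what makes the statement genuinely different from the tree case. The tree-comparison in the second direction is routine in spirit but still needs care about the joint scaling of $n$ and $t$, and the behaviour of the tree field recursion exactly at $\aedeg\tanh\beta=1$ must be extracted from the classical tree-uniqueness analysis rather than from the crude contraction estimate.
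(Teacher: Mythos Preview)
The paper does not actually prove this theorem; it attributes the result to \cite{DemboMonUnpublished,GerschenMon1} and offers only a one-line hint: ``The proof of reconstructibility for $\aedeg \tanh(\beta)>1$ essentially amounts to finding a bottleneck in Glauber dynamics.'' Your proposal is fully consistent with that hint --- the bottleneck is precisely the coexistence established in Section~\ref{ch:IsingChapter}, and your test function $\sign(M)$ is the natural way to convert coexistence into a lower bound on $\Vert\mu_{\root,\cBall_{\root}(t)}-\mu_{\root}\times\mu_{\cBall_{\root}(t)}\Vert_{\sTV}$. The non-solvability direction via Griffiths monotonicity, sandwiching by $\pm$ boundary conditions, local weak convergence to the tree, and the tree-uniqueness criterion $\aedeg\tanh\beta\le 1$ is the standard route and is correct.

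Two minor technical points. First, your claim ``$|\Ball_{\root}(t)|=O_t(1)$'' is not uniform over all roots in the Poisson ensemble; you should restrict to roots whose $t$-ball is bounded (a $1-o(1)$ fraction, which is all you need for the reverse-Markov step). Second, the conditional law given $\ux_{\cBall_{\root}(t)}$ lives on $\Ball_{\root}(t)$ with boundary at $\Edge_{\root}(t)$, so the bound should read $\Delta(t;\ux)\le\tfrac12\langle x_{\root}\rangle^{+}_{\Ball_{\root}(t)}$ rather than $t-1$.

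You have honestly identified the real gap: coexistence in the supercritical regime is proved in the paper only for random regular graphs (via the first-moment bound on $Z_r(G_n)$ combined with the exact free-entropy limit), and the Poisson case genuinely requires handling the small components, which fluctuate independently and contribute $\Theta(\sqrt n)$ to $M$. Your sketch of how to address this (restricting the $Z_r$ analysis to the giant component and controlling the remainder) is the right idea, but it is not in the paper and does need to be carried out --- this is exactly why the paper defers to \cite{DemboMonUnpublished}.
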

In physicists' language, the ferromagnetic phase transition occurring at 
$\aedeg\tanh(\beta)=1$, cf. Section \ref{ch:IsingModel},
`drives' the reconstruction threshold.
The proof of reconstructibility for $\aedeg \tanh(\beta)>1$ essentially
amounts to finding a bottleneck in Glauber dynamics.
As a consequence it immediately implies that the mixing time is exponential
in this regime. We expect this to be a tight estimate of the threshold
for exponential mixing.

On the other hand, for 
a zero magnetic field, 
Ising spin-glass, the tree and graph thresholds do coincide.
In fact, 
for such a model 
on a Galton-Watson tree with 
Poisson$(2\gamma)$ offspring distribution, reconstruction 
is solvable if and only if 
$2\gamma (\tanh(\beta))^2>1$ (see, \cite{EvaKenPerSch}). 
The corresponding graph result is: 
\begin{thm}\label{thm:spinglass}
Reconstruction is solvable for Ising spin-glasses of zero magnetic
field, on Poisson random graph of density parameter $\gamma$,
provided $2\gamma(\tanh(\beta))^2>1$, and it is unsolvable if 
$2\gamma(\tanh(\beta))^2<1$.
\end{thm}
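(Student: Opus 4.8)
A Poisson$(2\gamma)$ random multi-graph converges locally to the Galton--Watson tree $\Tree$ with Poisson$(2\gamma)$ offspring, whose mean offspring equals $\aedeg=2\gamma$, and by \cite{EvaKenPerSch} the zero-field Ising spin-glass reconstruction problem on $\Tree$ is solvable if and only if $2\gamma(\tanh\beta)^2>1$ --- this being the Kesten--Stigum threshold, which for the symmetric $\pm$ channel is tight. So the task is to relate graph reconstruction to tree reconstruction, and the plan is to handle the two phases by entirely different means. For $2\gamma(\tanh\beta)^2<1$ I will verify the sphericity hypothesis \eqref{eqn:graph<->tree} and invoke Theorem~\ref{thm:graph<->tree}, which gives the full equivalence with tree reconstruction (hence unsolvability). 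For $2\gamma(\tanh\beta)^2>1$ the hypothesis \eqref{eqn:graph<->tree} \emph{fails} --- the model is in the spin-glass phase, where the two-replica overlap stays extensively correlated --- so reconstructibility must be established directly.

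\emph{The subcritical regime.} Here $\cX=\{+1,-1\}$, so $2/|\cX|=1$ and the bracket in \eqref{eqn:graph<->tree} equals $-\Delta\nu_n(+,-)$ for $x=+1$ and $-\Delta\nu_n(-,+)$ for $x=-1$. Writing $R_{12}=n^{-1}\sum_i X^{(1)}_iX^{(2)}_i$ and $m^{(a)}=n^{-1}\sum_i X^{(a)}_i$ one checks $\Delta\nu_n(\pm,\mp)=\tfrac14\bigl(-R_{12}\pm(m^{(1)}-m^{(2)})\bigr)$, so it suffices to prove $\E[(m^{(1)}-m^{(2)})^2]\to0$ and $\E[R_{12}^2]\to0$. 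The magnetization term is immediate: with $B=0$ the global spin-flip symmetry forces $\langle X_i\rangle=0$ on every fixed graph, and since the couplings have an even law a gauge flip at a single vertex gives $\E[\langle X_iX_j\rangle]=0$ for $i\neq j$, whence $\E[(m^{(a)})^2]=1/n$ and $\E[(m^{(1)}-m^{(2)})^2]=2/n$. For the overlap, $\E[R_{12}^2]=n^{-2}\sum_{i,j}\E[\langle X_iX_j\rangle^2]$, and I will bound $\E[\langle X_iX_j\rangle^2]=\E\langle X^{(1)}_iX^{(1)}_jX^{(2)}_iX^{(2)}_j\rangle$ by the high-temperature expansion of the doubled Ising model, exactly as in the two-dimensional argument leading to \eqref{eq:ht-Ising-cor}: averaging over the $\pm$ couplings kills every non-diagonal pair of expansion subgraphs, leaving $F_1=F_2=F$ with each edge of $F$ carrying weight $(\tanh\beta)^2$, and the normalisation is controlled by the same injection $F\mapsto F\setminus P$ as in the planar case, yielding $\E[\langle X_iX_j\rangle^2]\le\sum_{P:i\to j}(\tanh^2\beta)^{|P|}$ over self-avoiding paths of $G_n$. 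Summing over $j$ and over $G_n$, and using that the expected number of self-avoiding walks of length $\ell$ from a fixed vertex is $\le(2\gamma)^\ell(1+o(1))$, gives $\E[\sum_j\langle X_iX_j\rangle^2]\le\sum_{\ell\ge0}(2\gamma\tanh^2\beta)^\ell(1+o(1))=O(1)$ when $2\gamma\tanh^2\beta<1$; hence $\E[R_{12}^2]=O(1/n)\to0$. (An alternative, fully graph-level, route to $\E[R_{12}^2]\to0$ in this regime is the Guerra--Toninelli interpolation bound for the dilute spin glass, \cite{Guerra,GerschenMon1}.) With \eqref{eqn:graph<->tree} in hand, Theorem~\ref{thm:graph<->tree} equates graph and tree reconstructibility, and the latter is impossible here, proving unsolvability.

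\emph{The supercritical regime.} For $2\gamma(\tanh\beta)^2>1$ I will prove reconstructibility by exhibiting a successful local estimator. With probability $1-o_n(1)$ the ball $\Ball_{\root}(t)$ is a tree isomorphic to the first $t$ generations of the Poisson$(2\gamma)$ Galton--Watson tree; on it form the linear Kesten--Stigum statistic $S_t=\sum_{v}\bigl(\prod_{e\in\mathrm{path}(\root,v)}\tanh(\beta J_e)\bigr)X_v$, the sum running over $\dBall_{\root}(t)$. On the pure tree with free boundary its first two moments give a signal-to-noise ratio of order $(2\gamma\tanh^2\beta)^t$, and for the symmetric channel the Kesten--Stigum bound is not only tight but \emph{robust}: $\E[X_{\root}S_t]/\E[S_t^2]^{1/2}$ stays bounded below uniformly in $t$ and in the boundary condition imposed on $\dBall_{\root}(t)$ (see \cite{EvaKenPerSch}). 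The remaining point is that replacing the free boundary by the random law which the complement $\cBall_{\root}(t)$ induces on $X_{\dBall_{\root}(t)}$, together with the rare non-tree edges inside $\Ball_{\root}(t)$, degrades this ratio by at most a constant factor; this I would control by uniform sparsity and a second-moment estimate on the number of short cycles meeting $\Ball_{\root}(t)$. Consequently $\E[\langle X_{\root};S_t\rangle_{G_n}]\ge c>0$ for all $t$ and all large $n$, and Cauchy--Schwarz turns this into $\E\Vert\mu_{\root,\cBall_{\root}(t)}-\mu_{\root}\times\mu_{\cBall_{\root}(t)}\Vert_{\sTV}\ge c'>0$, i.e. solvability. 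This direction, with a clean treatment of the robustness step, is carried out in \cite{GerschenMon1,DemboMonUnpublished}.

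\emph{Main obstacle.} The delicate part is the supercritical direction: because the graph is only locally tree-like one cannot transfer the tree result through a sphericity argument (sphericity genuinely fails in the spin-glass phase), and the circular difficulty flagged in Section~\ref{ch:Reco} --- that decorrelation of $X_{\dBall_{\root}(t)}$ under the complement graph is exactly what one would need --- has to be circumvented by leaning on the \emph{robustness} of the linear estimator rather than on any decorrelation. The subcritical direction, by contrast, is routine once the two-replica high-temperature expansion and the gauge symmetry are in place.
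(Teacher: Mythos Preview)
The paper does not give a self-contained proof of this theorem; it is stated with attribution to \cite{GerschenMon1}, and the only methodological hint is the remark preceding Theorem~\ref{thm:graph<->tree} that sphericity for the dilute spin glass follows from Guerra--Toninelli \cite{Guerra}, after which Theorem~\ref{thm:graph<->tree} identifies the graph and tree thresholds. Your two-regime plan is therefore aligned with what the paper sketches: for $2\gamma\tanh^2\beta<1$ establish condition~\eqref{eqn:graph<->tree} and invoke Theorem~\ref{thm:graph<->tree}, and treat the supercritical side separately. Your identification of the supercritical direction as the delicate one is also correct, since \cite{Guerra} is explicitly a high-temperature result.

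Two substantive gaps remain. First, your high-temperature route to $\E[R_{12}^2]\to 0$ is not valid as written. The correlation $\langle X_iX_j\rangle$ is a \emph{ratio} of expansion sums sharing the same random couplings, so one cannot average the $\pm J$ over numerator and denominator separately; the sentence ``averaging over the $\pm$ couplings kills every non-diagonal pair'' presumes exactly that. This is not a minor technicality --- it is why the paper points to the Guerra--Toninelli interpolation rather than to a naive expansion. You do mention that alternative, and it is the right one; but the path-counting bound you state for $\E[\langle X_iX_j\rangle^2]$ does not follow from the argument you give.

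Second, in the supercritical direction the ``robustness'' you invoke is not what \cite{EvaKenPerSch} proves: their analysis is for the free-boundary tree measure, and says nothing about the signal-to-noise ratio of $S_t$ when $X_{\dBall_\root(t)}$ is drawn from the graph marginal. On $G_n$ the correlation $\langle X_\root X_v\rangle$ differs from the tree value $\prod_e\tanh(\beta J_e)$ by contributions from paths through $\cBall_\root(t)$, and bounding the effect of those contributions on both $\E[X_\root S_t]$ and $\E[S_t^2]$ is exactly the work that has to be done --- it cannot be absorbed into a citation. The mechanism that makes this tractable for the $\pm J$ spin glass (and not for the ferromagnet, where the graph threshold genuinely differs from the tree one, cf.~Theorem~\ref{thm:ferromagnet}) is the gauge symmetry: averaging over the coupling signs in $\cBall_\root(t)$ annihilates the mean contribution of the extra paths, so that conditional on $\Ball_\root(t)$ being a tree the first moment $\E[X_\root S_t]$ matches the tree value. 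One still owes a second-moment bound on $S_t$ under the graph measure, and that is where the actual estimate lives. Your outline names the right statistic and the right obstacle, but the proof is in controlling those two moments after averaging the exterior disorder, not in any boundary-condition robustness of the tree estimator.
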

%
%*************************************************************************
%
\subsection{Proof of main results}\label{sec:pfs-main-result}

Hereafter, let 
$\Ball_i(t) = \{ j \in [n] : d(i,j)\le t \}$, 
$\cBall_i(t) = \{ j \in [n] : d(i,j)\ge t \}$
and $\Edge_i(t) \equiv \Ball_i(t)\cap\cBall_i(t)$ 
(i.e. the set of vertices of distance $t$ from $i$).
Further, partition the edges of $G_n$ between the
subgraphs $\Ball_i(t)$ and $\cBall_i(t)$ so 
edges between two vertices from $\Edge_i(t)$ 
are all in $\cBall_i(t)$, and excluded from $\Ball_i(t)$.

Beyond the almost sure convergence of the law of $\Ball_{\root}(t)$ 
to the corresponding Galton-Watson tree of depth-$t$, 
rooted at $\root$ (which as explained before, is a consequence
of Proposition \ref{prop:local-trees2}),
the proof of Theorem \ref{thm:graph<->tree} relies 
on the following form of independence between $\Ball_{\root}(t)$ 
and $\cBall_{\root}(t)$ for Poisson random graphs. 
\begin{propo}\label{prop:poisson_independance}
Let $G_n$ be a Poisson random graph on vertex set $[n]$ and 
density parameter $\gamma$. 
Then, conditional on $\Ball_{\root}(t)$,
$\cBall_{\root}(t)$ is a Poisson random graph on vertex set 
$[n]\setminus \Ball_{\root}(t-1)$ with same edge 
distribution as $G_n$.
\end{propo}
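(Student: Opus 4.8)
The plan is to exploit the defining property of the Poisson multi-graph, namely that the edge multiplicities $\{N_{\{u,v\}}\}_{u\le v}$ (with $N_{\{u,v\}}\sim\mathrm{Poisson}(2\gamma/n)$ for $u\ne v$ and self-loops $N_{\{u\}}\sim\mathrm{Poisson}(\gamma/n)$) form a family of \emph{mutually independent} random variables, together with the observation that $\Ball_\root(t)$ depends only on those multiplicities incident to $\Ball_\root(t-1)$. Concretely, fix a valid realization $\beta$ of $\Ball_\root(t)$; it specifies the distance layers $D_0=\{\root\},D_1,\dots,D_t$, the set $S\equiv S_\beta=D_0\cup\cdots\cup D_{t-1}$, and the edge multiplicities among $S$ and between $S$ and $D_t$. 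The first step is to check, via the usual breadth-first exploration, that the event $\{\Ball_\root(t)=\beta\}$ lies in the sub-$\sigma$-algebra $\mathcal F_S\equiv\sigma\big(N_{\{u,v\}}:\{u,v\}\cap S\ne\emptyset\big)$: confirming that a vertex lies in layer $D_s$ with $s\le t-1$ uses only multiplicities of edges incident to $D_0\cup\cdots\cup D_{s-1}\subseteq S$ (both to witness reachability in $s$ steps and to exclude shorter paths); confirming $v\in D_t$ uses a positive edge from $v$ to $D_{t-1}\subseteq S$ and the absence of edges from $v$ to $D_0\cup\cdots\cup D_{t-2}\subseteq S$; and confirming that a vertex lies outside $\Ball_\root(t)$ is precisely the statement that it has no $G_n$-neighbour in $S$. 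All of these are $\mathcal F_S$-measurable events.

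Granting this, the conclusion is immediate. The complementary family $\mathcal F_{S^c}\equiv\sigma\big(N_{\{u,v\}}:u,v\in[n]\setminus S\big)$ is independent of $\mathcal F_S$, so conditioning on the $\mathcal F_S$-event $\{\Ball_\root(t)=\beta\}$ does not alter the joint law of $\{N_{\{u,v\}}:u,v\in[n]\setminus S\}$: it remains a family of independent Poissons of intensities $2\gamma/n$ (for distinct pairs) and $\gamma/n$ (self-loops). On this event, however, the vertex set of $\cBall_\root(t)$ equals the now-deterministic set $[n]\setminus S=[n]\setminus\Ball_\root(t-1)$, and by the stated edge-partition convention the edge multiplicities of $\cBall_\root(t)$ are exactly $\{N_{\{u,v\}}:u,v\in[n]\setminus S\}$ — in particular the edges internal to the distance-$t$ layer $D_t$ are assigned to $\cBall_\root(t)$ rather than to $\Ball_\root(t)$. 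Since this description of the conditional law holds for every valid $\beta$, it holds conditionally on $\Ball_\root(t)$ itself, which is the assertion of the proposition (with the same per-pair and self-loop intensities $2\gamma/n$, $\gamma/n$ relative to the original $n$, i.e.\ the ``same edge distribution as $G_n$'').

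The only mildly delicate point — and the one I would spell out carefully — is the measurability claim $\{\Ball_\root(t)=\beta\}\in\mathcal F_S$, the subtlety being that $S=S_\beta$ is a priori random; fixing $\beta$ first turns $S_\beta$ into a deterministic vertex set, after which the layer-by-layer verification above is routine, the key case being that ``$w$ has distance $>t$ from $\root$'' is \emph{equivalent} to ``$w$ has no $G_n$-edge to $S_\beta$'', an $\mathcal F_{S_\beta}$-event. It is also worth recording the harmless facts that self-loops never affect graph distances (so they simply ride along with the residual graph), that each conditioning event $\{\Ball_\root(t)=\beta\}$ of interest has positive probability (any admissible multiplicity pattern on $\mathcal F_S$-pairs is attained with positive probability under independent Poissons, so no null-set issues arise), and that $\root$ may be held fixed throughout since its uniform choice is independent of $(G_n,\upsi)$ and plays no role in the statement.
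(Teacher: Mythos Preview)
Your proof is correct and follows essentially the same approach as the paper's: both argue that the event $\{\Ball_\root(t)=\beta\}$ is determined by the edge multiplicities incident to $S_\beta=\Ball_\root(t-1)$, which are independent of the multiplicities among $[n]\setminus S_\beta$ that constitute $\cBall_\root(t)$. Your version is in fact more careful on the measurability point---the paper phrases the conditioning as ``a given edge realization between the vertices $k$, $l$ such that $k\in\Graph(t-1)$ and $l\in\Graph(t)$'', whereas you correctly make explicit that one also needs the (zero) multiplicities from $S_\beta$ to vertices outside $\Ball_\root(t)$ in order to certify that those vertices lie at distance greater than $t$.
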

\begin{proof}
Condition on $\Ball_{\root}(t) = \Graph(t)$, and let 
$\Graph(t-1)=\Ball_{\root}(t-1)$
(notice that this is uniquely determined from $\Graph(t)$).
This is equivalent to conditioning on a given edge realization 
between the vertices 
$k$, $l$ such that $k\in \Graph(t-1)$ and $l\in \Graph(t)$.

The 
graph $\cBall_{\root}(t)$ has as vertices the set   
$[n]\setminus\Graph(t)$ and its edges are those $(k,l)\in G_n$ such that
$k,l\not\in \Graph(t-1)$. Since the latter set of edges is disjoint
from the one we are conditioning upon, the claim follows by the 
independence of the choice of edges taken into $G_n$.
\end{proof}
%

%
%\begin{propo}\label{prop:tree_convergence}
%Let $\Ball_{\root}(t)$ 
%be the depth-$t$ neighborhood of
%the root in a Poisson random graph $G_n$, and $\Tree(t)$
%a Galton-Watson tree of depth $t$ and 
%offspring distribution {\rm Poisson}$(2\gamma)$.
%Given any (labeled) tree $\Tree_*$,
%we write $\Ball(r,t) \simeq \Tree_*$ if $\Tree_*$ is obtained 
%by the depth-first relabeling of $\Ball(r,t)$ 
%following a pre-established 
%convention\footnote{For instance one might agree to preserve
%the original lexicographic order among siblings}.
%Then 
%$\prob\{\Ball(r,t)\simeq \Tree_*\}$ converges to
%$\prob\{\Tree_t\simeq \Tree_*\}$ as $N\to\infty$.
%%
%\end{propo}

We also need to bound the tail of the distribution of the number of vertices
in the depth-$t$ neighborhood of $\root$. This can be done by comparison with
a Galton-Watson process.
\begin{propo}\label{prop:BoundNeighborhood}
Let 
$\Vert \Ball_{\root}(t) \Vert$ 
denote the number of edges (counting their multiplicities), 
in depth-$t$ neighborhood of the root in a Poisson random graph $G_n$ 
of density $\gamma$. 
Then, for any $\lambda>0$ there exists 
finite $g_t(\lambda,\gamma)$ such that, for any $n$, $M\ge 0$
\begin{eqnarray}\label{eq:bd-vert}
\prob\{ \Vert\Ball_{\root}(t)\Vert\ge M\}
\le g_t(\lambda,\gamma)\, \lambda^{-M}\, .
\end{eqnarray}
\end{propo}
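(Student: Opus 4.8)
The plan is to dominate the breadth--first exploration of $\Ball_\root(t)$ by a Galton--Watson process with Poisson offspring, and then to bound the exponential moments of the total progeny of its first $t$ generations.

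First I would run the usual vertex--by--vertex exploration of $G_n$ started at $\root$, processing vertices in order of their generation (ties broken arbitrarily). When a vertex $v$ is processed, one reveals the multiplicities of all edges joining $v$ to vertices not yet processed, together with the multiplicity of the self--loop at $v$; none of these has been examined before, and they are mutually independent Poisson variables, so the number $N_v$ of edge--endpoints revealed at $v$ is, conditionally on everything revealed so far, stochastically dominated by a $\poisson(3\gamma)$ variable \emph{uniformly in $n$} (the relevant parameter being at most $2\gamma\frac{n-1}{n}+\frac{\gamma}{n}\le 3\gamma$). Every edge counted in $\Vert\Ball_\root(t)\Vert$ is revealed while processing a vertex of generation at most $t-1$, and every vertex of generation $s+1$ is discovered through at least one of the $N_v$'s at a generation--$s$ vertex $v$. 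Hence, letting $\widehat W_0=1$ and $\widehat W_{s+1}=\sum_{j=1}^{\widehat W_s}\xi_{s,j}$ with $\{\xi_{s,j}\}$ i.i.d. $\poisson(3\gamma)$, a routine coupling (assign one $\xi_{s,j}$ to each generation--$s$ vertex and couple so that $N_v\le\xi_{s,j}$) gives, under the coupling and for every $n$,
\[
\Vert\Ball_\root(t)\Vert \;\le\; Z_t \;:=\; \sum_{s=1}^{t}\widehat W_s \,.
\]

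Next I would estimate $\beta_t(\lambda)\equiv\E[\lambda^{Z_t}]$. Splitting the Galton--Watson tree at the root gives the identity $Z_t\ \ed\ \widehat W_1+\sum_{j=1}^{\widehat W_1}Z_{t-1}^{(j)}$, with $Z_{t-1}^{(j)}$ i.i.d. copies of $Z_{t-1}$ independent of $\widehat W_1\sim\poisson(3\gamma)$ and $Z_0\equiv 0$. Using $\E[s^{\poisson(3\gamma)}]=e^{3\gamma(s-1)}$, which is finite for every $s\ge 0$, one obtains the recursion $\beta_0=1$ and $\beta_t(\lambda)=\exp\{3\gamma(\lambda\,\beta_{t-1}(\lambda)-1)\}$, so $\beta_t(\lambda)<\infty$ for every $\lambda>0$ and every fixed $t$. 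Then, for $\lambda\ge 1$, Markov's inequality applied to $\lambda^{Z_t}$ yields $\prob\{\Vert\Ball_\root(t)\Vert\ge M\}\le\prob\{Z_t\ge M\}\le\beta_t(\lambda)\,\lambda^{-M}$, while for $0<\lambda<1$ the bound $\prob\{\cdot\}\le 1\le\lambda^{-M}$ is trivial for $M\ge 0$. Thus the proposition holds with $g_t(\lambda,\gamma)=\max\{1,\beta_t(\lambda)\}$.

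The main obstacle is not the moment computation, which is mechanical, but making the stochastic domination rigorous \emph{uniformly in $n$}: one must check that processing $v$ reveals edges to \emph{all} not--yet--processed vertices (including ones in $v$'s own generation, which may create cycles) and that self--loops and edge multiplicities are all accounted for, that none of these edges was examined at an earlier step, and that the resulting conditional law at each step is therefore dominated by the \emph{same} $\poisson(3\gamma)$ law regardless of $n$ and of the history. Once the exploration is set up with this care, the comparison with the Poisson Galton--Watson process and the coupling $\Vert\Ball_\root(t)\Vert\le Z_t$ follow in the standard way, and the tail bound is immediate.
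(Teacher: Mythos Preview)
Your proposal is correct and follows essentially the same route as the paper: dominate the breadth-first exploration by a Poisson Galton--Watson tree and apply Markov's inequality to the exponential moment of its total progeny, with the same recursion for the moment generating function. The only cosmetic differences are that the paper uses offspring parameter $2\gamma$ (your computation $2\gamma\tfrac{n-1}{n}+\tfrac{\gamma}{n}\le 2\gamma$ already shows $3\gamma$ is unnecessarily generous), counts vertices rather than edges in the dominating tree, and does not separately spell out the trivial case $0<\lambda<1$.
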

\begin{proof}
Notice that, because of the symmetry of the graph distribution under 
permutation of the vertices, we can 
and shall fix $\root$ to be a deterministic vertex. 
Starting at $\root$ we explore $G_n$ in breadth-first fashion
and consider the sequence of random 
variables $E_t=\Vert \Ball_{\root}(t) \Vert$. Then,
for each $t \ge 0$, the value of $E_{t+1}-E_t$ is,
conditional on $\Ball_{\root}(t)$, 
upper bounded by the sum of 
$|\Edge_{\root}(t)| \times |\cBall_{\root}(t)|$ 
i.i.d. Poisson($2\gamma/n$) random variables.
Since $|\cBall_{\root}(t)| \le n$ and 
$|\Edge_{\root}(t)| \le E_t-E_{t-1}$ for $t \ge 1$
(with $|\Edge_{\root}(0)|=1$), it follows that
$E_t$ is stochastically dominated by
$|\Tree(t)|$, where $\Tree(t)$ is a depth-$t$ Galton-Watson tree 
with Poisson$(2\gamma)$ offspring distribution. By Markov's inequality,
$$
\prob\{\Vert \Ball_{\root}(t) \Vert \ge M\}\le 
\E\{\lambda^{|\Tree(t)|}\}\, \lambda^{-M} \,.
$$
To complete the proof, recall that 
%by elementary branching processes theory
$g_t(\lambda,\gamma)\equiv\E\{\lambda^{|\Tree(t)|}\}$ is the finite
solution of the recursion 
$g_{t+1}(\lambda,\gamma) = \lambda \xi (g_t(\lambda,\gamma),\gamma)$
for $\xi(\lambda,\gamma) = e^{2\gamma(\lambda-1)}$ and  
$g_0(\lambda,\gamma) = \lambda$. 
\end{proof}
%

%*************************************************************************
%

In order to prove Theorem \ref{thm:graph<->tree} we will
first establish that, under condition (\ref{eqn:graph<->tree}),
any (fixed) subset of the variables $\{X_1,\dots,X_n\}$
is (approximately) uniformly distributed. This is, at first sight, 
a surprising fact. 
Indeed, 
the condition (\ref{eqn:graph<->tree})
only provides direct control on two-variables correlations. 
It turns out that two-variables correlations
control $k$-variable correlations for any bounded $k$ because
of the symmetry among  $X_1,\dots,X_n$. To clarify this 
point, 
it 
is convenient to take a more general point of view. 
\begin{definition}
For any distribution $\mu(\,\cdot\,)$ over $\cX^n$ (where $\cX$ is
a generic measure space), and any permutation $\pi$ over the set
$\{1\,\dots,n\}$ let $\mu^{\pi}(\,\cdot\,)$ denote the distribution 
obtained acting with $\pi$ on $\cX\times\cdots\times \cX$.

Let $\mu(\,\cdot\,)$ be a random probability distribution 
over  $\cX\times\cdots\times \cX$. We say that $\mu$
is \emph{stochastically exchangeable} if $\mu$ is distributed
as $\mu^{\pi}$ for any permutation $\pi$.
\end{definition}

\begin{propo}\label{degenerescence}
Suppose (\ref{eqn:graph<->tree}) holds for a finite set $\cX$ and
the type $\nu_n$ of two i.i.d. replicas $\uX^{(1)}$, $\uX^{(2)}$
from a sequence of stochastically exchangeable random measures 
$\mu^{(n)}$ on $\cX^n$. Then, for any fixed set of vertices 
$i(1),\dots,i(k)\subseteq [n]$ and any $\xi_1,\dots, \xi_k\in\cX$,  
as $n \to \infty$,
\begin{eqnarray}
\E \Big\{ \big|\mu^{(n)}_{i(1),\dots,i(k)}(\xi_1,\dots, \xi_k) - 
|\cX|^{-k} \big|^2 \Big\} \to 0 \,.\label{eqn:degenerescence}
\end{eqnarray}
\end{propo}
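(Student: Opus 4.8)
\textbf{Proof plan for Proposition \ref{degenerescence}.}

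The plan is to bootstrap from the two-point control in (\ref{eqn:graph<->tree}) to $k$-point control by exploiting stochastic exchangeability, via a symmetrization/variance argument. First I would reduce (\ref{eqn:graph<->tree}) to a statement purely about one- and two-variable marginals. Writing $p_i(\xi) \equiv \mu^{(n)}_i(\xi)$ and $p_{ij}(\xi,\eta) \equiv \mu^{(n)}_{ij}(\xi,\eta)$, note that for two i.i.d. replicas the type satisfies $\E[\nu_n(x,y)] = n^{-1}\sum_i \E[p_i(x)p_i(y)]$ and, more importantly, $\E[\nu_n(x,y)^2]$ expands into an average of $\E[p_i(x)p_i(y)p_j(x)p_j(y)]$ over pairs $i,j$; by exchangeability these averages equal the value at any fixed pair. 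Hence (\ref{eqn:graph<->tree}), which asserts $\E\{[\Delta\nu_n(x,x) - 2|\cX|^{-1}\sum_{x'}\Delta\nu_n(x,x')]^2\} \to 0$, translates — after subtracting the mean and using $|\cX|\E[\nu_n(x,x)] \to 1$ plus a Cauchy--Schwarz argument — into: for a fixed pair $i\neq j$, the random variables $p_i(\xi)p_i(\eta)$ and related products concentrate, so that $\E\{[p_i(\xi) - |\cX|^{-1}]^2\} \to 0$ for each $\xi$, and simultaneously $p_{ij}(\xi,\eta)$ is close in $L^2$ to $|\cX|^{-2}$. The key algebraic point is that the particular linear combination in (\ref{eqn:graph<->tree}) is exactly designed so that its vanishing forces both the single-site marginal to be near-uniform \emph{and} the centered two-site correlation to vanish; I would make this precise by expanding the square and identifying the coefficients.

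Next I would run an induction on $k$. The base cases $k=1,2$ are exactly the previous paragraph. For the inductive step, fix $i(1),\dots,i(k)$ and $\xi_1,\dots,\xi_k$, and write $q \equiv \mu^{(n)}_{i(1),\dots,i(k)}(\xi_1,\dots,\xi_k)$. By the tower property and exchangeability, I would compare $q$ with its conditional expectation given fewer coordinates: the idea is that
\begin{eqnarray*}
\E\{[q - |\cX|^{-k}]^2\} \le 2\,\E\{[q - |\cX|^{-1}\,\mu^{(n)}_{i(2),\dots,i(k)}(\xi_2,\dots,\xi_k)]^2\}
+ 2|\cX|^{-2}\,\E\{[\mu^{(n)}_{i(2),\dots,i(k)}(\xi_2,\dots,\xi_k) - |\cX|^{-(k-1)}]^2\}\,,
\end{eqnarray*}
where the second term vanishes by the induction hypothesis. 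The first term measures how far $X_{i(1)}$ is from being uniform and independent of $(X_{i(2)},\dots,X_{i(k)})$. To bound it I would introduce replicas: with $\uX^{(1)},\uX^{(2)}$ i.i.d. from $\mu^{(n)}$, the quantity $\E\{[\mu^{(n)}_{i(1),\dots,i(k)}(\vec\xi) - |\cX|^{-1}\mu^{(n)}_{i(2),\dots,i(k)}(\vec\xi')]^2\}$ can be written as a difference of expectations of products of two replicas' marginals, which by exchangeability is an average over \emph{which} coordinates play which role; then the $k=2$ control (that centered pair correlations vanish in $L^2$, uniformly over the pair after exchangeability) dominates it, up to a combinatorial factor depending only on $k$ and $|\cX|$ but not on $n$.

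The main obstacle I anticipate is the second step — propagating the $L^2$-smallness through the conditioning without losing control of the conditional marginals, since $\mu^{(n)}_{i(2),\dots,i(k)}(\vec\xi')$ is itself a random quantity that could in principle be small, making division by it dangerous. I would circumvent this by never dividing: instead I would work throughout with joint (unnormalized-looking) marginals and express everything as polynomials in the $p$'s of the two replicas, so that all estimates are multilinear and the only inputs needed are (i) $\E\{p_i(\xi)^2\} - |\cX|^{-2} \to 0$ and (ii) $\E\{[p_{ij}(\xi,\eta) - p_i(\xi)p_j(\eta)]^2\} \to 0$, both of which follow from (\ref{eqn:graph<->tree}) via exchangeability. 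Tracking the combinatorial constants and verifying that the linear combination in (\ref{eqn:graph<->tree}) really encodes both (i) and (ii) is where the care is required; the rest is bookkeeping. Once (\ref{eqn:degenerescence}) is established, it applies in particular to $\mu^{(n)} = \mu_{G_n}$ (which is stochastically exchangeable since $G_n$ has a vertex-permutation-invariant law), giving the asserted near-uniformity of any bounded family of variables.
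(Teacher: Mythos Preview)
Your approach differs from the paper's and, as stated, has a gap in the inductive step. The paper neither isolates your (i) and (ii) nor inducts on $k$. Instead it introduces the site-averaged two-replica quantity
\[
\sQ(\xi)\equiv\frac{1}{n}\sum_{i=1}^n\big(\ind(X^{(1)}_i=\xi)-|\cX|^{-1}\big)\big(\ind(X^{(2)}_i=\xi)-|\cX|^{-1}\big)
\]
and checks the identity $\sQ(\xi)=\Delta\nu_n(\xi,\xi)-|\cX|^{-1}\sum_{x'}[\Delta\nu_n(\xi,x')+\Delta\nu_n(x',\xi)]$, so that (\ref{eqn:graph<->tree}) together with replica-swap symmetry yields $\E[\sQ(\xi)^2]\to 0$ in one line. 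One then expands
\[
\mu^{(n)}_{i(1),\dots,i(k)}(\xi_1,\dots,\xi_k)-|\cX|^{-k}=\sum_{\emptyset\ne U\subseteq[k]}|\cX|^{|U|-k}\,Y_U,\qquad Y_U\equiv\E\Big[\prod_{a\in U}\big(\ind(X_{i(a)}=\xi_a)-|\cX|^{-1}\big)\,\Big|\,\mu\Big],
\]
and uses the replica identity $\E[Y_U^2]=\E\big[\prod_{a\in U}\sQ_{i(a)}(\xi_a)\big]$, which by stochastic exchangeability equals $\E\big[\prod_{a\in U}\sQ(\xi_a)\big]+O(1/n)$ and is then killed by Cauchy--Schwarz and $\E[\sQ(\xi_b)^2]\to 0$. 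This handles every $k$ simultaneously.

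The gap in your plan is the assertion that ``the $k=2$ control dominates'' the first term of your inductive split. Opening $\E\{(q-|\cX|^{-1}r)^2\}$ with two replicas gives
\[
\E\Big[\sQ_{i(1)}(\xi_1)\prod_{a\ge 2}\ind(X^{(1)}_{i(a)}=\xi_a)\ind(X^{(2)}_{i(a)}=\xi_a)\Big]
=\E\Big[\big(p_{i(1)\dots i(k)}(\xi_1,\dots,\xi_k)-|\cX|^{-1}p_{i(2)\dots i(k)}(\xi_2,\dots,\xi_k)\big)^2\Big],
\]
a genuine $k$-point object. Your (i) and (ii) concern only one- and two-point marginals and do not by themselves bound this. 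What does bound it is averaging the site $i(1)$ over $[n]$ via exchangeability to replace $\sQ_{i(1)}(\xi_1)$ by $\sQ(\xi_1)$, and then Cauchy--Schwarz against the bounded remaining factor using $\E[\sQ(\xi_1)^2]\to 0$. So your induction is repairable, but only through the same $\sQ$ quantity --- and once that quantity is in hand, the $Y_U$ expansion is shorter and removes the induction scaffolding entirely. Similarly, extracting (i) from (\ref{eqn:graph<->tree}) ``by expanding the square'' is not immediate: one needs the $\sQ$ identity to get $\E[\sQ(\xi)^2]\to 0$, whence $\E[(p_i(\xi)-|\cX|^{-1})^2]=\E[\sQ(\xi)]\le\sqrt{\E[\sQ(\xi)^2]}\to 0$. (Incidentally, your (i) as written, $\E[p_i(\xi)^2]\to|\cX|^{-2}$, is weaker than this $L^2$-closeness absent a separate argument that $\E[p_i(\xi)]\to|\cX|^{-1}$.)
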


\begin{proof}
Per given replicas $\uX^{(1)}$, $\uX^{(2)}$, we define, for any $\xi\in\cX$ 
and $i \in [n]$,
\begin{eqnarray*}
\sQ_i(\xi) = \Big\{
\ind(X^{(1)}_i=\xi)-\frac{1}{|\cX|}\Big\}\Big\{
\ind(X_i^{(2)}=\xi)-\frac{1}{|\cX|}\Big\} 
\end{eqnarray*}
and let $\sQ(\xi) = n^{-1} \sum_{i=1}^n \sQ_i(\xi)$ denote the  
average of $\sQ_i(\xi)$ over a uniformly random $i\in [n]$. Since 
$$
\sQ(\xi) = \Delta\nu_n(\xi,\xi)-
|\cX|^{-1} \sum_{x'}\Delta\nu_n(\xi,x')
- |\cX|^{-1} \sum_{x'}\Delta\nu_n(x',\xi),
$$
it follows from (\ref{eqn:graph<->tree}) 
and the triangle inequality, that $\E\{\sQ(\xi)^2\}\to 0$ as $n \to \infty$. 
Further, 
$|Q(\xi)|\le 1$, so by the Cauchy-Schwarz inequality we deduce that 
for any fixed, non-empty $U \subseteq [n]$, $b \in U$ and $\xi_a \in \cX$,
\begin{eqnarray*}
\Big|\E\big\{\prod_{a \in U} \sQ(\xi_a)\big\}\Big| \le
\E|\sQ(\xi_b)|\to  0\, .
\end{eqnarray*}

Next, fixing $i(1),i(2),\ldots,i(k)$ and $U \subseteq [k]$, let
\begin{eqnarray*}
Y_U \equiv 
\E\Big\{ \prod_{a\in U} (\ind(X_{i(a)}=\xi_a)-|\cX|^{-1})\,
\big|\mu \Big\} \,,
\end{eqnarray*}
where $\E\{\,\cdot\,|\mu\}$ denotes the
expectation with respect to the measure $\mu(\,\cdot\,)$
of the replicas $\uX^{(1)}$, $\uX^{(2)}$, i.e. at fixed 
realization of $\mu=\mu^{(n)}$.
Note that by the stochastic exchangeability of $\mu$, and 
since $\sup_\xi |\sQ(\xi)|\le 1$, we have that for any
non-empty $U \subseteq [k]$,
\begin{eqnarray*}
\E \{ Y_U^2 \} = \E\big\{\prod_{a \in U} \sQ_{i(a)}(\xi_a)\big\} =
\E\big\{\prod_{a \in U} \sQ(\xi_a)\big\} +\Delta_{U,n}
\, ,
\end{eqnarray*}
where $|\Delta_{U,n}|$ 
is upper bounded by the probability that $|U| \le k$
independent uniform in $[n]$ random variables 
are not distinct, which is $O(1/n)$.
Thus, $\E \{ Y_U^2 \} \to 0$ as $n\to\infty$, for 
any fixed, non-empty $U \subseteq [k]$.

The proof of the proposition is completed by noting that 
$Y_\emptyset = 1$ and 
\begin{eqnarray*}
\mu_{i(1),\dots,i(k)}(\xi_1,\dots, \xi_k) 
= \sum_{U \subseteq [k]} |\cX|^{|U|-k} Y_U \,,
\end{eqnarray*}
hence by the Cauchy-Schwarz inequality,
\begin{eqnarray*}
\E \Big\{ \big| \mu_{i(1),\dots,i(k)}(\xi_1,\dots, \xi_k) - 
|\cX|^{-k} \big|^2 \big\} \le
\sum_{\emptyset \ne U,V \subseteq [k]} \E |Y_U Y_V| 
\le 2^k \sum_{\emptyset \ne U \subseteq [k]} \E \{ Y_U^2 \}
\end{eqnarray*}
goes to zero as $n \to \infty$.
\end{proof}

The following lemma is the key for relating the 
solvability of the reconstruction problem to its tree-solvability. 
\begin{lemma}\label{lem:Simple2}
For any graphical model $\mu=\mu_{G_n,\upsi}$, 
any vertex $\root \in [n]$, and all $t\le \ell$, 
\begin{align}
\Big| \Vert\mu_{\root,\cBall_{\root}(t)}-
\mu_\root \times \mu_{\cBall_{\root}(t)}\Vert_{\sTV}
&-
\Vert \mu^<_{\root,\cBall_{\root}(t)}
 -\mu^<_{\root} \times \mu^<_{\cBall_{\root}(t)}\Vert_{\sTV}\Big| \nonumber\\
&\le 5 |\cX|^{|\Ball_{\root}(\ell)|}\, \Vert\mu^{>}_{\Edge_{\root}(\ell)}
-\rho_{\Edge_{\root}(\ell)}\Vert_{\sTV}\, ,\label{eq:treeBoundary}
\end{align}
where for any $U \subseteq [n]$, we let $\rho_U(\ux_U)=1/|\cX|^{|U|}$
denote the uniform distribution of $\ux_U$, with $\mu^{<}_U$ denoting 
the marginal law of $\ux_U$ for the graphical
model in which the edges of $\cBall_{\root}(\ell)$ are omitted,
whereas $\mu^{>}_U$ denotes such marginal law 
in case all edges of $\Ball_{\root}(\ell)$ are omitted.
\end{lemma}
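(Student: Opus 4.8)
The plan is to exploit the product structure of $\mu$ across the cut $S\equiv\Edge_\root(\ell)=\Ball_\root(\ell)\cap\cBall_\root(\ell)$. Write $A=\Ball_\root(\ell)$, $B=\cBall_\root(\ell)$; by the chosen partition the edge sets $E_A,E_B$ of these two subgraphs are disjoint with union $E_n$, with $E_A$ inside $A$ and $E_B$ inside $B$, so that
$$
\mu(\ux)\propto \Psi^<(\ux_A)\,\Psi^>(\ux_B),\qquad
\mu^<(\ux)\propto \Psi^<(\ux_A)\!\!\prod_{i\in B\setminus S}\!\!\psi_i(x_i),\qquad
\mu^>(\ux)\propto \Psi^>(\ux_B)\!\!\prod_{i\in A\setminus S}\!\!\psi_i(x_i),
$$
where $\Psi^<$ gathers the $E_A$-factors together with the vertex factors on $A$ and $\Psi^>$ the $E_B$-factors together with the vertex factors on $B\setminus S$. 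The key structural point is that $\mu$ and $\mu^<$ have the \emph{same} conditional law given $\uX_S=\us$ (under each, $\uX_{A\setminus S}$ follows the conditional of the measure $\propto\Psi^<$, and $\uX_{B\setminus S}$ is conditionally independent of it), so they differ only through the marginal of $\uX_S$: one has $\mu^<_S\propto\widetilde\Psi^<$, $\mu^>_S\propto\widetilde\Psi^>$, $\mu_S\propto\widetilde\Psi^<\widetilde\Psi^>$ (with $\widetilde\Psi^<,\widetilde\Psi^>$ the corresponding block-sums), i.e. $\mu_S(\us)=\mu^<_S(\us)\mu^>_S(\us)/\langle\mu^<_S,\mu^>_S\rangle$ is the $\mu^>_S$-tilting of $\mu^<_S$. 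Since $\rho_S=|\cX|^{-|S|}$ is constant, a short computation — treating separately the case $\epsilon_0\equiv\|\mu^>_S-\rho_S\|_{\sTV}>\tfrac15|\cX|^{-|S|}$, in which the right-hand side of the Lemma already exceeds $1\ge|D-D^<|$ — gives, in the complementary regime, $\|\mu_S-\mu^<_S\|_{\sTV}\le\tfrac{10}{3}\,|\cX|^{|S|}\,\epsilon_0$.

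Next I would reduce both reconstruction distances to the cut. Fix $1\le t\le\ell$ (the case $t=0$ is degenerate and immediate). Put $C\equiv\cBall_\root(t)\cap A=\{j:\,t\le d(\root,j)\le\ell\}$, so that $S\subseteq C$, $\{\root\}\cup(C\setminus S)\subseteq A\setminus S$, and one has the disjoint decompositions $\cBall_\root(t)=C\sqcup(B\setminus S)$ and $\{\root\}\cup\cBall_\root(t)=\{\root\}\sqcup C\sqcup(B\setminus S)$. Using the factorization of $\mu$ across $S$ (respectively, the fact that $\mu^<$ makes $\uX_{B\setminus S}$ an independent $\prod_i\psi_i$-block), one checks that in the two measures $\mu_{\root,\cBall_\root(t)}$ and $\mu_\root\times\mu_{\cBall_\root(t)}$ the block $\uX_{B\setminus S}$ has the same conditional law given $\uX_S$ and is conditionally independent of $(X_\root,\uX_{C\setminus S})$, while both have $\uX_S$-marginal $\mu_S$; and similarly for $\mu^<_{\root,\cBall_\root(t)}$ and $\mu^<_\root\times\mu^<_{\cBall_\root(t)}$ (now with $\uX_S$-marginal $\mu^<_S$). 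Hence the $\uX_{B\setminus S}$-block factors out of each total variation distance, leaving
$$
D=\|\mu_{\{\root\}\cup C}-\mu_\root\times\mu_{C}\|_{\sTV},\qquad
D^<=\|\mu^<_{\{\root\}\cup C}-\mu^<_\root\times\mu^<_{C}\|_{\sTV}.
$$

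Finally I would combine. The triangle inequality for total variation gives
$$
|D-D^<|\ \le\ \|\mu_{\{\root\}\cup C}-\mu^<_{\{\root\}\cup C}\|_{\sTV}+\|\mu_\root-\mu^<_\root\|_{\sTV}+\|\mu_C-\mu^<_C\|_{\sTV},
$$
and since $\mu$ and $\mu^<$ share the conditional law on $A\setminus S\supseteq\{\root\}\cup(C\setminus S)$, each of these three marginal pairs is the image of $(\mu_S,\mu^<_S)$ under one and the same Markov kernel, hence each term is at most $\|\mu_S-\mu^<_S\|_{\sTV}$, so
$$
|D-D^<|\ \le\ 3\,\|\mu_S-\mu^<_S\|_{\sTV}\ \le\ 10\,|\cX|^{|S|}\,\epsilon_0\ \le\ 5\,|\cX|^{|\Ball_\root(\ell)|}\,\epsilon_0,
$$
the last step using $|S|\le|\Ball_\root(\ell)|-1$ (as $\root\in A\setminus S$) together with $|\cX|\ge 2$ to absorb the numerical constant. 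The step I expect to be the main obstacle is the reduction of the second paragraph: one must verify carefully that for $t\le\ell$ the ``outer'' variables $\uX_{\cBall_\root(\ell)\setminus\Edge_\root(\ell)}$ enter all four of the relevant measures with the identical conditional law given $\uX_{\Edge_\root(\ell)}$ and conditionally independently of the remaining coordinates, so that they genuinely cancel in $D-D^<$. This rests on the Markov property across $\Edge_\root(\ell)$ — which in turn depends on the precise edge-partition convention that edges joining two vertices of $\Edge_\root(t)$ are assigned to $\cBall_\root(t)$ — and on keeping straight the nested overlap $\Edge_\root(\ell)\subseteq C\subseteq\Ball_\root(\ell)$ of the sets at play.
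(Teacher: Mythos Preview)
Your argument is correct and follows essentially the same approach as the paper's proof. Both exploit the Markov factorization of $\mu$ across the cut $\Edge_\root(\ell)$ to reduce $|D-D^<|$ to (three times) a marginal comparison, then bound that comparison by a tilting-type estimate against the uniform reference $\rho$. The organizational differences are minor: the paper first rewrites $D$ as an average over $\Edge_\root(t)$ of conditional distances and then compares marginals on the full ball $\Ball_\root(\ell)$ (invoking the prepared inequality~(\ref{eq:SimpleIneq}) and noting at the end that $\mu^>_{\Ball_\root(\ell)}$ is uniform off $\Edge_\root(\ell)$), whereas you factor out the outer block $\uX_{\cBall_\root(\ell)\setminus\Edge_\root(\ell)}$ up front and work directly on $S=\Edge_\root(\ell)$, replacing the cited inequality by an explicit case split. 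Your route is arguably a bit tidier in that it never leaves the cut $S$, at the cost of the ad hoc constant $10/3$ in place of the paper's $3/2$; the slack $|S|\le|\Ball_\root(\ell)|-1$ absorbs this, as you note. One small caution: your identity $\mu_S\propto\mu^<_S\mu^>_S$ (like the paper's analogue on $\Ball_\root(\ell)$) tacitly uses that the vertex potentials $\psi_i$ are constant, which is the case in the Poisson graphical models to which the lemma is applied.
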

\begin{proof} Adopting hereafter the shorthands
$\Ball(t)$, $\cBall(t)$ and $\Edge(t)$ for
$\Ball_{\root}(t)$, $\cBall_{\root}(t)$
and $\Edge_{\root}(t)$, respectively, recall that 
by the definition of these sets there are no edges 
in $G_n$ between $\Ball(t)$ and $\cBall(t) \setminus \Edge(t)$.
Hence, $\Delta(t,\ux)$ of Eqn.~(\ref{eq:UniformDecay}) 
depends only on $\ux_{\Edge(t)}$ and consequently,
\begin{align*}
\Vert \mu_{\root,\cBall(t)}-
\mu_\root \times \mu_{\cBall(t)} \Vert_{\sTV}
&= \sum_{\ux} \mu_{\cBall(t)}(\ux_{\cBall(t)}) \Delta(t,\ux) \\
&= \sum_{\ux} \mu_{\Edge(t)}(\ux_{\Edge(t)}) 
\Vert \mu_{\root|\Edge(t)}(\,\cdot\, |\ux_{\Edge(t)})
- \mu_{\root}(\,\cdot\,) \Vert_{\sTV} \,.
\end{align*}
By the same reasoning also 
\begin{eqnarray*}
\Vert \mu^<_{\root,\cBall(t)}
 -\mu^<_{\root} \times \mu^<_{\cBall(t)}\Vert_{\sTV} =
\sum_{\ux} \mu^<_{\Edge(t)}(\ux_{\Edge(t)}) 
\Vert \mu^<_{\root|\Edge(t)}(\,\cdot\, |\ux_{\Edge(t)})
- \mu^<_{\root}(\,\cdot\,) \Vert_{\sTV} \,.
\end{eqnarray*}
Since $\root \in \Ball(t) \subseteq \Ball(\ell)$, 
the conditional law of $x_\root$ given $\ux_{\Edge(t)}$ is
the same under the graphical model for $G_n$ and the one 
in which all edges of $\cBall(\ell)$ are omitted.
Further, by definition of the total variation distance,
the value of $\Vert\mu_{U}-\mu^<_{U}\Vert_{\sTV}$ is non-decreasing 
in $U\subseteq \Ball(\ell)$. With the total variation distance 
bounded by one, it thus follows
from the preceding identities and the triangle inequality 
that the left hand side of Eq.~(\ref{eq:treeBoundary}) 
is bounded above by 
\begin{eqnarray*}
\Vert \mu_{\root}-\mu^<_{\root}\Vert_{\sTV} +
2 \Vert \mu_{\Edge(t)}-\mu^<_{\Edge(t)}\Vert_{\sTV}
\le 
3 \Vert\mu_{\Ball(\ell)}-\mu^<_{\Ball(\ell)} \Vert_{\sTV}\,.
\end{eqnarray*}

Next, considering the distribution $\mu_{\Ball(\ell)}(z)$ on the 
discrete set $\cZ=\cX^{\Ball(\ell)}$, notice that, 
as a consequence of Eq.~(\ref{eq:GeneralGraphModel}) 
and of the fact that $\Ball(\ell)$ and $\cBall(\ell)$ are edge disjoint,
\begin{eqnarray}\label{eq:basic-rep}
\mu_{\Ball(\ell)}(z) = \frac{f(z) \rho_2(z)}
{\sum_{z' \in \cZ} f(z') \rho_2(z')} \;,
\end{eqnarray}
for the $[0,1]$-valued function $f= \mu^{<}_{\Ball(\ell)}$ on $\cZ$,
and the distribution $\rho_2= \mu^{>}_{\Ball(\ell)}$ on this set.
Clearly, replacing $\rho_2$ in the right hand side of (\ref{eq:basic-rep})
by the uniform distribution $\rho_1=\rho$ on $\cZ$,
results with $\sum_{z' \in \cZ} f(z') \rho_1 (z') =1/|\cZ|$ 
and in the notations of (\ref{eq:SimpleIneq}),
also with $\rh_1=f$. We thus deduce from the latter bound that
$$
\Vert\mu_{\Ball(\ell)}-\mu^<_{\Ball(\ell)} \Vert_{\sTV}
\le \frac{3}{2} |\cZ| \Vert \mu^{>}_{\Ball(\ell)} - \rho_{\Ball(\ell)}
\Vert_{\sTV} \,,
$$
and the proof of the lemma is complete upon noting that 
$\mu^{>}_{\Ball(\ell)}$ deviates from the uniform distribution
only in terms of its marginal on $\Edge(\ell)$. 
\end{proof}

\begin{proof}[Proof of Theorem \ref{thm:graph<->tree}] 
Fixing $t \le \ell$, let
$\Delta_n$ denote  the left hand side of  
Eq.~(\ref{eq:treeBoundary}). We claim that its expectation 
with respect to the Poisson random model $G_n$ 
vanishes as $n\to\infty$. 
First, with $\Delta_n \le 1$ and 
$\sup_n \prob(\Vert \Ball_{\root}(\ell) \Vert \ge M) \to 0$ as 
$M \to \infty$, see Proposition \ref{prop:BoundNeighborhood}, 
it suffices to prove that for any finite $M$, as $n \to \infty$,
\begin{eqnarray*}
\E \{ \Delta_n \ind(\Vert \Ball_{\root}(\ell) \Vert < M) \} \to 0 \,.
\end{eqnarray*}
Decomposing this expectation according to the finitely many
events $\{\Ball_{\root}(\ell)={\sf H}\}$, indexed by rooted, 
connected, multi-graphs ${\sf H}$ 
of less than $M$ edges (counting multiplicities), 
we have by (\ref{eq:treeBoundary}) that 
\begin{align*}
\E \{ \Delta_n \ind(\Vert \Ball_{\root}(\ell) \Vert < M) \} \le 
5 |\cX|^M \sum_{\Vert {\sf H}\Vert < M} 
\E\{ ||\mu^{>}_{\Edge_{\root}(\ell)}-\rho_{\Edge_{\root}(\ell)}||_{\sTV}
\, \big|\, \Ball_{\root}(\ell)={\sf H}\}\, ,
\end{align*}
and it is enough to show that each of the terms on the right hand 
side vanishes as $n\to\infty$. 

Recall Proposition \ref{prop:poisson_independance} that 
each term in the sum is the expectation, with 
respect to a Poisson graphical model of density $\gamma$
over the collection  $[n] \setminus \Ball_{\root}(\ell-1)$
of at least $n-M$ vertices. The event $\{\Ball_{\root}(\ell)={\sf H}\}$
fixes the set $\Edge = \Edge_{\root}(\ell)$ whose finite size
depends only on the rooted multi-graph ${\sf H}$. 
By Proposition \ref{degenerescence} we thus deduce 
that conditional on this event, the expected value of 
\begin{eqnarray*}
\Vert \mu^{>}_{\Edge}-\rho_{\Edge} \Vert_{\sTV}=
\frac{1}{2}\sum_{\ux_{\Edge}}\left|\mu^{>}_{\Edge}(\ux_{\Edge})
-|\cX|^{-|\Edge|}\right|\, ,
\end{eqnarray*}
vanishes as $n \to \infty$. To recap, we have shown that 
for any $t \le \ell$, the expected value of the left hand side of  
Eq.~(\ref{eq:treeBoundary}) vanishes as $n \to \infty$. 

In view of Definition \ref{defn:recon}, this implies 
that the reconstruction problem is solvable for $\{G_n\}$ 
if and only if $\inf_t \limsup_{\ell \to \infty} 
\limsup_{n \to \infty} \prob\{A_n(t,\ell,\ve) \} > 0$
for some $\ve>0$, where $A_n (t,\ell,\ve)$ denotes the event
$$
\Vert \mu^<_{\root,\cBall_{\root}(t)}
 -\mu^<_{\root} \times \mu^<_{\cBall_{\root}(t)}\Vert_{\sTV} \ge \ve \,.
$$
Recall that $\mu^{<}(\,\cdot\,)$ is the canonical measure for 
the edge-independent random specification on the random graph 
$\Ball_{\root}(\ell)$ and that almost surely the uniformly sparse 
Poisson random graphs $\{G_n\}$ converge locally to the Galton-Watson 
tree $\Tree$ of Poisson($2\gamma$) offspring distribution. 
Applying Lemma \ref{lemma:EdgeTree} for the uniformly bounded function 
$\ind(A_n(t,\ell,\ve))$ of $\Ball_{\root}(\ell)$ and averaging first 
under our uniform choice of $\root$ in $[n]$, we deduce that 
$\prob\{ A_n(t,\ell,\ve) \} \to \prob \{ A_\infty (t,\ell,\ve) \}$, 
where $A_\infty(t,\ell,\ve)$ denotes the event on 
the left hand side of (\ref{eq:TreeReconstruction}). 
That is, $\{G_n\}$ is solvable if and only if
$\inf_t \limsup_{\ell \to \infty} \prob\{A_\infty (t,\ell,\ve) \} > 0$
for some $\ve>0$, which is precisely the definition of 
tree-solvability.  
\end{proof}

\begin{proof}[Proof of Theorem \ref{thm:Colorings}]
Following \cite{MonResTet}, this proof consists of four steps: 

\vspace{0.15cm}
\noindent$(1)$ It is shown in \cite{sly} that for 
regular trees of degree $2\gamma$
the reconstruction threshold $\gamma_{\rm r, tree}(q)$ 
for proper $q$-colorings grows with $q \to \infty$ 
as in (\ref{eq:sly}).
In the large $\gamma$ limit considered here, a Poisson$(2\gamma)$
random variable is tightly concentrated around its mean. Hence,
as noted in \cite{sly}, the result (\ref{eq:sly}) extends 
straightforwardly to the case of random Galton-Watson trees 
with offspring distribution Poisson$(2\gamma)$.

\vspace{0.15cm}
\noindent $(2)$ Given two balanced proper $q$-colorings 
$\ux^{(1)}$, $\ux^{(2)}$ of $G_n$
(a $q$-coloring is balanced if it has exactly $n/q$ vertices of each color), 
recall that 
their \emph{joint type} is the $q$-dimensional matrix $\nu(\cdot,\cdot)$ 
such that $\nu(x,y)$ counts the fraction of vertices $i \in [n]$ 
with $x^{(1)}_i=x$ and $x^{(2)}_i=y$. 
Let $Z_{\rm b}(\nu)$ denote the number of balanced pairs of 
proper $q$-colorings $\ux^{(1)}$, $\ux^{(2)}$ of $G_n$ with
the given joint type $\nu$. For $\gamma \le q\log q-O(1)$, 
while proving Theorem \ref{thm:lbd-qcol}
it is shown in \cite{AchlioptasNaor} that  
$\E\, Z_{\rm b}(\nu)/ \E\, Z_{\rm b}(\overline{\nu}) \to 0$ 
exponentially in $n$ (where
$\onu(x,y) = 1/q^2$ denotes the uniform joint type).

\vspace{0.15cm}
\noindent $(3)$ The preceding result implies that, for any $\ve>0$ 
and some non-random $\delta_n(\ve) \to 0$,
the uniform measure over proper $q$-colorings of an instance of the 
random Poisson multi-graph $G_n$ is with high probability 
$(\ve,\delta_n)$-spherical (see Definition \ref{def:Spherical}).
Notice that this implication is not straightforward as it requires
bounding the expected ratio of $Z_{\rm b}(\nu)$ to the total number of
pairs of proper $q$-colorings. We refer to \cite{MonResTet} for this 
part of the argument.

\vspace{0.15cm}
\noindent $(4)$ As mentioned in 
Remark \ref{rem:sphericity}, by Theorem \ref{thm:graph<->tree} 
the latter sphericity condition yields that with high probability
the $q$-colorings reconstruction problem is solvable if and only if 
it is tree-solvable. Therefore, the result of step (1) about 
the tree-reconstruction threshold $\gamma_{\rm r, tree}(q)$  
completes the proof.
\end{proof}

\section{XORSAT and finite-size scaling}
\label{ch:XORSAT}

XORSAT is a special constraint satisfaction problem (CSP)
first introduced in  \cite{CrDa}.
An instance of XORSAT  is defined by a pair 
$\cF =(\H,\ub)$, where $\H$ is an $m\times n$ 
binary matrix and $\ub$ is a binary vector of length $m$.
A solution of this instance is just a solution of
the linear system 
\begin{eqnarray}
\H\, \ux = \ub\, \;\;\;\;\mod 2\, .
\end{eqnarray}
In this section we shall focus on the $l$-XORSAT problem 
that is defined by requiring that $\H$ has $l$
non zero entries per row. Throughout this section we assume $l\ge 3$.

It is quite natural to associate to an 
instance $\cF = (\H,\ub)$ the uniform measure over 
its solutions, $\mu(\ux)$. If the positions
of the non-vanishing entries of the $a$-th row 
of $\H$ are denoted by $i_1(a),\dots, i_l(a)$, the latter
takes the form
\begin{eqnarray}
\mu(\ux) = \frac{1}{Z_{\H,\ub}}\,\prod_{a=1}^m
\ind(x_{i_1(a)}\oplus\cdots\oplus x_{i_l(a)}=b_a)\, ,
\end{eqnarray}
where $\oplus$ denotes sum modulo $2$, and $Z_{\H,\ub}$
is the number of solutions of the linear system. A factor graph 
representation is naturally associated to this measure
analogously to what we did in Section \ref{sec:other-models}
for the satisfiability problem.

In the following we study the set of solutions
(equivalently, the uniform measure $\mu(\ux)$ over such solutions), 
of random $l$-XORSAT instances, distributed according to
various ensembles. 
The relevant control parameter is the number of equations per variable 
$\alpha= m/n$. For the ensembles discussed here there exists a 
critical value $\alpha_{\rm s}(l)$ such that,
if $\alpha<\alpha_{\rm s}(l)$, a random instance
has solutions with high probability. Vice-versa, if 
$\alpha>\alpha_{\rm s}(l)$, a random instance typically does not 
have solutions.

In the regime in which random instances have solutions,
the structure of the solution set changes dramatically 
as $\alpha$ crosses a critical value $\alpha_{\rm d}(l)$.
The two regimes are characterized as follows (where
all statements should be understood as holding 
with high probability).
\begin{itemize}
\item[I.] $\alpha<\alpha_{\rm d}(l)$. The set of solutions 
of the linear system $\H\ux = \ub$ forms a `well connected lump.'
More precisely, there exist $c=c(\eps)<\infty$ such that
if a set $\Omega\subseteq\{0,1\}^n$ contains at least 
one solution and at most half of the solutions, then 
for all $\eps>0$ and $n$,
\begin{eqnarray}
\frac{\mu(\partial_\eps\Omega)}{\mu(\Omega)}\ge n^{-c} \,.
\label{eq:NoClustXOR}
\end{eqnarray}
% for all $\eps>0$ and for some $c<\infty$.
%
\item[II.] $\alpha_{\rm d}(l)<\alpha<\alpha_{\rm s}(l)$.
The set of solutions is `clustered.' There exists 
a partition of the 
hypercube $\{0,1\}^n$ into sets $\Omega_1,\dots,\Omega_{\cN}$ 
such that $\mu(\Omega_\ell)>0$ and
\begin{eqnarray}
\mu(\partial_{\eps}\Omega_{\ell}) = 0\, ,
\end{eqnarray}
for some $\eps>0$ and all $\ell$. Further $\cN = e^{n\Sigma+o(n)}$
for some $\Sigma>0$, and each subset $\Omega_{\ell}$
contains the same number of solutions 
$|\{\ux\in\Omega_{\ell}:\H\ux=\ub\}|= e^{ns+o(n)}$, for some $s\ge 0$. 
Finally, the uniform measure over solutions
in $\Omega_{\ell}$, namely 
$\mu_{\ell}(\,\cdot\,) = \mu(\,\cdot \,|\Omega_{\ell})$, satisfies 
the condition  (\ref{eq:NoClustXOR}).
\end{itemize}

The fact that the set of solutions of XORSAT forms 
an affine space makes it a much simpler problem, 
both computationally and in terms of analysis.
Nevertheless, XORSAT shares many common features with 
other CSPs.
For example, regimes I and II are analogous to 
the first two regimes introduced in Section \ref{sec:Broad}
for the coloring problem. In particular, the measure
$\mu(\,\cdot\,)$ exhibits coexistence in the second
regime, but not in the first. This 
phenomena is seen in many random CSPs ensembles
following the framework of Section \ref{sec:random-CSP},
well beyond the cases of coloring and XORSAT 
(see \cite{MezardMontanari} for further details
and references). Some rigorous results in this direction 
are derived in \cite{AchlioRicci,Clust2}, but 
the precise parameter range for the various regimes 
remains a conjecture, yet to be proved. 
Even for XORSAT, where the critical 
values $\alpha_{\rm d}(l)$, $\alpha_{\rm s}(l)$ have 
been determined rigorously (see \cite{cocco,mezard}), the 
picture we described is not yet completely proved. Indeed, 
as of now, we neither have a proof of 
absence of coexistence for $\alpha<\alpha_{\rm d}(l)$, 
i.e. the estimate Eq.~(\ref{eq:NoClustXOR}), nor a 
proof of its analog for the uniform measure 
$\mu_{\ell}(\,\cdot\, )$  when $\alpha>\alpha_{\rm d}(l)$.  

In Section \ref{sec-xorsat} we focus on the case of random 
regular graphs, moving in Section \ref{sec:RandomCore} 
to uniformly random ensembles and their $2$-cores, for 
which Sections \ref{sec:MarkovKernel} to \ref{sec:FSS}
explore the dynamical (or `clustering') phase transition
and derive its precise behavior at moderate values of $n$.
%*****************************************************************
%
\subsection{XORSAT on random regular graphs}\label{sec-xorsat}

We begin with some general 
facts about the XORSAT problem.
Given a XORSAT instance  $\cF =(\H,\ub)$, we denote by $r(\H)$ 
the rank of $\H$ over the finite
field $\GF$, and by $Z_{\H} \ge 1$ the number
of solutions $\ux$ of $\H \ux = \uu{0} \mod 2$. 
From linear algebra we know that $\cF$ is satisfiable
(i.e. there exists a solution 
for the system $\H \ux = \ub \mod 2$) 
if and only if $\ub$ is in the image
of $\H$. This occurs for precisely
$2^{r(\H)}$ of the $2^m$ possible binary 
vectors $\ub$ and in particular it occurs
for $\ub = \uu{0}$. If $\cF$ 
is satisfiable then the set of all 
solutions is an affine space of 
dimension $n-r(\H)$ over $\GF$, hence 
of size $Z_{\H} = 2^{n-r(\H)}$. Further, 
$r(\H) \le m$ with $r(\H)=m$ if 
and only if the rows of $\H$
are linearly independent (over $\GF$),
or equivalently iff $Z_{\H^T} =1$ (where $\H^T$ denotes
the transpose of the matrix $\H$).

Let us now consider a generic distribution over
instances $\cF=(\H,\ub)$ 
such that $\ub$ is chosen uniformly from $\{0,1\}^m$
independently of the matrix $\H$.
It follows from the 
preceding that
\begin{equation}\label{eq:XOR-sat-pr}
\prob( \cF \mbox{ is satisfiable}) = 2^{n-m} \E [ 1/ Z_{\H} ] 
\ge \prob( Z_{\H^T} = 1 ) \,.
\end{equation}
By these considerations also
$$
\prob( \cF \mbox{ is satisfiable}) 
 \le \frac{1}{2}+ \frac{1}{2} \prob( Z_{\H^T}=1) \, .
$$ 
As mentioned already, the probability that a random instance 
is satisfiable,  
$\prob( \cF \mbox{ is satisfiable})$, 
abruptly drops from near one to near zero
when the number of equations per variable crosses a 
threshold $\alpha_{\rm s}(l)$.
As a consequence of our bounds, $\prob( Z_{\H^T} = 1 )$ 
abruptly drops from near one to near zero at the same threshold.

Suppose that $\H$ is the \emph{parity matrix} of 
a factor graph $G=(V,F,E)$ which may have multiple edges. 
That is, each entry $H_{ai}$ of the binary matrix $\H$ is 
just the parity of the multiplicity of edge $(a,i)$ in $G$. 
We 
associate to the instance $\cF$ an energy function 
$\eham_{\H,\ub}(\ux)$ given by the number of unsatisfied equations
in the linear system $\H\ux = \ub$, with the corresponding
partition function 
\begin{eqnarray}
Z_{\H,\ub}(\beta) = \sum_{\ux} \exp\{-2\beta \eham_{\H,\ub}(\ux)\}\, .
\end{eqnarray}
In particular, $Z_{\H} = \lim_{\beta \to\infty } Z_{\H,\ub}(\beta)$
whenever the instance is satisfiable. 
Moreover, it is easy to show that $Z_{\H,\ub}(\beta)$ is 
independent of $\ub$ whenever the instance is satisfiable. 

We thus proceed to apply the general approach of  
high-temperature expansion on $Z_{\H,\uu{0}}(\beta)$,
which here yields important implications for all $\beta>0$ 
and in particular, also on $Z_{\H}$.
For doing so, it is convenient to map the variable domain 
$\{0,1\}$ to $\{+1,-1\}$ and rewrite $Z_{\H,\uu{0}}(\beta)$ as 
the partition function of a generalized (ferromagnetic) Ising 
model of the form (\ref{eq:extIsing}). That is, 
\begin{eqnarray}
Z_{\H,\uu{0}}(\beta) = e^{-\beta|F|}\sum_{\underline{x}\in\{+1,-1\}^V}
\exp\Big\{\beta\sum_{a\in F} x_a\Big\} \equiv  e^{-\beta|F|}Z_G(\beta) \,,
\end{eqnarray} 
where $x_a \equiv \prod_{i\in\partial a} x_i$ for each $a \in F$.
We also  
%-------------------------------------------------
introduce the notion of a \emph{hyper-loop}
in a factor graph $G=(V,F,E)$, which is a subset 
$F'$ of function nodes such that every variable node $i \in V$ 
has an even degree in the induced subgraph $G'=(V,F',E')$.
\begin{lemma}\label{lem:ht-exp}
Set $N_G(0) \equiv 1$ and 
$N_G(\ell)$ denote the number of hyper-loops of size 
$\ell \ge 1$ in a factor graph $G=(V,F,E)$.
Then, for any $\beta \in \reals$,
\begin{eqnarray}\label{eq:ht-XORSAT}
Z_G(\beta) = 2^{|V|} (\cosh\beta)^{|F|}\sum_{\ell= 0}^{|F|} N_G(\ell)\,
(\tanh\beta)^\ell \, . 
\end{eqnarray}
Further, if $Z_{\H^T}=1$ with $\H$ the parity matrix of $G$ 
then 
$$
Z_G(\beta)=2^{|V|} (\cosh\beta)^{|F|} \,.
$$
\end{lemma}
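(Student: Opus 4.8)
The plan is to run the high-temperature expansion exactly as in the two-dimensional Ising computation of (\ref{eq:ht-2dIsing}), but now on the factor graph $G=(V,F,E)$ rather than on a graph with only pairwise interactions. First I would write, for each $a\in F$ and each $x_a=\prod_{i\in\partial a}x_i\in\{+1,-1\}$, the identity $e^{\beta x_a}=\cosh(\beta)[1+x_a\tanh\beta]$. Substituting this into the definition $Z_G(\beta)=\sum_{\ux}\exp\{\beta\sum_{a\in F}x_a\}$ and expanding the product over $a\in F$ gives
\begin{eqnarray*}
Z_G(\beta) = (\cosh\beta)^{|F|}\sum_{\ux\in\{+1,-1\}^V}\prod_{a\in F}[1+x_a\tanh\beta]
= (\cosh\beta)^{|F|}\sum_{F'\subseteq F}(\tanh\beta)^{|F'|}\sum_{\ux}\prod_{a\in F'}x_a \,.
\end{eqnarray*}
Now $\prod_{a\in F'}x_a=\prod_{a\in F'}\prod_{i\in\partial a}x_i=\prod_{i\in V}x_i^{d_{F'}(i)}$, where $d_{F'}(i)$ is the degree of $i$ in the subgraph induced by $F'$. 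Summing over $\ux$ therefore yields $2^{|V|}$ if $d_{F'}(i)$ is even for every $i\in V$ (i.e. $F'$ is a hyper-loop) and $0$ otherwise, by the same parity/symmetry argument used after (\ref{eq:ht-2dIsing}). Collecting terms by $|F'|=\ell$ gives exactly (\ref{eq:ht-XORSAT}), with the convention $N_G(0)=1$ coming from $F'=\emptyset$.

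For the second statement I would translate the hypothesis $Z_{\H^T}=1$ into the absence of nontrivial hyper-loops. Recall that $\H$ is the parity matrix of $G$, so its transpose $\H^T$ has columns indexed by $F$ and rows by $V$; a vector $\uu{z}\in\{0,1\}^F$ satisfies $\H^T\uu{z}=\uu{0}$ over $\GF$ precisely when, for every variable node $i$, the number of function nodes $a$ in the support of $\uu{z}$ incident to $i$ (counted with edge-multiplicity parity, i.e. with $H_{ai}=1$) is even — that is, when $\{a:z_a=1\}$ is a hyper-loop of $G$. Hence $Z_{\H^T}$, the number of solutions of $\H^T\uu{z}=\uu{0}$, equals the total number of hyper-loops (including the empty one), i.e. $Z_{\H^T}=\sum_{\ell\ge 0}N_G(\ell)$. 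So $Z_{\H^T}=1$ forces $N_G(\ell)=0$ for all $\ell\ge 1$, and (\ref{eq:ht-XORSAT}) collapses to $Z_G(\beta)=2^{|V|}(\cosh\beta)^{|F|}$, as claimed.

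I do not expect a serious obstacle here; the only point requiring a little care is the bookkeeping of edge multiplicities, since $G$ may have multiple edges while $\H$ records only their parities. One must check that the parity $d_{F'}(i)\bmod 2$ appearing in the spin sum is computed using the true (multi-)degrees, but that $x_i^{d_{F'}(i)}$ depends only on this parity, and that the same parity is exactly what the entry $H_{ai}$ of $\H$ encodes — so the hyper-loop condition and the condition $\H^T\uu{z}=\uu{0}$ genuinely match. Once this identification is made cleanly, both displays follow immediately from the expansion above.
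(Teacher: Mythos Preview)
Your proof is correct and follows essentially the same high-temperature expansion as the paper, with the same identification of the surviving terms as hyper-loops. Your treatment of the second claim is in fact slightly more complete than the paper's: you exhibit the full bijection between hyper-loops and solutions of $\H^T\uu{z}=\uu{0}$ (so $Z_{\H^T}=\sum_{\ell\ge 0}N_G(\ell)$), whereas the paper only notes the one direction needed, namely that a nonempty hyper-loop yields a nonzero solution.
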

\begin{proof}
Observe that $e^{\beta x_a} = \cosh(\beta) [ 1 + x_a (\tanh \beta)]$ for 
any function node $a \in F$ and any $\ux \in \{+1,-1\}^V$. Thus, 
setting $F=[m]$, we have the following   
`high-temperature expansion' of $Z_G(\beta)$ as a polynomial 
in $(\tanh \beta)$,
\begin{align*}
Z_G(\beta)&= (\cosh \beta)^{m} \sum_{\ux} 
\prod_{a \in F} [1 + x_a (\tanh \beta) ]\\
&= (\cosh \beta)^{m} \sum_{F' \subseteq [m]} 
(\tanh \beta)^{|F'|} \sum_{\ux} \prod_{a \in F'} x_a \,.
\end{align*}
Since $x_i^2 =1$ for each $i \in V$ we see that 
$\prod_{a \in F'} x_a$ is simply the product 
of $x_i$ over all variable nodes $i$ that have 
an odd degree in the induced subgraph $G'$. The sum of
this quantity over all $\ux \in \{+1,-1\}^V$ is by symmetry 
zero, unless $F'$ is either 
an empty set or a hyper-loop of $G$, in which case this 
sum is just the number of such binary vectors $\ux$, that is
$2^{|V|}$. Therefore, upon collecting together all 
hyper-loops $F'$ in $G$ of the same size we get
the stated formula (\ref{eq:ht-XORSAT}). To complete the 
proof of the lemma note that the sum of columns of the 
transpose $\H^T$ of the parity matrix of $G$ corresponding 
to the function nodes in a hyper-loop $F'$ in $G$ must be
the zero vector over the field $\GF$. Hence, the existence of a hyper-loop
in $G$ provides a non-zero solution of $\H^T \uy = \uu{0} \mod 2$
(in addition to the trivial zero solution). 
Consequently, if $Z_{\H^T}=1$ then $N_G(\ell)=0$ for all $\ell \ge 1$,
yielding the stated explicit expression for $Z_G(\beta)$. 
\end{proof}

We now consider the $k$-XORSAT for ensembles 
$\G_{l,k}(n,m)$
of random \emph{$(l,k)$-regular graphs}
drawn from the corresponding configuration model.
Such an ensemble is defined whenever $nl=mk$ as follows. 
Attach $l$ half-edges to each variable node $i\in V$, 
and $k$ half-edges to each function node $a\in F$. 
Draw a uniformly random permutation over $nl$ elements, 
and connect edges on the two sides accordingly. 
We then have the following result about uniqueness 
of solutions of random regular linear systems.
\begin{thm}\label{thm:Solutions}
Let $\H$ denote the $m \times n$ parity matrix 
of a random $(l,k)$-regular factor graph
from $\G_{l,k}(n,m)$, with $l>k \ge 2$.
Then, the linear system 
$\H \ux=\uu{0} \mod 2$ has, with high probability as
$n \to \infty$, the unique solution $\ux=\uu{0}$.  
\end{thm}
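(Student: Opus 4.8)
The plan is to prove the quantitative strengthening that the expected number of \emph{non-zero} vectors in the kernel of $\H$ over $\GF$ tends to $0$. Since $Z_{\H}=2^{\,n-r(\H)}$ is always an integer power of two with $Z_{\H}\ge 1$, writing $W$ for the number of $\ux\neq\uu{0}$ with $\H\ux=\uu{0}\bmod 2$ we have $\E[Z_{\H}]=1+\E[W]$ and $\prob(Z_{\H}\ge 2)=\prob(W\ge 1)\le\E[W]$ by Markov's inequality, so it suffices to show $\E[W]\to 0$. A non-zero $\ux$ with $\H\ux=\uu{0}$ is the support indicator of a non-empty set $S$ of variable nodes such that every function node of $G$ is incident to an even number of edges having an endpoint in $S$; by the invariance of the configuration model $\G_{l,k}(n,m)$ under relabelling of variable nodes, the probability $p_{s}$ of this event depends only on $s=|S|$, whence $\E[W]=\sum_{s=1}^{n}\binom{n}{s}p_{s}$.

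First I would compute $p_{s}$: conditioning on which $sl$ of the $mk=nl$ function-side half-edges are matched to the $sl$ half-edges out of $S$, and imposing that each function node receive an even number of them, gives
\begin{eqnarray*}
p_{s}=\binom{nl}{sl}^{-1}\,[z^{sl}]\Big(\frac{(1+z)^{k}+(1-z)^{k}}{2}\Big)^{m}\,.
\end{eqnarray*}
Combining the elementary Cauchy bound $[z^{N}]g(z)\le z^{-N}g(z)$ (valid for every $z>0$) with $\binom{n}{\sigma n}=e^{nH(\sigma)+o(n)}$ and $\binom{nl}{\sigma nl}=e^{nlH(\sigma)+o(n)}$ ($H$ the natural-logarithm binary entropy) yields $\binom{n}{s}p_{s}\le e^{n\Psi(s/n)+o(n)}$ with
\begin{eqnarray*}
\Psi(\sigma)=-(l-1)H(\sigma)+l\inf_{z>0}\Big[\tfrac1k\log\tfrac{(1+z)^{k}+(1-z)^{k}}{2}-\sigma\log z\Big]\,,
\end{eqnarray*}
the $o(n)$ being uniform on compact subintervals of $(0,1)$.

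The crux is to show $\Psi(\sigma)<0$ for $\sigma$ bounded away from $0$ and $1$, and this is precisely where $l>k$ enters: evaluating the infimum at $z=1$ already gives $\Psi(\sigma)\le-(l-1)H(\sigma)+\tfrac{l(k-1)}{k}\log 2$, which at $\sigma=\tfrac12$ equals $\tfrac{k-l}{k}\log 2<0$ because $l>k$, and for the remaining $\sigma\in(\epsilon,1-\epsilon)$ one substitutes the stationary value of $z$ (the root of $\sigma=z\phi'(z)/(k\phi(z))$ for $\phi(z)=\tfrac12[(1+z)^{k}+(1-z)^{k}]$) and checks the resulting one-variable real-analytic inequality by convexity and monotonicity. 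Two boundary regimes need separate treatment. For $s=o(n)$, pass to a minimal-support non-zero null vector; its support induces a subgraph carrying $s$ variable nodes, each of degree $l\ge3$, and at most $sl/2$ function nodes (those of positive, hence even, degree in it), i.e.\ $sl$ edges on at most $s(1+l/2)$ vertices, forcing cyclomatic excess at least $s(l/2-1)\ge s/2$; a standard first-moment estimate shows that a random $(l,k)$-regular factor graph has no such dense sub-linear subgraph with high probability, which is where $l\ge3$ is indispensable. For $s\to n$, complement: write $\ux=\uu{1}\oplus\ux'$ with $\ux'$ of small support $S'$, so $\H\ux'=\H\uu{1}$; since every row of $\H$ has coordinate-sum $\equiv k\pmod 2$, the vector $\H\uu{1}$, when non-zero, has $\Theta(m)$ non-zero coordinates, whereas $\H\ux'$ is supported on the at most $|S'|k\ll m$ function nodes adjacent to $S'$, which is impossible for $|S'|$ small. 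Combining the three regimes gives $\E[W]\to0$, hence the theorem.

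The main obstacle is this middle step: proving $\sup_{\epsilon\le\sigma\le1-\epsilon}\Psi(\sigma)<0$ with a genuine margin and dovetailing it with the two boundary arguments so that no window of $s/n$ slips through. A secondary technical chore is keeping track of the $O(1)$ multiple edges and self-loops that the configuration model produces; they affect $p_{s}$ by at most a sub-exponential factor, but must be watched when translating the combinatorial subgraph counts back into statements about $\H$.
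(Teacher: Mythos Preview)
Your approach is the paper's: a first-moment bound on the number of non-zero kernel vectors, computation of the exponential rate (your $\Psi$ is the paper's $\phi$), the key check that it is negative at $\sigma=1/2$ exactly because $l>k$, and separate control of the two boundary sums. The only difference is cosmetic---the paper handles both boundaries directly from the closed formula for $\E Z_{\H}(w)$ rather than via your cyclomatic-excess and complementation arguments, which are correct (modulo the typo $|S'|k\to|S'|l$) but more than is needed.
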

\begin{proof}
Let $Z_{\H}(w)$ denote the number of solutions 
of $\H \ux=\uu{0}$ with $w$ non-zero entries. 
Such solution corresponds to a coloring, by say red, 
$w$ vertices of the multi-graph $G$, and by, say blue, 
the remaining $n-w$ vertices, while having 
an even number of red half-edges at  
each function node. A convenient way to compute 
$\E\, Z_{\H}(w)$ is thus to divide the number of 
possible graph colorings with this property by 
the total size $(nl)!$ of the ensemble $\G_{l,k}(n,m)$. 
Indeed, per integers $(m_r, r=0,\ldots,k)$ that 
sum to $m$ there are 
$$
\binom{m}{m_0,\ldots,m_k} \prod_{r=0}^k \binom{k}{r}^{m_r}
$$
ways of coloring the $m k$ half-edges of function nodes 
such that $m_r$ nodes have $r$ red half-edges, for 
$r=0,\ldots,k$. There are $\binom{n}{w}$ ways of 
selecting red vertices and $(wl)!(nl-wl)!$ color 
consistent ways of matching half-edges of factor nodes 
with those of vertices, provided one has the same 
number of red half-edges in both collections, that is
$\sum_r r m_r = wl$. The value of $\E Z_{\H}(w)$ is 
thus obtained by putting all this together, 
and summing over all choices of 
$(m_0,\ldots,m_k)$ such that $m_r=0$ for odd values of $r$.
Setting $w=n \omega$ and using Stirling's formula one 
finds that $\E Z_{\H}(w) =\exp(n \phi(\omega) +o(n))$ 
for any fixed $\omega \in (0,1)$, with an explicit
expression for $\phi(\cdot)$ 
(c.f. \cite[Section 11.2.1]{MezardMontanari} and the
references therein). For $l > k \ge 2$
the only local maximum of $\phi(\omega)$ 
is at $\omega=1/2$, with $\phi(0)=\phi(1)=0$ 
and $\phi(1/2)<0$. Hence, in this case 
$\phi(\omega)<0$ for all $\omega \in (0,1)$. Further, 
from the formula for $\E Z_{\H}(w)$ one can show that 
for $\kappa>0$ small enough the sum of $\E Z_{\H}(w)$ 
over $1 \le w \le \kappa n$ and $n-w \le \kappa n$
decays to zero with $n$. Therefore,
$$
\lim_{n\to\infty}\sum_{w=1}^n\E\, Z_{\H}(w) = 0 \,,
$$
which clearly implies our thesis.
\end{proof}

We have the following consequence about satisfiability 
of XORSAT for random $(l,k)$-regular factor graphs.
\begin{coro}\label{thm:MainXORReg}
Choosing a random $(l,k)$-regular factor graph
$G$ from the ensemble $\G_{l,k}(n,m)$, with $k>l\ge 2$. Then,
the probability that $Z_G(\beta)= 2^n(\cosh \beta)^{ln/k}$ goes 
to one as $n \to \infty$ and so does the probability that 
$k$-XORSAT with the corresponding parity matrix
$\H$ has $2^{sn}$ solutions for $s=1-l/k$.
\end{coro}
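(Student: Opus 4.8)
The plan is to reduce the corollary to Theorem~\ref{thm:Solutions} applied to the \emph{transpose} matrix, combined with the high-temperature identity of Lemma~\ref{lem:ht-exp}. First I would observe that if $\H$ is the $m\times n$ parity matrix of a factor graph drawn from the configuration model $\G_{l,k}(n,m)$, then $\H^T$ is the $n\times m$ parity matrix of a factor graph drawn from $\G_{k,l}(m,n)$: swapping the roles of variable nodes and function nodes in the half-edge matching construction of Section~\ref{sec-xorsat} turns the $n$ degree-$l$ variable nodes into $n$ degree-$l$ function nodes and the $m$ degree-$k$ function nodes into $m$ degree-$k$ variable nodes, while the consistency relation $nl=mk$ is symmetric. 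Since $k>l\ge 2$, Theorem~\ref{thm:Solutions} (with the roles of $l$ and $k$ interchanged) applies to this ensemble and gives that, with probability tending to one as $n\to\infty$, the homogeneous system $\H^T\uy=\uu{0}\bmod 2$ has only the trivial solution, i.e. $Z_{\H^T}=1$.

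On the event $\{Z_{\H^T}=1\}$, the second half of Lemma~\ref{lem:ht-exp} yields the deterministic identity $Z_G(\beta)=2^{|V|}(\cosh\beta)^{|F|}=2^{n}(\cosh\beta)^{ln/k}$ for every $\beta$, so the first claim follows from $\prob(Z_{\H^T}=1)\to 1$. For the count of solutions, I would recall that $Z_{\H^T}=2^{m-r(\H^T)}=2^{m-r(\H)}$, so $Z_{\H^T}=1$ is equivalent to $\H$ having full row rank $r(\H)=m$. When $r(\H)=m$ the image of $\H$ is all of $\{0,1\}^m$, so the $k$-XORSAT instance with matrix $\H$ is satisfiable for every right-hand side and its solution set is an affine space of dimension $n-r(\H)=n-m=(1-l/k)n=sn$; in particular the homogeneous system has exactly $2^{sn}$ solutions. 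Equivalently, one can obtain the same count by letting $\beta\to\infty$ in $Z_{\H,\uu{0}}(\beta)=e^{-\beta m}Z_G(\beta)=2^{n}\big(\frac{1}{2}(1+e^{-2\beta})\big)^{m}\to 2^{n-m}$, using $Z_{\H}=\lim_{\beta\to\infty}Z_{\H,\uu{0}}(\beta)$ for the (always satisfiable) homogeneous instance.

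Since Theorem~\ref{thm:Solutions} is already in hand, there is no genuine analytic obstacle here; the only points requiring care are the identification of $\H^T$ with the parity matrix of the $\G_{k,l}(m,n)$ ensemble, the verification that the degree hypothesis of Theorem~\ref{thm:Solutions} becomes exactly $k>l\ge 2$ after the interchange, and the observation that Lemma~\ref{lem:ht-exp}'s hypothesis $Z_{\H^T}=1$ is precisely the high-probability event produced by that theorem. I would also note in passing that the corollary thereby confirms satisfiability of regular $k$-XORSAT at clause density $\alpha=l/k<1$ and pins down the solution entropy at $s=1-\alpha$ for these ensembles.
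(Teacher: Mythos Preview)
Your proposal is correct and follows essentially the same approach as the paper: identify $\H^T$ as the parity matrix of a graph from $\G_{k,l}(m,n)$, apply Theorem~\ref{thm:Solutions} with the roles of $l$ and $k$ swapped to get $\prob(Z_{\H^T}=1)\to 1$, invoke Lemma~\ref{lem:ht-exp} for the partition function identity, and then use the rank argument $r(\H)=m$ to count solutions. The paper's proof is slightly terser but structurally identical; your additional $\beta\to\infty$ computation is a nice consistency check but not needed.
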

\begin{proof} Let $\H$ be the parity matrix of a randomly chosen 
$(l,k)$-regular factor graph $G$ from ensemble $\G_{l,k}(n,m)$.
Then, $\H^T$ has the law of the parity matrix for
a random $(k,l)$-regular factor graph from ensemble $\G_{k,l}(m,n)$.
Thus, from Theorem \ref{thm:Solutions} 
we know that $\prob(Z_{\H^T} = 1) \to 1$ as $n \to \infty$ and 
by Lemma \ref{lem:ht-exp}
with same probabilities also $Z_G(\beta)= 2^n(\cosh \beta)^{ln/k}$
(as here $|V|=n$ and $|F|=nl/k$). We complete the proof upon 
recalling that 
if $Z_{\H^T} = 1$ then $r(\H)=m=|F|$ and there are 
$2^{n-m}$ solutions $\ux$ of the corresponding XORSAT problem
(for any choice of $\ub$).
\end{proof}

See \cite[Chapter 18]{MezardMontanari} for more information on XORSAT models, 
focusing on the zero-temperature case.

\subsection{Hyper-loops, hypergraphs, cores and a peeling algorithm}
\label{sec:RandomCore}

As we saw in the previous section, if the bipartite graph $G$
associated to the matrix $\H$ does not contain hyper-loops,
then the linear system $\H\ux = \ub$ is solvable for any $\ub$.
This is what happens for $\alpha<\alpha_{\rm s}(d)$:
a random matrix $\H$ with $l$ non-zero elements per row is,
with high probability, free from hyper-loops.
Vice versa, when $\alpha>\alpha_{\rm s}(d)$ the matrix $\H$ 
contains, with high probability, $\Theta(n)$ hyper-loops.
Consequently, the linear system $\H\ux =\ub$ is solvable only
for $2^{m-\Theta(n)}$ of the vectors $\ub$. If $\ub$
is chosen uniformly at random, this implies that
$\prob\{\cF \mbox{ is satisfiable}\}\to 0$ as $n\to\infty$.

Remarkably, the clustering threshold $\alpha_{\rm d}(l)$
coincides with the threshold for appearance of
a specific subgraph of the bipartite graph $G$, called 
the \emph{core} of $G$. The definition of the
core is more conveniently given in the language of 
hypergraphs. 
This is an equivalent description of factor
graphs, where the hypergraph corresponding to
$G=(V,F,E)$ is formed by associating with
each factor node $a\in F$ the hyper-edge 
(i.e. a subset of vertices in $V$),
$\partial a$ consisting of all 
vertices $i \in V$ such that $(i,a)\in E$. The
same applies for factor multi-graphs, in which 
case a vertex $i \in V$ may appear with 
multiplicity larger than one in some hyper-edges.
\begin{definition}\label{def:r-core}
The \emph{$r$-core} of hyper-graph $G$ is the unique
subgraph obtained by recursively removing all 
vertices of degree less than $r$ 
(when counting multiplicities of vertices 
in hyper-edges). In particular, the $2$-core, 
hereafter called the \emph{core} of $G$, 
is the maximal collection of hyper-edges 
having no vertex appearing in only one of them 
(and we use the same term for the induced subgraph).
\end{definition}

Obviously, if $G$ contains a non-empty hyper-loop, it also
contains a non-empty core. It turns out that the probability 
that a random hypergraph contains a non-empty core
grows sharply from 
near zero to near one
as the number of hyper-edges
crosses a threshold which coincides with the clustering
threshold $\alpha_{\rm d}(l)$ of XORSAT.
 
Beyond XORSAT, the core of a hyper-graph plays an 
important role in the analysis of many combinatorial problems.

For example, Karp and Sipser~\cite{karp} consider
the problem of finding
the largest possible matching (i.e. vertex disjoint set of edges) in a
graph $G$.  They propose a simple \emph{peeling 
algorithm} that recursively selects an edge $e=(i,j) \in G$ for 
which the vertex $i$ has degree one, as long as such an 
edge exists, and upon including $e$ in the matching, the algorithm  
removes it from $G$ together with all edges incident on $j$
(that can no longer belong to the matching).
Whenever the algorithm successfully matches all vertices, the resulting
matching can be shown to have maximal size. Note that this 
happens if an only if the core of the hyper-graph $\widetilde{G}$ 
is empty, where $\widetilde{G}$ has a c-node $\widetilde{e}$ 
per edge $e$ of $G$ and a v-node $\widetilde{i}$ per vertex 
$i$ of degree two or more in $G$ that is incident
on $\widetilde{e}$ in $\widetilde{G}$ if and only if $e$ is incident on $i$
in $G$. Consequently, the performance of the Karp-Sipser algorithm for 
a randomly selected graph 
has to do with the probability of non-empty core in the 
corresponding graph ensemble. For example, 
\cite{karp} analyze the asymptotics of this probability for a
uniformly chosen random graph of $N$ vertices 
and $M=\lfloor N c/2 \rfloor$ edges, as $N \to \infty$
(c.f.  \cite{aronson,dyer} for recent contributions).

A second example deals with the decoding of a noisy message when 
communicating over the binary erasure channel
with a low-density parity-check code ensemble. This amounts to 
finding the {\it unique} solution of a linear system over $\GF$
(the solution exists by construction, but is not necessarily unique, 
in which case decoding fails).
If the linear system includes an equation
with only one variable, we thus determine the value of
this variable, and substitute it throughout the system.
Repeated recursively, this procedure either determines all the
variables, thus yielding the unique solution of the system, or halts on a 
linear sub-system each of whose equations involves at least two variables.
While such an algorithm is not optimal (when it halts, the resulting
linear sub-system might still have a unique solution), it
is the simplest instance of the widely
used belief propagation decoding strategy, that has proved
extremely successful. For example, on properly optimized code
ensembles, this algorithm has been shown to achieve the theoretical
limits for reliable communication, i.e., Shannon's channel capacity
(see \cite{LMSS01}). Here a hyper-edge of the hyper-graph $G$ 
is associated to each variable, and a vertex is associated 
to each equation, or parity check, 
and the preceding decoding scheme successfully finds the 
unique solution if and only if the core of $G$ is empty. 

In coding theory one refers to each variable 
as a v-node of the corresponding bipartite factor 
graph representation of $G$ and to each parity-check 
as a c-node of this factor graph. 
This coding theory setting is dual to the one considered 
in XORSAT. Indeed, as we have already seen, satisfiability 
of an instance $(\H,\ub)$ for most choices of $\ub$ is
equivalent to the uniqueness of the solution of 
$\H^T\ux=0$. Hereafter we adopt this `dual' but equivalent 
coding theory language, considering a hyper-graph $G$ chosen 
uniformly from an ensemble $\G_l(n,m)$ with $n$ 
hyper-edges (or v-nodes), each of whom is a 
collection of $l \ge 3$ 
vertices (or c-nodes), from the vertex set $[m]$. 
Note that as we passed to the 
dual formulation, we also inverted the roles of $n$ and $m$. 
More precisely, each hyper-graph
in $\G=\G_l(n,m)$ is described by an ordered list of edges, 
i.e. couples $(i,a)$ with $i\in [n]$ and $a \in [m]$
\begin{eqnarray*}
E= [(1,a_1),(1,a_2),\dots ,(1,a_l);(2,a_{l+1}),\dots;(n,a_{(n-1)l+1}),
\dots,(n,a_{nl})]\, ,
\end{eqnarray*}
where a couple $(i,a)$ appears \emph{before} $(j,b)$ whenever $i<j$ and each
v-node $i$ appears \emph{exactly} $l$ times in the list, with $l\ge 3$ a
fixed integer parameter. In this configuration model 
the \emph{degree} of a v-node $i$ (or c-node $a$), refers
to the number of edges $(i,b)$ (respectively $(j,a)$) in $E$ to which 
it belongs (which corresponds to counting
hyper-edges and vertices \emph{with their multiplicity}). 
%
%The 'dual' problem is $l$-XORSAT,   
%where given a linear system over $m$ binary 
%variables, composed of $n$ equations modulo $2$, each involving 
%exactly $l\geq  3$ variables, the `leaf removal' algorithm 
%of \cite{cocco,mezard} recursively selects a variable 
%that appears in a single equation, and eliminates 
%the corresponding equation from the system (since 
%this equation can always be satisfied by properly setting 
%the selected variable).
%If all the equations are removed by this procedure, a solution
%can be constructed by running the process backward and fixing
%along the way the selected variables. Associate a 
%hyper-graph $G$ of $m$ vertices and $n$ hyper-edges 
%to the linear system, where 
%hyper-edge $e$ is incident on vertex $i$ if and only if the corresponding 
%equation involves the $i$-th variable with a non-zero coefficient,
%noting that the leaf removal algorithm is successful if and only
%if the core of the corresponding hyper-graph $G$ is empty. 

%We focus hereafter on the relevant ensemble $\G=\G_l(n,m)$ 
%and uniformly select a hyper-graph $G$ from among the
%$m^{nl}$ hyper-graphs in $\G$. 
%
%\begin{equation}\label{eq:gcount}
%|\G_l(n,m)| = m^{nl} = \coeff[(e^{\bx})^m,{\bx}^{nl}]\, (nl)!\,
%\end{equation}
To sample $G$ from the uniform distribution over $\G$ 
consider the v-nodes in order, $i=1,\ldots,n$, choosing 
for each v-node and $j=1,\ldots,l$,
independently and uniformly at random a c-node
$a=a_{(i-1)l+j} \in [m]$ and adding the couple $(i,a)$ to the list $E$.
Alternatively, to sample from this distribution first attribute 
sockets $(i-1)l+1,\ldots,il$ to the $i$-th v-node, $i=1,\ldots,n$,
then attribute $k_a$ sockets to each c-node $a$, where $k_a$'s
are mutually independent Poisson($\zeta$) random variables,
conditioned upon their sum being $nl$ (these sockets
are ordered using any pre-established convention). Finally,
connect the v-node sockets to the c-node sockets according to
a permutation $\sigma$ of $\{1,\ldots,nl\}$ that is chosen
uniformly at random and independently of the choice of $k_a$'s.

In the sequel we take $m=\lfloor n \rho\rfloor$ 
for $\rho=l/\gamma>0$ bounded away from $0$ and $\infty$
and study the large $n$ asymptotics of the probability 
\begin{eqnarray}
P_l(n,\rho) \equiv\prob\left\{ G\in \G_l(n,m) \mbox{ has a non-empty core}
\right\}\, 
\end{eqnarray}
that a hyper-graph $G$ of this distribution has a non-empty core. 
Setting $\H^T$ as the parity matrix of a uniformly 
chosen $G$ from $\G_l(n,m)$ corresponds to a binary 
matrix $\H$ chosen uniformly (according to a 
configuration model), among all $n \times m$ matrices 
with $l$ non-zero entries per row. That is, the 
parameter $\rho$ corresponds to $1/\alpha$ in the $l$-XORSAT.

\subsection{The approximation by a smooth Markov kernel}\label{sec:mc-apx}
\label{sec:MarkovKernel}

Our approach to $P_l(n,\rho)$ is by analyzing whether 
the process of sequentially peeling, or decimating, 
c-nodes of degree one, corresponding to the
decoding scheme mentioned before, ends 
with an empty graph, or not. That is, 
consider the inhomogeneous Markov chain of 
graphs $\{G(\tau),\, \tau\ge 0\}$, where  
$G(0)$ is a uniformly random element of $\G_l(n,m)$ and 
for each $\tau=0,1,\dots$, if there is a non-empty set of 
c-nodes of degree $1$, choose one of 
them (let's say $a$) uniformly at random, deleting  
the corresponding edge $(i,a)$
together with all the edges incident to the
v-node $i$. The graph thus obtained is $G(\tau+1)$.
In the opposite case, where there are no c-nodes of degree $1$ in $G(\tau)$,
we set $G(\tau+1) = G(\tau)$.

\noindent {\bf Reducing the state space to $\integers_+^2$.}
We define furthermore the process 
$\{\vz(\tau)= (z_1(\tau),z_2(\tau)),\, \tau\ge 0\}$ on $\integers_+^2$,
where $z_1(\tau)$
and $z_2(\tau)$ are, respectively, the number of c-nodes in $G(\tau)$,
having degree one or larger than one. Necessarily, 
$(n-\htau ) l \geq z_1(\tau)+2z_2(\tau)$, with equality if
$z_2(\tau)=0$, where  
$\htau \equiv \min(\tau, \inf \{ \tau' \geq 0 : z_1(\tau') = 0 \})$, 
i.e. $\htau=\tau$ till  
the first $\tau'$ such that $z_1(\tau')=0$, after which 
$\htau$ is frozen (as the algorithm stops).

Fixing $l \ge 3$, $m$ and $n$, set $\vz \equiv (z_1,z_2) \in 
\integers_+^2$ and $\G(\vz,\tau)$ denote the
ensemble of possible bipartite graphs with $z_1$ c-nodes
of degree one and $z_2$ c-nodes of degree at least two,
after exactly $\tau$ removal steps of this process.
Then, $\G(\vz,\tau)$ is non-empty only if $z_1+2z_2\le (n-\tau)l$ with 
equality whenever $z_2=0$. Indeed, each element of 
$\G(\vz,\tau)$ is a bipartite graph $G = (U,V;R,S,T;E)$ where 
$U,V$ are disjoint subsets of $[n]$ with $U\cup V = [n]$ and
$R,S,T$ are disjoint subsets of $[m]$ with 
$R\cup\ S\cup T = [m]$, having the cardinalities
$|U|=\tau$, $|V|=n-\tau$, $|R| = m-z_1-z_2$, $|S| = z_1$, $|T|=z_2$
and the ordered list $E$ of $(n-\tau)l$ edges $(i,a)$ 
with $i$ a v-node and $a$ a c-node such that 
each $i\in V$ appears as the first coordinate
of exactly $l$ edges in $E$, while each $j\in U$ does not appear in
any of the couples in $E$. Similarly,
each $c\in R$ does not appear in $E$, each $b\in S$ appears as the
second coordinate of exactly one edge in $E$,
and each $a\in T$ appears in some $k_a \ge 2$ such edges.

The following observation allows us to focus on 
the much simpler process $\vz(\tau)$ on 
$\integers_+^2$ instead of the graph process
$G(\tau) \in \G(\vz,\htau)$.
\begin{lemma}\label{lemma:OriginalProcess}
Conditional on $\{\vz(\tau'), 0\le \tau'\le\tau\}$,
the graph $G(\tau)$ is uniformly distributed over $\G(\vz,\htau)$. 
Consequently, the process $\{\vz(\tau)\, \tau\ge 0\}$ 
is an inhomogeneous Markov process.
\end{lemma}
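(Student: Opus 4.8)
The plan is to prove the stated uniformity by induction on $\tau$, exploiting the exchangeability built into the configuration model and the fact that the peeling rule depends on $G(\tau)$ only through which c-nodes have degree one (and picks one of them uniformly). First I would set up the base case: $G(0)$ is by definition uniform over $\G_l(n,m)$, and conditioning on $\vz(0)=(z_1,z_2)$ is conditioning on the event that the degree sequence of c-nodes has exactly $z_1$ ones and $z_2$ values $\ge 2$; by symmetry of the uniform measure under relabeling of c-nodes and of v-nodes, the conditional law is uniform over $\G(\vz,0)$. (Here $\htau=0$, so there is nothing more to check.)

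For the inductive step, assume $G(\tau)$ is uniform over $\G(\vz(\tau),\htau)$ conditional on the history $\{\vz(\tau'):0\le\tau'\le\tau\}$. If $z_1(\tau)=0$ the process is frozen, $G(\tau+1)=G(\tau)$ and $\htau$ does not advance, so the claim is immediate. Otherwise, I would argue as follows: on $\G(\vz,\htau)$, the operation "pick a uniformly random degree-one c-node $a$, delete its incident edge $(i,a)$ and all $l$ edges incident to the v-node $i$" is a map whose fibers can be described combinatorially. The key point is that this map, applied to the uniform measure on $\G(\vz,\htau)$, pushes forward to the uniform measure on $\bigcup_{\vz'} \G(\vz',\htau+1)$ after further conditioning on the resulting value $\vz(\tau+1)=\vz'$. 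To see this one checks that every graph $G' \in \G(\vz',\htau+1)$ has the same number of preimages under one peeling step: a preimage is obtained by reattaching a v-node $i$ of degree $l$ to $G'$ — one of its $l$ sockets goes to a freshly created degree-one c-node $a$, and the other $l-1$ sockets go to an arbitrary ordered choice of the current $m-1$ c-node sockets (with the bookkeeping that turns some degree-one c-nodes into degree-$\ge 2$ ones and vice versa). Combined with the uniform random choice of the c-node $a$ among the $z_1(\tau)$ candidates, the counting is homogeneous in $G'$, which is exactly what is needed. Because the peeling rule at step $\tau$ used only $\vz(\tau)$ (through the set of degree-one c-nodes) and independent external randomness, the conditional law of $G(\tau+1)$ given the enlarged history $\{\vz(\tau'):0\le \tau'\le \tau+1\}$ is uniform over $\G(\vz(\tau+1),\htau+1)$, completing the induction.

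Given the uniformity statement, the Markov property of $\{\vz(\tau)\}$ follows formally: the conditional distribution of $\vz(\tau+1)$ given the history is a function of $G(\tau)$ through the combinatorial step above, but since $G(\tau)$ is (conditionally) uniform over $\G(\vz(\tau),\htau)$, this distribution depends on the history only through $\vz(\tau)$ (and $n,m,l,\tau$). I expect the main obstacle to be the bookkeeping in the fiber-counting argument of the inductive step — one must carefully track how deleting the $l$ edges at v-node $i$ can simultaneously lower the degrees of up to $l-1$ other c-nodes (some from "$\ge 2$" to "$1$", possibly some "$1$" c-nodes getting deleted entirely if $i$ was attached to them), and verify that after conditioning on the net effect $\vz(\tau+1)=\vz'$ the count of preimages of a given $G'$ is the same for all $G'\in\G(\vz',\htau+1)$. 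This is a finite, elementary calculation but it is the part where an error is easy to make, so I would write it out explicitly rather than wave at "exchangeability."
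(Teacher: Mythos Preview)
Your proposal is correct and follows essentially the same route as the paper: prove by induction on $\tau$ that the number of (graph, choice-of-degree-one-c-node) preimages of any $G'\in\G(\vz',\tau+1)$ depends on $G'$ only through $\vz'$, which preserves uniformity under conditioning. The paper carries out exactly the explicit fiber count you flag as the crux, parametrizing each transition by integers $(q_0,p_0,q_1,q_2)$ that record how many c-nodes change degree category in each possible way, and verifying that the resulting formula for $N(G'|\vz,\tau)$ is a function of $\vz'$ alone.
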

\emph{Proof outline:}  Fixing $\tau$, $\vz=\vz(\tau)$ 
such that $z_1>0$, $\vz'=\vz(\tau+1)$ 
and $G' \in \G(\vz',\tau+1)$, let $N(G'|\vz,\tau)$ 
count the pairs of graphs $G \in \G(\vz,\tau)$ and
choices of the deleted $c$-node from $S$ that 
result with $G'$ upon applying a single step of our algorithm.
Obviously, $G$ and $G'$ must be such that 
$R \subset R'$, $S \subseteq R' \cup S'$ and $T' \subseteq T$.
With $q_0 \equiv |R' \cap S|$, $p_0 \equiv |R' \cap T|$,
$q_1 \equiv |S' \cap T|$ and $q_2$ denoting the number of
$c$-nodes $a \in T'$ for which $k_a>k'_a$, 
it is shown in \cite[proof of Lemma 3.1]{hcore} that 
($p_0$, $q_0$, $q_1$, $q_2$) belongs to 
the subset ${\mathcal D}$ of $\integers_+^4$ where 
both the relations 
\begin{eqnarray}\label{eq7}
\left\{\begin{array}{rcl}
z_0 & = & z_0'-q_0-p_0\, ,\\
z_1 & = & z_1'+q_0-q_1\, ,\\
z_2 & = & z_2'+p_0+q_1\, ,
\end{array}\right.
\end{eqnarray}
for $z_0=m-z_1-z_2$, $z_0'=m-z_1'-z_2'$,
and the inequalities 
$(n-\tau)l-(z_1+2z_2) \ge l - (2p_0+q_0+q_1) \ge q_2$, $q_0+p_0\le z_0'$, 
$q_1\le z_1'$ (equivalently, $q_0 \le z_1$), $q_2\le z_2'$ (equivalently,
$p_0+q_1+q_2 \le z_2$) hold. In particular 
$|\mathcal D| \le (l+1)^4$. It is further shown there that
\begin{equation}\label{eq:nform}
N(G'|\vz,\tau)
= (\tau+1)\; l!\, \sum_{{\mathcal D}}
\binom{m-z_1'-z_2'}{q_0,p_0,\cdot}\binom{z_1'}{q_1}\binom{z_2'}{q_2}
c_l (q_0,p_0,q_1,q_2) \,,
\end{equation} 
depends on $G'$ only via $\vz'$, where 
$$
c_l (q_0,p_0,q_1,q_2) = q_0
\coeff[(e^{\bx}-1-{\bx})^{p_0}(e^{\bx}-1)^{q_1+q_2},{\bx}^{l-q_0}]\, .
$$

We start at $\tau=0$ with a uniform distribution of $G(0)$ within 
each possible ensemble $\G(\vz(0),0)$.
As $N(G'|\vw,\tau)$ depends on $G'$ only via $\vw'$ it follows by 
induction on $\tau=1,2,\ldots$ that 
conditional on $\{\vz(\tau'), 0\le \tau'\le\tau\}$,
the graph $G(\tau)$ is uniformly distributed over $\G(\vz,\htau)$ 
as long as $\htau=\tau$. Indeed, if $z_1(\tau)>0$, then 
with $h(\vz,\tau)$ denoting the number of graphs in $\G(\vz,\tau)$,
\begin{eqnarray*}
\prob\left\{G(\tau+1) = G'| \{\vz(\tau'), 0\le \tau'\le\tau\}\right\}
= \frac{1}{z_1} \frac{N(G'|\vz(\tau),\tau)}{h(\vz(\tau),\tau)} \,, 
\end{eqnarray*}
is the same for all $G' \in \G(\vz',\tau+1)$. Moreover, 
noting that $G(\tau)=G(\htau)$ and $\vz(\tau)=\vz(\htau)$ we
deduce that this property 
extends to the case of $\htau<\tau$ (i.e. $z_1(\tau)=0$). 
Finally, since there are
exactly $h(\vz',\tau+1)$ graphs in the ensemble $\G(\vz',\tau+1)$  
the preceding implies that $\{\vz(\tau),\, \tau\ge 0\}$ 
is an inhomogeneous Markov process whose transition probabilities 
\begin{eqnarray*}
W^+_{\tau}(\Delta \vz|\vz) \equiv
\prob\{ \vz(\tau+1) = \vz+\Delta \vz\, |
 \,\vz(\tau) = \vz\, \} \,,
\end{eqnarray*}
for $\Delta\vz\equiv(\Delta z_1,\Delta z_2)$ and
$z_1'=z_1+\Delta z_1$, $z_2'=z_2+\Delta z_2$ are
such that $W^+_{\tau}(\Delta \vz|\vz) = \ind(\Delta \vz =0)$ in case
$z_1=0$, whereas 
$W^+_{\tau}(\Delta \vz|\vz) =
h(\vz',\tau+1) N(G'|\vz,\tau)/(z_1 h(\vz,\tau))$
when $z_1>0$.
\endproof

To sample from the uniform distribution on $\G(\vz,\tau)$  
first partition $[n]$ into $U$ and $V$ uniformly at 
random under the constraints $|U| = \tau$ and $|V|=(n-\tau)$ (there are
$\binom{n}{\tau}$ ways of doing this), and independently partition $[m]$ to
$R\cup S\cup T$ uniformly at random under the constraints $|R| = m-z_1-z_2$,
$|S|= z_1$ and $|T|=z_2$ (of which
there are $\binom{m}{z_1,z_2,\cdot}$ possibilities).
Then, attribute $l$ v-sockets to each $i\in V$ and number them
from $1$ to $(n-\tau)l$ according to some pre-established convention.
Attribute one c-socket to each $a\in S$ and $k_a$ c-sockets to each $a\in T$,
where $k_a$ are mutually independent Poisson($\zeta$) random variables
conditioned upon $k_a\ge 2$, and further conditioned upon
$\sum_{a \in T} k_a$ being $(n-\tau)l-z_1$.
Finally, connect the v-sockets and c-sockets according to a uniformly
random permutation on $(n-\tau)l$ objects, chosen independently of the
$k_a$'s. Consequently, 
\begin{eqnarray}\label{eq:hform}
h(\vz,\tau) 
= \binom{m}{z_1,z_2,\cdot}\, \binom{n}{\tau}\,
\coeff[(e^{\bx}-1-\bx)^{z_2},\bx^{(n-\tau)l-z_1}]
((n-\tau)l)!
\end{eqnarray}
%
%********************************************************************
\noindent
{\bf Approximation by a smooth Markov transition kernel.}
Though the transition kernel $W_{\tau}^+(\cdot|\vz)$ of the
process $\vz(\cdot)$ is given explicitly via
(\ref{eq:nform}) and (\ref{eq:hform}), it is hard to get 
any insight from these formulas, or to use them directly 
for finding the probability of this process hitting the
line $z_1(\tau)=0$ at some $\tau<n$ (i.e. of the graph $G(0)$ 
having a non-empty core). 
Instead, we analyze the simpler transition probability kernel 
\begin{eqnarray}\label{SimpleKernel}
\W_{\theta}(\Delta \vz|\vx) \equiv 
\binom{l-1}{q_0-1,q_1,q_2}\p_0^{q_0-1}\p_1^{q_1}\p_2^{q_2} \, ,
\end{eqnarray}
with $q_0 = -\Delta z_1-\Delta z_2\ge 1$, $q_1=-\Delta z_2\ge 0$
and $q_2 = l+\Delta z_1 +2\Delta z_2\ge 0$, where    
\begin{eqnarray}
\p_0 = \frac{x_1}{l(1-\theta)}\, ,\;\;
\p_1 = \frac{x_2\lambda^2 e^{-\lambda}}
{l(1-\theta)(1-e^{-\lambda}-\lambda e^{-\lambda})}\,,
\;\;
\p_2 = 1 - \p_0 - \p_1,
\label{pdef} 
\end{eqnarray}
for each $\theta \in [0,1)$ and 
$\vx \in \reals_+^2$ such that $x_1+2x_2 \leq l(1-\theta)$.
In case $x_2>0$ we set $\lambda=\lambda(\vx,\theta)$ 
as the unique positive solution of  
\begin{eqnarray}
\frac{\lambda (1-e^{-\lambda})}
{1-e^{-\lambda}-\lambda e^{-\lambda})} 
= \frac{l(1-\theta)-x_1}{x_2}\,\label{eq:LambdaDef}
\end{eqnarray}
while for $x_2=0$ we set by continuity $\p_1 = 0$ 
(corresponding to $\lambda \to \infty$).

Intuitively, $(\p_0,\p_1,\p_2)$ are the probabilities
that each of the remaining $l-1$ edges emanating from the v-node 
to be deleted at the $\tau= n \theta$ step of the algorithm 
is connected to a $c$-node of degree $1$, $2$ 
and at least $3$, respectively. Indeed, 
of the $n l (1-\theta)$ v-sockets at that time,
precisely $z_1 = n x_1$ are connected to c-nodes of 
degree one, hence the formula for $\p_0$. 
Our formula for $\p_1$ corresponds to postulating 
that the $z_2=n x_2$ c-nodes of degree at least two
in the collection $T$ follow a $\poisson(\lambda)$ 
degree distribution, conditioned on having degree
at least two, setting $\lambda>0$  
to match the expected number of c-sockets per c-node 
in $T$ which is given by the right side of (\ref{eq:LambdaDef}).
To justify this assumption, note that 
\begin{equation}\label{eq:tval}
\coeff[(e^{\bx}-1-{\bx})^t,\bx^{s}] \lambda^s
(e^{\lambda}-1-\lambda)^{-t} = \prob(\sum_{i=1}^t N_i = s)\,,
\end{equation}
for i.i.d. random variables $N_i$, each having the law of a
$\poisson(\lambda)$ random variable conditioned to be at least two.
% (with $\E N_1$ thus precisely the left side of (\ref{eq:LambdaDef})). 
We thus get from (\ref{eq:nform}) and (\ref{eq:hform}), upon 
applying the local CLT for such partial sums, that the tight approximation 
\begin{eqnarray*}
\left| W_{\tau}^+(\Delta \vz|\vz)- \W_{\tau/n}(\Delta \vz|n^{-1} \vz)
\right|\le \frac{C(l,\epsilon)}{n}
\end{eqnarray*}
%(for large graph size $n$), 
applies for
$(\vz,\tau)\in \Q_+(\epsilon)$, $\Delta z_1\in \{-l,\dots,l-2\}$,
$\Delta z_2\in\{-(l-1),\dots,0\}$, with  
\begin{align*}
\Q_+ (\epsilon) \equiv \big\{(\vz,\tau)\, :\; 1 
\le z_1\, ; \; n\epsilon\le z_2\, ; \; 
&0\le \tau\le n(1-\epsilon)\, ; \\
&n\epsilon\le (n-\tau)l- z_1-2z_2\big\}\, ,
\end{align*}
approaching (as $\epsilon \downarrow 0$) 
the set $\Q_+(0)\subset\integers^3$ in which 
the trajectory $(\vz(\tau),\tau)$ 
evolves till hitting one of its absorbing states
$\{(\vz,\tau) : z_1(\tau)=0, \tau \le n \}$
(c.f. \cite[Lemma 4.5]{hcore} for the proof, where
the restriction to $\Q_+(\epsilon)$ guarantees that 
the relevant values of $t$ in (\ref{eq:tval})
are of order $n$).

\noindent
{\bf The initial distribution.}
Considering $m = \lfloor n\rho \rfloor$, for 
$\rho=l/\gamma \in [\epsilon,1/\epsilon]$ and large $n$,
recall that 
\begin{eqnarray*}
\prob(\vz(0)=\vz) = \frac{h(\vz,0)}{m^{nl}} = 
\frac{\prob_{\gamma}\left\{\vS_m = (z_1,z_2,nl)\right\}}
{\prob_{\gamma}\left\{S_{m}^{(3)}=nl\right\}}
\end{eqnarray*}
where $\vS_m = \sum_{i=1}^{m}\vX_i$
for $\vX_i = (\ind_{N_i=1},\ind_{N_i\ge 2},N_i) \in \integers_+^3$ and
$N_i$ that are i.i.d. $\poisson(\gamma)$ random variables
(so $\E S_{m}^{(3)}=nl$ up to the 
quantization error of at most $\gamma$). Hence, 
using sharp local CLT estimates for $\vS_m$ we find that  
the law of $\vz(0)$ is 
well approximated by the multivariate Gaussian law 
$\gauss_2(\cdot|n\vy(0);n\covm(0))$ whose mean 
$n \vy(0) \equiv n \vy(\theta;\rho)$ 
consists of the first two coordinates of $n \rho \E \vX_1$, that is,
\begin{eqnarray}\label{eq:ODEInitial0}
\vy(0;\rho)= \rho  
(\gamma e^{-\gamma}, 1-e^{-\gamma}-\gamma e^{-\gamma})\, , 
\end{eqnarray}
and its positive definite covariance matrix $n \covm(0;\rho)$ 
equals $n \rho$ times the conditional 
covariance of the first two coordinates
of $\vX_1$ given its third coordinates. That is,
\begin{eqnarray}\label{eq:ODEInitial4}
\left\{\begin{array}{rcl}
Q_{11}(0) &= &\frac{l}{\gamma}\, \gamma\, e^{-2\gamma}(e^{\gamma}-1+\gamma-
\gamma^2)\,,\\
Q_{12}(0)&=&-\frac{l}{\gamma}\,\gamma\, e^{-2\gamma}(e^{\gamma}-1-\gamma^2)\,, \\
Q_{22}(0) & = & \frac{l}{\gamma}\, e^{-2\gamma}\,
[(e^{\gamma}-1)+\gamma(e^{\gamma}-2)-\gamma^2(1+\gamma)]\,.
\end{array}\right.
\end{eqnarray}
More precisely, as shown for example in \cite[Lemma 4.4]{hcore},
for all $n$, $r$ and $\rho \in [\epsilon,1/\epsilon]$,
\begin{align}
\sup_{\vu \in \reals^2} \sup_{x \in \reals}
\left| \prob\{ \vu \cdot \vz \leq x \}  - 
\gauss_2(\vu \cdot \vz \le  x|n\vy(0);n\covm(0)) \right|
\le \kappa(\epsilon) n^{-1/2} \,. \label{NewInitialConc2}
\end{align}

\noindent
{\bf Absence of small cores}.
A considerable simplification comes from the 
observation that a typical large random hyper-graph does not have 
a non-empty core of size below a certain threshold. Indeed,
a subset of v-nodes of 
a hyper-graph is called a \emph{stopping set} if the restriction 
of the hyper-graph to this subset has no $c$-node of 
degree one. With 
$N(s,r)$ counting the number of stopping sets in our random 
hyper-graph which involve exactly $s$ v-nodes and $r$ c-nodes, 
observe that necessarily $r \leq \lfloor l s/2\rfloor$. Further, 
adapting a result of \cite{Orlitsky} (and its proof) 
to our graph ensemble, it is shown in \cite[Lemma 4.7]{hcore} that
for $l \ge 3$ and any $\epsilon >0$ there exist 
$\kappa=\kappa(l,\epsilon)>0$ and 
$C=C(l,\epsilon)$ finite, such that 
for any $m \geq \epsilon n$
\begin{eqnarray*}
\E\, \Big[ \sum_{s = 1}^{m \kappa}\sum_{r=1}^{\lfloor l s/2\rfloor} 
N(s,r) \Big] 
\leq C m^{1-l/2} \,.
\end{eqnarray*}
Since the core is the stopping set including the 
maximal number of v-nodes, this implies that 
%\begin{lemma}\label{lemma:SmallCore}
a random hyper-graph from the ensemble $\G_l(n,m)$ 
has a non-empty core of less than
$m\kappa$ v-nodes with probability that is at most $C m^{1-l/2}$
(alternatively, 
the probability of having a non-empty core with less than
$n\kappa$ v-nodes is at most $C\,n^{1-l/2}$).

\subsection{The ODE method and the critical value}\label{sec:ODE}

In view of the approximations of Section \ref{sec:mc-apx}
the asymptotics of $P_l(n,\rho)$ reduces to determining 
the probability $\hprob_{n,\rho}(z_1(\tau)=0$ for some 
$\tau<n)$ that the inhomogeneous Markov chain 
on $\integers_+^2$ 
with the transition kernel $\W_{\tau/n}(\Delta \vz|n^{-1} \vz)$
of (\ref{SimpleKernel})
and the initial distribution $\gauss_2(\cdot|n\vy(0);n\covm(0))$,
hits the line $z_1(\tau)=0$ for some $\tau<n$. 

The functions $(\vx,\theta)\mapsto \p_a(\vx,\theta)$, $a=0,1,2$ 
are of Lipschitz continuous partial derivatives
on each of the compact subsets 
\begin{align*}
\hq_+ (\epsilon)\equiv\left\{(\vx,\theta)\, :\; 
0\le x_1\, ; \; 0\le x_2\, ; \; \theta \in [0,1-\epsilon]\,;\;
0\le (1-\theta)l-x_1-2x_2\right\}\,,
\end{align*}
of $\reals^2 \times \reals_+$ where 
the rescaled (macroscopic) state 
and time variables $\vx \equiv n^{-1} \vz$ and $\theta \equiv \tau/n$
are whenever $(\vz,\tau)\in \Q_+(\epsilon)$. As a result, 
the transition kernels of (\ref{SimpleKernel}) can be extended
to any $\vx \in \reals^2$ such that 
for some $L = L(l,\epsilon)$ finite,
any $\theta,\theta' \in [0,1-\epsilon]$ and $\vx,\vxp \in \reals^2$ 
\begin{eqnarray*}
\l| \W_{\theta'}(\,\cdot\,|\vxp)-
 \W_{\theta}(\,\cdot\,|\vx)\r|_{\rm TV}
\le L\,\left( \l| \vxp-\vx\r|+|\theta'-\theta|\right)
\end{eqnarray*}
(with $||\, \cdot \, ||_{\rm TV}$ denoting the total variation
norm and $\l|\, \cdot\, \r|$ the Euclidean norm in $\reals^2$).

So, with the approximating chain of 
kernel $\W_{\theta}(\Delta \vz|\vx)$
having bounded increments ($=\Delta \vz$),
and its transition probabilities 
depending smoothly on $(\vx,\theta)$, 
the scaled process $n^{-1} \vz(\theta n)$ 
concentrates around the solution of the ODE
\begin{eqnarray}
\frac{\de\vy}{\de\theta}(\theta) & = & \vF(\vy(\theta),\theta)\, ,
\label{eq:ODEFirst}
\end{eqnarray}
starting at $\vy(0)$ of (\ref{eq:ODEInitial0}),
where $\vF(\vx,\theta) = (-1+(l-1)(\p_1-\p_0), -(l-1)\p_1)$ 
is the mean of $\Delta \vz$ under the transitions
of (\ref{SimpleKernel}).  This is  
shown for instance in \cite{cocco,LMSS01,mezard}.

We note in passing that  
this approach of using a deterministic ODE 
as an asymptotic approximation for slowly varying 
random processes goes back at least to \cite{kurtz},
and such degenerate (or zero-one) 
fluid-limits have been established for many other problems. 
For example, this was done in \cite{karp} for the 
largest possible matching and in \cite{PittelEtAl} 
for the size of 
$r$-core 
of random graphs (c.f. \cite{Molloy2} for a general
approach for deriving such results 
without recourse to ODE approximations).

Setting $h_\rho (u)\equiv u-1+\exp(-\gamma u^{l-1})$, with a bit of 
real analysis one verifies that for $\gamma=l/\rho$ finite,   
the ODE (\ref{eq:ODEFirst}) admits a unique solution
$\vy(\theta;\rho)$ subject to the initial condition (\ref{eq:ODEInitial0}) 
such that $y_1(\theta;\rho) = lu^{l-1} h_{\rho}(u)$ for 
$u(\theta)\equiv(1-\theta)^{1/l}$, as long as 
$h_\rho( u(\theta) ) \ge  0$.  Thus, 
if $\rho$ exceeds the finite and positive \emph{critical density}
$$
\rho_{\rm d} \equiv \inf\{\rho>0:\, h_\rho(u)>0 \quad
\forall u\in(0,1]\}  \,,
$$
then $y_1(\theta;\rho)$ is strictly positive for all $\theta \in [0,1)$,
while for any $\rho \le \rho_{\rm d}$ the solution  
$\vy(\theta;\rho)$ first hits the line 
$y_1=0$ at some $\theta_*(\rho)<1$. 

Returning to the XORSAT problem, \cite{cocco,mezard} prove that 
for a uniformly chosen linear system 
with $n$ equations and  $m=\rho n$ variables 
the leaf removal algorithm is successful with high probability 
if $\rho > \rho_{\rm d}$
and fails with high probability if $\rho<\rho_{\rm d}$. 
See \cite[Figure 1]{hcore} for an illustration of this phenomenon.
%\label{fig:Crit}
Similarly, in the context of decoding of a noisy message 
over the binary erasure channel (i.e. 
uniqueness of the solution for 
a given linear system over $\GF$),
\cite{LMSS01} show that with high probability this algorithm
successfully decimates the whole hyper-graph without ever running out
of degree one vertices if $\rho>\rho_{\rm d}$. 
Vice versa, for $\rho<\rho_{\rm d}$, the solution $\vy(\theta;\rho)$ 
crosses the $y_1=0$ plane near which point the algorithm stops
with high probability and returns a core of size $O(n)$. The value
of $\rho$ translates into noise level in this communication 
application, so \cite{LMSS01} in essence explicitly characterizes
the critical noise value, for a variety of codes (i.e. random 
hyper-graph ensembles). Though this result has been successfully 
used for code design, it is often a poor approximation for 
the moderate code block-length (say, $n=10^2$ to $10^5$) that 
are relevant in practice.

The first order phase transition in the size of the core at 
$\rho=\rho_{\rm d}$
where it abruptly changes from an empty core for 
$\rho>\rho_{\rm d}$ to
a core whose size is a 
positive fraction of $n$ for $\rho<\rho_{\rm d}$, 
has other important implications. For example, 
as shown in \cite{cocco,mezard} 
and explained before,
the structure of the
set of solutions of the linear system changes dramatically
at $\rho_{\rm d}$, exhibiting a `clustering effect' when
$\rho<\rho_{\rm d}$. More precisely, 
a typical instance of our ensemble has a core that corresponds
to $n (1-\theta_*(\rho))+o(n)$ equations in 
$n y_2(\theta_*(\rho))+o(n)$ variables. 
The approximately $2^{m-n}$ solutions of the original linear
system partition to about $2^{n \xi(\rho)}$ clusters according 
to their projection on the core, such that the distance between 
each pair of clusters is $O(n)$. 
This analysis also determines the location 
$\rho_{\rm s}$ of the satisfiability phase transition. That is, 
as long as
$\xi(\rho)=y_2(\theta_*(\rho))-(1-\theta_*(\rho))$ is positive,
with high probability the original system is solvable
(i.e the problem is satisfiable), whereas when $\xi(\rho)<0$ 
it is non-solvable with high probability.

We conclude this subsection with a `cavity type' 
direct prediction of the value of $\rho_{\rm d}$ 
without reference to a peeling algorithm (or any 
other stochastic dynamic). To this end, we
set $u$ to denote the probability that a typical c-node
of $\G_l(n,m)$, say $a$, is part of the core. If this is the
case, then an hyper-edge $i$ incident to $a$ is also 
part of the core iff all other $l-1$ sockets of $i$ 
are connected to c-nodes from the core. Using the Bethe ansatz
we consider the latter to be the intersection 
of $l-1$ independent events, each
of probability $u$. So, with probability $u^{l-1}$ 
an hyper-edge $i$ incident to $a$ from the core, 
is also in the core. As already seen,
a typical c-node in our graph ensemble
has $\poisson(\gamma)$ hyper-edges incident to it,
hence $\poisson(\gamma u^{l-1})$ of them shall be from 
the core. Recall that a c-node belongs to the core iff
at least one hyper-edge incident to it is in the
core. By self-consistency, this yields the identity 
$u=1-\exp(-\gamma u^{l-1})$, or alternatively, 
$h_\rho (u)=0$. As we have already seen, the existence 
of $u \in (0,1]$ for which $h_\rho (u)=0$ is equivalent
to $\rho \le \rho_{\rm d}$.
\subsection{Diffusion approximation and scaling window}

As mentioned before, the ODE asymptotics as in \cite{LMSS01} 
is of limited value for decoding with code block-length 
that are relevant in practice. For this reason, \cite{AMRU04}
go one step further and using a diffusion approximation, 
provide the probability of successful
decoding in the double limit of large size $n$
and noise level approaching the critical value
(i.e. taking $\rho_n \to \rho_{\rm d}$).
The resulting asymptotic characterization is of
finite-size scaling type.

Finite-size scaling has been the object of several investigations
in statistical physics and in combinatorics.
Most of these studies estimate the size of the corresponding
scaling window. That is, fixing a small value of $\ve>0$, they
find the amount of change in some control parameter which moves
the probability of a relevant event from $\ve$ to $1-\ve$.
A remarkably general result in this direction is the rigorous formulation
of a `Harris criterion' in \cite{Chayes,Wilson}.
Under mild assumptions, this implies that the scaling window has to
be at least $\Omega(n^{-1/2})$ for a properly defined control parameter
(for instance, the ratio $\rho$ of the number of nodes to
hyper-edges in our problem). A more precise result has recently been
obtained for the satisfiable-unsatisfiable phase transition for the
random $2$-SAT problem, yielding a window of size $\Theta(n^{-1/3})$
\cite{2SATScal}.
Note however that statistical physics arguments suggest that
the phase transition we consider here is not from the same universality
class as the satisfiable-unsatisfiable transition for random $2$-SAT problem.

If we fix $\rho>0$, 
the fluctuations of $\vz(n\theta)$ around $n\vy(\theta)$
are accumulated in $n\theta$ stochastic steps, hence are
of order $\sqrt{n}$. Further, applying the classical 
Stroock-Varadhan martingale characterization technique, 
one finds 
that the 
rescaled variable $(\vz(n\theta)-n\vy(\theta))/\sqrt{n}$ converges 
in law as $n \to \infty$ 
to a Gaussian random variable whose covariance matrix
$\covm(\theta;\rho) = \{Q_{ab}(\theta;\rho); 1\le a,b\le 2\}$
is the symmetric positive definite solution of the ODE:
\begin{eqnarray}
\frac{\de \covm(\theta)}{\de \theta} =
\covd(\vy(\theta),\theta) +
\A (\vy(\theta),\theta) \covm(\theta) +
\covm(\theta) 
\A (\vy(\theta),\theta)^{T}
\label{eq:FirstODECovariance}
\end{eqnarray}
(c.f. \cite{AMRU04}). Here
$\A (\vx,\theta) \equiv \{A_{ab}(\vx,\theta)
= \partial_{x_b} F_a(\vx,\theta) \,;\;1\le a,b\le 2\}$ is 
the matrix of derivatives of the drift term for the
mean ODE (\ref{eq:ODEFirst})
and $\covd(\vx,\theta)= \{G_{ab}(\vx,\theta):a,b\in\{1,2\}\}$
is the covariance of $\Delta \vz$ at $(\vx,\theta)$ 
under the transition kernel (\ref{SimpleKernel}). 
That is, the
non-negative definite symmetric matrix with entries
\begin{eqnarray}\label{eq:GDef}
\left\{\begin{array}{rcl}
G_{11}(\vx,\theta) & = & (l-1)[\p_0+\p_1-(\p_0-\p_1)^2]\, ,\\
G_{12}(\vx,\theta) & = & -(l-1)[\p_0\p_1+\p_1(1-\p_1)]\, ,\\
G_{22}(\vx,\theta) & = & (l-1)\p_1(1-\p_1)\,
\end{array}\right.
\end{eqnarray}
The dependence of 
$\covm(\theta) \equiv \covm(\theta;\rho)$ on $\rho$ is 
via the positive definite
initial condition $\covm(0;\rho)$ 
of (\ref{eq:ODEInitial4}) for the ODE
(\ref{eq:FirstODECovariance}) as well as 
the terms $\vy(\theta)=\vy(\theta;\rho)$ 
that appear in its right side.

Focusing hereafter on the critical case $\rho=\rho_{\rm d}$, 
there exists then a unique \emph{critical time}  
$\theta_{\rm d} \equiv \theta_*(\rho_{\rm d})$ in $(0,1)$
with $y_1(\theta_{\rm d})=y_1'(\theta_{\rm d})=0$
and $y_1''(\theta_{\rm d})>0$, while the 
smooth solution $\theta \mapsto y_1(\theta;\rho_{\rm d})$ 
is positive when $\theta \neq \theta_{\rm d}$ and $\theta \neq 1$ 
(for more on $\vy(\cdot;\cdot)$ see \cite[Proposition 4.2]{hcore}).

For $\rho_n = \rho_{\rm d}+  r n^{-1/2}$ 
the leading contribution to $P_l(n,\rho_n)$ is the 
probability $\hprob_{n,\rho_n}(z_1(n \theta_{\rm d}) \le 0)$ 
for the inhomogeneous Markov chain $\vz(\tau)$ on $\integers_+^2$ 
with transition kernel $\W_{\tau/n}(\Delta \vz|n^{-1} \vz)$
of (\ref{SimpleKernel})
and the initial distribution $\gauss_2(\cdot|n\vy(0);n\covm(0))$ 
at $\rho=\rho_n$. To estimate this contribution, 
note that $y_1(\theta_{\rm d};\rho_{\rm d})=0$, hence
$$
y_1(\theta_{\rm d};\rho_n) = r n^{-1/2} [
\frac{\partial y_1}{\partial \rho} (\theta_{\rm d};\rho_{\rm d}) + o(1)] \,.
$$
Thus, setting 
$\alp_l \equiv \frac{\partial y_1}{\partial \rho} /\sqrt{Q_{11}} $,
both evaluated at $\theta=\theta_{\rm d}$ and $\rho=\rho_{\rm d}$, 
by the preceding Gaussian approximation
\begin{equation}\label{eq:diff-appx}
P_l(n,\rho_n) = 
\hprob_{n,\rho_n}(z_1(n \theta_{\rm d}) \le 0) + o(1) = 
\gauss_1(- r \alp_l)+o(1) \,,
\end{equation}
as shown in \cite{AMRU04}. In particular, 
the phase transition scaling window around 
$\rho=\rho_{\rm d}$ is of size $\Theta(n^{-1/2})$. 

In a related work, \cite{norris1} determine the 
asymptotic core size for a random hyper-graph from an ensemble 
which is the `dual' of $\G_l(n,m)$. In their model 
the hyper-edges (i.e. v-nodes) are of random, Poisson
distributed sizes, which allows for a particularly simple Markovian
description of the peeling algorithm that constructs the core. Dealing
with random hyper-graphs at the critical point, where the
asymptotic core size exhibits a discontinuity, they describe the
fluctuations around the deterministic limit via a certain linear SDE.
In doing so, they heavily rely on the powerful theory of weak convergence,
in particular in the context of convergence of Markov processes.
For further results that are derived along this line of reasoning,
see \cite{norris2,goldschmidt1,goldschmidt2}.
%
%******************************************************************
\subsection{Finite size scaling correction to the critical value}
\label{sec:FSS}

In contrast with the preceding and closer in level of precision
to that for the scaling behavior in the emergence of the
giant component in Erd\"os-R\'enyi random graphs 
(see \cite{Giant} and references therein),
for $\G_l(n,m)$ and 
$\rho_n = \rho_{\rm d}+r n^{-1/2}$ inside the scaling window, 
it is conjectured in \cite{AMRU04} and proved in \cite{hcore}
that the leading correction to the diffusion approximation 
for $P_l(n,\rho_n)$ is of order $\Theta(n^{-1/6})$.
Comparing this finite size scaling expression
with numerical simulations, as illustrated in 
\cite[Figure 2]{hcore},
%\ref{fig:Scal} 
we see that it is 
very accurate even at $n\approx 100$.
%(opening the way to an efficient code design procedure, c.f. \cite{AMU06}). 

Such finite size scaling result is 
beyond the scope of weak convergence theory, 
and while its proof involve delicate 
coupling arguments,
expanding and keeping track of the rate of decay of
approximation errors (in terms of $n$), 
similar results are expected for 
other phase transitions within the same class,
such as $k$-core percolation on random graphs (with $k\ge 3$),
or the pure literal rule threshold in random $k$-SAT (with $k\ge 3$,
c.f. \cite{Fr-Su}). In a different direction, the 
same approach provides rates of convergence (in the sup-norm) as $n$ grows, 
for distributions of many inhomogeneous Markov
chains on $\reals^d$ whose transition kernels
$W_{t,n}(x_{t+1}-x_t=y|x_t=x)$ are approximately (in $n$)
linear in $x$, and ``strongly-elliptic'' of uniformly bounded
support with respect to $y$. 

As a first step in proving the finite size scaling, the
following refinement of the left hand side of (\ref{eq:diff-appx})
is provided in \cite[Section 5]{hcore}. 
\begin{propo}\label{PropoApproximated}
Let $\betaw \in (3/4,1)$, $J_n = [n\theta_{\rm d}-n^{\betaw},
n\theta_{\rm d}+n^{\betaw}]$ and 
$|\rho-\rho_{\rm d}|\le  n^{\betaw'-1}$ with $\betaw'<2\betaw-1$. Then,
for $\ve_n = A\log n$ and $\delta_n = D\,n^{-1/2}(\log n)^2$,
\begin{align}
\hprob_{n,\rho}\Big\{\inf_{\tau\in J_n} z_1(\tau)  \le-\ve_n\Big\}-
\delta_n
&\le P_l(n,\rho) \nonumber \\
&\le \hprob_{n,\rho}\Big\{\inf_{\tau\in J_n} z_1(\tau)  \le\ve_n\Big\}
+\delta_n\,.
\end{align}
\end{propo}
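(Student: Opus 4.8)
The plan is to bridge the gap between the true core-emergence probability $P_l(n,\rho)$ and the hitting probability for the \emph{approximating} Markov chain $\vz(\tau)$ with the smooth kernel $\W_{\tau/n}(\Delta \vz|n^{-1}\vz)$ of (\ref{SimpleKernel}), restricting attention to the critical window $J_n$ around $n\theta_{\rm d}$. First I would recall from Section \ref{sec:mc-apx} three ingredients already at our disposal: (i) the coupling of the true graph process $G(\tau)$ with the chain $\vz(\tau)$ on $\integers_+^2$ via Lemma \ref{lemma:OriginalProcess}, together with the kernel estimate $|W_\tau^+(\Delta\vz|\vz)-\W_{\tau/n}(\Delta\vz|n^{-1}\vz)|\le C(l,\epsilon)/n$ valid on $\Q_+(\epsilon)$; (ii) the initial-distribution Gaussian approximation (\ref{NewInitialConc2}) with error $O(n^{-1/2})$; and (iii) the absence-of-small-cores bound, namely that a non-empty core of fewer than $n\kappa$ v-nodes occurs with probability at most $C n^{1-l/2}$. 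The event $\{G(0)$ has a non-empty core$\}$ equals, up to the small-core event, the event that the peeling process halts, i.e. that $z_1(\tau)=0$ for some $\tau<n$ with the residual graph still of size $\Theta(n)$; and because the ODE solution $y_1(\theta;\rho_{\rm d})$ is strictly positive away from $\theta_{\rm d}$ and $1$ (with $y_1''(\theta_{\rm d})>0$), the concentration of $n^{-1}\vz(n\theta)$ around $\vy(\theta)$ forces any such halting to occur, with overwhelming probability, for $\tau$ inside $J_n=[n\theta_{\rm d}-n^\betaw, n\theta_{\rm d}+n^\betaw]$ once $\betaw>3/4$. This localization is what lets us replace the global event by $\{\inf_{\tau\in J_n} z_1(\tau)\le 0\}$ up to error $\delta_n$.

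The core of the argument is then a two-sided comparison. For the lower bound I would argue: if the approximating chain satisfies $\inf_{\tau\in J_n} z_1(\tau)\le -\ve_n$ with $\ve_n=A\log n$, then by transferring along the coupling — paying the cumulative kernel error $\sum_{\tau\in J_n} C(l,\epsilon)/n = O(n^{\betaw-1})$ in total variation over the window and the $O(n^{-1/2})$ initial error — the true process also crosses zero somewhere in $J_n$ except on an event of probability $\delta_n = D n^{-1/2}(\log n)^2$; the extra cushion $-\ve_n$ absorbs the $O(\log n)$-scale discrepancy between the two coupled trajectories that builds up over $n^\betaw$ steps (a maximal/Azuma-type estimate for the martingale part of the increments, whose per-step conditional variance is $O(1)$, gives fluctuations of order $\sqrt{n^\betaw}\,$; but the relevant comparison is between \emph{two} chains driven by nearly identical kernels, so the accumulated TV-coupling error is what matters, controlled by $n^{\betaw-1}\ll \delta_n$ once $\betaw<1$, while the $\log n$ safety margin handles rare large deviations). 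The upper bound is symmetric, using $+\ve_n$ and the reverse implication together with the small-core bound to discard trajectories that dip below zero only through a spuriously small residual graph. Throughout, one keeps $|\rho-\rho_{\rm d}|\le n^{\betaw'-1}$ with $\betaw'<2\betaw-1$ precisely so that the shift $y_1(\theta_{\rm d};\rho)-y_1(\theta_{\rm d};\rho_{\rm d}) = O(n^{\betaw'-1})$ in the mean trajectory is small compared with the window-scale fluctuations, keeping the localization to $J_n$ intact.

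The main obstacle, and the step I would spend the most care on, is the \emph{localization to $J_n$ together with the quantitative choice of $\delta_n = D n^{-1/2}(\log n)^2$}: one must show that the probability that the approximating chain first hits $z_1=0$ \emph{outside} $J_n$ (either too early, too late near $\theta=1$, or by touching zero and bouncing back) is $O(\delta_n)$. Too-early crossings are ruled out by the strict positivity of $y_1(\theta;\rho_{\rm d})$ on $[\epsilon,\theta_{\rm d}-n^{\betaw-1}]$ combined with a Gaussian deviation bound scaled by $\sqrt n$; the delicate regime is $\theta$ near $\theta_{\rm d}$ itself, where $y_1$ is quadratically small, $y_1(\theta;\rho_{\rm d})\approx \tfrac12 y_1''(\theta_{\rm d})(\theta-\theta_{\rm d})^2$, so that at the edge $|\theta-\theta_{\rm d}|=n^{\betaw-1}$ the mean is of order $n^{2\betaw-2}\cdot n = n^{2\betaw-1}$ c-nodes, which must dominate the fluctuation scale $\sqrt n$ — this forces $\betaw>3/4$, exactly the hypothesis, and shows the choice of window exponent is sharp. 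Assembling these deviation estimates with the right powers of $\log n$ to pass from polynomial to the stated $(\log n)^2$ correction, while simultaneously tracking that the coupling error over $|J_n|\sim 2n^\betaw$ steps does not exceed $\delta_n$, is the technical heart; once it is in place the two-sided bound of Proposition \ref{PropoApproximated} follows by combining it with (\ref{NewInitialConc2}), the kernel estimate, and the small-core bound, as in \cite[Section 5]{hcore}.
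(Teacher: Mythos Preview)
The paper does not give a proof of this proposition; it simply records that the result ``is provided in \cite[Section 5]{hcore}.'' So there is no in-paper argument to compare against directly. Your sketch reproduces the correct architecture one expects from \cite{hcore}: localize the hitting event to $J_n$ via ODE concentration and the quadratic tangency $y_1(\theta;\rho_{\rm d})\approx\tfrac12\tF(\theta-\theta_{\rm d})^2$ (which is precisely what forces $\betaw>3/4$, as you correctly derive), discard late or small stoppings via the absence-of-small-cores bound, absorb the initial-law error via the Berry--Esseen estimate (\ref{NewInitialConc2}), and use the $\ve_n=A\log n$ cushion to pass between the true absorbing chain and the non-absorbing approximating chain. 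Your identification of the constraint $\betaw'<2\betaw-1$ as keeping the mean shift below the window-scale fluctuations is also right.

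The one place where your reasoning is loose enough to be misleading is the coupling step. You write that the ``cumulative kernel error $\sum_{\tau\in J_n}C(l,\epsilon)/n=O(n^{\betaw-1})$ in total variation over the window'' controls the discrepancy. But the two chains must be tracked from time $0$, not just on $J_n$, and a crude total-variation accumulation of the $C/n$ per-step kernel error over $\Theta(n)$ steps yields only an $O(1)$ bound, which is useless. The actual mechanism in \cite{hcore} is not a pathwise TV coupling of the two chains but rather separate ODE-plus-Gaussian tracking for each: one shows that \emph{each} chain stays within $O(\sqrt{n}\,\log n)$ of $n\vy(\theta)$ uniformly in $\tau$, so that outside $J_n$ both are bounded away from zero, while inside $J_n$ the $O(1/n)$ kernel discrepancy contributes an $O(1)$ drift over the $2n^{\betaw}$ steps---negligible against the $\sqrt{n}$-scale fluctuations---and the $A\log n$ cushion absorbs the residual difference in hitting probabilities. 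The $\delta_n=Dn^{-1/2}(\log n)^2$ then emerges from combining the Berry--Esseen rate with the Gaussian tail bounds carrying the $\log n$ threshold. Your parenthetical about Azuma-type estimates is pointing toward this, but as written the TV-accumulation sentence suggests a mechanism that would not close.
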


At the critical point (i.e. for $\rho = \rho_{\rm d}$ and
$\theta = \theta_{\rm d}$) the solution of the ODE (\ref{eq:ODEFirst}) 
is tangent to the $y_1=0$ plane and fluctuations in the $y_1$ 
direction determine whether a non-empty (hence, large), core exists or not. 
Further, in a neighborhood of $\theta_{\rm d}$ we
have $y_1(\theta)\simeq\frac{1}{2}\tF (\theta-\theta_{\rm d})^2$,
for the positive constant 
\begin{eqnarray}
\tF \equiv \frac{\de^2 y_1}{\de\theta^2}(\theta_{\rm d};\rho_{\rm d})
=  \frac{\de F_1}{\de\theta}(\vy(\theta_{\rm d};\rho_{\rm d}),\theta_{\rm d}) =
\frac{\partial F_1}{\partial\theta}+
\frac{\partial F_1}{\partial y_2}\, F_{2}\, \label{eq:tFDefinition}
\end{eqnarray}
(omitting hereafter arguments that refer to the critical point).
In the same neighborhood, the contribution of fluctuations to 
$z_1(n\theta)-z_1(n\theta_{\rm d})$ 
is approximately $\sqrt{\tG n|\theta-\theta_{\rm d}|}$, with 
$\tG= G_{11}(\vy(\theta_{\rm d};\rho_{\rm d}),\theta_{\rm d})>0$.
Comparing these two contributions we see that the relevant 
scaling is $X_n(t) = n^{-1/3}[z_1(n\theta_{\rm d}+n^{2/3}t)-
z_1(n\theta_{\rm d})]$, which as shown in \cite[Section 6]{hcore}
converges for large $n$, by strong approximation,  to 
$X(t) = \frac{1}{2}\tF t^2+ \sqrt{\tG} W(t)$, for a standard two-sided
Brownian motion $W(t)$ (with $W(0)=0$). That is,
\begin{propo}\label{PropoGaussian}
Let $\xi(r)$ be a normal random variable of 
mean $\left(\frac{\partial y_1}{\partial \rho}\right) r$ and
variance $Q_{11}$ 
(both evaluated at $\theta=\theta_{\rm d}$ and $\rho=\rho_{\rm d}$),
which is independent of $W(t)$.

For some $\betaw \in (3/4,1)$, any $\eta<5/26$, all $A>0$, 
$r \in \reals$ and
$n$ large enough, if $\rho_n = \rho_{\rm d}+ r\, n^{-1/2}$ 
and $\ve_n = A\log n$, then  
\begin{eqnarray}\label{eq:PropoGaussian}
\Big|
\hprob_{n,\rho_n}\big\{ \inf_{\tau\in J_n} z_1(\tau)\le\pm
\ve_n\big\} 
 - \prob\big\{   n^{1/6} \xi+\inf_{t} X(t) \le 0
\big\} \Big|\le n^{-\eta}\, .
\end{eqnarray}
\end{propo}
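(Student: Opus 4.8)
\textbf{Proof plan for Proposition \ref{PropoGaussian}.}

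The plan is to compare the inhomogeneous Markov chain $\vz(\tau)$ (with the smooth kernel $\W_{\tau/n}$ and Gaussian initial law of mean $n\vy(0;\rho_n)$) to a pair of limiting Gaussian objects: the fluctuation $\xi(r)$ coming from the initial condition and the drift of $y_1$ in $\rho$, and the Brownian scaling limit $X(t) = \tfrac12 \tF t^2 + \sqrt{\tG}\,W(t)$ of the locally parabolic $z_1$-coordinate near the critical time. The starting point is Proposition \ref{PropoApproximated}, which already reduces $P_l(n,\rho_n)$ to $\hprob_{n,\rho_n}\{\inf_{\tau \in J_n} z_1(\tau) \le \pm \ve_n\}$ up to an error $\delta_n = D n^{-1/2}(\log n)^2$ that is negligible relative to the target $n^{-\eta}$ (any $\eta < 5/26 < 1/2$). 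So the whole task is to show that $\inf_{\tau \in J_n} z_1(\tau)$, appropriately rescaled, converges to $n^{1/6}\xi + \inf_t X(t)$ with the stated rate $n^{-\eta}$.

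I would carry this out in the following order. First, decompose $z_1(\tau)$ for $\tau \in J_n$ as $z_1(n\theta_{\rm d}) + [z_1(\tau) - z_1(n\theta_{\rm d})]$. For the first term, use the fact that $z_1(n\theta_{\rm d})$ accumulates fluctuations over $\Theta(n)$ steps of the chain; via a martingale (Stroock--Varadhan) characterization plus the initial Gaussian law, $n^{-1/2}[z_1(n\theta_{\rm d}) - n y_1(\theta_{\rm d};\rho_n)]$ is approximately $N(0,Q_{11})$, and since $y_1(\theta_{\rm d};\rho_{\rm d})=0$ one has $n y_1(\theta_{\rm d};\rho_n) = \sqrt{n}\, r\, \partial_\rho y_1 (1+o(1))$; hence $n^{-1/2} z_1(n\theta_{\rm d})$ is approximately $\xi(r)$. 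Second, for the increment $z_1(\tau) - z_1(n\theta_{\rm d})$ with $\tau = n\theta_{\rm d} + n^{2/3} t$, use the local expansion $y_1(\theta) \simeq \tfrac12 \tF(\theta - \theta_{\rm d})^2$ (with $\tF$ as in \eqref{eq:tFDefinition}, positive) for the deterministic drift contribution and the diffusion covariance $\tG = G_{11}$ at the critical point for the martingale part; a strong (Komlós--Major--Tusnády-type) approximation embeds the martingale part into a Brownian motion $\sqrt{\tG}\,W(t)$ with a polynomially small coupling error, giving $X_n(t) \to X(t)$ in sup-norm over the relevant $t$-range with an explicit rate. Third, combine: $\inf_{\tau\in J_n} z_1(\tau) \approx z_1(n\theta_{\rm d}) + n^{1/3}\inf_t X_n(t)$, so after dividing by $n^{1/3}$, the event $\{\inf_{\tau\in J_n} z_1(\tau) \le \pm\ve_n\}$ becomes $\{n^{1/6}\,[n^{-1/2}z_1(n\theta_{\rm d})] + \inf_t X_n(t) \le \pm n^{-1/3}\ve_n\}$; since $\ve_n = A\log n$ makes $n^{-1/3}\ve_n \to 0$ and the density of $n^{1/6}\xi + \inf_t X(t)$ is bounded, the threshold perturbation contributes $O(n^{-1/3}\log n)$, absorbed into $n^{-\eta}$. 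The independence of $\xi(r)$ and $W$ comes from the fact that the initial fluctuation and the late-time martingale increments are driven by disjoint blocks of randomness (Proposition \ref{prop:poisson_independance}-type independence plus the Markov property), so their joint limit factorizes.

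The main obstacle is the quantitative control in the second step: one must upgrade the qualitative diffusion approximation of \cite{AMRU04} to a rate $n^{-\eta}$, $\eta$ close to $5/26$, and this requires (i) a strong approximation of the $z_1$-martingale by Brownian motion with explicit error, valid uniformly on $J_n$, which is delicate because $J_n$ has width $2n^\betaw$ with $\betaw \in (3/4,1)$ — larger than the natural $n^{2/3}$ scaling window — so one needs to show the infimum is achieved (with overwhelming probability) within an $O(n^{2/3+\epsilon})$ sub-window of $n\theta_{\rm d}$, using the strict positivity of $y_1$ away from $\theta_{\rm d}$ to push down the contribution of the rest of $J_n$; and (ii) careful propagation of the Lipschitz-in-$(\vx,\theta)$ bound on $\W_\theta(\cdot|\vx)$ and the smoothness of $\vy, \covm$ (guaranteed on the compact sets $\hq_+(\epsilon)$) to keep the accumulated error polynomially small rather than merely $o(1)$. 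This is exactly the kind of delicate coupling-and-error-tracking argument flagged in Section \ref{sec:FSS}; I expect the bookkeeping of the various exponents ($\betaw$, $\betaw'$, the strong-approximation rate, the $n^{2/3}$ and $n^{1/6}$ scalings) to be the technical heart, with the choice $\eta < 5/26$ emerging as the optimal tradeoff among them.
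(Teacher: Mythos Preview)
Your plan is correct and matches the approach the paper sketches (and defers to \cite[Section 6]{hcore}): decompose $z_1(\tau)$ into $z_1(n\theta_{\rm d})$ plus the increment, use the Gaussian fluctuation result for the first term to produce $\xi(r)$, apply a KMT-type strong approximation for the rescaled increment $X_n(t)$ to obtain $X(t)=\tfrac12\tF t^2+\sqrt{\tG}\,W(t)$, and then combine via (asymptotic) independence, with the exponent $5/26$ emerging from the bookkeeping of the competing error terms. One small correction: the independence of $\xi$ and $W$ has nothing to do with Proposition~\ref{prop:poisson_independance} (that result is about Poisson random graphs in the reconstruction section); here it comes purely from the Markov property of $\vz(\cdot)$ together with the fact that the increments driving $X_n$ occur after time $n\theta_{\rm d}$, so you should drop that reference.
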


We note in passing that within the scope of weak convergence
Aldous pioneered in \cite{aldous} the use of Brownian motion with quadratic
drift (ala $X(t)$ of Proposition \ref{PropoGaussian}),
to examine the near-critical behavior of the
giant component in Erd\"os-R\'enyi random graphs, and his 
method was extended in \cite{goldschmidt2}
to the giant set of identifiable vertices in  
Poisson random hyper-graph models.

Combining Propositions \ref{PropoApproximated} and \ref{PropoGaussian} we 
estimate $P_l(n,\rho_n)$
in terms of the distribution of the global minimum of the process
$\{X(t)\}$.
% (that is, a Brownian motion plus a quadratic shift). 
The latter has been determined already 
in \cite{groeneboom}, yielding the following conclusion. 
\begin{thm}\label{thm:Main}
For $l\ge 3$ set $\alp_l = 
\frac{\partial y_1}{\partial \rho}/\sqrt{Q_{11}}$, 
$\bt_l = \frac{1}{\sqrt{Q_{11}}}\tG^{2/3}\, \tF^{-1/3}$ and 
$\rho_n = \rho_{\rm d}+  r\, n^{-1/2}$. Then, for any $\eta<5/26$
\begin{eqnarray}\label{eq:Mainresult}
P_l(n,\rho_n) = \gauss_1(- r \alp_l)+\bt_l \const
\; \gauss_1'(- r \alp_l)
\; n^{-1/6}+ O(n^{-\eta})\, ,
\end{eqnarray}
for $\const \equiv \int_0^{\infty}\!\! [1-\K(z)^2]\; \de z$ and
an explicit function $\K(\cdot)$ (see \cite[equation (2.17)]{hcore}).
\end{thm}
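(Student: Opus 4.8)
The proof of Theorem \ref{thm:Main} combines the two preceding propositions
with an explicit computation of the law of the global minimum of the
process $X(t)=\frac{1}{2}\tF t^2+\sqrt{\tG}\,W(t)$. The plan is as follows.
First, I combine Propositions \ref{PropoApproximated} and \ref{PropoGaussian}.
Proposition \ref{PropoApproximated} sandwiches $P_l(n,\rho_n)$ between the two
quantities $\hprob_{n,\rho_n}\{\inf_{\tau\in J_n} z_1(\tau)\le\pm\ve_n\}$ up to
an error $\delta_n=D\,n^{-1/2}(\log n)^2$, while Proposition \ref{PropoGaussian}
identifies each of these (up to an error $n^{-\eta}$) with
$\prob\{n^{1/6}\xi(r)+\inf_t X(t)\le 0\}$. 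Since both the $+\ve_n$ and $-\ve_n$
bounds converge to the \emph{same} limiting probability, the sandwich closes and
we obtain
$$
P_l(n,\rho_n)=\prob\{ n^{1/6}\xi(r)+\inf_t X(t)\le 0\}+O(n^{-\eta})\,,
$$
using that $\delta_n = o(n^{-\eta})$ for $\eta<5/26<1/2$. Here $\xi(r)$ is the
normal variable of mean $(\partial y_1/\partial\rho)\,r$ and variance $Q_{11}$
(evaluated at the critical point), independent of $W$.

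The next step is to evaluate the limiting probability asymptotically in $n$.
Write $S\equiv -\inf_t X(t)\ge 0$, a nonnegative random variable independent of
$\xi(r)$; then the probability equals $\prob\{\xi(r)\le -n^{-1/6}S\}$.
Conditioning on $S$ and Taylor-expanding the Gaussian c.d.f.\ of $\xi(r)$ about
$0$, one gets
$\E[\,\gauss_1(-\alp_l r - n^{-1/6} S/\sqrt{Q_{11}})\,]
=\gauss_1(-\alp_l r)+n^{-1/6}\,\gauss_1'(-\alp_l r)\,\E[S]/\sqrt{Q_{11}}
+O(n^{-1/3}\E[S^2])$, where I use $\gauss_1(-\alp_l r)=\prob\{\xi(r)\le 0\}$
after the affine change of variable $\xi(r)=\sqrt{Q_{11}}\,Z+(\partial y_1/\partial\rho)r$.
(The second-moment bound $\E[S^2]<\infty$ follows from standard tail estimates
for Brownian motion with quadratic drift, e.g.\ via the reflection principle and
a Gaussian bound on $\prob\{S\ge z\}$; this is where one needs only a crude estimate.)
It remains to identify $\E[S]=\E[-\inf_t X(t)]$. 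By Brownian scaling,
$\inf_t\{\frac12\tF t^2+\sqrt{\tG}W(t)\}$ has the same law as
$c\,\inf_t\{\frac12 t^2+W(t)\}$ for the scaling constant
$c=\tG^{2/3}\tF^{-1/3}$ obtained by setting $t=c_1 s$, $W(t)=\sqrt{c_1}\,\widetilde W(s)$
and matching the two terms, so $c_1=(\tG/\tF^2)^{1/3}$ and the common prefactor is
$\tF c_1^2 = \tG^{2/3}\tF^{-1/3}$. The distribution of $\inf_t\{\frac12 t^2+W(t)\}$
was determined by Groeneboom \cite{groeneboom}, and its (negative) mean is the
explicit constant $\const=\int_0^\infty[1-\K(z)^2]\,\de z$ recorded in
\cite[equation (2.17)]{hcore}. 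Hence $\E[S]=\const\,\tG^{2/3}\tF^{-1/3}$ and
$\E[S]/\sqrt{Q_{11}}=\bt_l\const$ with $\bt_l=\tG^{2/3}\tF^{-1/3}/\sqrt{Q_{11}}$,
which yields exactly (\ref{eq:Mainresult}).

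The main obstacle is not in this final assembly — which is essentially a Taylor
expansion plus a citation to Groeneboom — but is hidden inside
Propositions \ref{PropoApproximated} and \ref{PropoGaussian}, and in particular
in the strong-approximation step underlying Proposition \ref{PropoGaussian}: one
must couple the inhomogeneous Markov chain $z_1(\tau)$ on $\integers_+^2$,
restricted to the $n^\betaw$-window $J_n$ around $n\theta_{\rm d}$, to the
Gaussian process $\sqrt{n}\,\xi(r)$ plus the quadratic-drift Brownian motion
$X$, with an error that is genuinely smaller than $n^{-\eta}$ for some
$\eta<5/26$ uniformly in $r$. This requires the Komlós–Major–Tusnády type
strong embedding of the martingale part of $z_1$, careful control of the drift
expansion near the degenerate (tangency) point $\theta_{\rm d}$ where the
first-order term in $y_1$ vanishes, and a matching of the discrete time-window
scale $n^{2/3}$ with the spatial scale $n^{1/3}$; these delicate coupling and
error-tracking estimates are carried out in \cite[Sections 5 and 6]{hcore}, on
which I would rely. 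Given those two propositions, the remaining work is the
short computation above, so I would spend essentially all the effort on checking
that the two-sided $\pm\ve_n$ sandwich indeed closes (i.e.\ that the
$\ve_n=A\log n$ slack is absorbed into the $n^{-\eta}$ error, which it is since
$n^{1/6}\ve_n/\sqrt n\to 0$) and that the scaling constants $\tF,\tG,Q_{11}$ and
$\partial y_1/\partial\rho$ are the ones appearing in
(\ref{eq:tFDefinition}), (\ref{eq:GDef}), (\ref{eq:ODEInitial4}) and the
solution of (\ref{eq:ODEFirst}) respectively.
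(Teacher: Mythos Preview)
Your proposal is correct and follows essentially the same route as the paper's proof outline: combine Propositions~\ref{PropoApproximated} and~\ref{PropoGaussian}, use Brownian scaling to reduce $\inf_t X(t)$ to $\tG^{2/3}\tF^{-1/3}\inf_t\{\tfrac12 t^2+W(t)\}$, invoke Groeneboom for the law of this minimum (with $\const=-\E Z$ via integration by parts of $F_Z(z)=1-\K(-z)^2\ind(z<0)$), and Taylor-expand $\gauss_1$ using finiteness of moments of $Z$. One small slip: with $S=-\inf_t X(t)\ge 0$ the event $\{n^{1/6}\xi+\inf_t X(t)\le 0\}$ is $\{\xi\le n^{-1/6}S\}$, not $\{\xi\le -n^{-1/6}S\}$, and correspondingly the first-order Taylor term of $\gauss_1(-\alp_l r - h)$ in $h$ carries a minus sign; your two sign errors cancel, so the final formula is right, but you should fix the intermediate display.
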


\noindent{\it Proof outline.}
Putting together Propositions \ref{PropoApproximated} and \ref{PropoGaussian},
we get that 
\begin{eqnarray*}
P_l(n,\rho_n) = \prob\left\{   n^{1/6} \xi+\inf_{t} X(t) \le 0
\right\} + O(n^{-\eta})\, .
\end{eqnarray*}
By Brownian scaling, $X(t) = \tF^{-1/3}\tG^{2/3}\tX(\tF^{2/3}\tG^{-1/3}t)$,
where $\tX(t) =\frac{1}{2}t^2 +\tW(t)$ and $\tW(t)$ is also a two
sided standard Brownian motion. 
With $Z = \inf_{t} \tX(t)$,
and $Y$ a standard normal random variable which is independent of 
$\tX(t)$, we clearly have that 
\begin{align}\label{eq:third}
P_l(n,\rho_n) & = \prob\left\{n^{1/6}\left(\frac{\partial y_1}
{\partial \rho}\right) r+n^{1/6}
\sqrt{Q_{11}}Y +  \tF^{-1/3}\tG^{2/3}Z\le 0\right\}+O(n^{-\eta})
\nonumber\\
& = \E\Big\{\gauss_1 \big(- r \alp_l -
\bt_l n^{-1/6} Z\big)
\Big\}+ O(n^{-\eta})\, .
\end{align}
From \cite[Theorem 3.1]{groeneboom} we deduce that $Z$ has
the continuous distribution function $F_Z (z) = 1-\K(-z)^2 \ind(z<0)$, 
resulting after integration by parts with 
the explicit formula for $\const = - \E \, Z$ (and where
\cite[(5.2)]{groeneboom} provides the explicit expression of 
\cite[formula (2.17)]{hcore} for $\K(x)$). Further, as shown 
in \cite[proof of Theorem 2.3]{hcore} all moments of $Z$ 
are finite and the proof is thus completed by 
a first order Taylor expansion of $\gauss_1(\, \cdot\, )$ 
in (\ref{eq:third}) around $-r \alp_l$. 
\endproof

\begin{remark}\label{rem:accuracy}
The simulations in \cite[Figure 2]{hcore} suggest that
the approximation of $P_l(n,\rho_n)$ we provide in (\ref{eq:Mainresult})
is more accurate than the $O(n^{-5/26+\epsilon})$ correction term suggests. 
Our proof shows that one cannot hope for a better error estimate than
$\Theta(n^{-1/3})$ as we neglect the second order term in expanding 
$\Phi(-r \alp_l + Cn^{-1/6})$, see (\ref{eq:third}). We believe this is
indeed the order of the next term in the expansion
(\ref{eq:Mainresult}). Determining its form is an open problem.
\end{remark}

\begin{remark}\label{rem:emergence}
Consider the (time) 
evolution of the core for the hyper-graph process where 
one hyper-edge is added uniformly at random at each time step.
That is, $n$ increases with time, while the number of vertices
$m$ is kept fixed. Let $S(n)$ be the corresponding (random)  
number of hyper-edges in the core of the hyper-graph at time $n$ and 
$n_{\rm d} \equiv \min\{ n : S(n) \geq 1 \}$ the onset of 
a non-empty core. Recall that 
small cores are absent for a typical large random 
hyper-graph, whereas fixing $\rho<\rho_{\rm d}$ 
the probability of an empty core, i.e.
$S(m/\rho)=0$, decays in $m$. Thus, for large $m$
most trajectories $\{S(n)\}$ abruptly jump from having no core
for $n<n_{\rm d}$ to a linear in $m$ core size 
at the random critical edge number $n_{\rm d}$.
By the monotonicity of $S(n)$ we further see that 
$\prob_m\{n_{\rm d}\le m/\rho\} 
% = \prob\{S(m/\rho) \geq 1\}
= P_{l}(\rho,m/\rho)$, hence 
Theorem \ref{thm:Main} determines the asymptotic
distribution of $n_{\rm d}$. Indeed, as detailed
in \cite[Remark 2.5]{hcore}, upon expressing $n$ 
in terms of $m$ in equation (\ref{eq:Mainresult}) 
we find that 
the distributions of $\widehat{n}_{\rm d} \equiv 
\alp_l (\rho_{\rm d} n_{\rm d}-m )/\sqrt{m/\rho_{\rm d}}
+ \bt_l \const \rho_{\rm d}^{1/6} m^{-1/6}$
converge point-wise to the standard normal law 
at a rate which is faster than $m^{-5/26+\epsilon}$.
\end{remark}
\begin{remark}\label{rem:size}
The same techniques are applicable for other
properties of the core in the `scaling regime'
$\rho_n = \rho_{\rm d}+r\,n^{-1/2}$. For example, 
as shown in \cite[Remark 2.6]{hcore}, for $m=n \rho_n$
and conditional to the existence of a non-empty core,
$(S(n)-n(1-\theta_{\rm d}))/n^{3/4}$ converges in distribution 
as $n \to \infty$ to $(4Q_{11}/\tF^2)^{1/4}\, Z_r$ where
$Z_r$ is a non-degenerate random variable (whose 
density is explicitly provided there). In particular, 
the $\Theta(n^{1/2})$ fluctuations of the core size
at fixed $\rho<\rho_{\rm d}$ are enhanced to $O(n^{3/4})$
fluctuations near the critical point.
%
%The distribution of the fractions of vertices with a given degree 
%within the core can be computed along the same lines.
\end{remark}

\bibliographystyle{amsalpha}

\begin{thebibliography}{99}
%
\bibitem{AchlioptasCoja} Achlioptas, D. and Coja-Oghlan, A.,
{\it Algorithmic barriers from phase transitions},
49th Annual Symposium on Foundations of Computer Science, 
Philadelphia, PA, 2008.
%
\bibitem{AchlioptasFriedgut}
Achlioptas, D. and Friedgut, E.,
{\it A sharp threshold for $k$-Colorability},
Rand. Struct. Alg.  {\bf 14} (1999), 63--70.
%
\bibitem{Friedgut}
Achlioptas, D. and Friedgut, E.,
{\it Sharp thresholds of graph properties and the  $k$-SAT Problem},
with an appendix by J. Bourgain, J.~Amer.~Math.~Soc.
{\bf 12} (1999), 1017--1054.
%
\bibitem{AchlioptasNaor}
Achlioptas D. and Naor, A.,
{\it The two possible values of the chromatic number of a random graph},
Proc. of ACM STOC, 2004.

\bibitem{AchlioptasNaorPeres} 
Achlioptas, D., Naor, A. and Peres, Y.,
{\it Rigorous location of phase transitions in hard optimization problems},
Nature {\bf 435} (2005), 759.
%
\bibitem{AchlioRicci} 
Achlioptas,~D. and Ricci-Tersenghi,~F.,
{\it On the solution-space geometry of random constraint satisfaction problems},
Proc. 38th ACM Symposium on Theory Of Computing, Seattle (USA) 2006,
130--139.

\bibitem{AizenmanCanopy} Aizenman, M. and Warzel, S.,
{\it The canopy graph and level statistics for random operators on tree},
Math. Phys. Anal. and Geom., {\bf 9} (2007), 291--333.

%AD - Long only.
\bibitem{aldous}
Aldous, D., 
{\em Brownian excursions, critical random graphs and the multiplicative
coalescent}, Ann. Prob. {\bf 25} (1997), 812--854. 

\bibitem{Aldous}  Aldous, D. and Steele, J. M.,
{\it The objective method: probabilistic combinatorial optimization 
and local weak convergence}, in
{\it Probability on discrete structures}, H.~Kesten ed.,
New York, 2004.
%
\bibitem{AlonSpencer} Alon, N. and Spencer, J.,
{\it The probabilistic method}, Wiley, New York, 2000.

%AD - Long only.
\bibitem{AMRU04}
Amraoui,~A., Montanari,~A., Richardson,~T. and Urbanke,~R.,
{\it Finite-length scaling for iteratively decoded LDPC ensembles},
submitted to IEEE Trans. on Information Theory (2004).
Available on-line at {\tt http://arxiv.org/abs/cs.IT/0406050}

%AD - Long only.
\bibitem{aronson} Aronson, J., Frieze, A. and Pittel, B.G.,
{\em Maximum matchings in sparse random graphs: Karp-Sipser re-visited},
Random Structures and Algorithms, {\bf 12} (1998), 111--177.

\bibitem{BergerEtAl} Berger, N., Kenyon, C., Mossel, E. and Peres, Y.,
{\it Glauber dynamics on trees and hyperbolic graphs},
Prob. Theory Rel. Fields {\bf 131}, (2005), 311. 

\bibitem{Tomo} Bhamidi, S., Rajagopal R. and Roch, S,
{\it Network delay inference from additive metrics},
To appear in Random Structures Algorithms {\bf 35}, (2009).

\bibitem{Bha-Ver-Vig} Bhatnagar, N., Vera, J., Vigoda, E. and Weitz, D.,
{\it Reconstruction for colorings on trees}.
To appear in SIAM J. of Disc. Math. 
%http://front.math.ucdavis.edu/0711.3664, 2007.

\bibitem{BleherRuizZagrebnov}
Bleher, P. M.,  Ruiz, J. and Zagrebnov, V. A.,
{\it On the purity of limiting {G}ibbs state for the {I}sing model on the
  {B}ethe lattice},
J. Stat. Phys. {\bf 79} (1995), 473--482.
%

\bibitem{Bol80}
%{Bollobas} 
Bollob\'as, B., 
{\it A probabilistic proof of an asymptotic formula for the Number
of labeled regular graphs}, Eur.~J.~Combinatorics, 
{\bf 1} (1980), 311--316.

%AD - Long only.
\bibitem{2SATScal} 
Bollob\'as, B., Borgs, C., Chayes, J. T., 
Kim, J. H. and Wilson D. B, 
{\em The scaling window of the {2-SAT} transition},
Rand.~Struct.~and~Alg. {\bf 18} (2001), 201--256.

\bibitem{Asymmetric}
Borgs, C., Chayes, J., Mossel, E. and Roch, S.,
{\it The Kesten-Stigum reconstruction bound is tight for roughly symmetric 
binary channels}, Proc. of IEEE FOCS, 2006. 
%
%
\bibitem{Brightwell} Brightwell,~G.~R. and Winkler,~P.,
{\it A second threshold for the hard-core model on a Bethe lattice},
Rand.~Struct.~and~Alg. {\bf 24} (2004), 303--314.

\bibitem{CarParPatSou} Caracciolo, S., Parisi, G., Patarnello, S. and Sourlas, N.,
{\it 3d Ising spin glass in a magnetic field and mean-field theory},
Europhys. Lett. {\bf 11} (1990), 783.

%AD - Long only.
\bibitem{Chayes} 
Chayes,~J.~T., Chayes,~L., Fisher,~D.~S. and Spencer,~T.
{\em Finite-size scaling and correlation lengths for disordered systems},
Phys.~Rev.~Lett. {\bf 57} (1986), 2999--3002.

%AD - Long only.
\bibitem{cocco}
Cocco, S., Dubois, O., Mandler, J. and Monasson, R.,
{\it Rigorous decimation-based construction of ground pure states
for spin glass models on random lattices},
Phys. Rev. Lett. {\bf 90} (2003), 47205.

\bibitem{CoverThomas} Cover, T. and Thomas, J. A.,
{\it Elements of information theory}, Wiley Interscience, New York, 1991.

%AD - Long only.
\bibitem{CrDa} Creignou, N. and Daud\'e, H.,
{\it Satisfiability threshold for random XOR-CNF formulas},
Discrete Applied Math. {\bf 96-97} (1999), 41--53.

%AD - Long only.
\bibitem{norris2}
Darling, R.W.R., Levin, D. and Norris, J.,
{\it Continuous and discontinuous phase transitions in hypergraph processes},
Random Structures Algorithms, {\bf 24} (2004), 397--419.

%AD - Long only.
\bibitem{norris1}
Darling, R.W.R. and Norris, J.,
{\it Structure of large random hypergraphs},
Ann. Appl. Probab. {\bf 15} (2005), 125--152.

\bibitem{Phylogeny}
Daskalakis, C., Mossel, E. and Roch, S.,
{\it Optimal phylogenetic reconstruction}, 
Proc. of ACM STOC, 2006.
%
\bibitem{ising} Dembo, A. and Montanari, A. 
{\it Ising models on locally tree-like graphs}, to appear in
Ann. Appl. Probab. (2010).

\bibitem{bethe} Dembo, A. and Montanari, A. 
{\it Bethe states for graphical models}, preprint (2008).

\bibitem{DemboMonUnpublished} Dembo, A. and Montanari, A.,
{\it unpublished}, (2008).

%AD - Long only
\bibitem{hcore}
Dembo, A. and Montanari, A.,
{\it Finite size scaling for the core of large random hypergraphs},
Ann. Appl. Probab. {\bf 18} (2008), to appear.

\bibitem{Dorogotsev} Dorogovtsev, S. N., Goltsev A. V. and Mendes J. F. F.,
{\it Ising model on networks with an arbitrary distribution of connections},
Phys. Rev. E {\bf 66} (2002), 016104.

%AD - Long only
\bibitem{dyer} Dyer, M.E., Frieze, A.M. and Pittel, B.,
{\em On the average performance of the greedy algorithm for
finding a matching in a graph}, Ann. Appl. Probab. {\bf 3} (1993),
526--552.

\bibitem{StaticDynamics2} Dyer, M., Sinclair, A., Vigoda, E. and Weitz, D., 
{\it Mixing in time and space for lattice spin systems: A combinatorial view},
Rand.~Struc.~and Alg. {\bf 24} (2004), 461--479.
%
\bibitem{Dyson} Dyson, F.~J.
{\em Existence of a phase-transition in a one-dimensional Ising
ferromagnet}, Commun.~Math.~Phys. {\bf 12} (1969), 91--107.

\bibitem{NewmanEllis} Ellis, R.~S. and Newman, C.~M.,
{\it The statistics of Curie-Weiss models}, J.~Stat.~Phys. {\bf 19} (1978),
149--161.
%
\bibitem{EvaKenPerSch}
Evans, W., Kenyon, C., Peres, Y. and Schulman, L. J.,
{\it Broadcasting on trees and the {I}sing model},
Ann. Appl. Probab. {\bf 10} (2000), 410--433.

%AD - Long only
\bibitem{Fis67} Fisher, M. E. {\it Critical temperatures of aniostropic Ising
lattices. II. General upper bounds}, Phys. Rev. {\bf 162} (1967), 480--485. 

\bibitem{FraPar_pot} Franz, S. and Parisi, G.,
{\it Recipes for metastable states in spin glasses},
J. Physique I {\bf 5} (1995), 1401.

%AD - Long only
\bibitem{Fr-Su} Frieze, A.M. and Suen, S.,
{\it Analysis of two simple heuristics on a random instance of k-SAT},
Journal of Algorithms {\bf 20} (1996), 312--355.

\bibitem{Georgii} Georgii, H.-O.,
{\it Gibbs measures and phase transition},
de Gruyter, Berlin, 1988.

\bibitem{GerschenMon1} Gerschenfeld,~A. and Montanari,~A.,
{\it Reconstruction for models on random graphs},
48nd Annual Symposium on Foundations of Computer Science, 
Providence, RI, 2007.

\bibitem{Ginibre} Ginibre, J.,
{\em General formulation of Griffiths' inequalities},
Comm. Math. Phys., {\bf 16} (1970), 310--328.

%AD - Long only
\bibitem{goldschmidt1}
Goldschmidt, C. and Norris, J.,
{\it Essential edges in Poisson random hypergraphs},
 Random Structures and Algorithms {\bf 24}, (2004), 381--396.

%AD - Long only
\bibitem{goldschmidt2}
Goldschmidt, C., 
{\it Critical random hypergraphs: the emergence of a giant set of 
identifiable vertices},
Ann. Prob. {\bf 33} (2005), 1573--1600.

%AD - Long only
\bibitem{Gri64} Griffiths, R. B., 
{\it Peierls proof of spontaneous magnetization
in a two-dimensional Ising ferromagnet}, 
Phys. Rev. A {\bf 136} (1964), 437--439.

\bibitem{GHS} Griffiths, R. B., Hurst, C. A. and Sherman, S.,
{\it Concavity of magnetization of an Ising ferromagnet in a positive external 
field}, J. Math. Phys. {\bf 11} (1970), 790--795.

\bibitem{Grimmett} Grimmett, G. 
{\em Percolation}, Springer, New York, 1999.


%AD - Long only
\bibitem{groeneboom}
Groeneboom,~P., {\it Brownian motion with a parabolic drift and Airy
functions}, Probab.~Th.~Rel.~Fields, {\bf 81} (1989), 79--109.

\bibitem{Guerra} Guerra, F. and Toninelli, F. L.,
{\it The high temperature region of the Viana-Bray diluted spin glass model}, 
J.~Stat.~Phys. {\bf 115} (2004), 531--555.
%
%
\bibitem{StaticDynamics1} Guionnet, A. and Zegarlinski, B.,
{\it Lectures on logarithmic Sobolev inequalities},
S\'eminaire de Probabilites de Strasbourg, {\bf 36}
(2002), 1--134.
%\bibitem[Hal82]{Hall} Hall, P., {\em Rate of convergence in the central limit
%theorem}, Research Notes in Mathematics, {\bf 62}, Pitman Publishing, (1982).

%AD - Long only
\bibitem{Giant} Janson,~S., Knuth,~D.~E., Luczak,~T. and Pittel, B., 
{\it The birth of the giant component},
Rand.~Struct.~and~Alg., {\bf 4} (1993), 231--358.

\bibitem{Rgraphs} Janson, S., Luczak, T. and Ruci\'nski, A.,
{\it Random graphs}, John Wiley and sons, 2000.

%\bibitem[KL96]{karonski} Karo\'nski, M., and Luczak, T.
%{\it Random hypergraphs}, Bolyai Soc. Math. Stud. {\bf 2} (1996), 283--293.

%\bibitem[KMT76]{kmt}
%Koml\'os, J., Major, P. and Tusn\'ady, G.
%{\it An approximation of partial sums of independent {R.V.}'s and the
%  sample {D.F. I.}}
%Z. Wahr. verw. Gebiete {\bf 32} (1975), 111--131.

%AD - Long only.
\bibitem{karp} Karp, R.M. and Sipser, M., 
{\it Maximum matching in sparse random graphs}, 
Proc. 22nd
Annual IEEE Symposium on Foundations of Computing (1981), 364--375. 

\bibitem{Kelly} Kelly,~F.~P.,
{\it Stochastic models of computer communication systems},
J.~R.~Stat.~Soc. Ser. B {\bf 47} (1985), 379--395.

\bibitem{FlorentLenka} Krzakala, F. and Zdeborova, L.,
{\it Phase transitions in the coloring of random graphs},
Phys. Rev. {\bf E 76} (2007), 031131.


\bibitem{OurPNAS} Krzakala, F., Montanari, A., Ricci-Tersenghi, F., Semerjian, G.
and Zdeborova, L., {\it Gibbs states and the set of solutions of random constraint
satisfaction problems}, Proc.~Natl.~Acad.~Sci. {\bf 1004} (2007), 10318.
%

%AD - Long only.
\bibitem{kurtz} Kurtz, T.G.,
{\it Solutions of ordinary differential equations as limits of pure
Markov jump processes}, J. Appl. Probab. {\bf 7} (1970), 49--58.

\bibitem{Vespignani} 
Leone, M., V\'azquez, A., Vespignani A. and Zecchina, R.,
{\it Ferromagnetic ordering in graphs with arbitrary degree distribution},
Eur. Phys. J. B {\bf 28} (2002), 191--197.
%

\bibitem{Liggett} Liggett, T.,  {\it Interacting particle systems},
Springer Verlag, New York, 1985.




%AD _ Long only.
\bibitem{LMSS01}
Luby,~M., Mitzenmacher,~M., Shokrollahi,~A. and Spielman,~D., 
{\it Efficient erasure correcting codes}, 
IEEE Trans. Inform. Theory, {\bf 47} (2001),  569--584.

\bibitem{Martin} Martin, J. B., 
{\it Reconstruction thresholds on regular trees}. Discr. Math. Theor. 
Comp. Sci. AC (2003), 191--204.

\bibitem{Martinelli} 
Martinelli, F., Sinclair, A. and Weitz, D.,
{\it The Ising model on trees: boundary conditions and mixing time},
Proc.~of IEEE FOCS, 2003.
%

\bibitem{MezardMontanari} M\'ezard, M. and Montanari, A.,
{\it Information, physics and computation}, Oxford University Press,
Oxford, 2009.

\bibitem{MezardMontanariRec} M\'ezard,~M. and Montanari,~A.,    
{\it Reconstruction on trees and spin glass transition},
J.~Stat.~Phys. {\bf 124} (2006), 1317--1350.

\bibitem{Mora} M\'ezard, M., Mora, T. and Zecchina, R.,
{\it Clustering of solutions in the random satisfiability problem},   
Phys.~Rev.~Lett. {\bf 94} (2005), 197205.

%AD - Long only.
\bibitem{Clust2} M\'ezard,~M., Mora,~T. and Zecchina,~R., 
{\it  Pairs of SAT assignments and clustering in random Boolean formulae},
submitted to Theoretical Computer Science (2005),
available online at {\tt http://arxiv.org/abs/cond-mat/0506053}.
%

\bibitem{MezPar_glasses} M\'ezard, M. and Parisi, G.,
{\it Thermodynamics of glasses: a first principles computation},
Phys. Rev. Lett. {\bf 82} (1999), 747--751.

\bibitem{SpinGlass} M\'ezard, M., Parisi, G. and Virasoro, M.~A.,
{\it Spin glass theory and beyond}, World Scientific, 1987.

\bibitem{MarcGiorgioRiccardo} M\'ezard,~M., Parisi,~G. and Zecchina,~R., 
{\it Analytic and Algorithmic Solution of Random Satisfiability Problems}, 
Science {\bf 297} (2002), 812--815.


%AD - Long only.
\bibitem{mezard}
M\'ezard, M., Ricci-Tersenghi, F. and Zecchina, R.,
{\it Two solutions to diluted $p$-spin models and XORSAT problems},
J. Stat. Phys. {\bf 111} (2003), 505--533.

%AD - Long only.
\bibitem{Molloy2} Molloy, M.,
{\it Cores in random hypergraphs and Boolean formulas},
Rand.~Struct.~and~Alg., {\bf 27} (2005), 124--135. 

%
\bibitem{MonResTet} Montanari,~A., Restrepo,~R. and Tetali,~P.,
{\it Reconstruction and clustering in random constraint satisfaction problems},
{\tt arXiv:0904.2751}, submitted to SIAM J. of Discr. Math.

\bibitem{MonSem} 
Montanari, A. and Semerjian, G.,
{it Rigorous inequalities between length and time scales in glassy systems},  
J.~Stat.~Phys. {\bf 125} (2006), 23--54.
%

\bibitem{TreeRec} Mossel,~E.  and Peres,~Y.,
{\it Information flow on trees},
Ann. Appl. Probab. {\bf 13} (2003), 817--844.

\bibitem{MWW} Mossel, E., Weitz, D. and Wormald, N.,
{\it On the hardness of sampling independent sets beyond the tree threshold},
Prob. Theory Rel. Fields {\bf 142} (2008), 401-439

\bibitem{FirstColoring} Mulet, R., Pagnani, A., Weigt, M. and 
Zecchina, R., {\it Coloring random graphs}
Phys. Rev. Lett. {\bf 89} (2002), 268701.
%

%\bibitem[AMU06]{AMU06}
% Amraoui,~A., Montanari,~A., and Urbanke,~R.,
%{\it How to Find Good Finite-Length Codes: From Art Towards Science}.
%Proc. $4^{\rm th}$ International Symposium on Turbo Codes and Related Topics, 
%Munich, Germany, 2006.

%\bibitem[AS64]{abramowitz}
% Abramowitz, M., and Stegun, I.A., 
%{\em Handbook of mathematical functions}, Nat. Bur. Stand. (1964). 

%\bibitem[BFU93]{broder}
%Broder, A.Z., Frieze, A.M., and Upfal, E.,
%{\it On the satisfiability and maximum satisfiability of random 3-CNF formulas},
%Proc. 4th Annual ACM-SIAM Symp. on Discrete Algorithms, (1993), 322-....

%\bibitem[BKS07]{Bayati}
%Bayati M., Kim, J. H., and Saberi, A.,
%{\it A Sequential Algorithm for Generating Random Graphs},
%in Lecture Notes in Computer Science {\bf 4627}, 
%Springer (2007), 326--340.


%\bibitem[BR76]{BR} 
%Bhattacharya, R.N., and Ranga Rao, R.,
%{\em Normal approximation and asymptotic expansions},
%Wiley, New York, (1976).

%\bibitem[Fe71]{feller} Feller, W.
%{\it An introduction to probability theory and its applications}
%Volume II, second edition (1971), John Wiley \& Sons, New York.

%\bibitem[Fr86]{frieze} Frieze, A.M.,
%{\em On large matchings and cycles in sparse random graphs},
%Disc. Mathematics {\bf 59} (1986), 243--256.

%\bibitem[MKWo91]{mckay} McKay, B. D., and Wormald, N., C.,
%{\it Asymptotic enumeration by degree sequence of graphs with degrees 
%$o(n^{1/2})$}, Combinatorica 11 (1991) 369-382


%\bibitem[Sak85]{Sakhanenko}
%Sakhanenko, A. I.,
%{\em Rate of convergence in the invariance principle for variables
%with exponential moments that are not identically distributed}
%In Advances in Probability Theory:
%Limit theorems for sums of random variables
%(eds. Balakrishna and A.A.Borovkov), Springer (1985), 2--73.

%\bibitem[SBT05]{Sellitto} Sellitto,~M., Biroli,~G., and Toninelli,~C.,
%{\em  Facilitated spin models on Bethe lattice: bootstrap percolation, 
%mode-coupling transition and glassy dynamics} 
%Europhys. Lett. {\bf 69} (2005), 496--501.

%\bibitem[Sha95]{Shao} Shao, Q.M.,
%{\em Strong approximation theorems for independent random variables and
%their applications}
%J. Multi. Anal. {\bf 52} (1995), 107--130.
% 

%\bibitem{deSantisGuerra}
%L.~De Sanctis, F.~Guerra, {\it Mean field dilute ferromagnet I. 
%High temperature and zero temperature behavior},
%{\tt arXiv:0801.4940}

%AD - Long only.
\bibitem{Orlitsky} Orlitsky, A., Viswanathan, K. and Zhang, J.,
{\it Stopping set distribution of LDPC code ensembles},
IEEE Trans. Inform. Theory, {\bf 51} (2005), 929--953.

\bibitem{PittelEtAl} Pittel, B., Spencer, J. and Wormald, N.,
{\it Sudden emergence of a giant $k$-core in a random graph},
J.~of~Comb.~Theory, B {\bf 67} (1996),  111--151.

\bibitem{UrbankeBook} Richardson, T. and Urbanke, R.,
{\it Modern coding theory}, Cambridge University Press, Cambridge, 2008.

\bibitem{Simon} Simon, B., {\it Correlation inequalities and the decay of 
correlations in ferromagnets}, Commun. Math Phys. {\bf 77} (1980), 111--126.

\bibitem{sly} Sly, A., {\it Reconstruction of random colourings},
Commun. in Math. Phys. {\bf 288} (2008), 943--961

\bibitem{Tatikonda} Tatikonda,~S. and  Jordan,~M.~I.,
{\it Loopy belief propagation and Gibbs measures},
Uncertainty in Artificial Intelligence, 2002.

\bibitem{WeigtHartmann} Weigt,~M. and Hartmann,~A.~K.,
{\it Minimal vertex covers on finite-connectivity random graphs: 
A hard-sphere lattice-gas picture}, Phys. Rev. E {\bf 63}, (2001), 056127. 

%AD - Long only. 
\bibitem{Wilson} Wilson, D. B.,
{\it On the critical exponents of random k-SAT},
Rand.~Struct.~and~Alg. {\bf 21} (2002), 182--195.
%
%
%
\end{thebibliography}

\end{document}